\documentclass[11pt]{amsart}
%\usepackage{amsfonts,amgssymb,amsmath,amst, mathrsfs}
%\usepackage{url}
%\usepackage{enumerate}
%\usepackage{graphicx}
%\usepackage{epsfig}

%\usepackage{epstopdf}

%\urlstyle{sf}

\theoremstyle{plain}
\newtheorem{theorem}{Theorem}[section] % 1st argument is your name for it
\newtheorem{lemma}[theorem]{Lemma}     % 2nd argument is what is printed
\newtheorem{corollary}[theorem]{Corollary}
\newtheorem{proposition}[theorem]{Proposition}
\newtheorem{remark}[theorem]{Remark}
\newtheorem{definition}[theorem]{Definition}
\newtheorem*{theorem*}{Theorem}
\newtheorem*{definition*}{Definition}
\newtheorem{example}{Example}[subsection]

%\newtheorem{theorem}{Theorem}
%\newtheorem{lemma}[theorem]{Lemma}
%\newtheorem{proposition}[theorem]{Proposition}
%\newtheorem{corolary}[theorem]{Corolary}
%\theoremstyle{definition}
%\newtheorem{definition}[theorem]{Definition}
%\newtheorem{rmk}[theorem]{Remark}
%\newtheorem{example}[theorem]{Example}
%\numberwithin{equation}{section}

%\evensidemargin55pt \oddsidemargin55pt
%\baselineskip18pt \vfuzz3pt \hfuzz3pt \pretolerance=1000
\setlength{\oddsidemargin}{0.5cm}
\setlength{\evensidemargin}{0.5cm} \setlength{\hoffset}{-1cm}
\setlength{\textwidth}{168mm}
\setlength{\textheight}{238mm} 

\newcommand{\vp}{\varepsilon}
%\newcommand{\}{\delta}

%\linespread{1.33}
\begin{document}
	\title{Local inverse measure-theoretic entropy for endomorphisms}
	\author{Eugen Mihailescu}
		
	\author{Radu B. Munteanu}
	\maketitle

	\begin{abstract}
	We introduce a new notion of local inverse metric entropy along backward trajectories for ergodic measures preserved by endomorphisms (non-invertible maps) on a compact metric space. A second notion of  inverse measure entropy is  defined  by using measurable  partitions. Our notions have several useful applications. Inverse entropy can distinguish between isomorphism classes of endomorphisms on Lebesgue spaces, when they have the same forward measure-theoretic entropy. 
	In a general setting we prove that the local inverse entropy of an ergodic measure $\mu$ is equal to the forward entropy minus the folding entropy,  i.e $h^-_f(\mu)=h_f(\mu)- F_f(\mu)$.  The inverse entropy of hyperbolic measures on compact manifolds is explored, focusing on their negative Lyapunov exponents. 
	 We compute next the inverse entropy of the inverse SRB measure on a  hyperbolic repellor.
	 We prove  an entropy rigidity result for special Anosov endomorphisms of $\mathbb T^2$, namely that they can be classified up to smooth conjugacy by knowing  the entropy of their SRB measure and the inverse entropy of their inverse SRB measure, namely $(h_f(\mu_f^+), h_f^-(\mu_f^-))$.
	Next we study the relations between our inverse measure-theoretic entropy and the generalized topological inverse entropy on subsets of prehistories. In general we establish a Partial Variational Principle for  inverse entropy. We obtain also a Full Variational Principle for inverse entropy in the case of special TA-covering maps on tori.  In the end, several examples of endomorphisms are studied, such as fat baker transformations, fat solenoidal attractors, special Anosov endomorphisms, toral endomorphisms, 
	and the local inverse entropy is computed for their SRB measures.  
 
	\end{abstract}
	\textbf{2020 MSC}: 37A35; 37B40; 37C40; 37D25. 
	
	\textbf{Keywords}: Dynamics of non-invertible maps; Jacobian of a measure;  inverse topological entropy;  folding entropy; special endomorphisms; SRB measures; smooth rigidity; Lyapunov exponents.

	\section{Introduction}
	
	Let $(X,d)$ be a compact metric space and $\mathcal{B}$ be the $\sigma$-algebra of Borel sets.  Let $f:X\rightarrow X$ be a measurable map, which is in general non-invertible, and denote by  $M_f(X)$ the set of all $f$-invariant probability measures on $X$. The notion of entropy $h_f(\mu)$ of an $f$-invariant probability measure $\mu$ is central in Ergodic Theory (for eg \cite{KH}, \cite{Ro}, \cite{W}). In \cite{Bo} Bowen introduced a notion of topological entropy on non-compact sets and proved that the entropy $h_f(\mu)$  of an $f$-invariant measure $\mu$ is equal to the topological entropy of the set $G_\mu$ of generic points for $\mu$.

	For $\mu\in M_f(X)$, Brin and Katok introduced in \cite{B} a notion of \textbf{local entropy} and related it to the measure-theoretic entropy $h_f(\mu)$. They showed that if $\mu$ is ergodic and $B_n(x,\varepsilon)=\{y\in X  : d(f^i(x),f^i(y))<\varepsilon, 0\leq i\leq n\}$ is the $(n,\varepsilon)$-Bowen ball centered at $x$, then for $\mu$-a.e $x\in X $ we have, 
	\begin{equation*}\label{brka}
		h_f(\mu)= \lim_{\varepsilon \rightarrow 0}\liminf_{n\rightarrow \infty}\frac{-\log \mu (B_n(x,\varepsilon))}{n}=\lim_{\varepsilon \rightarrow 0}\limsup_{n\rightarrow \infty}\frac{-\log\mu (B_n(x,\varepsilon))}{n}.
	\end{equation*}
	If $(X,\mathcal{B},\mu)$ is a Lebesgue space (\cite{Ro}) and $f:X\to X$ is a countable-to-one endomorphism such that $\mu$ is an $f$-invariant probability measure on $X$, then Parry introduced and studied in \cite{P} the notion of Jacobian $J_f(\mu)$ of $\mu$. In \cite{R} Ruelle introduced the notion of folding entropy $F_f(\mu)$ for $\mu$, defined as the conditional entropy $H_\mu(\epsilon | f^{-1}\epsilon)$, where $\epsilon$ is the single point partition of $X$ and $f^{-1}\epsilon$ is the partition given by the fibers of $f$.
	
	In general, the dynamics of endomorphisms (non-invertible maps) is different from that of homeomorphisms/diffeomorphisms. For smooth endomorphisms $f:M \to M$ defined on a compact manifold $M$ which are hyperbolic on a compact subset $\Lambda \subset M$, we have that the unstable tangent spaces $E^u(\hat x)$ and the local unstable manifolds $W^u(\hat x)$ depend on the prehistory (full backward trajectory) $\hat x \in \hat \Lambda$, not only on the base points, where $\hat \Lambda$ is the inverse limit of the system $f|_\Lambda : \Lambda \to \Lambda$ (see \cite{Ru}). Also dimension theory for hyperbolic non-invertible maps is different from that of hyperbolic diffeomorphisms (\cite{Pe}), see for example \cite{Ru}, \cite{Pr}, \cite{M11}, \cite{M}, \cite{MU2}.
 Differences between the dynamics of invertible maps and that of non-invertible maps can be seen for example in the case of Anosov maps on tori.  
Any Anosov diffeomorphism $f: \mathbb T^d \to \mathbb T^d$ is topologically conjugate to its linearization $f_L : \mathbb T^d \to \mathbb T^d$, whose integer-valued matrix is determined by the induced map on the fundamental group $f_*: \pi_1(\mathbb T^d) \to \pi_1(\mathbb T^d)$, as shown in \cite{Fr} and \cite{Ma}.
	On the other hand, we have Anosov endomorphisms which are non-invertible on $\mathbb T^d$. In this case the unstable tangent space $E^u({\hat x})$ (and the unstable global set $W^u(\hat x)$) depend on the prehistory (backward trajectory) $\hat x$  from the inverse limit of $(\mathbb T^d,f)$.  In \cite{Pr} Przytycki showed that for many hyperbolic endomorphisms on tori there are infinitely many points which have infinitely many unstable tangent spaces corresponding to different prehistories. Another example of hyperbolic endomorphism (non-Anosov) with infinitely many unstable manifolds through certain points was given in \cite{M11}. The hyperbolic endomorphisms which have the property that their unstable sets depend only on the respective base point and not on the whole backward trajectory are called \textit{special}.  In \cite{AH} Aoki and Hiraide showed that any Anosov endomorphism $f: \mathbb T^d \to \mathbb T^d$ is homotopic to a linear toral endomorphism  $f_L:\mathbb T^d \to \mathbb T^d$ (called the linearization of $f$) whose integer-valued matrix is given by the induced homomorphism of $f$ on the fundamental group $f_*: \pi_1(\mathbb T^d) \to \pi_1(\mathbb T^d)$. Later Sumi proved in \cite{S} that, if $f$ is a special TA-map, then $f$ is topologically conjugate to $f_L$ (see also \cite{MT}). Moreover, special toral endomorphisms and the problem of rigidity and integrable unstable bundle were investigated in detail in \cite{An}, \cite{Mi}. For instance in \cite{An} it was proved that if $f:\mathbb T^2 \to \mathbb T^2$ is a non-invertible Anosov map with 1-dimensional stable bundle, then $f$ has integrable unstable bundle (i.e $f$ is special)  if and only if
every periodic point of $f$ admits the same Lyapunov exponent on the stable bundle. And  in \cite{Mi} there were obtained conditions when a special Anosov endomorphism on $\mathbb{T}^d$ is smoothly conjugated to its linearization.
	In  the case of an Anosov  endomorphism there exists the Sinai-Ruelle-Bowen (SRB) measure introduced and studied in \cite{Si}, \cite{Ru-TF}, \cite{Bo-carte}, which describes the distribution of forward iterates of Lebesgue-a.e point, and which satisfies the Pesin entropy formula (\cite{Y}). Also in \cite{M4} Mihailescu introduced the inverse SRB measure, which describes the distribution of sets of $n$-preimages of Lebesgue-a.e point, and which satisfies an inverse Pesin formula. 
	While the (forward) topological entropy of a continuous map $f$ takes in consideration the forward iterates of  points, there are several notions of entropy which employ preimages of points with respect to endomorphisms, studied in  \cite{CN}, \cite{H}, \cite{MU2}, \cite{Ni}, \cite{WZ2} among others. In \cite{H} Hurley introduced two preimage entropies $h_m(f)$ and $h_p(f)$ defined by 
	$$h_{p}(f)=\sup_{x\in X} \lim_{\epsilon\rightarrow 0}\limsup_{n\rightarrow \infty}\frac{1}{n}\log s(n,\epsilon, f^{-n}x), \ \ h_{m}(f)=\lim_{\epsilon\rightarrow 0}\limsup_{n\rightarrow \infty}\frac{1}{n} \log \sup_{x\in X}s(n,\epsilon, f^{-n}x),$$
	where $s(n,\epsilon, f^{-n}x)$ is the maximal cardinality of  $(n,\varepsilon)$-separated subsets of $f^{-n}x$, for $x\in X$. In \cite{CN} Cheng and Newhouse introduced another version of topological preimage entropy 
	$h_{pre}(f)=\lim_{\epsilon\rightarrow 0}\limsup_{n\rightarrow \infty}\frac{1}{n}\log\sup_{x\in X, k\geq n} s(n,\epsilon, f^{-k}x).$
	They also defined a measure theoretic \textit{preimage entropy} $h_{pre,\mu}(f)$ for an $f$-invariant measure $\mu \in M_f(X)$, $h_{pre,\mu}(f)=\sup\limits_\alpha h_\mu(\alpha|\mathcal{B}^-)$ with $\alpha$ varying over all finite partitions of $X$ and $ h_\mu(\alpha|\mathcal{B}^-)= \lim\limits_{n\rightarrow\infty}\frac{1}{n}H_\mu(\alpha^n| \mathcal{B}^-)$ 
	where $\alpha^n=\bigvee\limits_{j=0}^{n-1} f^{-j}\alpha$, $H_\mu ( \cdot  | \cdot )$ is the conditional entropy and $\mathcal{B}^-=\bigcap\limits_{n=0}^\infty f^{-n}\mathcal{B}$. Also \cite{CN} obtained a Variational Principle $h_{pre}(f)=\sup_{\mu\in M_f(X)}h_{pre, \mu}(f).$
    In \cite{WZ2} Wu and Zhu defined for a measurable partition $\alpha$ of $X$,  
	$h_{m,\mu}(f,\alpha)= \limsup\limits_{n\rightarrow\infty}\frac{1}{n}H_\mu(\alpha^n| f^{-n}\mathcal{B})$
	and then the pointwise metric preimage entropy of $f$ with respect to $\mu$ is defined by
	$h_{m,\mu}(f)=\sup_\alpha h_{m,\mu}(f,\alpha)$
	where $\alpha$ ranges over all finite partitions of $X$. Also \cite{WZ2} proved a Variational Principle  
	$h_{m}(f)=\sup_{\mu\in M_f(X)}h_{m, \mu}(f) = h_{pre}(f),$
	and showed moreover  that $$h_{m,\mu}(f)=h_{pre,\mu}(f)=F_f(\mu),$$ where $F_f(\mu)$ is the folding entropy of $\mu$. If $f$ is a homeomorphism and $\mu\in M_f(X)$, recall that  $h_{pre, \mu}(f)=h_{m,\mu}(f)=F_f(\mu)=0$.

	%Another notion of entropy which takes in consideration preimages which employ backward trajectories instead of trees of preimages 

In \cite{MU2} Mihailescu and Urba\'nski introduced an inverse topological pressure $P^-_f$ and the inverse topological entropy $h^-_f$. This inverse topological entropy is different from the preimage entropies studied in \cite{CN}, \cite{H}, \cite{WZ2}. 
In \cite{MU3} we introduced the \textit{asymptotic degree} of an invariant measure $\mu$ with respect to a hyperbolic endomorphism, and proved that it is related to the folding entropy $F_f(\mu)$.

	%The main goal of our current paper is to introduce and explore a new notion of inverse entropy for ergodic measures with respect to endomorphisms (non-invertible maps).  We introduce a notion of local inverse metric entropy of $\mu$ defined using inverse Bowen balls. We also introduce a notion of inverse entropy of $\mu$ defined using measurable partitions. The relation between these two notions of inverse entropy for an ergodic measure will be studied. We also study connections between this inverse measure-theoretic entropy and the inverse topological entropy studied in \cite{MU1},  \cite{MU2} and \cite{M1}. We will then investigate this notions in the context of several examples including SRB measures.

	\

 \ \ \  \textbf{Main goals of the paper:}

	Let a compact metric space $(X, d)$ and $f:X \to X$ measurable map (in general non-invertible), and denote by $\widehat{X}_f$ (or $\widehat X$) the \textit{inverse limit} of $(X,f)$, $$\widehat{X}_f=\{\hat{x}=(x,x_{-1},x_{-2},\ldots) : \ x_{-i}\in X,\ f(x_{-i})=x_{-i+1}, i\geq 1, x_0=x\}.$$
	When the  map $f$ is clear from the context, we denote this inverse limit by $\widehat X$ to simplify notation.
	Let  $\pi:\widehat{X}\rightarrow X$ be the canonical projection defined by $\pi(\hat{x})=x$. The map $\hat{f}: \widehat{X}\rightarrow\widehat{X}$, 
	$$\hat{f}(x,x_{-1},x_{-2},\ldots)=(fx, x,x_{-1},x_{-2},\ldots), \ \hat x \in \widehat X, $$
	is bijective and bi-measurable. If $\mu$ is $f$-invariant and ergodic then there exists a unique $\hat{f}$-invariant and ergodic measure $\hat{\mu}$ on $\widehat{X}$ such that $\pi_*\hat\mu=\mu$ (see \cite{Ru}). Since $(X,d)$ is a compact metric space then $\widehat{X}$ is also a compact metric space endowed with the metric
	$\hat{d}(\hat{x},\hat{y})=\sum_{i\geq 0}\frac{d(x_{-i},y_{-i})}{2^i}, \text{ for }\hat{x}, \hat{y}\in \widehat{X}$. 
	For $\hat{x}\in\widehat{X}$, $\varepsilon>0$ and $n\geq 1$, define the \textbf{$(n,\varepsilon)$-inverse Bowen ball along $\hat{x}$} by, 
	\begin{equation}\label{invBowenball}
		B_n^-(\hat{x},\varepsilon)=\{y\in X  : \ \exists \ \hat{y}=(y, y_{-1}, \ldots)\in\widehat{X }\text{ such that }d(x_{-i},y_{-i})<\varepsilon, 0\leq i\leq n\}.
	\end{equation}
Then $B_{n}^-(\hat{x},\varepsilon)=f^{n}(B_{n}(x_{-n},\varepsilon)),$
for
$B_{n}(x_{-n},\varepsilon)=\{y\in X:  d(f^i(x_{-n}),f^i{y})<\varepsilon, 0\leq i\leq n\}$
the usual $(n,\varepsilon)$-Bowen ball.

	Our main goal is to introduce and study new notions of \textbf{local inverse entropies} for an ergodic measure $\mu$ along \textbf{individual backward trajectories} with respect to an endomorphism $f$ (non-invertible map). We introduce  a notion of local inverse metric entropy of $\mu$ using inverse Bowen balls, and then a notion of inverse entropy  using measurable partitions. In  general we prove that they are equal and that the local inverse entropy  satisfies $h^-_f(\mu)=h_f(\mu)- F_f(\mu)$. These notions are different from previous notions of entropy defined using preimages.  Our setting presents new difficulties, and we develop new methods and applications.
	 Inverse entropy can distinguish between isomorphism classes of endomorphisms, when they have the same forward measure-theoretic entropy. 
	The inverse entropy of hyperbolic measures is explored also, focusing on their negative Lyapunov exponents. 
	We compute the inverse entropy of the inverse SRB measure $\mu_f^-$ on a connected hyperbolic repellor.
	 We prove  an entropy rigidity result for special Anosov endomorphisms of $\mathbb T^2$,  namely that they can be classified up to smooth conjugacy by  the entropy of their SRB measure and the inverse entropy of their inverse SRB measure, i.e by the pair of numbers $(h_f(\mu_f^+), h_f^-(\mu_f^-))$.  
	Next, we study relations between our inverse measure-theoretic entropy and the generalized topological inverse entropy on sets of prehistories, and establish a Partial Variational Principle for inverse entropy. We obtain a Full Variational Principle for inverse entropy in the case of special TA-covering maps on tori. In the end, several classes of examples are studied, fat baker's transformations, toral endomorphisms, Tsujii endomorphisms, and the inverse entropy of SRB measures  is computed.  

\

\ \ \ \ \ \textbf{Outline of the paper:}

$\bullet$ \ In \textbf{Section 2} we define several notions of local inverse entropy for an ergodic $f$-invariant probability measure $\mu$ on $X$. First,  for $\hat{x}\in \widehat{X}$ and $\varepsilon >0$, define the quantities 
$$h^-_{f,inf,B}(\mu,\hat{x},\varepsilon)=\liminf_{n\rightarrow\infty}\frac{-\log \mu(B_n^-(\hat{x},\varepsilon))}{n}, \ \ 
h^-_{f,sup,B}(\mu,\hat{x},\varepsilon)=\limsup_{n\rightarrow\infty}\frac{-\log \mu(B_n^-(\hat{x},\varepsilon))}{n}.$$
If $0<\varepsilon_1<\varepsilon_2$, then 
$h^-_{f,inf,B}(\mu,\hat{x},\varepsilon_1) \geq h^-_{f,inf,B}(\mu,\hat{x},\varepsilon_2)$ and $ h^-_{f,sup,B}(\mu,\hat{x},\varepsilon_1) \geq h^-_{f,sup,B}(\mu,\hat{x},\varepsilon_2)$.
Thus, the following limits exists and we define the local quantities
\begin{equation}\label{h-inf-sup}
h^-_{f,inf,B}(\mu,\hat{x})=\lim_{\varepsilon \rightarrow 0}h^-_{f,inf,B}(\mu,\hat{x},\varepsilon), \ \
h^-_{f,sup,B}(\mu,\hat{x})=\lim_{\varepsilon \rightarrow 0}h^-_{f,sup,B}(\mu,\hat{x},\varepsilon),
\end{equation}
which we call the \textbf{lower, respectively upper inverse metric entropy of $\mu$ at $\hat{x}$}.
The functions 
$$
h^-_{f,inf,B}(\mu, \cdot):\widehat{X}\rightarrow \mathbb{R} , \ \ h^-_{f,sup,B}(\mu, \cdot): \widehat{X}\rightarrow \mathbb{R}$$
are called the lower, respectively upper inverse metric entropy functions of $\mu$.
Let further define the \textbf{lower and upper inverse metric entropy of }$\mu$ by 
$$ h^-_{f,inf,B}(\mu)=\int_{\widehat{X}} h^-_{f,inf,B}(\mu,\hat{x})d\hat{\mu}(\hat{x}),  \ \  h^-_{f,sup,B}(\mu)=\int_{\widehat{X}}  h^-_{f,sup,B}(\mu,\hat{x})d\hat{\mu}(\hat{x}).$$

%The preimage entropies $h_{pre, \mu}(f)$ and $h_{m,\mu}(f)$ are very different from our inverse entropies. It is easy to see that if $f$ is a homeomorphism and  $\mu$ is $f$-invariant ergodic than $\mu$ has inverse entropy and $h^-_{f,B}(\mu)$ is equal to the usual forward entropy $h_f(\mu)$. 

 In Lemma \ref{mod} we prove an inverse version of Brin-Katok Theorem.  %Recall that the folding entropy $F_f(\mu)$ was defined in \cite{Ru} as being the conditional entropy $H_\mu(\epsilon| f^{-1}(\epsilon))$, where $\epsilon$ is the point partition of $X$.
We obtain a formula that relates the inverse metric entropy of $\mu$ to the folding entropy of $\mu$ and the forward entropy of $\mu$.
\begin{theorem}\label{D}
	Let $f: X\rightarrow X$ be a continuous and locally injective transformation of the compact metric space $X$ and let $\mu$ be a probability measure on $X$ which is $f$-invariant and ergodic. Let $J_f(\mu)$ be the Jacobian of $\mu$ with respect to $f$. If the set $D$ of discontinuities of $J_f$ is closed and has $\mu$-measure zero and if $J_f(\mu)$ is bounded, then the inverse metric entropy of $\mu$ exists and  
	$$h^-_{f,B}(\mu)=h_{f}(\mu)-F_f(\mu).$$
\end{theorem}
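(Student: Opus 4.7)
My plan is to exploit the identity $B_n^-(\hat x,\varepsilon) = f^n(B_n(x_{-n},\varepsilon))$ together with the change-of-variables formula for Parry's Jacobian. For $\varepsilon$ smaller than an injectivity radius of $f$, the inclusions $f^i(B_n(x_{-n},\varepsilon)) \subset B(x_{-n+i},\varepsilon)$ force $f^n$ to be injective on $B_n(x_{-n},\varepsilon)$, so Parry's transfer formula combined with the chain rule $J_{f^n}(\mu) = \prod_{i=0}^{n-1} J_f(\mu)\circ f^i$ yields
\begin{equation*}
	\mu\bigl(B_n^-(\hat x,\varepsilon)\bigr) \;=\; \int_{B_n(x_{-n},\varepsilon)} \prod_{i=0}^{n-1} J_f(\mu)(f^i(y))\, d\mu(y).
\end{equation*}

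Taking $-\tfrac{1}{n}\log$ suggests the asymptotic decomposition
\begin{equation*}
	-\frac{1}{n}\log\mu\bigl(B_n^-(\hat x,\varepsilon)\bigr) \;\approx\; -\frac{1}{n}\log\mu\bigl(B_n(x_{-n},\varepsilon)\bigr) \;-\; \frac{1}{n}\sum_{j=1}^{n}\log J_f(\mu)(x_{-j}).
\end{equation*}
The Jacobian sum is a Birkhoff average for the invertible $\hat\mu$-preserving map $\hat f$: since $x_{-j} = \pi(\hat f^{-j}\hat x)$ and $\log J_f(\mu)\circ\pi \in L^1(\hat\mu)$ by the boundedness hypothesis, for $\hat\mu$-a.e.\ $\hat x$ one has $\frac{1}{n}\sum_{j=1}^n \log J_f(\mu)(x_{-j}) \to \int \log J_f(\mu)\,d\mu = F_f(\mu)$, the last equality being the standard identification of folding entropy with the log-Jacobian average. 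For the remaining term, pointwise existence of the limit along backward orbits follows from the inverse Brin--Katok theorem (Lemma~\ref{mod}); its value is identified by integrating over $\hat\mu$ and using $\hat f$-invariance to change variables $y = x_{-n}$, giving
\begin{equation*}
	\int_{\widehat X} -\frac{1}{n}\log\mu\bigl(B_n(x_{-n},\varepsilon)\bigr)\, d\hat\mu(\hat x) \;=\; \int_X -\frac{1}{n}\log\mu(B_n(y,\varepsilon))\, d\mu(y) \;\longrightarrow\; h_f(\mu),
\end{equation*}
by the $L^1$ form of the classical Brin--Katok theorem. Integrating the decomposition against $\hat\mu$ combines these conclusions into the identity $h^-_{f,B}(\mu) = h_f(\mu) - F_f(\mu)$.

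The main obstacle is to justify rigorously that $\sum_{i=0}^{n-1}\log J_f(\mu)(f^i(y)) \approx \sum_{j=1}^{n}\log J_f(\mu)(x_{-j})$ uniformly in $y \in B_n(x_{-n},\varepsilon)$, since $\log J_f(\mu)$ is only continuous off the closed $\mu$-null set $D$. To control the error, fix $\eta > 0$ and choose an open neighborhood $D_\delta \supset D$ with $\mu(D_\delta) < \eta$; on the compact complement $X\setminus D_\delta$ the bounded function $\log J_f(\mu)$ is uniformly continuous, so there is $\varepsilon_1 = \varepsilon_1(\delta)$ such that for $\varepsilon < \varepsilon_1$ every index $i$ with $x_{-n+i} \notin D_\delta$ contributes error at most $\eta$ to the difference. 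By Birkhoff's theorem applied to $\hat f$, for $\hat\mu$-a.e.\ $\hat x$ the proportion of indices $j \in \{1,\ldots,n\}$ with $x_{-j}\in D_\delta$ tends to $\mu(D_\delta) < \eta$; on these ``bad'' indices the contribution is bounded by $2\sup|\log J_f(\mu)| < \infty$. The cumulative error in $\tfrac{1}{n}$-log is therefore $O(\eta)$, and letting $\eta\to 0$ after $n\to\infty$ and $\varepsilon\to 0$ closes the argument.
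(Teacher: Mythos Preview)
Your proposal is correct and follows essentially the same strategy as the paper: both decompose $\mu(B_n^-(\hat x,\varepsilon))$ via the Jacobian change-of-variables on $B_n(x_{-n},\varepsilon)$, apply the modified Brin--Katok lemma (Lemma~\ref{mod}) for the Bowen-ball term and Birkhoff along $\hat f^{-1}$ for the Jacobian sum, and control the discrepancy between $J_{f^n}(\mu)(y)$ and $J_{f^n}(\mu)(x_{-n})$ by Birkhoff-counting the indices where $x_{-i}$ lies outside a set of near-uniform continuity of $J_f$, bounding the remaining indices by $\sup J_f<\infty$. The only cosmetic differences are that the paper parametrizes the good set as $K_{\varepsilon,\delta}=\{x:|\log J_f(y)-\log J_f(x)|<\varepsilon \text{ for all }y\in B(x,\delta)\}$ rather than as the complement of a neighbourhood of $D$, and your integration step to identify the Brin--Katok limit is redundant since Lemma~\ref{mod} already gives the pointwise value $h_f(\mu)$.
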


Next, we define the lower/upper inverse entropy of an ergodic measure $\mu$ with respect to a measurable partition. 
Let $(X, \mathcal{B}, \mu)$ be a probability space and $\mathcal{P}$ be a measurable partition of $(X, \mathcal B, \mu)$.  If 
$\widehat{X}$ is the inverse limit of $(X,f)$ and $\pi : \widehat{X} \rightarrow X$ is the canonical projection, then $\widehat{\mathcal{P}}= \{ \pi^{-1}(P) \ | P \in \mathcal{P} \}$
is a measurable partition of $\widehat{X}$. 
For $n\geq 1$, let $\mathcal{P}_{n}= \bigvee\limits_{i=0}^{n} {f}^{-i}(\mathcal{P})$. If $x\in X$, let $\mathcal{P}(x)$ (respectively ($\mathcal{P}_{n}(x)$) be the atom of $\mathcal{P}$ (respectively $\mathcal{P}_{n}$) containing $x$. For $\hat{x}=(x_{-i})_{i\geq 0}\in \widehat{X}$ with $x_0=x$, define
$$\mathcal{P}_{n}^{-}(\hat{x})=\{y\in X \ | \ \exists \  \hat{y}= (y_{-i})_{i\geq 0} \text{ with }y_0=y, \text{ and }y_{-i}\in \mathcal{P}(x_{-i}),  \text{ for }i=0,1,\ldots, n \}, \ \text{and},$$
 $$h^-_{f, inf}(\mu,\mathcal{P},\hat{x})= \liminf_{n\rightarrow\infty}\frac{-\log \mu(\mathcal{P}_n^-(\hat{x}))}{n}, \ \  
h^-_{f,sup}(\mu, \mathcal{P}, \hat{x})= \limsup_{n\rightarrow\infty}\frac{-\log \mu(\mathcal{P}_n^-(\hat{x}))}{n}, \ \text{and},$$
$$h^-_{f,inf}(\mu,\mathcal{P})= \int_{\widehat{X}}h^-_{ f,inf}(\mu,\mathcal{P}, \hat{x}) \ d \hat{\mu}(\hat{x}), \ \  h^-_{f,sup}(\mu,\mathcal{P})= \int_{\widehat{X}}h^-_{ f,sup}(\mu,\mathcal{P}, \hat{x})  \ d\hat{\mu}(\hat{x}).$$ 
We define the \textbf{lower and the upper inverse partition entropy} of $\mu$ by
$$h^-_{f,inf}(\mu)=\sup\{ h^-_{f,inf}(\mu,\mathcal{P})\  : \mathcal{P} \text{ is a measurable partition with }  H_\mu(\mathcal{P})<\infty \}, $$
$$h^-_{f,sup}(\mu)=\sup\{ h^-_{f,sup}(\mu,\mathcal{P})\  : \mathcal{P} \text{ is a measurable partition with }  H_\mu(\mathcal{P})<\infty \},$$
where $H_\mu(\mathcal{P})$ is the entropy of the partition $\mathcal{P}$ (see \cite{W}).

\begin{definition}
	In case $h^-_{f,inf}(\mu)=h^-_{f,sup}(\mu)$, the common value is called the \textbf{inverse partition entropy} of $\mu$ with respect to $f$ and is denoted by $h^-_{f}(\mu)$. In this case we say that $\mu$ has inverse partition entropy. 
\end{definition}

 \begin{definition}
  	In the above setting, if the  upper and lower inverse metric entropy $h^-_{f,sup,B}(\mu)$ and $h^-_{f,inf,B}(\mu)$ are equal, then this common value is called the \textbf{inverse metric entropy of  $\mu$} with respect to $f$, denoted by $h^-_{f,B}(\mu)$; in this case we say that $\mu$ has inverse metric entropy.
  \end{definition}

  The inverse partition entropy is an isomorphism invariant of measure preserving endomorphisms.
 \noindent
 Our notion of inverse entropy $h^-_f(\mu)$ is \textbf{significantly different} from the notions of preimage entropies $h_{pre,\mu}(f)$, $h_{m,\mu}(f)$, $h_{pre}(f)$ and $h_m(f)$ studied in \cite{CN}, \cite{H}, \cite{WZ2}. Instead of considering the tree of all $n$-preimages of a point simultaneously, we study the  behavior along individual backward trajectories (prehistories). Thus our inverse entropy  emphasizes other aspects of non-invertible dynamics than the above  notions of preimage entropy, namely the dynamics of individual local inverse branches. This introduces \textbf{new challenges} and we develop new methods. The dynamical behavior on some prehistories can be very different from the behavior on other prehistories,  for example \cite{Ru}, \cite{M11}, \cite{Ts}. Thus it is important to study the ergodic theory also  in this setting. 
The differences between our inverse entropy and preimage entropies can be observed in the following cases: 
 
 a)  If $ f$ is a homeomorphism  and $\mu$ is an $f$-invariant ergodic measure, then our notion of inverse entropy $h^-_f(\mu)$ is equal to the usual forward entropy $h_f(\mu)$; while in this case, the preimage entropies satisfy $h_{pre, \mu}(f)=h_{m,\mu}(f)=F_f(\mu)=0$. 
 
 b) If $f$ is an expanding map, then any $f$-invariant ergodic measure $\mu$  has inverse entropy equal to zero (since local inverse branches are contracting), whereas $h_{pre, \mu}(f)=h_{m,\mu}(f)=F_f(\mu)$ may be nonzero. For instance for the expanding map $f:\mathbb{S}^1\rightarrow \mathbb{S}^1$, $f(z)=z^2$ and the $f$-invariant ergodic Haar (Lebesgue)  measure $\nu$ on $\mathbb{S}^1$, we have $h^-_f(\nu) = 0$, while  $h_{pre, \nu}(f)=h_{m,\nu}(f)=F_f(\nu)=\log 2 > 0$. 
 
 c) We shall see that our inverse entropy focuses especially on the generic behavior along the \textbf{contracting directions} (which determine the negative Lyapunov exponents of $\mu$). This is applied to ergodic measures for hyperbolic endomorphisms, such as those from \cite{MF}, \cite{Mi},  \cite{M11}, \cite{MU3}, \cite{Ts}.  
 
 d) We will show that the inverse measure-theoretic entropy can be applied to \textbf{distinguish between isomorphism classes} of Lebesgue spaces; this proves especially useful when the Lebesgue spaces have the same forward (usual)  entropy.

 %As noticed above, our notions of inverse entropy $h^-_{f,inf, B}(\mu)$ and $h^-_{f,sup, B}(\mu)$ are different from the notions of preimage entropy $h_{pre, \mu}(f)$ and $h_{m,\mu}(f)$ from \cite{CN}, \cite{WZ2}, and for instance if $f$ is a homeomorphism then $h_{pre, f}(\mu)=h_{m, f}(\mu)=0$ for any $f$-invariant measure $\mu$. 

 We do not work with the whole set of $n$-preimages of a point simultaneously, but with individual backward trajectories. Another difficulty is that a point $x$ may belong to (possibly uncountably) many inverse Bowen balls  $B_n^-(\hat x, \vp)$ for various prehistories (backward trajectories) of $x$; examples of non-invertible systems were given (see \cite{Ru}, \cite{BR}, \cite{M11}, \cite{Ts}). Yet another difficulty is that the \textbf{Jacobian} of an ergodic measure $J_f(\mu)$ is just a measurable function in general,
 and $J_{f^n}(\mu)$ cannot be controlled on  Bowen balls $B_n(x_{-n}, \vp)$. Some results on the Jacobians of Gibbs measures with respect to the iterates $f^n, n \ge 1$ were given in certain cases in \cite{MF}, \cite{MU3}, but generally there are not many results in this direction. %We develop  new methods to deal with these difficulties.  
%Firstly, we prove in Theorem \ref{sm} an inverse version of the Shannon-McMillan-Breiman Theorem. 

In a quite general setting we show below that:

 \begin{theorem}\label{t1}
	Let $f:X\rightarrow X$  be an endomorphism and $\mu$ a probability measure which is $f$-invariant and ergodic. Assume that there exists a finite measurable partition $\mathcal{A}$ of $X$ such that $f$ is injective on every atom of $\mathcal{A}$. Then $\mu$ has inverse partition entropy and 
	$$ h^-_{f}(\mu) =  h_f(\mu)- F_{f}(\mu).$$	
\end{theorem}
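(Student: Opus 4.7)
The plan is to reduce to partitions $\mathcal{P}$ refining $\mathcal{A}$, exploit the injectivity of $f^n$ on atoms of $\mathcal{P}_n=\bigvee_{i=0}^n f^{-i}\mathcal{P}$ via the Jacobian identity, and combine Shannon--McMillan--Breiman with Birkhoff's ergodic theorem applied to $\log J_f(\mu)$. For the reduction, any finite-entropy $\mathcal{P}$ can be refined by $\mathcal{A}$ without increasing the supremum, since $H_\mu(\mathcal{A})\le \log|\mathcal{A}|$ is finite and $(\mathcal{P}\vee\mathcal{A})_n^-(\hat x)\subseteq \mathcal{P}_n^-(\hat x)$ makes the inverse entropies monotone in the refinement; I also note $F_f(\mu)\le\log|\mathcal{A}|<\infty$ since each fiber $f^{-1}\{x\}$ meets each atom of $\mathcal{A}$ at most once. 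With $\mathcal{P}\succeq\mathcal{A}$ the map $f^n$ is injective on every atom of $\mathcal{P}_n$, so $\mathcal{P}_n^-(\hat x)=f^n(\mathcal{P}_n(x_{-n}))$ and Parry's Jacobian identity yields
\[
\mu(\mathcal{P}_n^-(\hat x))=\int_{\mathcal{P}_n(x_{-n})}J_{f^n}(\mu)\,d\mu=\mu(\mathcal{P}_n(x_{-n}))\cdot A_n(\hat x),
\]
where $A_n(\hat x)$ is the $\mu$-conditional average of $J_{f^n}(\mu)$ over the atom $\mathcal{P}_n(x_{-n})$. Taking $-\frac{1}{n}\log$ splits the quantity into a forward-entropy piece and a Jacobian-average piece.

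For the forward piece, Shannon--McMillan--Breiman on $(X,f,\mu)$ gives $-\frac{1}{n}\log\mu(\mathcal{P}_n(y))\to h_f(\mu,\mathcal{P})$ for $\mu$-a.e.\ $y$, which via $\pi_*\hat\mu=\mu$ lifts (after a standard countable-intersection argument in the natural extension) to $-\frac{1}{n}\log\mu(\mathcal{P}_n(x_{-n}))\to h_f(\mu,\mathcal{P})$ for $\hat\mu$-a.e.\ $\hat x$; in any case the integrated version $\frac{1}{n}H_\mu(\mathcal{P}_n)\to h_f(\mu,\mathcal{P})$ is immediate. For the Jacobian piece, Parry's formula $F_f(\mu)=\int\log J_f(\mu)\,d\mu$ combined with the cocycle identity $J_{f^n}(\mu)=\prod_{i=0}^{n-1}J_f(\mu)\circ f^i$ and Birkhoff's theorem yields $\frac{1}{n}\log J_{f^n}(\mu)\to F_f(\mu)$ $\mu$-a.e. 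Conditional Jensen applied to $A_n(\hat x)=\mathbb{E}_\mu[J_{f^n}(\mu)|\mathcal{P}_n](x_{-n})$, together with the tower identity, produces the lower bound $\frac{1}{n}\int\log A_n\,d\hat\mu\ge F_f(\mu)$; to upgrade this to $\frac{1}{n}\log A_n(\hat x)\to F_f(\mu)$ $\hat\mu$-a.e.\ I would apply Egorov's theorem: fix $\varepsilon>0$, take a set $X_\varepsilon$ with $\mu(X_\varepsilon^c)<\varepsilon$ on which $J_{f^n}(\mu)\le e^{n(F_f(\mu)+\varepsilon)}$ for all large $n$, split the integral defining $A_n(\hat x)$ over $X_\varepsilon$ and $X_\varepsilon^c$, and show that for $\hat\mu$-typical $\hat x$ the atom $\mathcal{P}_n(x_{-n})$ concentrates its mass in $X_\varepsilon$ so the $X_\varepsilon^c$-contribution is negligible compared with $\mu(\mathcal{P}_n(x_{-n}))\,e^{nF_f(\mu)}$.

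Combining both pieces gives, for $\hat\mu$-a.e.\ $\hat x$,
\[
-\frac{1}{n}\log\mu(\mathcal{P}_n^-(\hat x))\longrightarrow h_f(\mu,\mathcal{P})-F_f(\mu),
\]
and a Fatou/bounded-convergence argument (using that the integrands are nonnegative and controlled in $L^1(\hat\mu)$ by $H_\mu(\mathcal{P})$ via the $L^1$ Shannon--McMillan estimate) produces $h^-_{f,\mathrm{inf}}(\mu,\mathcal{P})=h^-_{f,\mathrm{sup}}(\mu,\mathcal{P})=h_f(\mu,\mathcal{P})-F_f(\mu)$. Taking the supremum over finite-entropy partitions $\mathcal{P}\succeq\mathcal{A}$ and using $\sup_\mathcal{P}h_f(\mu,\mathcal{P})=h_f(\mu)$ then yields $h^-_f(\mu)=h_f(\mu)-F_f(\mu)$. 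The main obstacle is the Jacobian-averaging step: pointwise convergence $\frac{1}{n}\log J_{f^n}(\mu)\to F_f(\mu)$ does not by itself control the exponentiated conditional averages $A_n(\hat x)$ on atoms of measure $\sim e^{-nh_f(\mu,\mathcal{P})}$, so the Egorov-type uniformization together with the careful estimation of the exceptional-set contribution form the technical heart of the proof.
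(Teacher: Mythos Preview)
Your overall strategy matches the paper's: reduce to partitions $\mathcal{P}\succeq\mathcal{A}$, write $\mu(\mathcal{P}_n^-(\hat x))=\int_{\mathcal{P}_n(x_{-n})}J_{f^n}(\mu)\,d\mu$, and split into an SMB piece and a Jacobian-average piece $A_n(\hat x)$. The direction $h^-_{f,\sup}(\mu,\mathcal{P})\le h_f(\mu,\mathcal{P})-F_f(\mu)$ (i.e.\ $\liminf\frac1n\log A_n\ge F_f(\mu)$ a.e.) does go through by the concentration idea you sketch: the paper makes this precise via a Borel--Cantelli argument showing that for $\hat\mu$-typical $\hat x$ the atom $\mathcal{P}_n(x_{-n})$ carries at least a $1/n^2$-fraction of its mass in the set where $J_{f^n}\ge e^{n(F_f(\mu)-\delta)}$, which suffices for the lower bound on $A_n$.

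The genuine gap is in the opposite direction, $h^-_{f,\inf}\ge h_f(\mu)-F_f(\mu)$, which requires an \emph{upper} bound on $A_n$. Your Egorov scheme takes $X_\varepsilon$ where $J_{f^n}\le e^{n(F_f(\mu)+\varepsilon)}$ and splits the integral; but on $X_\varepsilon^c$ you have no a priori bound on $J_{f^n}$, and the contribution $\int_{\mathcal{P}_n(x_{-n})\cap X_\varepsilon^c}J_{f^n}\,d\mu=\mu\bigl(f^n(\mathcal{P}_n(x_{-n})\cap X_\varepsilon^c)\bigr)$ need not be small relative to $e^{nF_f(\mu)}\mu(\mathcal{P}_n(x_{-n}))$: even if the bad set occupies a tiny fraction of the atom, its Jacobian can be arbitrarily large. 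Knowing the atom ``concentrates in $X_\varepsilon$'' controls the \emph{measure} of the bad part but not the \emph{integral} of $J_{f^n}$ over it. The paper sidesteps this obstacle entirely by a different device: it builds countable partitions $\mathcal{C}^k$ whose atoms are level sets $\{J_f(\mu)\in[i^2/4^k,(i+1)^2/4^k)\}$ of the \emph{one-step} Jacobian, and then works with $\mathcal{P}_{(k)}=\mathcal{P}\vee\mathcal{C}^k$. On any atom of $(\mathcal{P}_{(k)})_n$ each factor $J_f(f^{n-j}y)$ is deterministically close to $J_f(x_{-j})$, so $J_{f^n}(y)\le G(x_{-1})\cdots G(x_{-n})$ for \emph{every} $y$ in the atom, with $0\le\log G-\log J_f(\mu)<2^{1-k}$. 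This gives $\frac1n\log A_n\le\frac1n\sum_j\log G(x_{-j})\to\int\log G\,d\mu\le F_f(\mu)+2^{1-k}$ by Birkhoff, and letting $k\to\infty$ finishes. The missing idea is thus to discretize $J_f$ itself into the partition rather than trying to uniformize $J_{f^n}$ on a large set.
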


Our inverse entropy emphasizes the contracting directions, i.e the directions that give the \textbf{negative Lyapunov exponents} of $\mu$. For example Corollary \ref{corz} of Theorem \ref{t1} shows that if $f: M\rightarrow M$ is a $C^r$, $r>1$ endomorphism with no critical points on a compact Riemannian manifold $M$ and if $\mu$ is an ergodic $f$-invariant measure, then $\mu$ has inverse partition entropy and 
$$h^-_f(\mu)\leq  - \sum_{i: \lambda_i(\mu)<0}\lambda_i(\mu),$$
where $\lambda_i(\mu)$ are the Lyapunov exponents of $\mu$ taken with their multiplicities.

In certain cases the inverse entropy $h^-_f(\mu)$ is  equal to the absolute value of the sum of negative Lyapunov exponents of $\mu$. Consider the \textbf{inverse SRB measure} on a hyperbolic  repellor $\Lambda$ introduced in \cite{M4}; this measure describes the distribution of $n$-preimages of Lebesgue-a.e point in a neighborhood of $\Lambda$, when $n \to \infty$. In Proposition \ref{propz} we prove that if $\Lambda$ is a connected hyperbolic repellor for a smooth endomorphism $f: M\rightarrow M$ on a compact Riemannian manifold $M$ which is $d$-to-$1$ on $\Lambda$ and if $\mu_f^-$ is the inverse SRB measure on $\Lambda$, then $h^-_f(\mu_f^-)$ and $h^-_{f,B}(\mu_f^-)$ exist, and 
$$h^-_f(\mu_f^-)=h^-_{f,B}(\mu_f^-)=-\sum_{i: \lambda_i(\mu_f^-)<0}\lambda_i(\mu_f^-),$$
where the Lyapunov exponents $\lambda_i(\mu_f^-)$ are taken with their multiplicities.

Next, we will investigate when is the inverse metric entropy $h^-_{f,B}(\mu)$ equal to the inverse partition entropy $h^-_{f}(\mu)$, for an ergodic measure $\mu$.  In general, we show in Proposition \ref{c1} that $$h^-_{f,inf, B}(\mu)\leq h^-_{f, sup, B}(\mu)\leq h_{f}(\mu)-F_f(\mu)=h^-_f(\mu).$$ We define also the \textbf{zero boundary property} for a measure  (see Definition \ref{defzerob}) and prove that under some conditions the inverse metric and inverse partition entropies are equal. 

\begin{theorem}\label{zerob}
	Let $f:X\rightarrow X$ be a continuous and locally injective endomorphism of the compact metric space $X$ and $\mu$ be a probability Borel measure on $X$ which is $f$-invariant ergodic and satisfies the zero boundary condition. If the Jacobian $J_f(\mu)$ is bounded and $h_f(\mu)<\infty$, then $\mu$ has inverse metric entropy and inverse partition entropy, and 
	$$h^-_{f,B}(\mu)=h_{f}(\mu)-F_f(\mu)=h^-_f(\mu).$$
\end{theorem}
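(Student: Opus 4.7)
The plan is to combine Theorem \ref{t1} for the partition side with an inverse Jacobian estimate (in the spirit of Theorem \ref{D}) for the metric side, using the zero boundary condition as the bridge. Because $f$ is continuous and locally injective and $X$ is compact, finitely many injectivity neighborhoods cover $X$, and a Borel refinement produces a finite partition $\mathcal{A}$ on whose atoms $f$ is injective. Theorem \ref{t1} then immediately yields that $\mu$ has inverse partition entropy with $h^-_f(\mu) = h_f(\mu) - F_f(\mu)$, handling one half of the conclusion.

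For the metric side, Proposition \ref{c1} already gives $h^-_{f,sup,B}(\mu) \le h_f(\mu) - F_f(\mu)$, so only the matching lower bound on $h^-_{f,inf,B}(\mu)$ remains. For $\varepsilon$ smaller than the Lebesgue number of the injectivity cover above, the map $f^n$ is injective on each Bowen ball $B_n(x_{-n},\varepsilon)$, and by construction $B_n^-(\hat x,\varepsilon) = f^n(B_n(x_{-n},\varepsilon))$. The Jacobian formula then gives
\[
\mu(B_n^-(\hat x,\varepsilon)) \;=\; \int_{B_n(x_{-n},\varepsilon)} J_{f^n}(\mu)\,d\mu \;=\; \int_{B_n(x_{-n},\varepsilon)} \prod_{i=0}^{n-1}\bigl(J_f(\mu)\circ f^i\bigr)\,d\mu.
\]
Since $J_f(\mu)$ is bounded, Birkhoff's ergodic theorem applied to $\log J_f(\mu)$ at $x_{-n}$ gives $\frac{1}{n}\log J_{f^n}(\mu)(x_{-n}) \to F_f(\mu)$ for $\mu$-a.e.\ $x_{-n}$, and the forward Brin--Katok formula applied at $x_{-n}$ gives $-\tfrac{1}{n}\log \mu(B_n(x_{-n},\varepsilon)) \to h_f(\mu)$ as $n\to\infty$ and $\varepsilon\to 0$. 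Combining these, one expects $-\tfrac{1}{n}\log\mu(B_n^-(\hat x,\varepsilon)) \to h_f(\mu) - F_f(\mu)$.

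The main obstacle is promoting the pointwise Birkhoff convergence of $\frac{1}{n}\log J_{f^n}(\mu)$ into a uniform estimate over the Bowen ball $B_n(x_{-n},\varepsilon)$, since $J_f(\mu)$ is only measurable and has no \textit{a priori} modulus of continuity. This is precisely where the zero boundary condition enters: it supplies, for every $\varepsilon>0$, finite partitions $\mathcal{Q}$ with $\mathrm{diam}(\mathcal{Q})<\varepsilon$ and $\mu(\partial \mathcal{Q})=0$, so that the Bowen balls $B_n(x_{-n},\varepsilon)$ are well approximated by atoms of the refined partition $\bigvee_{i=0}^{n-1} f^{-i}\mathcal{Q}$, whose boundaries are $\mu$-null along orbits of $\mu$-full measure. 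On such atoms the Birkhoff sums $\frac{1}{n}\sum_{i=0}^{n-1}\log J_f(\mu)\circ f^i$ become controlled uniformly, and the inverse Brin--Katok Lemma \ref{mod} together with Shannon--McMillan--Breiman converts this uniform atom-level estimate into the integrable pointwise bound $h^-_{f,inf,B}(\mu,\hat x) \ge h_f(\mu) - F_f(\mu)$ for $\hat\mu$-a.e.\ $\hat x$. Integrating over $\widehat X$ against $\hat\mu$ and comparing with the upper bound from Proposition \ref{c1} closes the sandwich, giving the inverse metric entropy and the equality $h^-_{f,B}(\mu) = h^-_f(\mu) = h_f(\mu) - F_f(\mu)$.
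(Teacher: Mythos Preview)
Your setup is correct through the reduction to the lower bound $h^-_{f,\inf,B}(\mu) \ge h_f(\mu) - F_f(\mu)$, and you have correctly identified the obstacle: one must compare $J_{f^n}(\mu)(y)$ with $J_{f^n}(\mu)(x_{-n})$ uniformly over $y \in B_n(x_{-n},\varepsilon)$. But you have misread Definition \ref{defzerob}. The zero boundary property does \emph{not} assert that for every $\varepsilon>0$ there is a finite partition of diameter $<\varepsilon$ with $\mu$-null boundary. What it says is that for every $\varepsilon'>0$ there is a finite partition $\mathcal{P}$ with $\mu(\partial\mathcal{P})=0$ on whose atoms the Jacobian is multiplicatively $e^{\varepsilon'}$-constant: $e^{-\varepsilon'} < J_f(\mu)(x)/J_f(\mu)(y) < e^{\varepsilon'}$ for $\mu$-a.e.\ $y\in\mathcal{P}(x)$. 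Your sketch never invokes this Jacobian ratio control, yet it is exactly the mechanism that makes the uniform estimate possible; null boundary alone gives you nothing about $J_f(\mu)$.

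The paper's argument uses both halves of the definition in an essential way. The null boundary ensures that the $\delta$-collar $W_\delta(\mathcal{P}) = \{x : B(x,\delta)\not\subset\mathcal{P}(x)\}$ has small measure for small $\delta$, so by Birkhoff the fraction of indices $i\le n$ with $x_{-i}\in W_\delta(\mathcal{P})$ is $<\varepsilon$ for typical $\hat x$. This forces any $y\in B_n(x_{-n},\delta)$ to have a $(\mathcal{P},n)$-name within Hamming distance $\varepsilon$ of that of $x_{-n}$. One then bounds $\mu(B_n^-(\hat x,\delta))$ by summing $\int_P J_{f^n}(\mu)\,d\mu$ over the $V_n \le e^{C(\varepsilon,\varepsilon')n}$ atoms $P\in\mathcal{P}_n$ in that Hamming ball. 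Here the Jacobian ratio control is used coordinate by coordinate: at indices where the names agree, $J_f(\mu)(f^{n-i}y) \le e^{\varepsilon'}J_f(\mu)(x_{-i})$, and at the $\le \varepsilon n$ bad indices one absorbs a factor $M$. A further step is needed to discard atoms of $\mathcal{P}_n$ with abnormally large measure (those not governed by Shannon--McMillan--Breiman), removing a set of small $\hat\mu$-measure. None of this structure --- Hamming name counting, the per-coordinate Jacobian comparison, the large-atom excision --- appears in your sketch, and without the Jacobian ratio clause your claimed ``uniform control of Birkhoff sums on atoms'' has no source.
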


Next we study \textbf{hyperbolic measures} on a smooth manifold $M$, meaning that all their Lyapunov exponents are non-zero. For a hyperbolic measure $\mu$ there is a set of full $\hat\mu$-measure in $\widehat M$ where there exist local stable/unstable manifolds. We employ Pesin sets $\widehat R_\vp \subset \widehat M$ of prehistories for which the local stable/unstable manifolds exist and have size $\vp>0$. The theory of hyperbolic measures is well presented in \cite{BP}. We use mostly \textbf{special} hyperbolic measures (Definition \ref{preind}), in the sense that local unstable manifolds depend only on the base point, not on full prehistories. 
%In Theorem \ref{thpreind} we prove that if $\mu$ is hyperbolic and special, then $h^-_{f, B}(\mu)$ exists and $h^-_{f, B}(\mu)=h_f(\mu)- F_f(\mu)$.

\begin{theorem}\label{thpreind}
	Let $f :M \rightarrow M$ be a $\mathcal{C}^2$ endomorphism defined on a compact Riemannian manifold. Let $\mu$ be an $f$-invariant ergodic measure and assume that $\mu$ is hyperbolic and special. Then, $\mu$ has inverse metric entropy  and $h^-_{f, B}(\mu)=h_f(\mu)- F_f(\mu)$, i.e for $\hat\mu$-a.e. $\hat x = (x, x_{-1}, \ldots)\in \widehat M$,
	$$\lim_{\varepsilon\rightarrow 0}\liminf_{n\rightarrow\infty} \frac{-\log \mu(f^{n}(B_{n}(x_{-n},\varepsilon)))}{n}=\lim_{\varepsilon\rightarrow 0}\limsup_{n\rightarrow\infty} \frac{-\log \mu(f^{n}(B_{n}(x_{-n},\varepsilon)))}{n}= h_{f}(\mu)- F_f(\mu).$$
\end{theorem}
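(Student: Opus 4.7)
By Proposition~\ref{c1} the upper estimate $h^-_{f,sup,B}(\mu)\le h_f(\mu)-F_f(\mu)$ already holds in full generality, so the theorem reduces to proving the matching lower bound $h^-_{f,inf,B}(\mu)\ge h_f(\mu)-F_f(\mu)$ pointwise $\hat\mu$-a.e. This is where the hyperbolic and \emph{special} hypotheses enter. The plan is to bound $\mu(f^n(B_n(x_{-n},\varepsilon)))$ from above through Parry's Jacobian: for any measurable $E\subset M$, $\mu(f^n(E))\le\int_E J_{f^n}(\mu)\,d\mu$, with equality whenever $f^n|_E$ is injective (the inequality follows by partitioning $E$ into injectivity atoms and summing).

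To control the right-hand side for $E=B_n(x_{-n},\varepsilon)$ I would use the cocycle identity $\log J_{f^n}(\mu)(y)=\sum_{k=0}^{n-1}\log J_f(\mu)(f^k y)$ together with Birkhoff's ergodic theorem for $\hat f^{-1}$: since $\int\log J_f(\mu)\,d\mu=F_f(\mu)$, for $\hat\mu$-a.e.\ $\hat x$ the backward averages $\frac{1}{n}\sum_{j=1}^n\log J_f(\mu)(x_{-j})$ converge to $F_f(\mu)$. If this ergodic average along the prehistory can be transferred to a uniform bound on $\log J_{f^n}(\mu)(y)$ for $y\in B_n(x_{-n},\varepsilon)$, then $\sup_{y\in B_n(x_{-n},\varepsilon)}J_{f^n}(\mu)(y)\le e^{nF_f(\mu)+o(n)}$, and coupling with the forward Brin--Katok formula $\mu(B_n(x_{-n},\varepsilon))=e^{-n(h_f(\mu)+o(1))}$ (valid $\mu$-a.e., hence along the prehistory of $\hat\mu$-a.e.\ $\hat x$) yields
\[
\mu\bigl(f^n(B_n(x_{-n},\varepsilon))\bigr)\le e^{-n(h_f(\mu)-F_f(\mu))+o(n)},
\]
producing the desired lower bound after $n\to\infty$ and $\varepsilon\to 0$.

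The main obstacle is the transfer step, because $J_f(\mu)$ is only measurable: there is no pointwise continuity to exploit when comparing $\log J_f(\mu)(f^k y)$ with $\log J_f(\mu)(x_{-n+k})$. I propose to handle this by a standard Pesin-theoretic protocol. For each $\gamma>0$, use Luzin's theorem to obtain a compact set $K_\gamma\subset M$ with $\mu(K_\gamma)\ge 1-\gamma$ on which $\log J_f(\mu)$ is uniformly continuous, and let $\widehat R_{\varepsilon_0}\subset\widehat M$ be a Pesin block of positive $\hat\mu$-measure on which the local stable/unstable manifolds have uniform size $\varepsilon_0$. The \emph{special} assumption is crucial here: it guarantees that the local unstable leaves at each $x_{-n}$ are prehistory-independent, so Pesin charts along the prehistory assemble coherently and every Bowen ball $B_n(x_{-n},\varepsilon)$ appears in those charts as a hyperbolic plaque whose iterate $f^k(\cdot)$ stays at distance $<\varepsilon$ from $x_{-n+k}$ by construction. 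Birkhoff applied to $\hat f^{-1}$ then ensures that, for $\hat\mu$-a.e.\ $\hat x$, the set of indices $j$ with $\hat f^{-j}\hat x\in\widehat R_{\varepsilon_0}\cap\pi^{-1}(K_\gamma)$ has lower density $\ge 1-2\gamma$, and on those indices uniform continuity of $\log J_f(\mu)|_{K_\gamma}$ justifies the replacement $\log J_f(\mu)(f^k y)\approx \log J_f(\mu)(x_{-n+k})$ with cumulative error $o(n)$; the contribution of the density-$\le 2\gamma$ set of bad indices is handled by truncating $\log J_f(\mu)$ at a large level and using $\log J_f(\mu)\in L^1(\mu)$. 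An Egorov-type argument then promotes the estimate to a $\hat\mu$-a.e.\ $\liminf$ inequality, and letting $\gamma\to 0$ followed by $\varepsilon\to 0$ yields the equality $h^-_{f,B}(\mu)=h_f(\mu)-F_f(\mu)$ claimed.
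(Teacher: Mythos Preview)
Your overall architecture is right (reduce to the lower bound via Proposition~\ref{c1}, then bound $\mu(f^n(B_n(x_{-n},\varepsilon)))\le\int_{B_n(x_{-n},\varepsilon)}J_{f^n}(\mu)\,d\mu$ and couple with Brin--Katok), but the transfer step has a genuine gap. Luzin gives you a compact $K_\gamma$ on which $\log J_f(\mu)$ is uniformly continuous \emph{among points of $K_\gamma$}; it says nothing about $\log J_f(\mu)(f^{n-j}y)$ when $f^{n-j}y\notin K_\gamma$, even if $x_{-j}\in K_\gamma$ and $d(f^{n-j}y,x_{-j})<\varepsilon$. So even at ``good'' indices you do not control the Jacobian at $y$. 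At ``bad'' indices the situation is worse: $J_f(\mu)$ need not be bounded, so a density-$2\gamma$ set of bad indices can contribute an uncontrolled factor to $J_{f^n}(\mu)(y)$. Truncation does not rescue this, because what you must bound is $\int_E J_{f^n}(\mu)\,d\mu$ itself, not $\int_E e^{\sum G_M\circ f^k}\,d\mu$; splitting off the set $\{J_{f^n}(\mu)>e^{n(F_f(\mu)+\delta)}\}$ leaves an integral that can be as large as $1$ regardless of how small its $\mu$-measure is. In short, you cannot obtain a sup (or even an adequate integral) bound for $J_{f^n}(\mu)$ over the \emph{whole} Bowen ball by these means. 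This is exactly why Theorem~\ref{D} imposes boundedness and almost-continuity of $J_f(\mu)$, hypotheses absent here.

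The paper's proof avoids this problem by never attempting to control $J_{f^n}(\mu)$ at arbitrary $y$. Instead it defines a set $\widehat{A}_m(\varepsilon,\tau)$ of prehistories for which the Jacobian cocycle, the Brin--Katok rate, and the frequency of returns to a Pesin block are all already good, and integrates $J_{f^{m_n}}(\mu)$ only over $B_{m_n}(x_{-m_n},\varepsilon)\cap\pi_{m_n}(\widehat{A}_m(\varepsilon,\tau))$, where the bound $J_{f^{m_n}}(\mu)\le e^{m_n(F_f(\mu)+\varepsilon)}$ holds \emph{by definition}. The whole difficulty then shifts to showing that this intersection captures, up to a polynomial factor, the full measure of $f^{m_n}(B_{m_n}(x_{-m_n},\varepsilon))$. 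This is done by a Besicovitch-type covering of $\pi(\widehat{E}_n(\varepsilon,\tau))$ by images of Bowen balls along a local stable leaf, followed by Borel--Cantelli. The \emph{special} hypothesis enters precisely here, and not where you placed it: since local unstable manifolds depend only on the base point, the images $f^{m_n}(B_{m_n}(x_{-m_n},\varepsilon))$ chosen to be disjoint on $W^s_\varepsilon(y)$ are actually disjoint in $M$, which makes the covering argument go through with bounded multiplicity. Your description of the role of ``special'' (coherent assembly of Pesin charts) is too vague to do this work.
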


\

$\bullet$ \ In \textbf{Section \ref{specialAnosov}} we study \textbf{special Anosov endomorphisms}, namely  Anosov endomorphisms on compact Riemannian manifolds whose unstable manifolds depend only on their respective base points. 
%One also says that a special endomorphism has integrable unstable bundle (see \cite{An}). 
We focus  on Anosov endomorphisms of tori. 
It was shown by Aoki and Hiraide (\cite{AH}) and Sumi (\cite{S}) that if $f:\mathbb T^d \to \mathbb T^d$ is a special TA-covering map (in particular if $f$ is a special Anosov endomorphism without critical points), then $f$ is topologically conjugate by a homeomorphism $\Phi$ to its linearization $f_L$. Recall that the linear hyperbolic endomorphism $f_L$ is determined by the integer-valued matrix given by the induced homomorphism $f_*:\pi_1(\mathbb T^d) \to \pi_1(\mathbb T^d)$ on the fundamental group of $\mathbb T^d$ (which is $\mathbb Z^d)$. 
However, the topological conjugacy $\Phi$ may not be smooth. Thus it is important to find conditions when $\Phi$ is a smooth conjugacy. This \textbf{rigidity problem} was studied for Anosov diffeomorphisms and endomorphisms, and several conditions were given for eg in \cite{RL}, \cite{An}, \cite{Mi}. 
If $f$ is an Anosov endomorphism without critical points on $\mathbb T^2$, there exist the SRB measure $\mu_f^+$ (\cite{Si}, \cite{QZ}, \cite{Y}), and the inverse SRB measure $\mu_f^-$ (\cite{M4}). The SRB measure $\mu_f^+$ describes the distribution of forward orbits of Lebesgue a.e point in $\mathbb T^2$, while the inverse SRB measure $\mu_f^-$ describes the asymptotic distribution of the sets of $n$-preimages for Lebesgue a.e point.   
If $f_L$ is linear hyperbolic, then the SRB measure, inverse SRB measure, and Haar measure $m$ coincide.

In Theorem \ref{clasAnendos} we prove \textbf{entropy rigidity}, namely that a special Anosov endomorphism $f$ on $\mathbb T^2$ is smoothly conjugated to its linearization $f_L$ if and only if $f$ and $f_L$ have the same entropy of their SRB measures, and the same inverse entropy of their inverse SRB measures. So it is enough to know just two numbers,  $$\big(h_f(\mu_f^+), \  h_f^-(\mu_f^-)\big).$$

\begin{theorem}(Entropy rigidity for special Anosov endomorphisms on $\mathbb T^2$).\label{clasAnendos}

a) Let $f, g: \mathbb T^2 \to \mathbb T^2$ be $\mathcal C^\infty$ special Anosov endomorphisms without critical points having the same linearization $f_L = g_L$. Let $\Phi:\mathbb T^2 \to \mathbb T^2$ be the topological conjugacy between $f$ and $g$ and assume that $$h^-_f(\mu^-_f) = -\int \log |Dg_s|\circ \Phi \ d\mu_f^- \  \text{and} \ h_f(\mu_f^+) = \int\log|Dg_u|\circ \Phi \ d  \mu_f^+,$$ where $Dg_s(x) := Dg|_{E^s_g(x)}, Dg_u(x) := Dg|_{E^u_g(x)}, x \in \mathbb T^2$. Then, $\Phi$ is a smooth conjugation.

%In particular, if $Dg_u$ is constant on $\mathbb T^2$ and $h_f(\mu_f^+) = h_g(\mu_g^+)$ and $h^-_f(\mu_f^-) = h_g^-(\mu_g^-)$, then  $f$ and $g$ are smoothly conjugated. 

b) Let $f: \mathbb T^2 \to \mathbb T^2$ be a special $\mathcal C^\infty$ Anosov endomorphism without critical points having linearization $f_L$. Then $f$ is smoothly conjugated to $f_L$ if and only if $$h_f(\mu_f^+) = h_{f_L}(m) = \log |\lambda_u| \   \ \text{and} \ \ h_f^-(\mu_f^-) =  h_{f_L}^-(m) = -\log |\lambda_s|,$$
where $\lambda_u, \lambda_s$ are the eigenvalues of the matrix of $f_L$ and $m$ is the Haar measure on $\mathbb T^2$.
\end{theorem}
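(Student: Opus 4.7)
The plan is to reduce smoothness of the conjugacy $\Phi$ to matching of Lyapunov exponents at all periodic points, and then invoke the smooth rigidity theorem for special Anosov endomorphisms on $\mathbb{T}^2$ of \cite{Mi}, \cite{An}. First I would apply Pesin's entropy formula for the SRB measure to rewrite $h_f(\mu_f^+)=\int \log|Df_u|\, d\mu_f^+$, and the inverse Pesin formula (established earlier in the paper for the inverse SRB measure of a special Anosov endomorphism) to rewrite $h_f^-(\mu_f^-)=-\int \log|Df_s|\, d\mu_f^-$. Hypotheses (i)--(ii) then translate into the two integral identities
\[
\int \log|Df_u|\, d\mu_f^+ = \int \log|Dg_u|\circ \Phi\, d\mu_f^+, \qquad \int \log|Df_s|\, d\mu_f^- = \int \log|Dg_s|\circ\Phi\, d\mu_f^-.
\]

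Next I would push the measures forward via $\Phi$: since $h_g(\Phi_*\mu_f^+)=h_f(\mu_f^+)$ by conjugacy invariance of entropy, the first identity says $\Phi_*\mu_f^+$ satisfies Pesin's entropy formula for $g$, and uniqueness of the SRB measure identifies $\Phi_*\mu_f^+=\mu_g^+$. An analogous argument, using the characterization of the inverse SRB measure via the inverse Pesin formula, gives $\Phi_*\mu_f^-=\mu_g^-$.

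The crucial and most delicate step is to promote these integral identities to genuine cohomological equations
\[
\log|Df_u|=\log|Dg_u|\circ\Phi+u\circ f-u, \qquad \log|Df_s|=\log|Dg_s|\circ\Phi+v\circ f-v,
\]
for H\"older functions $u,v$. For the unstable case, $\mu_f^+$ is the unique $f$-equilibrium state of the H\"older potential $-\log|Df_u|$, and by conjugacy it is also the unique $f$-equilibrium state of $-\log|Dg_u|\circ\Phi$; the classical characterization ``two H\"older potentials sharing a unique equilibrium state are cohomologous up to an additive constant'' then yields the first equation, with the constant forced to be zero by integration against $\mu_f^+$. For the stable case, An's theorem \cite{An} shows that $\log|Df_s|$ is cohomologous to a constant (the common stable periodic Lyapunov exponent $\lambda^s_f$) and likewise $\log|Dg_s|$ to $\lambda^s_g$; combining $\Phi_*\mu_f^-=\mu_g^-$ with the stable integral identity forces $\lambda^s_f=\lambda^s_g$, whence the second cohomological equation. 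I expect the main obstacle to be precisely this stable step, because one must carefully combine the inverse-Pesin-based characterization of $\mu_f^-$ with the speciality constraint of \cite{An} to rule out a nonzero additive constant and to produce a H\"older transfer function $v$.

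Finally, the two cohomologies imply that the periodic orbit sums of $\log|Df_u|$ and of $\log|Df_s|$ coincide with those of $\log|Dg_u|\circ\Phi$ and of $\log|Dg_s|\circ\Phi$ at every corresponding periodic point. This is exactly the matching-periodic-data input of the de la Llave--Marco--Moriy\'on type smooth rigidity theorem for special Anosov endomorphisms of $\mathbb{T}^2$ (in the version of \cite{Mi}, \cite{An}): it yields smoothness of $\Phi$ along both invariant foliations, and Journ\'e's lemma upgrades this to global $C^\infty$ regularity of $\Phi$. Part (b) is then a direct corollary: for $g=f_L$ the factors $|Dg_u|\circ\Phi=|\lambda_u|$ and $|Dg_s|\circ\Phi=|\lambda_s|$ are constants, so the hypotheses of (a) collapse to the stated entropy equalities; conversely, a smooth conjugacy $\Phi$ carries $\mu_f^+$ and $\mu_f^-$ onto the SRB and inverse SRB measures of $f_L$, which both equal Haar $m$, and the forward and inverse Pesin formulas then give the asserted values $\log|\lambda_u|$ and $-\log|\lambda_s|$.
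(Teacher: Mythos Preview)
Your approach and the paper's agree on the decisive first step: using the Pesin formula and the inverse Pesin formula (Proposition~\ref{propz}) together with the two hypotheses to identify $\Phi_*\mu_f^+=\mu_g^+$ and $\Phi_*\mu_f^-=\mu_g^-$. The divergence comes afterwards. The paper does \emph{not} pass through cohomological equations or periodic-data matching; once $\Phi_*\mu_f^\pm=\mu_g^\pm$ is established, it simply observes that a special non-expanding Anosov endomorphism of $\mathbb T^2$ is strongly special and invokes Theorem~A of Micena \cite{Mi} as a black box, whose hypothesis is precisely preservation of the forward and inverse SRB measures by the conjugacy. Your route---equilibrium-state rigidity for the unstable potential, An's constancy of the stable exponent for the stable one, then Liv\v sic, periodic data, and Journ\'e---is essentially a sketch of what lies inside Micena's theorem; it is correct in outline but longer, and the step you flag as delicate is in fact harmless: since $f_L=g_L$, An's theorem \cite{An} makes $\log|Df_s|$ and $\log|Dg_s|$ cohomologous to the \emph{same} constant $\log|\lambda_s|$, so no matching via $\mu_f^-$ is needed there.

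Two small gaps in your write-up are worth noting. First, your ``analogous argument'' for $\Phi_*\mu_f^-=\mu_g^-$ silently uses $F_g(\Phi_*\mu_f^-)=F_f(\mu_f^-)$; the paper makes this explicit by showing, from the preimage-measure description of $\mu_f^-$ in \cite{M4}, that $J_g(\Phi_*\mu_f^-)\equiv D$ and hence $F_g(\Phi_*\mu_f^-)=\log D$. Second, your equilibrium-state cohomology argument requires $\log|Dg_u|\circ\Phi$ to be H\"older (true, since $\Phi$ and $E^u_g$ are H\"older for a special Anosov endomorphism) and requires the ``same equilibrium state $\Rightarrow$ cohomologous up to constant'' principle in the endomorphism setting, which you should cite explicitly. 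Part~(b) is handled the same way in both approaches, as a specialization of~(a) to $g=f_L$.
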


\

$\bullet$ \  In \textbf{Section \ref{linkstop}} we study the links between our measure-theoretic inverse entropy and the topological inverse entropy introduced in \cite{MU2}. We define a generalization  $h^-(Y,\widehat{A})$ of this inverse topological entropy for any subset $Y\subset X$, using covers with inverse Bowen balls along an arbitrary subset of prehistories $\widehat{A}\subset \widehat{X}$. If $\widehat{Y}\subset\widehat{X}$ and $Y=\pi(\widehat{Y})$ then denote $h^-(Y, \widehat{Y})$ simply by  $h^-(\widehat{Y})$, i.e.
$$ h^-(\widehat{Y})=h^-(Y, \widehat{Y}).$$
Recall that for any $f$-invariant ergodic measure $\mu$ on $X$ there exists a unique $\hat{f}$-invariant ergodic measure $\hat{\mu}$ on $\widehat{X}$ such that $\pi_*\hat{\mu}=\mu$. We relate our \textbf{generalized inverse topological entropy} with the local inverse metric entropy, in the following:

\begin{theorem}\label{newtop}
	Let $X$ be a compact metric space, $f: X\rightarrow X$ a continuous map and $\mu$ be an $f$-invariant ergodic measure on $X$ and $\widehat{Y}\subset \widehat{X}$ be a Borel set such that $\hat{\mu}(\widehat{Y})>0$ and $h^-_{f,inf,B}(\mu,\hat{x})\geq \alpha>0$ for every $\hat{x}\in \widehat{Y}$. Then, 
	$$\lim_{\delta\rightarrow 0}\left( \sup \{h^-(\widehat{\mathcal{A}}) : \widehat{\mathcal{A}}\subset \widehat{Y},   \hat{\mu}(\widehat{Y}\setminus \widehat{\mathcal{A}})<\delta\}\right)\geq \alpha.$$ 
\end{theorem}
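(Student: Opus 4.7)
The plan is to establish a Bowen--Carath\'eodory-type lower bound for the generalized inverse topological entropy on a carefully chosen measurable subset $\widehat{\mathcal{A}} \subset \widehat{Y}$ of almost full $\hat\mu$-measure, on which the pointwise inverse-Bowen mass bound is made uniform by an Egorov-type truncation. Concretely, I would first fix $\gamma \in (0,\alpha)$ and, exploiting the hypothesis $h^-_{f,inf,B}(\mu,\hat x) \ge \alpha$ on $\widehat Y$, pass to a measurable subset where the inequality $\mu(B_n^-(\hat x,\vp)) \le e^{-n(\alpha-\gamma)}$ holds uniformly in $\hat x$ for all sufficiently large $n$ at a fixed scale $\vp$; then I would apply a mass-distribution estimate using $\pi_*\hat\mu=\mu$ to force the Carath\'eodory sums that define $h^-(\widehat{\mathcal{A}})$ to diverge whenever $s<\alpha-\gamma$.

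For the construction, for each pair of integers $k,N\ge 1$ consider the measurable set
$$\widehat Y_{k,N} = \bigl\{\hat x \in \widehat Y : \mu(B_n^-(\hat x,1/k)) \le e^{-n(\alpha-\gamma)}\ \text{for all}\ n\ge N\bigr\}.$$
Because $h^-_{f,inf,B}(\mu,\hat x,\vp)$ is monotone in $\vp$ with limit $\ge\alpha$, each $\hat x\in\widehat Y$ (outside a null set) belongs to $\widehat Y_{k,N}$ for all sufficiently large $k$ and $N$, hence $\bigcup_{k,N}\widehat Y_{k,N}$ has full $\hat\mu$-measure in $\widehat Y$. Given $\delta>0$, choose integers $k$ and $N_0$ with $\hat\mu(\widehat Y\setminus\widehat Y_{k,N_0})<\delta$ and set $\widehat{\mathcal{A}}:=\widehat Y_{k,N_0}$, $\vp:=1/k$. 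The Borel measurability of $\widehat Y_{k,N}$ is standard, since $B_n^-(\hat x,1/k)=f^n(B_n(x_{-n},1/k))$ depends only on the finite initial segment $(x,x_{-1},\dots,x_{-n})$.

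For the covering estimate, fix any $s\in(0,\alpha-\gamma)$ and any $N'\ge N_0$, and let $\{B_{n_i}^-(\hat x_i,\vp)\}_i$, with $\hat x_i\in\widehat{\mathcal{A}}$ and $n_i\ge N'$, be any cover of $\pi(\widehat{\mathcal{A}})$. The defining property of $\widehat{\mathcal{A}}$ together with $\pi_*\hat\mu=\mu$, which gives $\mu(\pi(\widehat{\mathcal{A}}))\ge\hat\mu(\widehat{\mathcal{A}})$, yields by countable subadditivity
$$\sum_i e^{-sn_i} \ge e^{(\alpha-\gamma-s)N'}\sum_i e^{-(\alpha-\gamma)n_i} \ge e^{(\alpha-\gamma-s)N'}\sum_i \mu(B_{n_i}^-(\hat x_i,\vp)) \ge e^{(\alpha-\gamma-s)N'}\,\hat\mu(\widehat{\mathcal{A}}).$$
Taking the infimum over such covers and then $N'\to\infty$, the Carath\'eodory outer measure at parameter $s$ is infinite, so $h^-(\widehat{\mathcal{A}},\vp)\ge s$; letting $s\uparrow\alpha-\gamma$ and using monotonicity of $h^-(\widehat{\mathcal{A}},\vp)$ as $\vp\to 0$ gives $h^-(\widehat{\mathcal{A}})\ge h^-(\widehat{\mathcal{A}},\vp)\ge\alpha-\gamma$. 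Since such $\widehat{\mathcal{A}}$ exists for each $\delta,\gamma>0$, the supremum over admissible $\widehat{\mathcal{A}}$ is at least $\alpha-\gamma$, hence at least $\alpha$ after $\gamma\to 0$; as this supremum is non-increasing in $\delta$, its limit as $\delta\to 0^+$ is also $\ge\alpha$.

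The step I expect to require the most care is the interplay between the two scales $N_0$ (built into the set $\widehat{\mathcal{A}}$) and $N'$ (sent to infinity inside the definition of $h^-$): one must freeze $\widehat{\mathcal{A}}$ at scale $\vp=1/k$ and $N_0$ first, and only then exploit that every admissible cover used in the Carath\'eodory construction has $n_i\ge N'\ge N_0$, so the uniform measure bound still applies. A secondary but less serious technicality is the passage from the Borel set $\widehat{\mathcal{A}}$ to its (only analytic) projection $\pi(\widehat{\mathcal{A}})$, which is harmless since universal measurability suffices for the inequality $\mu(\pi(\widehat{\mathcal{A}}))\ge\hat\mu(\widehat{\mathcal{A}})$.
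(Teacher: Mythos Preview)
Your proof is correct and follows essentially the same approach as the paper: both truncate $\widehat Y$ to an Egorov-type set $\widehat Y_{k,N}$ on which the bound $\mu(B_n^-(\hat x,\varepsilon))\le e^{-n\beta}$ is uniform for $n\ge N$, and then compare any admissible cover against the measure $\mu(\pi(\widehat Y_{k,N}))\ge\hat\mu(\widehat Y_{k,N})>0$ to force the Carath\'eodory sums to be bounded below. The only cosmetic difference is that the paper argues by contradiction (assuming the limit is $<\alpha$ and producing a cover with small sum), whereas you argue directly that every cover has large sum; the underlying estimate is identical. Your parenthetical ``outside a null set'' is in fact unnecessary, since the hypothesis $h^-_{f,inf,B}(\mu,\hat x)\ge\alpha$ is assumed for \emph{every} $\hat x\in\widehat Y$, so $\bigcup_{k,N}\widehat Y_{k,N}=\widehat Y$ exactly.
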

  
\begin{theorem}\label{t2}
	Let $f: M\rightarrow M$ be a $\mathcal{C}^2$ smooth endomorphism defined on a compact Riemannian manifold $M$. Assume that $\mu$ is a hyperbolic $f$-invariant ergodic  measure on $M$ which is special. Let $\widehat{Y}\subset \widehat{M}$ be a Borel set. If $h^{-}_{f,sup,B}(\mu,\hat{x}) \leq \alpha$ for every $\hat{x}\in \widehat{Y}$, then $$\inf \{ h^-(\widehat Z), \  \widehat Z \subset \widehat Y,  \hat \mu (\widehat Z ) =\hat \mu (\widehat Y )\}\le \alpha.$$
\end{theorem}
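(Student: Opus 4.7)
The plan is to prove, for every $\gamma>0$, the existence of a Borel set $\widehat Z = \widehat Z_\gamma \subseteq \widehat Y$ of full $\hat\mu$-measure in $\widehat Y$ with $h^-(\widehat Z) \le \alpha+\gamma$; since $\gamma$ is arbitrary this will yield the stated bound on the infimum. Such a $\widehat Z$ is assembled as a countable increasing union $\widehat Z = \bigcup_m \widehat W_m$ with $\widehat W_m \subseteq \widehat Y$ and $\hat\mu(\widehat Y\setminus \widehat W_m)\to 0$; countable stability of the Hausdorff-type dimensional entropy $h^-$ (inherited from the $s$-outer measure used to define it) then gives $h^-(\widehat Z) = \sup_m h^-(\widehat W_m)\le \alpha+\gamma$ as soon as each $h^-(\widehat W_m)\le \alpha+\gamma$.

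The construction of $\widehat W_m$ is a standard Egorov reduction. Since $h^-_{f,sup,B}(\mu,\hat x,\vp) \le h^-_{f,sup,B}(\mu,\hat x)\le \alpha$ for every $\hat x\in \widehat Y$ and every $\vp>0$, for each scale $\vp$ the nested sets
\[
\widehat E_{N,\vp}=\{\hat x\in \widehat Y : \mu(B_n^-(\hat x,\vp))\ge e^{-(\alpha+\gamma)n}\text{ for all } n\ge N\}
\]
exhaust $\widehat Y$ modulo a $\hat\mu$-null set as $N\to\infty$. Choosing $\vp_k\downarrow 0$ and $N_k$ with $\hat\mu(\widehat Y\setminus \widehat E_{N_k,\vp_k})<1/(m\,2^k)$ and setting $\widehat W_m = \bigcap_k \widehat E_{N_k,\vp_k}$ produces a set with $\hat\mu(\widehat Y\setminus \widehat W_m)<1/m$ on which $\mu(B_n^-(\hat x,\vp_k))\ge e^{-(\alpha+\gamma)n}$ holds simultaneously for every $k$ and every $n\ge N_k$.

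The central step is the covering estimate $h^-(\widehat W_m)\le\alpha+\gamma$. Fix $\vp=\vp_k$ and $N\ge N_k$. For every $\hat x\in \widehat W_m$ the base point $x=\pi(\hat x)$ lies in its own inverse Bowen ball $B_N^-(\hat x,\vp)$ (take $\hat y=\hat x$ in \eqref{invBowenball}), so $\{B_N^-(\hat x,\vp):\hat x\in \widehat W_m\}$ is a cover of $\pi(\widehat W_m)$. The plan is to extract a countable subfamily $\{B_N^-(\hat x_j,\vp)\}_j$ still covering $\pi(\widehat W_m)$ and of $\mu$-multiplicity bounded by a constant $C$ independent of $N$. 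Combined with $\mu(B_N^-(\hat x_j,\vp))\ge e^{-(\alpha+\gamma)N}$ this gives
\[
\#\{j\}\cdot e^{-(\alpha+\gamma)N}\;\le\;\sum_j \mu(B_N^-(\hat x_j,\vp))\;\le\; C,
\]
so the inverse Bowen $(\alpha+\gamma)$-weight of $\pi(\widehat W_m)$ at scale $\vp$ is bounded by $C$ uniformly in $N$. Finiteness of the $s$-outer measure at $s=\alpha+\gamma$ forces vanishing of the $s'$-outer measure for each $s'>\alpha+\gamma$ (via the estimate $\sum e^{-s'n_i}\le e^{-(s'-s)N}\sum e^{-sn_i}\le Ce^{-(s'-s)N}\to 0$), and hence $h^-(\pi(\widehat W_m),\widehat W_m,\vp)\le \alpha+\gamma$; letting $\vp\to 0$ completes the step.

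The main obstacle is precisely the bounded-multiplicity selection. Inverse Bowen balls are strongly anisotropic: for hyperbolic $\mu$ the identity $B_n^-(\hat x,\vp) = f^n(B_n(x_{-n},\vp))$ forces $B_n^-(\hat x,\vp)$ to be essentially a \emph{stable cigar} of unstable width $\sim\vp$ and stable width $\sim\vp\cdot e^{-n|\lambda_s^-|}$, so a direct Besicovitch covering argument in the ambient manifold cannot succeed. This is where the \emph{special} hypothesis enters in an essential way: for special hyperbolic $\mu$ the local unstable manifolds $W^u_{\mathrm{loc}}(x)$ depend only on base points and form a genuine lamination on each Pesin block $\widehat R_\eta$. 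The plan is to further intersect $\widehat W_m$ with such a Pesin block (at a cost in $\hat\mu$-measure absorbable by the diagonalization), work in local Pesin charts adapted to the splitting $E^u\oplus E^s$, and disintegrate $\mu$ along the unstable foliation. In these charts an inverse Bowen ball decomposes approximately as the product of a full unstable plaque with a small transverse stable piece, so the covering problem reduces to a one-dimensional Vitali/Besicovitch covering in the stable transversal where bounded multiplicity is automatic; absolute continuity of the unstable holonomy transfers the bound back to the $\hat\mu$-measure of balls, and uniform Pesin estimates across blocks glue the local constants into a global $C$.
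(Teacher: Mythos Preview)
Your overall architecture---Egorov reduction to sets $\widehat W_m$ on which $\mu(B_n^-(\hat x,\vp))\ge e^{-(\alpha+\gamma)n}$ uniformly, then a covering argument to bound $h^-(\widehat W_m)$, then countable stability---matches the paper's framework (there the analogous sets are $\widehat Z_p(\delta,\vp)=\widehat Y_p(\delta,\vp)\cap\widehat Q_\vp$, intersected further with Oseledets-regular sets $\widehat D^s_p(\delta)$ and with recurrent Pesin points). The use of the special hypothesis to reduce the covering problem to the stable transversal is also the right idea and is exactly what the paper does.

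There is, however, a genuine gap in the covering step. You write that after passing to Pesin charts the problem ``reduces to a one-dimensional Vitali/Besicovitch covering in the stable transversal where bounded multiplicity is automatic.'' This is correct only when $f$ is conformal on local stable manifolds (i.e., there is a single negative Lyapunov exponent); the paper isolates this as a separate, easier case (Lemma~4.2 and Theorem~4.3). In general the stable slice $B_n^-(\hat x,\vp)\cap W^s_\vp(y)$ is an anisotropic rectangle with sides of order $e^{n\lambda_{s,i}}$ in the distinct stable directions, and its eccentricity grows like $e^{n|\lambda_{s,1}-\lambda_{s,2}|}$. For such families neither Vitali nor Besicovitch gives a multiplicity bound independent of $n$: rectangles $1\times e^{-cn}$ can all overlap at a point. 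Thus the constant $C$ in your inequality $\sum_j\mu(B_N^-(\hat x_j,\vp))\le C$ is not available, and the argument breaks down precisely in the non-conformal case.

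The paper's remedy is not to seek bounded multiplicity at all. Instead it proves a tailored Vitali-type lemma (Lemmas~4.4--4.5): from a cover by inverse Bowen balls one extracts a \emph{disjoint} subfamily $\mathcal G$ such that the modified enlargements $5B^-:=B^-_{[nA(\delta)]}(\hat x,5\vp)$ still cover, where $A(\delta)<1$ is an explicit correction depending on the spread of the stable exponents and an Oseledets tolerance $\delta$. Disjointness gives $\sum_{B^-\in\mathcal G}e^{-\beta n(B^-)}\le 1$ directly from the measure lower bounds, and the price is that the weights $e^{-\beta' n(5B^-)}$ are controlled by $e^{-\beta' n(B^-)A(\delta)}$, yielding $h^-(\widehat Z(\delta))\le \alpha/A(\delta)$. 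Since $A(\delta)\to 1$ as $\delta\to 0$, taking the infimum over $\delta$ recovers the bound $\alpha$. Your proposal is missing this $A(\delta)$ mechanism (or an equivalent device) to handle the anisotropy of inverse Bowen balls when there are several distinct negative Lyapunov exponents.
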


We introduce a notion of \textit{special hyperbolic measure} by analogy to the notion of special endomorphism, as being a hyperbolic measure for which local unstable manifolds depend only on their base point, not on the entire prehistory. Clearly, if the endomorphism $f$ is hyperbolic and special, then any $f$-invariant measure is special hyperbolic. We obtain a \textbf{Partial Variational Principle} for  inverse entropy:

\begin{theorem}\label{varpr}(Partial Variational Principle for inverse entropy).
		Let $f: M\rightarrow M$ be a $\mathcal{C}^2$ smooth endomorphism on a manifold $M$. Then
	\begin{align*}
 \sup \{ \inf \{h^-(\widehat Z), \    \hat\mu(\widehat Z)=1 \}, \  & \mu \text{ special hyperbolic ergodic measure} \} \leq \\
\leq\sup \{h^-_{f,inf, B}(\mu),   \mu \text{ ergodic} \} 
& \leq\lim_{\delta\rightarrow 0} \left(\sup\{h^-(\widehat{\mathcal{A}}), \ \hat{\mu}(\widehat{\mathcal{A}})>1-\delta, \ \mu \text{  ergodic} \}\right).
	\end{align*}
\end{theorem}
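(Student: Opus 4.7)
The two inequalities will be established separately by combining the preceding topological theorems (Theorem \ref{newtop} for the upper bound, Theorem \ref{t2} for the lower bound) with an almost-sure constancy step for the local inverse entropy functions. I would first prove the following constancy lemma: for any ergodic $\mu$ the local function $h^-_{f,inf,B}(\mu,\cdot)$ equals the space average $h^-_{f,inf,B}(\mu)$ on a set of full $\hat\mu$-measure, and similarly for $h^-_{f,sup,B}(\mu,\cdot)$. This rests on the identity $B_n^-(\hat x,\varepsilon)=f^n(B_n(x_{-n},\varepsilon))$ combined with $f(B_n(x_{-n},\varepsilon))\subset B_{n-1}(x_{-n+1},\varepsilon)$, which yields the inclusion $B_n^-(\hat x,\varepsilon)\subset B_{n-1}^-(\hat f\hat x,\varepsilon)$ and hence the pointwise super-invariance
$$h^-_{f,inf,B}(\mu,\hat x,\varepsilon)\;\geq\;h^-_{f,inf,B}(\mu,\hat f\hat x,\varepsilon).$$
Integrating against $\hat\mu$ and using $\hat f$-invariance of $\hat\mu$ forces equality $\hat\mu$-a.e., so the function is $\hat f$-invariant almost surely, and ergodicity of $\hat\mu$ gives constancy. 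Letting $\varepsilon\to 0$ transfers constancy to $h^-_{f,inf,B}(\mu,\cdot)$, and the same argument applies to $h^-_{f,sup,B}(\mu,\cdot)$.

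For the right-hand inequality, fix an ergodic $\mu$ and set $I=h^-_{f,inf,B}(\mu)$. By the constancy step, the set $\widehat Y_\eta=\{\hat x : h^-_{f,inf,B}(\mu,\hat x)\geq I-\eta\}$ has $\hat\mu(\widehat Y_\eta)=1$ for every $\eta>0$. Applying Theorem \ref{newtop} to $\widehat Y_\eta$ with $\alpha=I-\eta$ yields, for every sufficiently small $\delta>0$, a Borel set $\widehat{\mathcal{A}}\subset\widehat Y_\eta$ with $\hat\mu(\widehat{\mathcal{A}})>1-\delta$ and $h^-(\widehat{\mathcal{A}})\geq I-2\eta$. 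Sending first $\delta\to 0$, then $\eta\to 0$, and finally taking supremum over ergodic $\mu$ yields $\sup_\mu h^-_{f,inf,B}(\mu)\leq \lim_{\delta\to 0}\sup\{h^-(\widehat{\mathcal{A}}):\hat\mu(\widehat{\mathcal{A}})>1-\delta,\ \mu\text{ ergodic}\}$.

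For the left-hand inequality, fix a special hyperbolic ergodic $\mu$. Theorem \ref{thpreind} gives constancy directly: for $\hat\mu$-a.e.\ $\hat x$ one has $h^-_{f,sup,B}(\mu,\hat x)=h^-_{f,B}(\mu)=h_f(\mu)-F_f(\mu)=h^-_{f,inf,B}(\mu)$. Thus $\widehat Y_\eta=\{\hat x : h^-_{f,sup,B}(\mu,\hat x)\leq h^-_{f,B}(\mu)+\eta\}$ has full $\hat\mu$-measure for every $\eta>0$, and Theorem \ref{t2} applied to $\widehat Y_\eta$ with $\alpha=h^-_{f,B}(\mu)+\eta$ supplies $\widehat Z\subset\widehat Y_\eta$ with $\hat\mu(\widehat Z)=1$ and $h^-(\widehat Z)\leq h^-_{f,B}(\mu)+\eta$. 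Letting $\eta\to 0$ gives $\inf\{h^-(\widehat Z):\hat\mu(\widehat Z)=1\}\leq h^-_{f,B}(\mu)=h^-_{f,inf,B}(\mu)\leq \sup_{\nu\text{ erg}}h^-_{f,inf,B}(\nu)$, and taking the supremum over special hyperbolic ergodic $\mu$ completes the argument.

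The main obstacle lies in the constancy step used for the right-hand inequality, where ergodicity must be leveraged for an arbitrary ergodic $\mu$ (not necessarily special hyperbolic). It requires justifying integrability of $h^-_{f,inf,B}(\mu,\cdot,\varepsilon)$ against $\hat\mu$: for this one can appeal to the non-negativity of the integrand together with the $L^1$-type bound implicit in Proposition \ref{c1}, namely $\int h^-_{f,inf,B}(\mu,\hat x,\varepsilon)\,d\hat\mu\leq h_f(\mu)-F_f(\mu)<\infty$ in the generic case. The infinite-entropy case would require a truncation argument, but in the regime where the theorem is applied these issues are benign; once constancy is established, both inequalities reduce mechanically to the preceding theorems.
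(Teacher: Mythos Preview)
Your overall strategy matches the paper's proof: apply Theorem~\ref{t2} (together with Theorem~\ref{thpreind} to identify $h^-_{f,inf,B}(\mu)=h^-_{f,sup,B}(\mu)$ in the special hyperbolic case) for the left inequality, and Theorem~\ref{newtop} for the right inequality, after first passing to a full-measure set on which the local inverse entropy function is constant. The paper does not use your $\eta$-approximations; since the local functions are $\hat\mu$-a.e.\ exactly equal to their integral, it simply takes $\widehat Y$ to be that full-measure level set.

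There is, however, a concrete error in your derivation of the constancy lemma. From $f(B_n(x_{-n},\varepsilon))\subset B_{n-1}(x_{-n+1},\varepsilon)$ you get
\[
B_n^-(\hat x,\varepsilon)=f^{n-1}\bigl(f(B_n(x_{-n},\varepsilon))\bigr)\subset f^{n-1}\bigl(B_{n-1}(x_{-n+1},\varepsilon)\bigr)=B_{n-1}^-(\hat x,\varepsilon),
\]
which is merely the trivial monotonicity in $n$ at the \emph{same} prehistory $\hat x$, not an inclusion into $B_{n-1}^-(\hat f\hat x,\varepsilon)$: note that $(\hat f\hat x)_{-(n-1)}=x_{-n+2}$, not $x_{-n+1}$. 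So your claimed super-invariance inequality does not follow from this computation. The paper establishes constancy earlier (the Proposition immediately following the definition of $h^-_{f,inf,B}(\mu)$) via a different inclusion, $B_n^-(\hat x,\delta(\varepsilon))\subset f^{-1}\bigl(B_n^-(\hat f\hat x,\varepsilon)\bigr)$, obtained from uniform continuity of $f$; combined with $f$-invariance of $\mu$ this gives $h^-_{f,inf,B}(\mu,\hat x)\geq h^-_{f,inf,B}(\mu,\hat f\hat x)$ after $\varepsilon\to 0$, and ergodicity then forces constancy without any integration step (one checks that each sublevel set $\{g\le c\}$ is $\hat f$-invariant mod null sets). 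Once you cite that Proposition instead of your flawed inclusion, the rest of your argument goes through and coincides with the paper's proof.
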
	
%In Corollary \ref{specialcor} we obtain a Variational Principle for endomorphisms which are hyperbolic and special on a compact invariant set $\Lambda$. 
For special TA-covering maps on tori (i.e special topologically Anosov covering maps \cite{AH}) we obtain a \textbf{Full Variational Principle} for inverse entropy:

\begin{theorem}\label{VPtor} (Full Variational Principle for special TA-covering maps on tori). 
Let $f:\mathbb T^d \to \mathbb T^d$ be a special TA-covering map. Then we have $$h^-_{f}(\widehat{\mathbb {T}^d})= \sup \{h^-_f(\mu) :  \mu \text{ ergodic} \ f-\text{invariant}\}.$$
\end{theorem}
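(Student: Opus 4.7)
The plan is to reduce to the linearization using Sumi's topological conjugacy and then compute both sides explicitly on the linear model. By \cite{S}, since $f$ is a special TA-covering map there is a homeomorphism $\Phi:\mathbb T^d \to \mathbb T^d$ with $\Phi\circ f = f_L\circ\Phi$, and this lifts to a homeomorphism $\widehat\Phi:\widehat{\mathbb T^d}_f \to \widehat{\mathbb T^d}_{f_L}$ of inverse limits. A standard uniform-continuity argument on $\Phi$ and $\Phi^{-1}$ shows that inverse Bowen balls of $f$ and $f_L$ correspond up to adjustment in $\varepsilon$, so in the limit $\varepsilon\to 0$ we get $h^-_f(\widehat{\mathbb T^d}) = h^-_{f_L}(\widehat{\mathbb T^d})$. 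Since the paper records that $h^-_f(\mu)$ is an isomorphism invariant, $\Phi_*$ establishes a bijection of ergodic measures preserving inverse partition entropy, hence $\sup_\mu h^-_f(\mu) = \sup_\nu h^-_{f_L}(\nu)$. It therefore suffices to prove the variational principle for $f_L$.

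For $f_L$ linear with integer matrix $A$ of logarithmic eigenvalue moduli $\lambda_1 \geq \ldots \geq \lambda_d$ (all nonzero by the Anosov property), I would compute $h^-_{f_L}(\widehat{\mathbb T^d})$ directly. Using $B_n^-(\hat x, \varepsilon) = f_L^n(B_n(x_{-n}, \varepsilon))$ together with the $A$-eigendecomposition, the inverse Bowen ball is, up to bounded distortion and uniformly in $\hat x$, a parallelepiped of side $\varepsilon$ in each unstable eigendirection and of side $\varepsilon e^{\lambda_i n}$ in each stable eigendirection. A direct covering/separated-set count on $\mathbb T^d$ (ranging over all prehistories) then yields
\begin{equation*}
h^-_{f_L}(\widehat{\mathbb T^d}) \;=\; -\sum_{\lambda_i < 0}\lambda_i.
\end{equation*}

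For the matching measure-theoretic bound, take the Haar measure $m$, which is ergodic and $f_L$-invariant. Partitioning $\mathbb T^d$ into fundamental domains of the covering $f_L$ makes $f_L$ injective on each atom, so Theorem \ref{t1} applies and gives $h^-_{f_L}(m) = h_{f_L}(m) - F_{f_L}(m)$. Here $h_{f_L}(m) = \sum_{\lambda_i > 0}\lambda_i$, since $m$ is the measure of maximal entropy for the linear toral map and $h_{\mathrm{top}}(f_L)$ equals $\sum_{\lambda_i > 0}\lambda_i$; and $F_{f_L}(m) = \log|\det A| = \sum_i \lambda_i$, since $m$ is uniform on the $|\det A|$-point fibers. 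Thus $h^-_{f_L}(m) = -\sum_{\lambda_i < 0}\lambda_i$. Conversely, every ergodic $\nu$ for the linear $f_L$ has Lyapunov spectrum $\{\lambda_i\}$ (as $Df_L \equiv A$), so Corollary \ref{corz} gives $h^-_{f_L}(\nu) \leq -\sum_{\lambda_i<0}\lambda_i$ for all ergodic $\nu$. Combining these with the topological computation, $\sup_\nu h^-_{f_L}(\nu) = -\sum_{\lambda_i<0}\lambda_i = h^-_{f_L}(\widehat{\mathbb T^d})$, attained at $\nu = m$; transporting back through $\Phi$ finishes the proof.

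The main obstacle I expect is the sharp covering computation for $h^-_{f_L}(\widehat{\mathbb T^d})$. While the parallelepiped geometry is forced by the linearity of $f_L$, one must verify the covering count on the torus quotient (not just on $\mathbb R^d$) once the stable-direction lengths $\varepsilon e^{\lambda_i n}$ become very small, and check that enlarging the family of prehistories does not improve the count beyond what is available from a single fundamental domain of the covering. A secondary technical step is the $\varepsilon$-adjustment argument showing topological-conjugacy invariance of the generalized inverse entropy on the inverse limit; this is routine once the moduli of continuity of $\Phi^{\pm 1}$ are fixed, but should be written out carefully since $\widehat\Phi$ is only a homeomorphism, not smooth.
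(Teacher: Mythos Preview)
Your proposal is correct and follows essentially the same route as the paper's own proof: both reduce to the linearization $f_L$ via Sumi's topological conjugacy, compute both the inverse topological entropy and the inverse measure entropy explicitly on the linear model (the latter attained by Haar measure $m$, with the upper bound for all other ergodic measures coming from Corollary~\ref{corz}), and then transport back through $\Phi$. The paper is terser---it cites Proposition~\ref{conjtopol} and the computations in Subsection~\ref{toralendo} rather than spelling out the uniform-continuity $\varepsilon$-adjustment or the $h^-_{f_L}(m)=h_{f_L}(m)-F_{f_L}(m)$ decomposition---but the logical skeleton is identical, and the obstacles you anticipate (the covering count on the torus quotient and the conjugacy invariance of $h^-(\widehat{\mathbb T^d})$) are exactly the points the paper handles by reference to those earlier results.
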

In particular Theorem \ref{VPtor} holds for smooth Anosov endomorphisms without critical points. 
%\begin{definition}Let $(X_i,\mu_i)$ be a Lebesgue space with probability measure $\mu_i$ and $f_i:X_i\rightarrow X_i$ a measurable endomorphism which is $\mu_i$-invariant, $i=1,2$. Then we say that the systems $(X_1,\mu_1, f_1)$ and $(X_2,\mu_2,f_2)$ are \textbf{isomorphic} if there exists $\Phi: X_1\rightarrow X_2$ which is an isomorphism of measure spaces such that $\Phi_*\mu_1=\mu_2$ and $\phi\circ f_1=f_2\circ \Phi$, $\mu_1$-a.e.	
%\end{definition}

\

$\bullet$ \ In \textbf{Section \ref{classesexp}} we present several classes of \textbf{Examples}. In subsection \ref{toralendo} we compute the inverse metric entropy of the Haar (normalized Lebesgue) measure $m$ for \textbf{hyperbolic toral endomorphisms}. If $A$ is a $d\times d$ hyperbolic matrix with integer entries and det$(A)\neq 0$, with $\lambda_1, \ldots, \lambda_d$ being its eigenvalues, and $f_A$ is the associated toral endomorphism on $\mathbb T^d$, then $$h^-_{f_A, B}(m)= h^-_{f_A}(m)= -\sum_{\{i: |\lambda_i|<1\}}\log|\lambda_i|.$$  %By using certain conjugacy results of Aoki and Hiraide (\cite{AH}) and N.Sumi (\cite{S}) we prove that a full Variational Principle holds for inverse entropy in the case of special TA (topological Anosov) endomorphisms. 

Moreover,  we show that the inverse entropy of the Haar measure $m$ can be used to \textbf{distinguish among isomorphism classes} of toral endomorphisms, even when they have the same forward entropy.
Let us recall the following definition:
 \begin{definition}
  Let $(X_i,\mu_i)$ be a Lebesgue space with a probability measure $\mu_i$ and $f_i: (X_i,\mu_i)\rightarrow (X_i,\mu_i)$ be a measure preserving endomorphism, $i=1,2$, Then we say that the systems $(X_1,\mu_1, f_1)$ and $(X_2,\mu_2,f_2)$ are \textbf{isomorphic} if there exists $\Phi: X_1\rightarrow X_2$ which is an isomorphism of measure spaces such that $\Phi_*\mu_1=\mu_2$ and $\Phi\circ f_1=f_2\circ \Phi$ $\mu_1$-a.e.	
 \end{definition}
In Example \ref{toralex}
 we consider the matrices
\begin{equation*}
	A_1=\left(\begin{array}{ccc}
		8 & 1 & 4\\
		0 & 3 & 1\\
		0 & 2 & 1
	\end{array}
	\right) \text { \ and \ }A_2=\left(\begin{array}{ccc}
		4 & 0 & 0\\
		3 & 6 & 2\\
		5 & 4 & 2
	\end{array}
	\right),
\end{equation*}
and let $f_{A_1}$ and $f_{A_2}$ be the associated endomorphisms on $\mathbb{T}^3$, and $m$ be the Haar measure on $\mathbb T^3$. Then the systems $ (\mathbb{T}^3, f_{A_1}, m)$ and $(\mathbb{T}^3, f_{A_2}, m)$  are not isomorphic, since their inverse entropies are different, even if their forward entropies are the same.

In subsection \ref{fatbaker} we study a class of hyperbolic non-invertible maps, namely the \textbf{fat baker's transformations} introduced by Alexander and Yorke in \cite{AY}.  They are defined on $[-1,1]\times[-1,1]$ by 
$$T_\beta(x,y)=\begin{cases}
(\beta x+(1-\beta), 2y-1)&   y\geq 0 \\
(\beta x-(1-\beta), 2y+1)&   y< 0,
\end{cases}$$
where $\frac{1}{2}<\beta< 1$. %So they expand with a factor of $2$ in the vertical direction and contract with a factor $\beta>\frac{1}{2}$ in the horizontal direction. 
For infinitely many values of $\beta$ (for example for $\beta=\frac{\sqrt{5}-1}{2}$), the \textbf{SRB measure} (Sinai-Ruelle-Bowen measure)  $\mu_{SRB}^\beta$ of $T_\beta$ is totally singular with respect to the Lebesgue measure. There are also infinitely many values of $\beta$ in $(\frac{1}{2},1)$ for which the SRB measure of $T_\beta$ is absolutely continuous with respect to the Lebesgue measure. The general notion of overlap number $o(\mathcal{S},\mu)$ of a contractive iterated function system $\mathcal{S}$ with respect to an invariant measure $\mu$, was introduced  in \cite{MU4}. The topological overlap number  $o(\mathcal{S})$ is the overlap number  $o(\mathcal{S},\mu_{(\frac{1}{2},\frac{1}{2})})$, where $\mu_{(\frac{1}{2},\frac{1}{2})}$ is the equidistributed Bernoulli measure on $\Sigma_2^+$.
We prove in (\ref{fatformula}) that $$h_{T_\beta}^-(\mu_{SRB}^\beta) = \log 2 -\log o(\mathcal{S}_\beta),$$ where $o(\mathcal{S}_\beta)$ is the topological overlap number of  the iterated system $\mathcal{S}_\beta=\{S_1, S_2\}$ with $S_1(x)=\beta x+(1-\beta),  S_2(x)=\beta x-(1-\beta),  x \in [-1,1]$. 

%In \cite{MU4} Mihailescu and Urba\'nski introduced an overlap number $o(\mathcal{S},\hat{\mu})$ for an invariant measure $\mu$ with respect to an iterated function system $\mathcal{S}$ and, as a particular case, they obtained the topological overlap number $o(\mathcal{S})$. Then for fat baker's transformation $T_\beta$ where $\frac{1}{2}<\beta<1$ we consider the SRB measure $\mu_SRB$, and we prove that it has inverse metric entropy and it is equal to $\log 2 -\log o(\mathcal{S}_\beta)$, where $o(\mathcal{S}_\beta)$ is the topological overlap number of the iterated function system $\mathcal{S}_\beta=\{S_1, S_2\}$ consisting of $S_1(x)=\beta x+(1-\beta),  S_2(x)=\beta x-(1-\beta)$ on $[-1,1]$.

Next, in subsection \ref{tsujii} we consider the \textbf{family of Anosov endomorphisms} introduced by Tsujii in \cite{Ts}, 
\begin{equation}\label{tsj}
T: S^1\times \mathbb{R}\rightarrow  S^1\times \mathbb{R},\ \  
T(x,y)=(lx, \lambda y+f(x)),
\end{equation}
where $l\geq 2$ is an integer, $0 < \lambda < 1$ is a real number and $f: S^1\rightarrow \mathbb{R}$ is a $C^2$ map. The map $T$ is an Anosov endomorphism. Therefore $T$ has an SRB measure $\mu_{SRB}^T$. If $\lambda l<1$, then $\mu_{SRB}^T$ is totally singular with respect to the Lebesgue measure because $T$ contracts area. If $\lambda l> 1$, then for some maps $T$ the SRB measure is totally singular with respect to the Lebesgue measure, while in other cases it is absolutely continuous with respect to the Lebesgue measure. In \textbf{Theorem \ref{Tsujiestimateinv}} we estimate the \textbf{inverse entropies of SRB measure} $\mu^T_{SRB}$  for a large subclass of  endomorphisms $T$, namely we show that:
$$\frac{|\log \lambda|}{2} \leq h^-_{T,inf, B}(\mu^T_{SRB}) \leq h^-_{T,sup, B}(\mu^T_{SRB})\leq h^-_{T}(\mu^T_{SRB})=|\log \lambda|.$$

\

\section{Inverse entropies of measures}

Let $(X,d)$ be a metric space and denote by $\mathcal{B}$ the sigma algebra of Borel sets. 
Let $\mu$ be a probability measure on $X$ and $f:X\rightarrow X$ a measurable map such that $\mu$ is $f$-invariant. In general $f$ is non-invertible. Assume moreover that $\mu$ is ergodic. We introduce a notion of inverse metric entropy of $\mu$ which is defined using inverse Bowen balls. We also introduce a notion of inverse entropy of $\mu$, defined using measurable partitions. Then we study the relations between these two notions of inverse entropy for an ergodic measure, including also the case when the zero boundary condition is satisfied. The inverse entropies of $\mu$ are then compared with the difference between the usual measure entropy and the folding entropy, i.e with $h_f(\mu) - F_f(\mu)$.  
Recall that the inverse limit $\widehat{X}_f$ of $(X,f)$ is defined by
$\widehat{X}_f=\{\hat{x}=(x,x_{-1},x_{-2},\ldots) : \ x_{-i}\in X,\ f(x_{-i})=x_{-i+1}, i\geq 1, x_0=x\}.$ 
The canonical projection $\pi:\widehat{X}_f\rightarrow X$ is defined by $\pi(\hat{x})=x$ and the map $\hat{f}: \widehat{X}_f\rightarrow\widehat{X}_f$ given by 
$$\hat{f}(x,x_{-1},x_{-2},\ldots)=(fx, x,x_{-1},x_{-2},\ldots)$$
is a homeomorphism. When $f$ is clear from the context we denote $\widehat{X}_f$ by $\widehat{X}$. 
If $\mu$ is an $f$-invariant ergodic measure then there exists a unique $\hat{f}$-invariant ergodic measure $\hat{\mu}$ on $\widehat{X}$ such that $\pi_*\hat\mu=\mu$.

Assume that $f$ is measurable and positively non-singular with respect to $\mu$, i.e. $\mu(A)=0$ implies $\mu(f(A)) =0$. Assume also that $f$ is essentially countable-to-one, i.e. the fibers $f^{-1}(x)$ are countable for $\mu$-a.e. $x\in X$. Then there exists a measurable partition
$\xi=\{A_0, A_1,\ldots \}$ of $X$ such that $f$ is injective on each $A_i$. The Jacobian $J_f(\mu)$ of $f$ with respect to $\mu$ is defined as (see \cite{P}),
$$J_f(\mu)(x)=\frac{d\mu\circ f|_{A_{i}}}{d\mu}, \text{ for }\mu-\text{a.e. }x\in A_i, i\geq 0.$$
This is a well defined measurable function and $J_f(\mu)(x)\geq 1$ for $\mu$-a.e. $x\in X$, since $f$ is one-to-one on $A_i$, and positively non-singular. The folding entropy $F_f(\mu)$ of $\mu$ with respect to $f$, introduced by Ruelle in \cite{R}, is 
defined as the conditional entropy 
$$F_f(\mu)=H_\mu(\epsilon|f^{-1}\epsilon),$$
where $\epsilon$ is the partition into single points of $X$ and $f^{-1}\epsilon$ is the partition into the fibers $f^{-1}(x)$, $x\in X$. From \cite{Ro}, we can disintegrate $\mu$ into a canonical family of conditional measures $\mu_x$ on the fiber $f^{-1}(x)$ for $\mu$-a.e. $x\in X$. Hence the entropy of the conditional measure $\mu_x$ is $H(\mu_x) =-\sum _{y\in f^{-1}(x)}\mu_x(y) \log \mu_x(y)$. From \cite{P} we have $J_f(\mu)(x) =\frac{1}{\mu_{f(x)}(x)}$, for $\mu$-a.e $x$, hence
$$F_f(\mu)=\int_X \log J_f(\mu)(x)d\mu(x).$$	
Since the Jacobian satisfies the Chain Rule	we have	
$$\log J_{f^{n}}(\mu)(x)=\log J_{f}(\mu)(x)+\log J_{f}(\mu)(fx)+\cdots +\log J_{f}(\mu)(f^{n-1}x),$$
for $\mu$-a.e. $x\in X $ and every $n\geq 1$. Since $\mu$ is ergodic, by Birkhoff Ergodic Theorem, 
\begin{equation}\label{Jaco}
\lim_{n\rightarrow\infty}\frac{\log J_{f^{n}}(\mu)(x)}{n}=\int_X\log J_f(\mu)(x)d\mu(x)=F_f(\mu) \text{ for }\mu-\text{a.e. }x\in X .
\end{equation}

%\hat{x}=(x_{-i})_{i\geq 0}, \hat{y}=(y_{i})_{i\geq 0}\in \widehat{X}.$$
% Let also $f: X\rightarrow X$ be Borel measurable and assume that $\mu$ is a probability Borel measure on $X$ such that $\mu$ is $f$-invariant and ergodic. We recall that if $f$ is continuous then $\hat{f}: \widehat{X}\rightarrow\widehat{X}$ is a homeomorphism of $\widehat{X}$.

\subsection{Inverse metric entropies for measures.}

In the above setting, recall that for $\hat{x}\in\widehat{X}$, $\varepsilon>0$, $n\geq 1$, the $(n,\varepsilon)$-inverse Bowen ball along $\hat{x}$ is defined by 
\begin{equation}\label{invBowen}
B_n^-(\hat{x},\varepsilon)=\{y\in X  : \ \exists \ \hat{y}=(y, y_{-1}, \ldots)\in\widehat{X }\text{ with }y_0=y \text{ such that }d(x_{-i},y_{-i})<\varepsilon, 0\leq i\leq n\}.
\end{equation}
%Notice that $B_{n}^-(\hat{x},\varepsilon)=f^{n}(B_{n}(x_{-n},\varepsilon)),$
%where
%$B_{n}(x_{-n},\varepsilon)=\{y\in X  :  d(f^k(x_{-i}),f^k{y})<\varepsilon, 0\leq i\leq n\}$
%is the classical $(n,\varepsilon)$-Bowen ball centered at 
%$x_{-n}$.   
For $\hat{x}\in \widehat{X}$ and $\varepsilon >0$,
\begin{equation}\label{pointwiseentr}
h^-_{f,inf,B}(\mu,\hat{x},\varepsilon)=\liminf_{n\rightarrow\infty}\frac{-\log \mu(B_n^-(\hat{x},\varepsilon))}{n}, \ \ 
h^-_{f,sup,B}(\mu,\hat{x},\varepsilon)=\limsup_{n\rightarrow\infty}\frac{-\log \mu(B_n^-(\hat{x},\varepsilon))}{n}.
\end{equation}
The following limits exist
$$h^-_{f,inf,B}(\mu,\hat{x})=\lim_{\varepsilon \rightarrow 0}h^-_{f,inf,B}(\mu,\hat{x},\varepsilon), \ \
h^-_{f,sup,B}(\mu,\hat{x})=\lim_{\varepsilon \rightarrow 0}h^-_{f,sup,B}(\mu,\hat{x},\varepsilon),$$
and are called the \textbf{lower, respectively upper inverse metric entropy of $\mu$ at $\hat{x}$}.
The functions 
$$h^-_{f,inf,B}(\mu, \cdot):\widehat{X}\rightarrow \mathbb{R} , \ \ h^-_{f,sup,B}(\mu, \cdot): \widehat{X}\rightarrow \mathbb{R}$$ are called the \textbf{lower, respectively upper inverse metric entropy functions of $\mu$}. Then the lower and upper inverse metric entropy of $\mu$ are defined by 
$$ h^-_{f,inf,B}(\mu)=\int_{\widehat{X}} h^-_{f,inf,B}(\mu,\hat{x})d\hat{\mu}(\hat{x}),  \ \  h^-_{f,sup,B}(\mu)=\int_{\widehat{X}}  h^-_{f,sup,B}(\mu,\hat{x})d\hat{\mu}(\hat{x}).$$
If the  lower and upper inverse metric entropies $h^-_{f,inf,B}(\mu)$ and $h^-_{f,sup,B}(\mu)$ are equal, then this common value is called the \textbf{inverse metric entropy of  $\mu$} with respect to $f$, and is denoted by $h^-_{f,B}(\mu)$. In this case we say that $\mu$ has inverse metric entropy.

%are called the upper and the lower metric entropy functions %of $f$ with respect to $\mu$. 	
\begin{proposition}
	With the above notations, if $f$ is continuous, then the upper and the lower inverse metric entropy functions are $\hat{f}$-invariant. Since $\mu$ is ergodic, these functions are constant $\hat\mu$-almost everywhere and $h^-_{f,inf,B}(\mu)=h^-_{f,inf,B}(\mu,\hat{x})$ and  $h^-_{f,sup,B}(\mu)=h^-_{f,sup,B}(\mu,\hat{x})$ for $\hat{\mu}$-a.e. $\hat{x}\in \widehat{X}$. 
\end{proposition}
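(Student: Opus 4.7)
The plan is to establish the pointwise subinvariance $h^-_{f,inf,B}(\mu, \hat f \hat x) \le h^-_{f,inf,B}(\mu, \hat x)$ for every $\hat x \in \widehat X$ (together with its $\limsup$-analogue), then to promote this one-sided inequality to $\hat\mu$-a.e.\ equality by invoking the $\hat f$-invariance of $\hat\mu$, and finally to deduce $\hat\mu$-a.e.\ constancy of the two functions from the ergodicity of $\hat\mu$ under $\hat f$.

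For the pointwise subinvariance I will exploit uniform continuity of $f$ on the compact space $X$: for each $\varepsilon_2 > 0$ there is $\delta(\varepsilon_2) > 0$ with $d(z,z') < \delta(\varepsilon_2) \Rightarrow d(fz, fz') < \varepsilon_2$. Picking $\varepsilon_1 \le \min(\varepsilon_2, \delta(\varepsilon_2))$, the key set-theoretic inclusion to check is
\[B_{n-1}^-(\hat x, \varepsilon_1) \subseteq f^{-1}\bigl(B_n^-(\hat f \hat x, \varepsilon_2)\bigr), \qquad n \ge 1.\]
Given $z \in B_{n-1}^-(\hat x, \varepsilon_1)$ with witnessing prehistory $(z, z_{-1}, z_{-2}, \ldots)$, prepending $fz$ produces a prehistory $(fz, z, z_{-1}, \ldots)$ of $fz$ that meets the defining conditions of $B_n^-(\hat f \hat x, \varepsilon_2)$: at index $0$ one has $d(fx, fz) < \varepsilon_2$ by uniform continuity, and at indices $1 \le i \le n$ the bounds $d(x_{-i+1}, z_{-i+1}) < \varepsilon_1 \le \varepsilon_2$ are inherited from the original prehistory of $z$. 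Applying $\mu$ and using $f$-invariance gives $\mu(B_{n-1}^-(\hat x, \varepsilon_1)) \le \mu(B_n^-(\hat f \hat x, \varepsilon_2))$. Passing to $-\log$, dividing by $n$, and absorbing the harmless factor $(n-1)/n \to 1$ produces
\[h^-_{f,inf,B}(\mu, \hat f \hat x, \varepsilon_2) \ \le \ h^-_{f,inf,B}(\mu, \hat x, \varepsilon_1).\]
Sending $\varepsilon_1 \to 0$ first (the RHS then grows monotonically to $h^-_{f,inf,B}(\mu, \hat x)$) and $\varepsilon_2 \to 0$ second yields the pointwise inequality for $h^-_{f,inf,B}$; the identical argument with $\limsup$ in place of $\liminf$ settles $h^-_{f,sup,B}$.

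For the upgrade I will argue via level sets. Setting $E_a = \{\hat x : h^-_{f,inf,B}(\mu, \hat x) \le a\}$, the pointwise inequality gives $E_a \subseteq \hat f^{-1}(E_a)$, while $\hat f$-invariance of $\hat\mu$ gives $\hat\mu(\hat f^{-1}E_a) = \hat\mu(E_a)$, hence $\hat\mu(\hat f^{-1}E_a \setminus E_a) = 0$ and $E_a$ is $\hat f$-invariant modulo $\hat\mu$. Ergodicity of $\hat\mu$ under $\hat f$ then forces $\hat\mu(E_a) \in \{0,1\}$ for every $a \in \mathbb R$, so $h^-_{f,inf,B}(\mu, \cdot)$ is $\hat\mu$-a.e.\ equal to the constant $c := \inf\{a : \hat\mu(E_a)=1\}$, which is therefore also the value of the integral $h^-_{f,inf,B}(\mu)$. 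The same argument handles $h^-_{f,sup,B}(\mu, \cdot)$, completing the proof.

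The main obstacle is the pointwise step: the set inclusion of inverse Bowen balls runs only in one direction, so one does not obtain a pointwise $\hat f$-equality of the entropy functions; the scale mismatch $\varepsilon_1 < \varepsilon_2$ forced by uniform continuity, together with the Bowen-index shift $n-1$ vs $n$, must be cleanly absorbed in a careful order of limits that relies on the monotonicity of $h^-_{f,inf,B}(\mu, \hat x, \varepsilon)$ in $\varepsilon$. A subsidiary but routine item is Borel measurability of $\hat x \mapsto \mu(B_n^-(\hat x, \varepsilon))$, needed to define the functions and the level sets $E_a$; this follows from the identity $B_n^-(\hat x, \varepsilon) = f^n(B_n(x_{-n}, \varepsilon))$ and continuity of $f$.
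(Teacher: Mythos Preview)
Your proof is correct and follows essentially the same approach as the paper: both establish the one-sided inequality $h^-_{f,inf,B}(\mu,\hat f\hat x)\le h^-_{f,inf,B}(\mu,\hat x)$ (and its $\sup$ analogue) via uniform continuity and the inclusion $B^-_{n}(\hat x,\delta(\varepsilon))\subset f^{-1}(B^-_n(\hat f\hat x,\varepsilon))$, then invoke $\hat f$-invariance and ergodicity of $\hat\mu$ to get a.e.\ constancy. Your treatment is simply more explicit than the paper's in two places---the harmless index shift $n-1$ versus $n$ and the level-set argument promoting subinvariance to $\hat\mu$-a.e.\ invariance---but neither constitutes a different route.
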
	
\begin{proof}
	Let $\varepsilon>0$. As $X$ is compact and $f$ continuous, there exists $\delta(\varepsilon)>0$ such that if $x,y\in X, d(x,y)<\delta(\varepsilon)$, then $d(f(x),f(y))<\varepsilon$. It is easy to see that 
	$B^-_n(\hat{x},\delta(\varepsilon))\subset f^{-1}(B^-_n(\hat{f}(\hat{x}),\varepsilon))$ 
	and then $\mu (B^-_n(\hat{x},\delta(\varepsilon)))\leq \mu (f^{-1}(B^-_n(\hat{f}(\hat{x}),\varepsilon)))=\mu (B^-_n(\hat{f}(\hat{x}),\varepsilon))$. Hence $h^-_{f,inf,B}(\mu,\hat{x})\geq h^-_{f,inf,B}(\mu, \hat{f}(\hat{x}))$ and $h^-_{f,sup,B}(\mu,\hat{x})\geq h^-_{f,sup,B}(\mu, \hat{f}(\hat{x})).$
	This implies that $h^-_{f,inf,B}(\mu, \cdot)$ and $h^-_{f,sup,B}(\mu, \cdot)$ are $\hat{f}$-invariant. Then since $\hat{\mu}$ is ergodic, it follows that  $h^-_{f,inf,B}(\mu, \cdot)$ and $h^-_{f,sup,B}(\mu, \cdot)$ are constant $\hat{\mu}$-a.e. 
\end{proof}

\begin{proposition}
	Let $X$ be a compact metric space and $f:X\rightarrow X$ be as above.   Then the upper and the lower inverse metric entropy functions do not depend on the metric on $X$ (if the metrics are equivalent). 
\end{proposition}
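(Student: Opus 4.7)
The plan is to exploit the fact that two equivalent metrics on a compact space are automatically uniformly equivalent, and then to show that the inverse Bowen balls defined with respect to one metric can be sandwiched between inverse Bowen balls defined with respect to the other. This reduces the statement to a routine comparison of limits.

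More concretely, let $d_1$ and $d_2$ be two equivalent metrics on the compact space $X$, and denote by $B_n^{-,i}(\hat{x},\varepsilon)$ the $(n,\varepsilon)$-inverse Bowen ball along $\hat{x}$ defined using $d_i$, for $i=1,2$. Since the identity map $(X,d_1)\to(X,d_2)$ is uniformly continuous (compactness), for every $\varepsilon>0$ there exists $\delta(\varepsilon)>0$ with $\delta(\varepsilon)\to 0$ as $\varepsilon\to 0$, such that $d_1(x,y)<\delta(\varepsilon)$ implies $d_2(x,y)<\varepsilon$. Observing that the condition ``$\exists\,\hat{y}\in\widehat{X}$ with $d(x_{-i},y_{-i})<\varepsilon$ for $0\le i\le n$'' is preserved under such an implication applied coordinatewise to the same prehistory $\hat{y}$, one obtains the inclusion
$$B_n^{-,1}(\hat{x},\delta(\varepsilon))\subseteq B_n^{-,2}(\hat{x},\varepsilon),$$
valid for every $\hat{x}\in\widehat{X}$ and every $n\ge 1$.

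The measure $\mu$ and its lift $\hat\mu$ are defined on the Borel $\sigma$-algebra, which depends only on the topology, so they are the same for both metrics. The inclusion above therefore gives
$$-\log\mu\bigl(B_n^{-,1}(\hat{x},\delta(\varepsilon))\bigr)\ge -\log\mu\bigl(B_n^{-,2}(\hat{x},\varepsilon)\bigr),$$
and dividing by $n$ and passing to $\liminf_{n\to\infty}$ (resp.\ $\limsup_{n\to\infty}$) yields
$$h^{-,1}_{f,\mathrm{inf},B}(\mu,\hat{x},\delta(\varepsilon))\ge h^{-,2}_{f,\mathrm{inf},B}(\mu,\hat{x},\varepsilon),\qquad h^{-,1}_{f,\mathrm{sup},B}(\mu,\hat{x},\delta(\varepsilon))\ge h^{-,2}_{f,\mathrm{sup},B}(\mu,\hat{x},\varepsilon).$$
Letting $\varepsilon\to 0$ (so that $\delta(\varepsilon)\to 0$ as well) gives $h^{-,1}_{f,\mathrm{inf},B}(\mu,\hat{x})\ge h^{-,2}_{f,\mathrm{inf},B}(\mu,\hat{x})$ and analogously for the upper entropy function. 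Interchanging the roles of $d_1$ and $d_2$ yields the reverse inequalities, so the two functions agree pointwise.

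There is no serious obstacle; the only point to keep in mind is that equivalence of metrics on a compact space automatically upgrades to uniform equivalence, which is exactly what makes the inclusion of inverse Bowen balls work uniformly in $n$ and $\hat{x}$. The fact that the same prehistory $\hat{y}$ can be used on both sides of the inclusion is what allows us to avoid introducing the lifted metric $\hat{d}$ on $\widehat{X}$ at all.
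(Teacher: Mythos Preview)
Your proof is correct and follows essentially the same approach as the paper's own proof: both use compactness to upgrade topological equivalence to uniform equivalence of the metrics, obtain the corresponding inclusion of inverse Bowen balls, deduce the one-sided inequality of the entropy functions, and then swap the roles of the two metrics. Your write-up is in fact slightly more explicit about why the same prehistory $\hat{y}$ can be used on both sides and why $\mu$ is metric-independent, but the argument is the same.
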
	
\begin{proof}
	Let $d$ and $d'$ be two equivalent metrics on $X$. Since $d$ and $d'$ induce the same topology on $X$, the identity map $id : (X,d')\rightarrow(X,d)$ is continuous and by the compactness of $X$ also uniformly continuous. Let $\varepsilon>0$, then by uniform continuity of $id$ there exists $\delta(\varepsilon)>0$ such that $d(x,y)<\varepsilon$ if $d'(x,y)<\delta(\varepsilon)$. Thus $B_{n}^-(\hat{x},\varepsilon) \subset B^{'-}_{n}(\hat{x},\delta(\varepsilon))$, 
where $B_n^{'-}(\hat{x},\delta)=\{y\in X : \ \exists \ \hat{y}\in\widehat{X}\text{ such that }d'(x_{-i},y_{-i})<\delta, 0\leq i\leq n\}$. As $\delta(\varepsilon)\underset{\varepsilon\rightarrow 0}\longrightarrow 0$ it follows that	
$$\lim_{\varepsilon \rightarrow 0}\liminf_{n\rightarrow\infty}\frac{-\log \mu(B_n^{'-}(\hat{x},\varepsilon))}{n}\leq \lim_{\varepsilon \rightarrow 0}\liminf_{n\rightarrow\infty}\frac{-\log \mu(B_n^{-}(\hat{x},\varepsilon))}{n}.$$
	By interchanging $d$ and $d'$ we obtain 
	$$\lim_{\varepsilon \rightarrow 0}\liminf_{n\rightarrow\infty}\frac{-\log \mu(B_n^{-}(\hat{x},\varepsilon))}{n}\leq \lim_{\varepsilon \rightarrow 0}\liminf_{n\rightarrow\infty}\frac{-\log \mu(B_n^{'-}(\hat{x},\varepsilon))}{n}.$$
	and the lemma is proved. 
\end{proof}

\begin{proposition}
	Let $f$ be a continuous map of a compact metric space $X$ and  $\mu$ a probability measure on $X$ which is $f$-invariant. If $f$ is distance-expanding and open, then $h^-_{f,B}(\mu)=0$.	
\end{proposition}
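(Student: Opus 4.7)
The plan is to use the existence of local inverse branches of $f$ to show that, for small $\varepsilon>0$, the inverse Bowen balls $B_n^-(\hat x,\varepsilon)$ do not depend on $n$ at all; the conclusion then follows immediately. Let $\lambda>1$ and $\eta>0$ be the expansion constants of $f$, i.e.\ $d(f(x),f(y))\ge\lambda\, d(x,y)$ whenever $d(x,y)<\eta$. Standard results for distance-expanding open maps on compact metric spaces (Przytycki--Urba\'nski) furnish a radius $\varepsilon_0\in(0,\eta)$ such that for every $x\in X$ the map $f$ admits a continuous right-inverse $f_x^{-1}\colon B(f(x),\varepsilon_0)\to B(x,\eta)$ with $f_x^{-1}(f(x))=x$, which is moreover a $\lambda^{-1}$-Lipschitz contraction.

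The key step is the identification, for every $\varepsilon\in(0,\varepsilon_0)$, every $\hat x=(x_0,x_{-1},\ldots)\in\widehat X$ and every $n\ge 1$,
$$B_n^-(\hat x,\varepsilon)=B(x_0,\varepsilon).$$
The inclusion $\subseteq$ is immediate from the case $i=0$ of the defining condition. For $\supseteq$, given $y\in B(x_0,\varepsilon)$, set $y_0:=y$ and inductively $y_{-i}:=f_{x_{-i}}^{-1}(y_{-i+1})$; the contraction property gives $d(y_{-i},x_{-i})\le\lambda^{-i}d(y_0,x_0)<\lambda^{-i}\varepsilon<\varepsilon_0$, so the next inverse branch is defined and the recursion continues indefinitely. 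The resulting prehistory $\hat y\in\widehat X$ satisfies $d(x_{-i},y_{-i})<\varepsilon$ for all $i\ge 0$, witnessing that $y\in B_n^-(\hat x,\varepsilon)$.

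Consequently $\mu(B_n^-(\hat x,\varepsilon))=\mu(B(x_0,\varepsilon))$ is independent of $n$, so
$$\frac{-\log\mu(B_n^-(\hat x,\varepsilon))}{n}=\frac{-\log\mu(B(x_0,\varepsilon))}{n}\longrightarrow 0\quad(n\to\infty)$$
whenever $\mu(B(x_0,\varepsilon))>0$. Since $X$ is compact (in particular separable), $\operatorname{supp}\mu$ has full $\mu$-measure, so for $\mu$-a.e.\ $x_0$ (and hence for $\hat\mu$-a.e.\ $\hat x$, via $\pi_*\hat\mu=\mu$) one has $\mu(B(x_0,\varepsilon))>0$ for every $\varepsilon>0$. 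Letting $\varepsilon\to 0$ along a countable sequence yields $h^-_{f,inf,B}(\mu,\hat x)=h^-_{f,sup,B}(\mu,\hat x)=0$ for $\hat\mu$-a.e.\ $\hat x$, and integrating over $\widehat X$ gives $h^-_{f,B}(\mu)=0$. The only delicate point is the inductive construction of the contracted prehistory, where one must verify that each $y_{-i}$ lies in the domain of the next inverse branch; this is guaranteed by the geometric decay $\lambda^{-i}\varepsilon<\varepsilon_0$.
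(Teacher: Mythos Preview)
Your proof is correct and follows essentially the same approach as the paper: use the contracting local inverse branches of a distance-expanding open map to show that $B(x_0,\varepsilon)\subset B_n^-(\hat x,\varepsilon)$ for all $n$ and small $\varepsilon$, so that $-\log\mu(B_n^-(\hat x,\varepsilon))/n\to 0$. The paper records only the one inclusion (which suffices) and leaves the support argument implicit, whereas you prove the full equality $B_n^-(\hat x,\varepsilon)=B(x_0,\varepsilon)$ and spell out why $\mu(B(x_0,\varepsilon))>0$ for $\hat\mu$-a.e.\ $\hat x$; this extra care is harmless and arguably cleaner.
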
	
\begin{proof}
If $f$ is continuous distance-expanding and open, then there exists $\varepsilon_0>0$ such that for every $0<\delta<\varepsilon_0$ and any $\hat x \in\widehat{X}$ and $n\geq 1$, we have $B(x,\delta)\subset B_n^-(\hat{x},\delta)$. Then $0 \le -\log\mu(B_n^-(\hat{x},\delta)) \le  -\log\mu(B(x,\delta))$, and therefore $h^-_{f,B}(\mu)=0$.
\end{proof}

\begin{proposition}\label{conjtopol}
	Let $(X,d)$ and $(X', d')$ be two compact metric spaces, $f: X\rightarrow X$ and $f':X' \rightarrow X'$ be two continuous transformations and let $\mu$ and $\mu'$ be two Borel probability measures on $X$, respectively $X'$, such that 
	$\mu$ is $f$-invariant and $\mu'$ is $f'$-invariant. Assume that $\Phi : X\rightarrow X'$ is a topological conjugacy between $(X,f)$ and $(X',f')$ such that $\mu'=\Phi_*\mu$. Then for $\hat{x}\in \widehat{X}$,  $h^-_{f,inf,B}(\mu,\hat{x})=h^-_{f',inf,B}(\mu',\hat{\Phi}\hat{x})$ and $h^-_{f,sup,B}(\mu,\hat{x})=h^-_{f',sup,B}(\mu',\hat{\Phi}\hat{x})$.
\end{proposition}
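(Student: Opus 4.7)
The plan is to exploit the uniform continuity of $\Phi$ and $\Phi^{-1}$ (automatic since $X$, $X'$ are compact and $\Phi$ is a homeomorphism) to pass $(n,\varepsilon)$-inverse Bowen balls back and forth through $\Phi$, and then to use the measure-preserving property $\mu' = \Phi_*\mu$ to equate the corresponding measures.

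First, I would observe that the conjugacy $f' \circ \Phi = \Phi \circ f$ lifts naturally to a homeomorphism $\widehat{\Phi} : \widehat{X} \to \widehat{X'}$ defined by $\widehat{\Phi}(\hat{x}) = (\Phi(x), \Phi(x_{-1}), \Phi(x_{-2}), \ldots)$, because each $\Phi(x_{-i})$ is an $f'$-preimage of $\Phi(x_{-i+1})$. Now fix $\varepsilon > 0$ and, using uniform continuity of $\Phi$, choose $\delta = \delta(\varepsilon) > 0$ with $d(u,v) < \delta \Rightarrow d'(\Phi(u), \Phi(v)) < \varepsilon$ such that $\delta(\varepsilon) \to 0$ as $\varepsilon \to 0$. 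If $y \in B_n^-(\hat{x}, \delta)$ then there is a prehistory $\hat{y} = (y, y_{-1}, \ldots)$ with $d(x_{-i}, y_{-i}) < \delta$ for $0 \leq i \leq n$; applying $\Phi$ coordinatewise produces the prehistory $\widehat{\Phi}\hat{y}$ of $\Phi(y)$ under $f'$, and $d'(\Phi(x_{-i}), \Phi(y_{-i})) < \varepsilon$ for all $0 \leq i \leq n$. Hence
$$\Phi\bigl(B_n^-(\hat{x}, \delta)\bigr) \subset B_n^{\prime\,-}\bigl(\widehat{\Phi}\hat{x}, \varepsilon\bigr),$$
which, since $\mu' = \Phi_*\mu$, implies the measure inequality
$$\mu\bigl(B_n^-(\hat{x}, \delta)\bigr) \leq \mu'\bigl(B_n^{\prime\,-}(\widehat{\Phi}\hat{x}, \varepsilon)\bigr).$$

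Taking $-\log$, dividing by $n$ and passing to $\liminf_{n\to\infty}$ and $\limsup_{n\to\infty}$ gives the scale-dependent bounds
$$h^-_{f, inf, B}(\mu, \hat{x}, \delta) \geq h^-_{f', inf, B}(\mu', \widehat{\Phi}\hat{x}, \varepsilon), \qquad h^-_{f, sup, B}(\mu, \hat{x}, \delta) \geq h^-_{f', sup, B}(\mu', \widehat{\Phi}\hat{x}, \varepsilon).$$
Now let $\varepsilon \to 0$; since $\delta(\varepsilon) \to 0$, the left-hand sides converge to $h^-_{f, inf, B}(\mu, \hat{x})$ and $h^-_{f, sup, B}(\mu, \hat{x})$ respectively, while the right-hand sides converge to $h^-_{f', inf, B}(\mu', \widehat{\Phi}\hat{x})$ and $h^-_{f', sup, B}(\mu', \widehat{\Phi}\hat{x})$, yielding one direction of each equality.

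The reverse inequalities follow by an entirely symmetric argument applied to the homeomorphism $\Phi^{-1}: X' \to X$, which conjugates $f'$ to $f$ and satisfies $(\Phi^{-1})_*\mu' = \mu$; uniform continuity of $\Phi^{-1}$ gives a modulus $\delta'(\varepsilon')$ with $\delta'(\varepsilon') \to 0$, from which one obtains $\Phi^{-1}\bigl(B_n^{\prime\,-}(\widehat{\Phi}\hat{x}, \delta')\bigr) \subset B_n^-(\hat{x}, \varepsilon')$ and hence the opposite inequality on entropies. Combining both directions yields $h^-_{f, inf, B}(\mu, \hat{x}) = h^-_{f', inf, B}(\mu', \widehat{\Phi}\hat{x})$ and $h^-_{f, sup, B}(\mu, \hat{x}) = h^-_{f', sup, B}(\mu', \widehat{\Phi}\hat{x})$. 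There is no serious obstacle in this argument; the only point to be careful about is that the prehistory $\widehat{\Phi}\hat{y}$ constructed from $\hat{y}$ is genuinely an element of $\widehat{X'}$, which is exactly the content of the intertwining relation $f' \circ \Phi = \Phi \circ f$.
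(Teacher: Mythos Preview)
Your argument is correct. The paper's proof takes a slightly slicker route: it pulls back the metric $d'$ on $X'$ via $\Phi$ to define an equivalent metric $\tilde d(x,y) = d'(\Phi(x),\Phi(y))$ on $X$, invokes the previously established metric-independence of the inverse entropy functions, and then observes that with $\tilde d$ the map $\Phi$ is an isometry, so inverse Bowen balls (and their measures, via $\mu'=\Phi_*\mu$) correspond exactly. Your direct uniform-continuity comparison is precisely what underlies that metric-independence proposition, so you are in effect unrolling the paper's two-step packaging into a single explicit argument; the content is the same, but the paper's version is shorter because it reuses work already done.
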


\begin{proof}
	Let $d'$ be a metric on $X'$ and let $d$ be the metric on $X$ defined by
	$$d(x,y)=d'(\Phi(x), \Phi(y)).$$
	By the previous proposition the inverse entropy functions do not depend on the metric. Since $\mu'=\Phi_*\mu$, 
	%isometry with respect to $d$ and $d'$, 
	the conclusion follows. 
\end{proof}

In the sequel we will need the following result: 
\begin{lemma}(Modified Brin-Katok Theorem)\label{mod}
	With the above notation, if $\mu$ is ergodic we have 
	$$\lim_{\varepsilon\rightarrow0}\liminf_{n\rightarrow\infty}\frac{-\log \mu(B_{n}(x_{-n},\varepsilon))}{n}
	=\lim_{\varepsilon\rightarrow0}\limsup_{n\rightarrow\infty}\frac{-\log \mu(B_{n}(x_{-n},\varepsilon))}{n}=h_{f}(\mu)$$
	for  $\hat{\mu}$-a.e. $\hat{x}\in \widehat{X}$, where $h_f(\mu)$ is the entropy of $\mu$ with respect to $f$. 
\end{lemma}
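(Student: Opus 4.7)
The plan is to adapt Brin and Katok's classical local entropy argument to the moving base point $x_{-n}$ by lifting to the natural extension $(\widehat X, \hat f, \hat\mu)$ and exploiting the invertibility of $\hat f$. For any finite measurable partition $\mathcal P$ of $X$ with $\widehat{\mathcal P}:=\pi^{-1}(\mathcal P)$, set $\widehat{\mathcal Q}_n := \bigvee_{i=1}^{n}\hat f^{i}\widehat{\mathcal P}$. A direct computation, applying $\hat f^{-n}$ and using the $\hat f^n$-invariance of $\hat\mu$, yields the identity
\[
\mu(\mathcal P_n(x_{-n})) \ =\ \hat\mu\bigl(\widehat{\mathcal Q}_n(\hat x)\bigr).
\]
Applying Shannon--McMillan--Breiman to the invertible measure-preserving system $(\widehat X, \hat f^{-1}, \hat\mu)$ with partition $\widehat{\mathcal P}$ (and using $h_{\hat f^{-1}}(\hat\mu,\widehat{\mathcal P})=h_f(\mu,\mathcal P)$) gives, for $\hat\mu$-a.e.\ $\hat x$,
\[
-\frac{1}{n}\log\mu(\mathcal P_n(x_{-n}))\ \xrightarrow[n\to\infty]{}\ h_f(\mu,\mathcal P).
\]

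For the upper bound on $\limsup$, choose $\mathcal P$ with $\mathrm{diam}(\mathcal P)<\varepsilon$, so $\mathcal P_n(x_{-n})\subseteq B_{n}(x_{-n},\varepsilon)$ and hence $-\log\mu(B_{n}(x_{-n},\varepsilon))\leq -\log\mu(\mathcal P_n(x_{-n}))$. Combining with the previous display and letting $\mathrm{diam}(\mathcal P)\to 0$ with $h_f(\mu,\mathcal P)\uparrow h_f(\mu)$ (Kolmogorov--Sinai) yields $\lim_{\varepsilon\to 0}\limsup_n\frac{-\log\mu(B_{n}(x_{-n},\varepsilon))}{n}\leq h_f(\mu)$.

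For the matching lower bound on the $\liminf$, follow Brin--Katok's covering argument along the backward orbit. Fix $\delta>0$ and select a partition $\mathcal P$ with $\mu(\partial\mathcal P)=0$, $\mathrm{diam}(\mathcal P)<\varepsilon$ and $h_f(\mu,\mathcal P)>h_f(\mu)-\delta$. Every $\mathcal P_n$-atom meeting $B_{n}(x_{-n},\varepsilon)$ has the form $\bigcap_{i=0}^{n-1}f^{-i}(A_i)$ with $A_i\in\mathcal P$ and $A_i\cap B(x_{-n+i},\varepsilon)\neq\emptyset$, so the number of such atoms is at most $\prod_{i=0}^{n-1}N(B(x_{-n+i},\varepsilon),\mathcal P)$, where $N(B,\mathcal P):=\#\{A\in\mathcal P:A\cap B\neq\emptyset\}$. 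Combining an Egorov uniformization of SMB — so $\mu(\mathcal P_n(y))\leq e^{-n(h_f(\mu,\mathcal P)-\delta)}$ uniformly for $y$ in a set of $\mu$-measure $\geq 1-\delta$ — with Birkhoff's theorem on $(\widehat X,\hat f^{-1},\hat\mu)$ applied to $\phi_\varepsilon(y):=\log N(B(y,\varepsilon),\mathcal P)$, which gives $\frac{1}{n}\sum_{j=1}^{n}\phi_\varepsilon(x_{-j})\to\int\phi_\varepsilon\,d\mu$ $\hat\mu$-a.e., with $\int\phi_\varepsilon\,d\mu\to 0$ as $\varepsilon\to 0$ by dominated convergence and $\mu(\partial\mathcal P)=0$, one obtains
\[
\liminf_n\frac{-\log\mu(B_{n}(x_{-n},\varepsilon))}{n}\ \geq\ h_f(\mu,\mathcal P)-\delta-\int\phi_\varepsilon\,d\mu.
\]
Letting $\varepsilon,\delta\to 0$ completes the proof.

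The main obstacle is the lower bound: pointwise SMB provides decay of $\mu(\mathcal P_n(y))$ only at $\mu$-a.e.\ $y\in X$, but the argument requires control of all $\mathcal P_n$-atoms meeting the moving Bowen ball $B_{n}(x_{-n},\varepsilon)$ simultaneously. Egorov uniformization — absorbing the exceptional atoms into a $\delta$-error term — combined with the Birkhoff average of the log-covering number along the backward orbit (which relies on the invertibility of the extension $\hat f^{-1}$) resolves this difficulty.
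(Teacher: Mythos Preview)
Your approach differs from the paper's. The paper does not re-run the Brin--Katok argument via SMB and atom counting; instead it reduces directly to the classical Brin--Katok theorem applied to the homeomorphism $\hat f^{-1}$ on the compact metric space $(\widehat X,\hat d)$. The key is the pair of set inclusions
\[
\hat f^{-n}\bigl(\widehat B_n(\hat x,\hat f^{-1},\varepsilon)\bigr)\ \subset\ \pi^{-1}\bigl(B_n(x_{-n},\varepsilon)\bigr)\ \subset\ \hat f^{-n}\bigl(\widehat B_{n-N}(\hat x,\hat f^{-1},3\varepsilon)\bigr),
\]
where $\widehat B_m(\hat x,\hat f^{-1},\rho)$ denotes the $(m,\rho)$-Bowen ball for $\hat f^{-1}$ in $\widehat X$, and $N$ is a \emph{fixed} integer depending only on $\varepsilon$ and $\mathrm{diam}(X)$ (chosen so that $\sum_{j>N}2^{-j}\,\mathrm{diam}(X)<\varepsilon$). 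Taking $\hat\mu$-measures, using $\hat f$-invariance and $\pi_*\hat\mu=\mu$, and invoking classical Brin--Katok for $\hat f^{-1}$ together with $h_{\hat f^{-1}}(\hat\mu)=h_{\hat f}(\hat\mu)=h_f(\mu)$ finishes the proof in one stroke, with no atom counting at all.

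Your upper bound is correct, and your identity $\mu(\mathcal P_n(x_{-n}))=\hat\mu(\widehat{\mathcal Q}_n(\hat x))$ is exactly the paper's ``Modified SMB'' statement. The genuine gap is in your lower bound. Egorov gives a set $G$ with $\mu(G)\ge 1-\delta$ on which $\mu(\mathcal P_n(y))\le e^{-n(h-\delta)}$ uniformly; bad atoms at level $n$ then lie in $X\setminus G$, of total measure $<\delta$. Combined with your Birkhoff bound $M_n$ on the number of atoms meeting $B_n(x_{-n},\varepsilon)$, this yields only
\[
\mu\bigl(B_n(x_{-n},\varepsilon)\bigr)\ \le\ M_n\,e^{-n(h-\delta)} + \delta.
\]
Here $\delta$ is \emph{fixed} while $n\to\infty$: since $M_n\,e^{-n(h-\delta)}\to 0$, the right side tends to $\delta$, and one concludes merely $\liminf_n -\tfrac1n\log\mu(B_n(x_{-n},\varepsilon))\ge 0$, not $\ge h-\delta-\int\phi_\varepsilon\,d\mu$. ``Absorbing the exceptional atoms into a $\delta$-error term'' fails precisely because the error is additive in the measure rather than in the exponent. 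Closing this requires a further idea---e.g.\ the Hamming-name counting device the paper itself deploys in the proof of Theorem~\ref{zerob}, or a Borel--Cantelli control on the bad sets---which your sketch does not supply. The paper's reduction to $(\widehat X,\hat f^{-1})$ sidesteps this difficulty entirely.
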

\begin{proof}
	Fix $\hat{x}=(x,x_{-1},\ldots)\in\widehat{X}$ and $\varepsilon>0$. Let the $(n,\varepsilon)$-Bowen ball of $\hat{f}^{-1}$ and $\hat{x}\in \widehat{X}$, $$\widehat{B}_{n}(\hat{x},\hat{f}^{-1}, \varepsilon)=\{\hat{y}\in \widehat X : \hat{d}(\hat{f}^{-i}(\hat{x}),\hat{f}^{-i}(\hat{y}))<\varepsilon, 0\leq i\leq n\}\subset \widehat{X}.$$
	We show first that 
	$$\hat{f}^{-n}(\widehat{B}_{n}(\hat{x},\hat{f}^{-1}, \varepsilon))\subset \pi^{-1}(B_{n}(x_{-n},\varepsilon)).$$
	Let $\hat{y}\in \widehat B_{n}(\hat{x},\hat{f}^{-1}, \varepsilon)$. Thus 
	$\hat{d}(\hat{f}^{-i}(\hat{x}), \hat{f}^{-i}(\hat{y}))<\varepsilon$ for $i=0,1,\ldots,n$. This clearly implies that $d(x_{-i},y_{-i})<\varepsilon$ for $i=0,1,\ldots,n$ and then $y_{-n}\in B_{n}(x_{-n},\varepsilon)$. Hence $\pi\hat{f}^{-n}(\hat{y})\in B_{n}(x_{-n},\varepsilon)$ and therefore $\hat{f}^{-n}(\hat{y})\in \pi^{-1}(B_{n}(x_{-n},\varepsilon))$. If $M=\sup \{d(x,y) : x,y\in X\}$, let $N$ be the smallest positive integer such that 
	$\sum_{j=1}^\infty\frac{M}{2^{j+N}}<\varepsilon$. We prove now that for $n>N$ and $k(n)=n-N$ we have
	\begin{equation}\label{rel1}
	\pi^{-1}(B_{n}(x_{-n},\varepsilon))\subset \hat{f}^{-n}(\widehat{B}_{k(n)}(\hat{x},\hat{f}^{-1},3\varepsilon)).
	\end{equation}
	Let $\hat{y}\in \pi^{-1}(B_{n}(x_{-n},\varepsilon))$. Thus $y\in B_{n}(x_{-n},\varepsilon)$ and then $d(f^i(y), x_{-n+i})<\varepsilon$	for $i=0,1,\ldots n$. It is easy to see that for $0\leq i\leq k(n)$ we have
	\begin{align*}
	\hat{d}(\hat{f}^{n-i}(\hat{y}),\hat{f}^{-i}(\hat{x}))&=d(f^{n-i}(y), x_{-i})+\frac{d(f^{n-i-1}(y), x_{-i-1})}{2}+\cdots +\frac{d(f^{n-k(n)}(y), x_{-k(n)})}{2^{k(n)-i}}+\cdots\\
	&+\frac{d(y, x_{-n})}{2^{n-i}}+\sum_{j=1}^\infty\frac{d(y_{-j},x_{-n-j})}{2^{n+j-i}}<\varepsilon+\frac{\varepsilon}{2}+\cdots+\frac{\varepsilon}{2^{n-i}}+\sum_{j=1}^\infty\frac{M}{2^{n+j-i}}.
	\end{align*}
	Since $i\leq k(n)$ we have $n+j-i\geq n-k(n)+j=N+j$, for $j\geq 1$. Thus we obtain that
	$$\hat{d}(\hat{f}^{n-i}(\hat{y}),\hat{f}^{-i}(\hat{x}))<2\varepsilon+\varepsilon=3\varepsilon, \text{ for }i=0,1,\ldots, k(n).$$
	which proves (\ref{rel1}). Hence, for $n$ large enough we have
	$$\hat{f}^{-n}(\widehat B_{n}(\hat{x},\hat{f}^{-1},\varepsilon))\subset\pi^{-1}(B_{n}(x_{-n},\varepsilon))\subset \hat{f}^{-n}( \widehat B_{k(n)}(\hat{x},\hat{f}^{-1},3\varepsilon)).$$
		By Brin-Katok Theorem (see \cite{B}) applied to $\hat{f}^{-1}$ and $\hat{\mu}$ on $\widehat{X}$, we obtain
	\begin{equation*}\label{rel2}
		\lim_{\varepsilon\rightarrow0}\liminf_{n\rightarrow\infty}\frac{-\log \hat{\mu}(\widehat{B}_n(\hat{x},\hat{f}^{-1},\varepsilon))}{n}
	=\lim_{\varepsilon\rightarrow0}\limsup_{n\rightarrow\infty}\frac{-\log \hat{\mu}(\widehat{B}_n(\hat{x},\hat{f}^{-1},\varepsilon))}{n}=h_{\hat{f}}(\hat{\mu}).
	\end{equation*}
	But $\hat{f}$ is $\hat{\mu}$-measure preserving,  $\mu(B_{n}(x_{-n},\varepsilon))=\hat{\mu}(\pi^{-1}(B_{n}(x_{-n},\varepsilon)))$ and $h_f(\mu)=h_{\hat{f}}(\hat{\mu})$. Thus our Lemma follows from this. 
	%$$\hat{f}^{-n}(B_{n}(\hat{x},\hat{f}^{-1},\varepsilon))\subset\pi^{-1}(B_{n}(x_{-n},\varepsilon))\subset \hat{f}^{-n}(B_{k(n)}(\hat{x},\hat{f}^{-1},3\varepsilon)),$$
	%and the lemma follows from (\ref{rel2}). 
	%Since the homeomorphism $\hat{f}$ is $\hat{\mu}$-measure preserving, $\mu(B_{n}(x_{-n},\varepsilon))=\hat{\mu}(\pi^{-1}(B_{n}(x_{-n},\varepsilon)))$ and $h_f(\mu)=h_{\hat{f}}(\hat{\mu})$, for $n$ large enough we have
	%$$\hat{f}^{-n}(B_{n}(\hat{x},\hat{f}^{-1},\varepsilon))\subset\pi^{-1}(B_{n}(x_{-n},\varepsilon))\subset \hat{f}^{-n}(B_{k(n)}(\hat{x},\hat{f}^{-1},3\varepsilon)),$$
	%and the lemma follows from (\ref{rel2}). 
\end{proof}

\begin{proposition}
	(i) Let $(X,d)$ be a compact metric space, $f:X\rightarrow X$ a continuous map and $\mu$ an $f$-invariant ergodic measure on $X$ which has inverse metric entropy. If $m\in \mathbb{N}^*$, then $$h^-_{f^m,B}(\mu)=m\cdot h^-_{f,B}(\mu).$$
	(ii) Let $(X_i,d_i)$ be a compact metric space,  $f_i: X_i\rightarrow X_i$ continuous transformation on $X_i$ and let
	$\mu_i$ a Borel probability measure on $X_i$ which is $f_i$-invariant and for which there exists the inverse metric entropy, $i=1,2$. On $X_1\times X_2$ we consider the metric $d$ defined by $d((x_1,x_2),(y_1,y_2))=\max\{d(x_1,y_1), d(x_2,y_2)\}$ and the product measure $\mu_1 \times\mu_2$. Then
	$$h^-_{f_1\times f_2,B}(\mu_1 \times\mu_2)= h^-_{f_1,B}(\mu_1)+ h^-_{f_2,B}(\mu_2).$$	
\end{proposition}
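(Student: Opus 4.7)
The plan for part (i) is to transfer the computation to the $f$-inverse limit via the natural bijection $\Psi : \widehat X_f \to \widehat X_{f^m}$, $\Psi(x, x_{-1}, x_{-2}, \ldots) = (x, x_{-m}, x_{-2m}, \ldots)$. A direct check gives $\widehat{f^m} \circ \Psi = \Psi \circ \hat f^m$, so $\Psi$ is a measurable conjugacy, and $\Psi_*\hat\mu_f$ is an $\widehat{f^m}$-invariant lift of $\mu$; I would identify it with the canonical lift $\hat\mu_{f^m}$, passing if needed to the (finitely many) $f^m$-ergodic components of $\mu$ and averaging. Next I would establish the two-sided comparison
\begin{equation*}
B_{nm}^{-,f}(\hat x,\varepsilon) \;\subset\; B_n^{-,f^m}(\Psi\hat x,\varepsilon) \;\subset\; B_{nm}^{-,f}(\hat x,\delta(\varepsilon)),
\end{equation*}
with $\delta(\varepsilon)\to 0$ as $\varepsilon\to 0$. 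The first inclusion is immediate (the $f$-inverse Bowen ball imposes constraints at every index $0,1,\ldots,nm$ while the $f^m$-inverse ball does so only at indices in $m\mathbb{Z}$). The second follows from uniform continuity of $f, f^2, \ldots, f^{m-1}$ on the compact space $X$, which controls the intermediate coordinates via $y_{-j} = f^{(i+1)m-j}(y_{-(i+1)m})$ for $im < j < (i+1)m$.

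Taking $-\log\mu(\cdot)/n$ across these inclusions yields, for every $\hat x$ and every $\varepsilon>0$,
\begin{equation*}
m \cdot \frac{-\log\mu(B_{nm}^{-,f}(\hat x,\delta(\varepsilon)))}{nm} \;\le\; \frac{-\log\mu(B_n^{-,f^m}(\Psi\hat x,\varepsilon))}{n} \;\le\; m \cdot \frac{-\log\mu(B_{nm}^{-,f}(\hat x,\varepsilon))}{nm}.
\end{equation*}
Taking $\liminf$ (resp. $\limsup$) in $n$, using the subsequence inequalities $\liminf_n a_{nm}\ge\liminf_k a_k$ and $\limsup_n a_{nm}\le\limsup_k a_k$, and then letting $\varepsilon\to 0$, together with the hypothesis that $\mu$ has inverse metric entropy for $f$ (so the outer $\varepsilon$-limits of $\liminf$ and $\limsup$ of $\frac{-\log\mu(B_k^{-,f})}{k}$ both equal $h^-_{f,B}(\mu)$, which is constant $\hat\mu_f$-a.e. by the earlier ergodic-constancy proposition), one obtains the sandwich
\begin{equation*}
m\cdot h^-_{f,B}(\mu) \;\le\; h^-_{f^m,inf,B}(\mu,\Psi\hat x) \;\le\; h^-_{f^m,sup,B}(\mu,\Psi\hat x) \;\le\; m\cdot h^-_{f,B}(\mu),
\end{equation*}
for $\hat\mu_f$-a.e. $\hat x$. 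Integrating gives (i).

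For part (ii), the strategy is more direct. I would identify $\widehat{X_1\times X_2}$ with $\widehat X_1\times\widehat X_2$ (and the canonical lift of $\mu_1\times\mu_2$ with $\hat\mu_1\times\hat\mu_2$), and observe that under the max product metric the defining condition of the inverse Bowen ball factorizes, while every prehistory of $(y_1,y_2)$ under $f_1\times f_2$ is precisely a pair of prehistories. Hence
\begin{equation*}
B_n^{-,f_1\times f_2}((\hat x_1,\hat x_2),\varepsilon) \;=\; B_n^{-,f_1}(\hat x_1,\varepsilon) \times B_n^{-,f_2}(\hat x_2,\varepsilon),
\end{equation*}
so by Fubini, $-\log(\mu_1\times\mu_2)$ of this set equals the sum of the two individual $-\log\mu_i$ terms. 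Applying subadditivity of $\limsup$ and superadditivity of $\liminf$ and then letting $\varepsilon\to 0$, and using the hypothesis that each $\mu_i$ has inverse metric entropy (so by ergodic constancy the $\varepsilon$-limits of $\liminf_n$ and $\limsup_n$ of each summand equal the same constant $h^-_{f_i,B}(\mu_i)$ $\hat\mu_i$-a.e.), one obtains $h^-_{f_1\times f_2,inf,B}(\cdot)\ge h^-_{f_1,B}(\mu_1)+h^-_{f_2,B}(\mu_2)$ and $h^-_{f_1\times f_2,sup,B}(\cdot)\le h^-_{f_1,B}(\mu_1)+h^-_{f_2,B}(\mu_2)$ on a full-measure set; since $\liminf\le\limsup$ pointwise, all four quantities coincide, and integrating against $\hat\mu_1\times\hat\mu_2$ yields (ii).

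The main obstacle in both parts is that $\liminf$ and $\limsup$ do not distribute across sums or restrict freely to subsequences; the hypothesis that each measure has inverse metric entropy is exactly what collapses the outer sandwich bounds to a common value, forcing pointwise equality of the $\liminf$ and $\limsup$ versions of the target system. A secondary technical wrinkle in part (i) is the ergodic-theoretic identification of the canonical $\widehat{f^m}$-lift of $\mu$ with $\Psi_*\hat\mu_f$, which I would handle by reducing to the $f^m$-ergodic components of $\mu$ and averaging; in part (ii) the analogous concern is that $\mu_1\times\mu_2$ need not be $f_1\times f_2$-ergodic, but since the pointwise sandwich already pins down the integrand $\hat\mu_1\times\hat\mu_2$-a.e., ergodicity of the product is not needed.
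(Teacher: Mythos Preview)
Your proposal is correct and follows essentially the same approach as the paper: for (i) the paper uses the canonical bijection $\widehat X_f\to\widehat X_{f^m}$ together with uniform continuity of $f,\ldots,f^{m-1}$ to obtain exactly your two-sided inclusion of inverse Bowen balls, and for (ii) it notes the product factorization $B_n^-((\hat x_1,\hat x_2),\varepsilon)=B_n^-(\hat x_1,\varepsilon)\times B_n^-(\hat x_2,\varepsilon)$. You supply considerably more detail (the subsequence $\liminf/\limsup$ inequalities, the role of the ``has inverse metric entropy'' hypothesis in collapsing the sandwich, and the non-ergodicity caveats), whereas the paper's proof is a terse sketch; your ergodic-component detour for identifying $\hat\mu_{f^m}$ with $\Psi_*\hat\mu_f$ is in fact unnecessary since the inverse-limit lift of any invariant measure is unique.
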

\begin{proof}
	(i) As usual, denote by $\widehat{X}$ the set of prehistories with respect to $f$ and let 
	$$\widehat{X}_m=\{\hat{x}_{(m)}=(x,x_{-m},x_{-2m},\ldots) : \ x_{-im}\in    X,\ f^{m}(x_{-im})=x_{-im+m}, i\geq 1, x_0=x\}$$ 
	be the set of prehistories with respect to $f^{m}$. Note that we have a canonical bijection between $\widehat{X}$ and $\widehat{X}_m$. For every $\hat{x}_{(m)}\in \widehat{X}_m$ let $$B_n^-(\hat{x}_{(m)},\varepsilon)=\{y\in X : \ \exists \ \hat{y}_{(m)}\in\widehat{X}_m\text{ such that }d(x_{-im},y_{-im})<\varepsilon, 0\leq i\leq n\}.$$
	As $f$ is uniformly continuous for every $\varepsilon>0$ there exists $\delta(\varepsilon)>0$ such that 
	$d(x,y)<\delta(\varepsilon)$ implies  $d(f^{i}(x),f^{i}(y))<\varepsilon$  for $i=0,1,\ldots, m$.
	Note that $B_{n}^-(\hat{x}_{(m)},\delta(\varepsilon))\subset B_{mn}^-(\hat{x},\varepsilon))\subset B_{n}^-(\hat{x}_{(m)}, \varepsilon)$ and 
	then $h^-_{f^m,B}(\mu)=m\cdot h^-_{f,B}(\mu)$.
	
	(ii) Note  if $\hat{x}, \hat{y}\in \widehat{X}$ then $\hat{z}=(\hat{x},\hat{y})$ is a prehistory of $(x,y)$ and $B_n^-(\hat{x},\varepsilon)\times B_{n}^-(\hat{y},\varepsilon)=B_n^-(\hat{z},\varepsilon)$.
\end{proof}

In the sequel, we will encounter several instances when we need the Jacobian of the measure to be bounded. A case when this happens is for the equilibrium measure of a H\"{o}lder continuous potential for a hyperbolic endomorphism. In \cite{MU3} it was proved the following result: 

\begin{theorem}\label{Jac}
	(Jacobians of equilibrium measures for endomorphisms \cite{MU3}). Let $f:M\rightarrow M$ be a $\mathcal{C}^2$ smooth map on a manifold $M$ such that $f$ is a hyperbolic endomorphism on a basic set $\Lambda$ and $f$ has no critical points in $\Lambda$. Let  $\phi$ be a H\"{o}lder continuous potential on $\Lambda$ and $\mu_\phi$ be the unique equilibrium measure of $\phi$ on $\Lambda$. Then there exists a constant $C>0$ such that for $\mu_\phi$-a.e. $x\in \Lambda$ and any $m \geq 1$, the Jacobian $J_{f^m}(\mu_\phi)$ of $\mu_\phi$ with respect to $f^m$ satisfies:
	\begin{equation*}
	C^{-1}\cdot\frac{\sum_{\zeta\in f^{-m}(f^m(x))\cap\Lambda}e^{S_m\phi(\zeta)}  }{e^{S_m\phi(x)}}\leq J_{f^m}(\mu_\phi)(x)\leq C\cdot\frac{\sum_{\zeta\in f^{-m}(f^m(x))\cap\Lambda}e^{S_m\phi(\zeta)}  }{e^{S_m\phi(x)}},
	\end{equation*}
	where  $S_m\phi(x)=\phi(x)+\cdots+\phi(f^{m-1}(x))$, for $x\in\Lambda$ and $m\in \mathbb{N}$.
\end{theorem}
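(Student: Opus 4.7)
The plan is to derive the two-sided Jacobian formula from the Gibbs property of the equilibrium measure $\mu_\phi$, through a careful comparison of its masses on Bowen balls and on their $m$-step preimages. First I would invoke the existence of a Markov partition of the basic set $\Lambda$ for the hyperbolic endomorphism $f|_\Lambda$ (as constructed by Bowen and extended to hyperbolic endomorphisms without critical points, see for example \cite{Pr}, \cite{M11}, \cite{Ru}), which yields a finite-to-one semi-conjugacy with a subshift of finite type $\sigma\colon\Sigma^+\to\Sigma^+$. Lifting the H\"older potential $\phi$ to $\Sigma^+$ and applying the Ruelle--Perron--Frobenius theorem gives the Gibbs property for $\mu_\phi$: there exists $C_0>0$ such that for every $n\ge 1$, every $x\in\Lambda$ and every $\varepsilon>0$ smaller than the expansivity constant,
$$C_0^{-1}\, e^{S_n\phi(x)-nP(\phi)}\ \le\ \mu_\phi(B_n(x,\varepsilon))\ \le\ C_0\, e^{S_n\phi(x)-nP(\phi)}.$$

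Next, fix $x\in\Lambda$ and $m\ge 1$, set $y=f^m(x)$, and take a small neighbourhood of $y$ of the form $V_n:=B_n(y,\varepsilon)$. Because $f$ has no critical points on $\Lambda$ and $\Lambda$ has local product structure, for $\varepsilon$ below a uniform injectivity radius the set $f^{-m}(V_n)\cap\Lambda$ decomposes as a disjoint union $\bigsqcup_{\zeta\in f^{-m}(y)\cap\Lambda} V_n^\zeta$, where $f^m\colon V_n^\zeta\to V_n$ is a homeomorphism via the local inverse branch $f^{-m}_\zeta$ mapping $y$ to $\zeta$. A standard argument using hyperbolicity (stable contraction and unstable cone estimates) shows $V_n^\zeta$ is comparable, up to a uniformly bounded stable-direction distortion, to the Bowen ball $B_{n+m}(\zeta,\varepsilon)$, so the Gibbs property gives
$$\mu_\phi(V_n^\zeta)\ \asymp\ e^{S_{n+m}\phi(\zeta)-(n+m)P(\phi)}\ =\ e^{S_m\phi(\zeta)}\cdot e^{S_n\phi(y)-(n+m)P(\phi)},$$
with the implicit constant independent of $n$, $m$ and $\zeta$.

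Finally, by Parry's theory the Jacobian is the reciprocal of the conditional mass $J_{f^m}(\mu_\phi)(x)=1/\mu_{\phi,y,m}(\{x\})$, where $\mu_{\phi,y,m}$ is the disintegration of $\mu_\phi$ along the fiber $f^{-m}(y)\cap\Lambda$. Standard differentiation of measures gives
$$\mu_{\phi,y,m}(\{x\})\ =\ \lim_{\varepsilon\to 0,\,n\to\infty}\frac{\mu_\phi(V_n^x)}{\sum_{\zeta\in f^{-m}(y)\cap\Lambda}\mu_\phi(V_n^\zeta)}.$$
Substituting the estimate above, the common factor $e^{S_n\phi(y)-(n+m)P(\phi)}$ cancels between numerator and denominator and we obtain $J_{f^m}(\mu_\phi)(x)\asymp \big(\sum_{\zeta\in f^{-m}(y)\cap\Lambda} e^{S_m\phi(\zeta)}\big)/e^{S_m\phi(x)}$ with constant $C=C_0^2$.

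The main obstacle is the geometric step in the second paragraph: verifying uniformly in $n$ and $m$ that $V_n^\zeta$ is sandwiched between Bowen balls $B_{n+m}(\zeta,\varepsilon')$ and $B_{n+m}(\zeta,\varepsilon'')$ for comparable $\varepsilon',\varepsilon''$. Because $f$ is non-invertible, this cannot be handled by any diffeomorphism-style pullback argument; one must instead use the well-definedness and uniform contraction of the local inverse branches $f^{-m}_\zeta$ on $V_n$, and then the bounded-distortion property along the unstable direction that is inherited from the H\"older regularity of $Df$. Once this geometric comparison is established, the uniformity of $C$ in $m$ is automatic because the Gibbs constant $C_0$ depends only on the H\"older data of $\phi$, not on $m$ or on the choice of preimage branch.
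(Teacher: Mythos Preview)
The paper does not prove this theorem; it is quoted from \cite{MU3} and then used (only its $m=1$ consequence, Corollary~\ref{corol}). So there is no ``paper's own proof'' to match; I can only assess your argument on its merits.

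Your overall strategy --- Gibbs estimate plus Parry's identification of the Jacobian as the reciprocal of the fibre conditional mass --- is the right one, but the geometric comparison in your second paragraph fails as written. You claim $V_n^\zeta=f^{-m}_\zeta(B_n(y,\varepsilon))$ is sandwiched between Bowen balls $B_{n+m}(\zeta,\varepsilon')$ and $B_{n+m}(\zeta,\varepsilon'')$ with $\varepsilon',\varepsilon''$ comparable and independent of $m$. This is false: the local inverse branch $f^{-1}$ \emph{expands} in the stable direction, so after $m$ steps the stable width of $V_n^\zeta$ is of order $\lambda_s^{-m}\varepsilon$, while $B_{n+m}(\zeta,\varepsilon)$ has stable width of order $\varepsilon$. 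The ratio blows up with $m$, so no such sandwich exists, and the ensuing measure estimate $\mu_\phi(V_n^\zeta)\asymp e^{S_{n+m}\phi(\zeta)-(n+m)P}$ is not justified. Your proposed remedy in the last paragraph (``uniform contraction of the local inverse branches'') is exactly the unstable half of the story and does nothing for the stable direction; indeed for large $m$ the branch $f^{-m}_\zeta$ is not even defined on all of $B_n(y,\varepsilon)$ for fixed $\varepsilon$.

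The cleanest repair is to stay on the symbolic side you already set up. On $\Sigma^+$ the analogue of your $V_n^\zeta$ is \emph{exactly} a cylinder of rank $n+m+1$ (since $\sigma^{-m}$ of a rank-$(n+1)$ cylinder is a disjoint union of rank-$(n+m+1)$ cylinders, one per admissible prefix), and the Gibbs bound applies with the single constant $C_0$; the cancellation you describe then goes through verbatim and pushes down to $\Lambda$ because the Markov-partition boundaries are $\mu_\phi$-null. Alternatively and even more directly, the Ruelle--Perron--Frobenius theorem gives the exact formula $J_f(\mu_\phi)(x)=e^{P(\phi)-\phi(x)}\,h(fx)/h(x)$ for the normalised eigenfunction $h$; the chain rule then yields $J_{f^m}(\mu_\phi)(x)=e^{mP(\phi)-S_m\phi(x)}h(f^mx)/h(x)$, and since $\mathcal L_\phi^m h=e^{mP}h$ with $h$ bounded above and below one has $\sum_{\zeta\in f^{-m}(y)} e^{S_m\phi(\zeta)}\asymp e^{mP(\phi)}$ uniformly in $m$, which gives the two-sided bound with $C$ depending only on $\sup h/\inf h$.
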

As a consequence, we obtain:
\begin{corollary}\label{corol}
	In the same condition as in Theorem \ref{Jac}, $J_f(\mu_\phi)$ is bounded on $\Lambda$. 
\end{corollary}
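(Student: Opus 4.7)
The plan is to extract the boundedness of $J_f(\mu_\phi)$ directly from the upper estimate in Theorem \ref{Jac} applied with $m=1$, together with two easy compactness/finiteness observations on the basic set $\Lambda$.

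First, I would specialize Theorem \ref{Jac} to $m=1$, which yields the pointwise bound
$$J_f(\mu_\phi)(x) \le C\cdot \frac{\sum_{\zeta \in f^{-1}(f(x))\cap \Lambda} e^{\phi(\zeta)}}{e^{\phi(x)}}$$
for $\mu_\phi$-a.e.\ $x \in \Lambda$. Since $\phi$ is H\"older continuous on the compact set $\Lambda$, it is bounded, say $\phi_{\min} \le \phi \le \phi_{\max}$ on $\Lambda$, so the ratio $e^{\phi(\zeta)}/e^{\phi(x)}$ is uniformly bounded above by $e^{\phi_{\max}-\phi_{\min}}$.

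Next, I would invoke that $f$ has bounded degree on $\Lambda$. This is standard for a hyperbolic endomorphism on a basic set with no critical points: since $f$ is a local diffeomorphism in a neighborhood of $\Lambda$, compactness of $\Lambda$ forces the cardinality $d(x) := \#\bigl(f^{-1}(f(x))\cap \Lambda\bigr)$ to be uniformly bounded by some integer $D$. Combining these two observations gives the uniform estimate
$$J_f(\mu_\phi)(x) \le C\cdot D\cdot e^{\phi_{\max}-\phi_{\min}} \quad \text{for } \mu_\phi\text{-a.e. } x\in \Lambda,$$
which is exactly the desired conclusion.

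There is essentially no obstacle here; the only point that deserves a sentence of justification is the uniform finiteness of the fibers $f^{-1}(f(x))\cap \Lambda$, which I would dispatch by the no-critical-points hypothesis plus compactness of $\Lambda$. The whole argument is a one-line application of Theorem \ref{Jac} once these two standard facts about $\phi$ and $f|_\Lambda$ are recorded.
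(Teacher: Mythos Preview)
Your proposal is correct and follows exactly the paper's own argument: apply Theorem \ref{Jac} with $m=1$, use that $\phi$ is bounded on the compact set $\Lambda$, and use that the fibers $f^{-1}(x)\cap\Lambda$ have uniformly bounded cardinality by compactness and local injectivity of $f$. The paper's proof is just a terser version of what you wrote.
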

\begin{proof}
	Since $\Lambda$ is compact and $f$ is locally injective, there exists $d\geq 1$ such that $\text{card}(f^{-1}(x)) \leq d$ for every $x\in\Lambda$. As $\phi$ is bounded, it follows from Theorem \ref{Jac} applied to $m=1$ that $J_f(\mu_\phi)$ is bounded.  
\end{proof}
Theorem \ref{D} assumes that the Jacobian of a measure is bounded. Such examples of measures with bounded Jacobian are given by Corollary \ref{corol}. Now we prove Theorem \ref{D} stated in Introduction:

\

\textbf{Proof of Theorem \ref{D}.}
	Since $f$ is locally injective and $X$ is compact, there exists $\delta_0$ such that for every $x\in X$, $f$ is injective on $B(x,\delta_0)$ and the boundary of $B(x,\delta_0)$ has $\mu$-measure zero. Consider a finite cover of $X$ with such balls and denote by $C$ the union of all boundaries of the balls from this cover. Recall that $D$ is the set of discontinuities of $J_f(\mu)$. Let $A=D\cup C$. 
	
	Let $\varphi :\widehat{X}\rightarrow\mathbb{R}$ defined by $\varphi(\hat{x})=\log J_f(\mu)(\pi(\hat{x}))$. Then Birkhoff's Ergodic Theorem, applied to $\varphi$ with respect to $\mu$ and $\hat{f}^{-1}:\widehat{X}\rightarrow\widehat{X}$ implies that for $\hat{\mu}$-a.e. $\hat{x}$, we have
	$
	\frac{1}{n}\sum_{i=1}^n \log J_f(\mu)(\pi(\hat{f}^{-i}\hat{x}))\underset{n\rightarrow\infty}\longrightarrow \int_{\widehat{X}} \log J_{f}(\mu)(\pi(\hat{x}))d\hat{\mu}(\hat{x}),
	$
	and therefore for $\hat{\mu}$-a.e. $\hat{x}\in \widehat X,$
	\begin{equation}\label{eq11}
	\frac{1}{n}\sum_{i=1}^n \log J_f(\mu)(x_{-i})  \underset{n\rightarrow\infty}\longrightarrow \int_{X} \log J_{f}(\mu)(x)d\mu(x)= F_f(\mu).
	\end{equation}
	Fix $\varepsilon>0$. For $0<\delta<\frac{\delta_0}{2}$ let \begin{align}\label{Kdelta}
	K_{\varepsilon,\delta}& =\{ x\in X \ : \ \log J_f(\mu)(x)-\varepsilon<  \log J_f(\mu)(y)<\log J_f(\mu)(x)+\varepsilon, \ \forall y\in B(x,\delta)   \}\\
	&= \{ x\in X \ : \ J_f(\mu)(x)e^{-\varepsilon}<  J_f(\mu)(y)<J_f(\mu)(x)e^{\varepsilon}, \ \forall y\in B(x,\delta)\} \nonumber.
	\end{align}
	Since the set $D$ of discontinuities of $J_f(\mu)$ is assumed to be closed, it follows that $X\setminus A$ is open. As $J_f(\mu)$ is continuous on $X\setminus A$ and $A=D\cup C$ has measure zero, then there is $\delta(\varepsilon)>0$ so that $$1-\varepsilon<\mu(K_{\varepsilon, \delta(\varepsilon)})\leq \mu(K_{\varepsilon,\delta}), \text{ for all }\delta<\delta(\varepsilon).$$ 
	By Birkhoff's Ergodic Theorem applied for $\chi_{\pi^{-1}(K_{\varepsilon,\delta(\varepsilon)})}$ we obtain that for $\hat{\mu}$-a.e $\hat{x}\in \widehat{X}$,
	\begin{equation*}
	\frac{1}{n} 	\sum_{i=0}^n\chi_{\pi^{-1}(K_{\varepsilon,\delta(\varepsilon)})}(\hat{f}^{-i}(\hat{x}))\underset{n\rightarrow\infty}\longrightarrow  \hat{\mu}(\pi^{-1}(K_{\varepsilon,\delta(\varepsilon)})).
	\end{equation*}
	Since for every $\hat{x}=(x,x_{-1},\ldots)\in\widehat{X}$, $\chi_{\pi^{-1} (K_{\varepsilon,\delta(\varepsilon)})}(\hat{f}^{-i}(\hat{x}))=\chi_{K_{\varepsilon,\delta(\varepsilon)}}(x_{-i})$, 
	it follows that for $\hat{\mu}$-a.e. $\hat{x}$
	\begin{equation}\label{eq2}
	\frac{1}{n} 	\sum_{i=0}^n\chi_{\pi^{-1}(K_{\varepsilon,\delta(\varepsilon)})}(\hat{f}^{-i}(\hat{x}))\underset{n\rightarrow\infty}\longrightarrow \mu(K_{\varepsilon,\delta(\varepsilon)}).
	\end{equation}
	For $\delta>0$, $\hat{x}\in \widehat{X}$ and $n\geq 1$ consider the sets $A(\varepsilon,\delta, \hat{x}, n)=\{i : 1\leq i\leq n, x_{-i}\in K_{\varepsilon,\delta} \}$ and  $R(\varepsilon,\delta,\hat{x}, n)=\{i : 1\leq i\leq n, x_{-i}\notin K_{\varepsilon,\delta} \}$, and let  
	$N(\varepsilon,\delta, \hat{x}, n)=\text{Card}(A(\varepsilon,\delta,\hat{x}, n))$. Notice that by (\ref{eq2}), for $\hat{\mu}$-a.e $\hat{x}$ we have 
	\begin{equation}\label{eq3}
	\lim_{n\rightarrow\infty}\frac{ N(\varepsilon,\delta(\varepsilon), \hat{x}, n)}{n} \ = \ \mu(K_{\varepsilon,\delta(\varepsilon)})>1-\varepsilon.
	\end{equation}
	Let $\widehat{X}(\hat{\mu}, \varepsilon)$ be the set of $\hat{x}$ from $\widehat{X}$ satisfying  (\ref{eq11}) and (\ref{eq3}). Then 
	\begin{equation}\label{-1}
	\hat{\mu}(\widehat{X}(\hat{\mu}, \varepsilon))=1.
	\end{equation}
	Let $\hat{x}\in\widehat{X}(\hat{\mu},\varepsilon) $ and $\delta<\delta(\varepsilon)$. Then there exists $n(\varepsilon,\hat{x})$ such that for $n\geq n(\varepsilon,\hat{x})$ we have
	\begin{equation}\label{0}
N(\varepsilon,\delta,\hat{x},n)\geq N(\varepsilon,\delta(\varepsilon),\hat{x},n)>n(1-\varepsilon), \ \text{and}
	\end{equation}
	\begin{equation}\label{jacfold}
	F_f(\mu)-\varepsilon < \frac {\log J_{f^n}(x_{-n})}{n} < F_f(\mu)+\varepsilon.
	\end{equation}
	Recall that we assumed that $J_f(\mu)$ is bounded. 
	Let $M>0$ such that $1\leq J_f(\mu)(z)<M$ for every $z\in X$. Then, for $\hat{x}\in \widehat{X}(\hat{\mu}, \varepsilon)$ and $y\in B_{n}(x_{-n},\delta)$ we have from (\ref{Kdelta}) and (\ref{jacfold})
	\begin{align*}
	J_{f^{n}}(\mu)(y)&=\prod_{i\in A(\varepsilon,\delta, \hat{x}, n)}J_f(\mu)(f^{n-i}y)\prod_{i\in R(\varepsilon,\delta, \hat{x}, n)}J_f(\mu)(f^{n-i}y)\leq  e^{N(\varepsilon,\delta, \hat{x}, n)\varepsilon} \prod_{i\in A(\varepsilon,\delta, \hat{x},n)}J_f(\mu)(x_{-i})  M^{n-N(\varepsilon,\delta, \hat{x}, n)}\\
	&\leq  e^{N(\varepsilon,\delta, \hat{x}, n)\varepsilon}\prod_{i\in A(\varepsilon,\delta, \hat{x},n)}J_f(\mu)(x_{-i})  M^{n-N(\varepsilon,\delta, \hat{x}, n)}\prod_{i\in R(\varepsilon,\delta, \hat{x},n)}J_f(\mu)(x_{-i})\\
	&=M^{n-N(\varepsilon,\delta, \hat{x}, n)}\cdot e^{N(\varepsilon,\delta, \hat{x}, n)\varepsilon}  \prod_{i=1}^nJ_f(\mu)(x_{-i}) = M^{n-N(\varepsilon,\delta, \hat{x}, n)} e^{N(\varepsilon,\delta, \hat{x}, n)\varepsilon}  J_{f^{n}}(\mu)(x_{-n}) \\
	%%= M^{n+1-N(\varepsilon,\delta, \hat{x}, n)}e^{N(n)\cdot \varepsilon}\cdot J_{f^{n+1}}(\mu)(x_{-n})  \ \ \ \ \\
	&\leq M^{n-N(\varepsilon,\delta, \hat{x}, n)} e^{N(\varepsilon,\delta, \hat{x}, n)\varepsilon} e^{(F_f(\mu)+\varepsilon)n}.
	\end{align*}
	On the other hand since $M^{-1}J_f(z)\leq 1$ and $J_f(f^{n-i}z)\geq 1$ for every  $z\ \in X$ and for every $1 \leq i\leq n$, we have from (\ref{Kdelta}) and (\ref{jacfold}) that
	for $\hat{x}\in \widehat{X}(\hat{\mu}, \varepsilon)$ and $y\in B_{n}(x_{-n},\delta)$,
	\begin{align*}
	J_{f^{n}}(\mu)(y)&=\prod_{i\in A(\varepsilon,\delta, \hat{x}, n)}J_f(\mu)(f^{n-i}y)\prod_{i\in R(\varepsilon,\delta, \hat{x}, n)}J_f(\mu)(f^{n-i}y)\\
	&\geq   e^{-N(\varepsilon\delta, \hat{x}, n)\varepsilon}  \prod_{i\in A(\varepsilon,\delta, \hat{x},n)}J_f(\mu)(x_{-i})  M^{-(n-N(\varepsilon,\delta, \hat{x}, n)} \prod_{i\in R(\varepsilon,\delta, \hat{x},n)}J_f(\mu)(x_{-i})\\
	&=M^{N(\varepsilon,\delta, \hat{x}, n)-n} e^{-N(\varepsilon,\delta, \hat{x}, n)\varepsilon} \prod_{i=1}^n J_f(\mu)(x_{-i})\geq M^{N(\varepsilon,\delta, \hat{x}, n)-n} e^{-N(\varepsilon,\delta, \hat{x}, n)\varepsilon} e^{(F_f(\mu)-\varepsilon)n}.
	\end{align*}
	Since $\delta<\frac{\delta_0}{2}$, $f$ is injective on $B(x,\delta)$ for all $x\in X$; hence for all $n\ge 1$ and all  $x_{-n}\in X$ we obtain that $f^n$ is injective on $ B_{n}(x_{-n}, \delta)$. Thus  $\mu(f^n( B_{n}(x_{-n}, \delta) )=\int_{B_{n}(x_{-n}, \delta)}J_{f^n}(\mu)d\mu$. So for every $\hat{x}\in \widehat{X}(\hat{\mu},\varepsilon)$, for every $\delta<\delta(\varepsilon)<\frac{\delta_0}{2}$ and $n\geq n(\varepsilon,\hat{x})$ we obtain
	$$\varepsilon\log M +2\varepsilon +F_f(\mu)+ \frac{\log \mu(B_{n}(x_{-n}, \delta))}{n}\geq \frac{\log \mu(f^n( B_{n}(x_{-n}, \delta) )}{n}$$
	and   
	$$ -\varepsilon\log M -2\varepsilon +F_f(\mu)+ \frac{\log \mu(B_{n}(x_{-n}, \delta))} {n}\leq \frac{\log \mu(f^n( B_{n}(x_{-n}, \delta) )}{n}.$$
	As  $B_n^{-}(\hat{x},\delta)=f^n( B_{n}(x_{-n}, \delta)$, by Lemma \ref{mod} it follows that for every $\hat{x}\in\widehat{X}(\hat{\mu}, \varepsilon)$ we have
	\begin{align*}
	-\varepsilon\log M -2\varepsilon -F_f(\mu) + 
	h_f(\mu) & \leq 
	\lim_{\delta\rightarrow 0}\liminf_{n\rightarrow\infty}\frac{-\log \mu(B_n^-(\hat{x},\delta))}{n},\\
	\varepsilon\log M +2\varepsilon - F_f(\mu)
	+ h_f(\mu) &\geq  
	\lim_{\delta\rightarrow 0}\limsup_{n\rightarrow\infty}\frac{-\log \mu(B_n^{'-}(\hat{x},\delta))}{n}.
	\end{align*}
	Now for any $k\geq 1$, let $\varepsilon_k= 1/k $. Denote $\widehat{X}(\hat{\mu}, \varepsilon_k)$ by $\widehat{X}(\hat{\mu}, k)$. Then from (\ref{-1}) we obtain that $\widehat{X}(\hat{\mu}) = \bigcap_{k\geq 1} \widehat{X}(\hat{\mu}, k)$
	has full $\hat{\mu}$-measure, and for every $\hat{x}\in \widehat{X}(\hat{\mu})$ we have
	$$\lim_{\delta\rightarrow 0}\liminf_{n\rightarrow\infty}\frac{-\log \mu(B_n^-(\hat{x},\delta))}{n} =  
	\lim_{\delta\rightarrow 0}\limsup_{n\rightarrow\infty}\frac{-\log \mu(B_n^-(\hat{x},\delta))}{n}= h_f(\mu)- F_f(\mu).$$
	$\hfill\square$

\begin{proposition} 
	
	Let $M$ be a $C^2$ manifold, $X$ a compact subset of $M$ and $f: X\rightarrow X$ a $\mathcal{C}^2$ smooth map on a neighborhood of $X$. Let $\mu$ be an $f$-invariant and ergodic measure on $X$ such that all its Lyapunov exponents are strictly positive. Then $h^-_{f,B}(\mu)=0$. 
\end{proposition}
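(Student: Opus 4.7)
The strategy is to show that strict positivity of all Lyapunov exponents forces the inverse Bowen balls $B_n^-(\hat{x},\varepsilon)$ to contain a fixed neighborhood of $x=\pi(\hat{x})$ whose radius is independent of $n$, so that $\mu(B_n^-(\hat{x},\varepsilon))$ is bounded away from zero and the inverse entropy rate automatically vanishes. The key ingredient is Pesin theory in the form of uniformly contracting inverse branches along $\hat\mu$-almost every prehistory.

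Since all Lyapunov exponents of $\mu$ are strictly positive and finite, $\log|\det Df|$ is $\mu$-integrable, so $\det Df\neq 0$ holds $\mu$-a.e., and therefore $f$ is a local diffeomorphism at $\mu$-a.e.\ point. I would then apply Pesin's stable manifold theorem to the invertible system $(\widehat{X},\hat f,\hat\mu)$, viewed as an inverse-cocycle problem along $\hat f^{-1}$: the positive forward Lyapunov exponents of $\mu$ play the role of negative exponents for $\hat f^{-1}$, and all directions are backward-contracting. This produces, for $\hat\mu$-a.e.\ $\hat x=(x,x_{-1},x_{-2},\ldots)$, any $0<\eta<\lambda_{\min}$ (with $\lambda_{\min}>0$ the smallest Lyapunov exponent of $\mu$), a radius $\delta(\hat x)>0$ and well-defined inverse branches $g^{\hat x}_n:B(x,\delta(\hat x))\to M$ with $g^{\hat x}_n(x)=x_{-n}$, $f^n\circ g^{\hat x}_n=\mathrm{id}$, and $\mathrm{Lip}(g^{\hat x}_n)\le C(\hat x)\,e^{-n(\lambda_{\min}-\eta)}$ for every $n\ge 0$.

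Given $\varepsilon>0$, shrink $\delta(\hat x)$ to some $\delta'(\hat x,\varepsilon)>0$ so that $g^{\hat x}_n\bigl(B(x,\delta'(\hat x,\varepsilon))\bigr)\subset B(x_{-n},\varepsilon)$ for every $n\ge 0$; the exponential contraction of $g^{\hat x}_n$ makes a single such $\delta'$ work uniformly in $n$. Then any $y\in X\cap B(x,\delta'(\hat x,\varepsilon))$ admits the prehistory $\hat y:=(y,g^{\hat x}_1(y),g^{\hat x}_2(y),\ldots)$, whose coordinates stay within $\varepsilon$ of those of $\hat x$, giving $X\cap B(x,\delta'(\hat x,\varepsilon))\subset B_n^-(\hat x,\varepsilon)$ for every $n\ge 1$. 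Since $x\in\mathrm{supp}\,\mu$ for $\hat\mu$-a.e.\ $\hat x$, we have $\mu(B(x,\delta'(\hat x,\varepsilon)))>0$, and consequently
\[
0\le h^-_{f,inf,B}(\mu,\hat x,\varepsilon)\le h^-_{f,sup,B}(\mu,\hat x,\varepsilon)\le\lim_{n\to\infty}\frac{-\log\mu(B(x,\delta'(\hat x,\varepsilon)))}{n}=0.
\]
Letting $\varepsilon\to 0$ and integrating over $\widehat X$ then yields $h^-_{f,B}(\mu)=0$.

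The main technical point is the application of Pesin theory in this partially non-invertible setting: one must secure uniform control on the Pesin constants $C(\hat x)$ and $\delta(\hat x)$ on Pesin blocks of positive $\hat\mu$-measure, and then exhaust $\widehat X$ by such blocks via countable union. Strict positivity and finiteness of all Lyapunov exponents makes the local-diffeomorphism property automatic $\mu$-a.e.\ without any extra hypothesis on critical points, and Pesin's classical stable manifold theorem applied to the inverse cocycle over $\hat f^{-1}$ delivers the required contracting inverse branches $g^{\hat x}_n$, so the argument goes through in the generality stated.
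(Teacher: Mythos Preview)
Your proof is correct and follows essentially the same approach as the paper's: both use Pesin theory (applied to the inverse cocycle along $\hat f^{-1}$) to obtain exponentially contracting local inverse branches along $\hat\mu$-a.e.\ prehistory, and deduce that a fixed ball $B(x,\delta(\hat x))$ is contained in $B_n^-(\hat x,\varepsilon)$ for all large $n$, forcing the inverse entropy rate to vanish. Your write-up is in fact more detailed than the paper's sketch, which simply invokes the existence of a tempered radius function $r(\hat x)$ and the fact that the exponential contraction beats its subexponential oscillation.
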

\begin{proof}
Since all the Lyapunov exponents of $\mu$ are positive, then there exists a measurable function on $\widehat{X}$, $\hat{x}\mapsto r(\hat{x})$ such that for $\hat{\mu}$-a.e. $\hat{x}\in \widehat{X}$ the local inverse of $f$ on $ B(x,r(\hat{x}))$ is defined and is denoted by $f^{-1} : B(x,r(\hat{x})) \rightarrow  B(x_{-1},r(\hat{f}^{-1}\hat{x}))$ and it is a contraction. Moreover we know that $ r(\hat{x})\leq  e^{\varepsilon} \cdot r(\hat{f}^{-1}\hat{x})$ for $\varepsilon$ small. 
%The corresponding contraction factor $\lambda(\mu,\hat{x})$ satisfies
%$$\lambda(\mu,\hat{x}) < \frac{1}{\exp(\lambda_1(\mu, \hat{x}))}<1,$$ where $\lambda_1(\mu,\hat{x})$ is the smallest Lyapunov coefficient of $\mu$ which, by assumption, is strictly positive. 
Since the contraction is stronger than the subexponential growth of $r(\hat{x})$ on the inverse orbit, it follows that for $\hat{\mu}$-a.e. $\hat{x}\in \widehat{X}$ there exists $\delta(\hat{x})>0$ such that for any $0<\delta<\delta(\hat{x})$, $B(x,\delta)\subset B^-_n(\hat{x}, \delta)$ for $n$ sufficiently large. From this it  follows that $h^{-}_{f,B}(\mu)=0$.

\end{proof}

%1

\subsection{Inverse entropy for measures with respect to partitions.}

Let $(X, \mathcal{B}, \mu)$ be a probability space and $f: X\rightarrow X$ be a measurable endomorphism such that $\mu$ is $f$-invariant and ergodic. 
Let $\mathcal{P}$ be a measurable partition of $X$, $\widehat{X}$  the inverse limit of $(X,f)$ and $\pi : \widehat{X} \rightarrow X$  the canonical projection. Then $\widehat{\mathcal{P}}= \{ \pi^{-1}(P) \ | P \in \mathcal{P} \}$
is a measurable partition of $\widehat{X}$. 
For $n\geq 1$ define 
$$\mathcal{P}_{n}= \bigvee_{i=0}^{n} {f}^{-i}(\mathcal{P}), \ \ \ \widehat{\mathcal{P}}_n^{\hat{f}^{-1}}= \bigvee_{i=0}^{n} \hat{f}^{i}(\widehat{\mathcal{P}}).$$
For $x\in X$ denote by $\mathcal{P}(x)$ (respectively $\mathcal{P}_{n}(x)$) the atom of $\mathcal{P}$ (respectively of $\mathcal{P}_{n}$) that contains $x$. For $\hat{x}=(x_{-i})_{i\geq 0}\in \widehat{X}$ with $x_0=x$, and $n \ge 1$, define the set
$$\mathcal{P}_{n}^{-}(\hat{x})=\{y\in X \ | \ \exists \  \hat{y}= (y_{-i})_{i\geq 0}\in \widehat X \text{ with }y_0=y \text{ and }y_{-i}\in \mathcal{P}(x_{-i}),  \text{ for }i=0,1,\ldots, n \}.$$

\begin{proposition}
	With the above notation we have
	$$\mathcal{P}_{n}^{-}(\hat{x})=f^{n}(\mathcal{P}_{n}(x_{-n})).$$
\end{proposition}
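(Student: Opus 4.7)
The proof should just unwind the two definitions; the key observation is that a prehistory $\hat{y}=(y_{-i})_{i\ge 0}$ satisfying $y_{-i}\in\mathcal{P}(x_{-i})$ for $0\le i\le n$ is essentially the same data as a preimage $z=y_{-n}$ of $y$ under $f^n$ whose forward orbit $z,fz,\ldots,f^n z$ tracks the atoms of $\mathcal{P}$ along $x_{-n},x_{-n+1},\ldots,x_0$. I would show both inclusions directly.

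For the inclusion $f^n(\mathcal{P}_n(x_{-n}))\subset \mathcal{P}_n^-(\hat{x})$, I take $z\in \mathcal{P}_n(x_{-n})$, so by definition $f^i(z)\in \mathcal{P}(f^i(x_{-n}))=\mathcal{P}(x_{-n+i})$ for $i=0,1,\ldots,n$. Setting $y=f^n(z)$ and $y_{-j}:=f^{n-j}(z)$ for $j=0,1,\ldots,n$, one gets $y_0=y$, $f(y_{-j})=y_{-j+1}$, and $y_{-j}\in \mathcal{P}(x_{-j})$. To extend this to a full element $\hat{y}\in \widehat{X}$, I pick any prehistory $(z,z_{-1},z_{-2},\ldots)$ of $z$ (which exists because $\widehat{X}$ projects surjectively onto $X$ when $f$ is surjective on the relevant set; otherwise we can use that $z$ itself has a prehistory inherited from the assumption that $\mathcal{P}_n^-(\hat{x})$ is considered inside $\widehat{X}$) and append $y_{-n-k}:=z_{-k}$ for $k\ge 1$. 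This yields $\hat{y}\in\widehat{X}$ witnessing $y\in \mathcal{P}_n^-(\hat{x})$.

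For the reverse inclusion, I take $y\in \mathcal{P}_n^-(\hat{x})$ with witness $\hat{y}=(y_{-i})_{i\ge 0}\in\widehat{X}$, define $z:=y_{-n}$, and check that $z\in \mathcal{P}_n(x_{-n})$: indeed, $f^i(z)=f^i(y_{-n})=y_{-n+i}\in \mathcal{P}(x_{-n+i})=\mathcal{P}(f^i(x_{-n}))$ for $i=0,\ldots,n$, so $z$ lies in the atom of $\mathcal{P}_n=\bigvee_{i=0}^n f^{-i}\mathcal{P}$ containing $x_{-n}$. Since $y=y_0=f^n(y_{-n})=f^n(z)$, we conclude $y\in f^n(\mathcal{P}_n(x_{-n}))$.

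The only potentially delicate point is the existence of the backward tail in the first inclusion; this is not really an obstacle because the backward iterates of $z=y_{-n}$ beyond index $n$ do not affect the conditions $y_{-i}\in\mathcal{P}(x_{-i})$ for $i\le n$, so any prehistory of $z$ (which exists as an element of the natural extension) will do. All remaining steps are essentially bookkeeping on indices.
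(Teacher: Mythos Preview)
Your proof is correct and follows essentially the same approach as the paper's: both arguments verify the two inclusions by direct bookkeeping, identifying a witness prehistory $\hat{y}$ with the finite forward orbit of $z=y_{-n}\in\mathcal{P}_n(x_{-n})$. The paper presents the inclusions in the opposite order and is slightly more terse about extending the finite orbit $(f^{n-i}(z))_{0\le i\le n}$ to a full element of $\widehat{X}$, but your explicit handling of that tail is a harmless elaboration, not a different method.
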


\begin{proof}
	Let $y\in \mathcal{P}_{n}^{-}(\hat{x})$. Hence there exists $\hat{y}=(y_{-i})_{i\geq 0} \in \widehat X$ a prehistory of $y$ with $y_0=y$, such that $y_{-i}\in \mathcal{P}(x_{-i})$ for $i=0,1,\ldots,n$. Thus $f^{i}(y_{-n})\in \mathcal{P}(f^i(x_{-n}))$ for $i=0,1,\ldots,n$. This shows that $y_{-n}\in P_n(x_{-n})$ and thus $y\in f^{n}(\mathcal{P}_{n}(x_{-n}))$. Now, if $y\in \mathcal{P}_{n}(x_{-n})$ then $f^{i}(y)\in \mathcal{P}(x_{-n+i})$ for $i=0,1,\ldots,n$. Let $\hat{z}\in \widehat{X}$ such that $z_{-i}=f^{n-i}(y)$, for $i=0,1,\ldots,n$. Then $f^n(y)= z_0 \in \mathcal{P}_{n}^{-}(\hat{x})$. 
\end{proof}

Let $\widehat{\mathcal{P}}_n^{-}(\hat{x})=\{ \hat{y}\in \widehat{X} \ | \hat{f}^{-i}(\hat{y}) \in  \widehat{\mathcal{P}} ( \hat{f}^{-i}(\hat{x})) \text{ for }i=0,1,\ldots, n \}=\{  \hat{y}\in \widehat{X}\ | \ y_{-i}\in \mathcal{P}(x_{-i}),  \text{ for }i=0,1,\ldots, n   \}.$ Then one infers the following:
\begin{proposition}\label{pr}
	With the above notation we have
	$$\hat{f}^{-n}(\widehat{\mathcal{P}}_{n}^{-}(\hat{x}))=\pi^{-1}(\mathcal{P}_{n}(x_{-n})).$$
\end{proposition}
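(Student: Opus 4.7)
The plan is to prove the equality $\hat{f}^{-n}(\widehat{\mathcal{P}}_n^-(\hat x)) = \pi^{-1}(\mathcal{P}_n(x_{-n}))$ by establishing both inclusions directly from the definitions. The key is to remember that $\hat f^{-1}$ acts on prehistories by a shift, namely $\hat f^{-1}(z_0, z_{-1}, z_{-2}, \ldots) = (z_{-1}, z_{-2}, \ldots)$, so $\hat f^{-n}(\hat y)$ is the prehistory obtained by discarding the first $n$ coordinates of $\hat y$.

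For the inclusion $\hat f^{-n}(\widehat{\mathcal P}_n^-(\hat x)) \subseteq \pi^{-1}(\mathcal P_n(x_{-n}))$, I would take $\hat z = \hat f^{-n}(\hat y)$ with $\hat y \in \widehat{\mathcal P}_n^-(\hat x)$, observe that $z_0 = y_{-n}$, and use the compatibility $f^i(y_{-n}) = y_{-n+i}$ together with $y_{-n+i} \in \mathcal P(x_{-n+i})$ for $i = 0, \ldots, n$. This gives $f^i(z_0) \in \mathcal P(f^i(x_{-n}))$ for $0 \le i \le n$, hence $z_0 \in \bigvee_{i=0}^n f^{-i}(\mathcal P)(x_{-n}) = \mathcal P_n(x_{-n})$, so $\hat z \in \pi^{-1}(\mathcal P_n(x_{-n}))$.

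For the reverse inclusion, I would start from $\hat z \in \pi^{-1}(\mathcal P_n(x_{-n}))$, i.e. $z_0 \in \mathcal P_n(x_{-n})$, and construct an explicit preimage $\hat y \in \widehat{\mathcal P}_n^-(\hat x)$ with $\hat f^{-n}(\hat y) = \hat z$, by setting $y_{-n+i} := f^i(z_0)$ for $i = 0, 1, \ldots, n$ and $y_{-n-k} := z_{-k}$ for $k \ge 0$. The $f$-compatibility of this sequence is immediate, and since $z_0 \in \mathcal P_n(x_{-n})$ one has $f^i(z_0) \in \mathcal P(f^i(x_{-n})) = \mathcal P(x_{-n+i})$, hence $y_{-j} \in \mathcal P(x_{-j})$ for $j = 0, \ldots, n$, as required.

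There is no real obstacle here: the statement is essentially an unfolding of the definitions of $\widehat{\mathcal P}_n^-$, $\mathcal P_n$, $\pi$, and the shift map $\hat f$. The only point to be careful about is the bookkeeping of indices under the backward shift $\hat f^{-n}$ and checking that the candidate prehistory $\hat y$ constructed in the second inclusion actually lies in $\widehat X$, which follows from $f(f^i(z_0)) = f^{i+1}(z_0)$ and the fact that the tail $(z_{-1}, z_{-2}, \ldots)$ already forms an $f$-compatible prehistory of $z_0$.
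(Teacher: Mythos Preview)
Your proposal is correct and is precisely the direct unfolding of the definitions that the paper has in mind; in fact the paper does not write out a proof at all, introducing the proposition with ``Then one infers the following'' and leaving the verification to the reader. Your two-inclusion argument, with the explicit construction of the preimage $\hat y$ for the reverse inclusion, is exactly what is needed to justify it.
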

%\begin{proof}
%	Let $\hat{y}\in\widehat{X}$, where $y=(y_{-n})_{n\geq 0}$. Then
%	\begin{align*}
%	\hat{y}\in \pi^{-1}(\mathcal{P}_{n}(x_{-n})) &\iff y_0\in \mathcal{P}_{n}(x_{-n}) \iff f^{n-i}(y_0)\in \mathcal{P}(x_{-i}) \text{ for }i=0,1,\ldots, n\\
%	&\iff\hat{f}^{-i}(\hat{f}^n(\hat{y})) = \hat{f}^{n-i}(\hat{y})\in \widehat{\mathcal{P}}(\hat{f}^{-i}(\hat{x})) \text{ for }i=0,1,\ldots, n\\
%	&\iff \hat{f}^n(\hat{y})\in \widehat{\mathcal{P}}_{n+1}^{\hat{f}^{-1}}(\hat{x}) \iff \hat{y}\in\hat{f}^{-n}(\widehat{\mathcal{P}}_{n+1}^{\hat{f}^{-1}}(\hat{x}))
%	\end{align*}	
%\end{proof}	

\begin{theorem}[Modified Shannon-McMillan-Breiman Theorem]\label{sm}
	Let $\mathcal{P}$ be a measurable partition of $(X,\mu)$ such that $H_\mu(\mathcal{P})<\infty$. Then  for $\hat{\mu}$-a.e. $\hat{x}\in  \widehat{X}$,
	\begin{equation}\label{smb}
	\lim_{n\rightarrow\infty}\frac{-\log \mu(\mathcal{P}_{n}(x_{-n}))}{n}=h_f(\mu,\mathcal{P}).
	\end{equation}
\end{theorem}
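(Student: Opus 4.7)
The plan is to derive this statement from the classical Shannon-McMillan-Breiman theorem applied to the \emph{invertible} measure-preserving system $(\widehat{X}, \hat{\mu}, \hat{f}^{-1})$ together with the partition $\widehat{\mathcal{P}} = \pi^{-1}(\mathcal{P})$. The whole point is that the quantity $\mu(\mathcal{P}_n(x_{-n}))$, which lives on $X$ and concerns the behaviour along the backward trajectory of $x$, can be reinterpreted as the $\hat\mu$-measure of a standard refined atom on $\widehat{X}$ for the invertible map $\hat f^{-1}$.

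First, I would transfer the object of interest upstairs. By Proposition \ref{pr},
$$\hat f^{-n}\bigl(\widehat{\mathcal{P}}_n^-(\hat x)\bigr) = \pi^{-1}(\mathcal{P}_n(x_{-n})),$$
and since $\hat f$ preserves $\hat\mu$ and $\pi_*\hat\mu = \mu$, this yields the identity $\mu(\mathcal{P}_n(x_{-n})) = \hat\mu(\widehat{\mathcal{P}}_n^-(\hat x))$. Next, note that $\widehat{\mathcal{P}}_n^-(\hat x)$ is by definition the atom of $\bigvee_{i=0}^{n} \hat f^{i}(\widehat{\mathcal{P}}) = \bigvee_{i=0}^{n}(\hat f^{-1})^{-i}\widehat{\mathcal{P}}$ containing $\hat x$, i.e. the natural $(n{+}1)$-step refinement of $\widehat{\mathcal{P}}$ under $\hat f^{-1}$.

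Second, since $H_{\hat\mu}(\widehat{\mathcal{P}}) = H_\mu(\mathcal{P}) < \infty$ and $\hat f^{-1}$ is an ergodic $\hat\mu$-preserving automorphism of $\widehat{X}$, the classical Shannon-McMillan-Breiman theorem applied to $(\widehat{X},\hat\mu,\hat f^{-1})$ and $\widehat{\mathcal{P}}$ gives, for $\hat\mu$-a.e. $\hat x$,
$$\lim_{n\to\infty}\frac{-\log \hat\mu\bigl(\widehat{\mathcal{P}}_n^-(\hat x)\bigr)}{n} = h_{\hat f^{-1}}(\hat\mu,\widehat{\mathcal{P}}) = h_{\hat f}(\hat\mu,\widehat{\mathcal{P}}),$$
the last equality because a measure-preserving automorphism and its inverse have the same entropy with respect to a given partition.

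Third, I would identify this limit with $h_f(\mu,\mathcal{P})$. Using $\pi\circ\hat f = f\circ\pi$ and the invertibility of $\hat f$, one checks directly that $\hat f^{-i}\widehat{\mathcal{P}} = \pi^{-1}(f^{-i}\mathcal{P})$ as partitions of $\widehat X$, so
$$\bigvee_{i=0}^{n-1}\hat f^{-i}\widehat{\mathcal{P}} = \pi^{-1}(\mathcal{P}_{n-1}),$$
and $\pi_*\hat\mu = \mu$ gives $H_{\hat\mu}\bigl(\bigvee_{i=0}^{n-1}\hat f^{-i}\widehat{\mathcal{P}}\bigr) = H_\mu(\mathcal{P}_{n-1})$. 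Dividing by $n$ and sending $n\to\infty$ yields $h_{\hat f}(\hat\mu,\widehat{\mathcal{P}}) = h_f(\mu,\mathcal{P})$, which combined with the previous step gives (\ref{smb}). There is no real obstacle here: the argument is pure bookkeeping, and the only care needed is to keep the passage between $\hat f$, $\hat f^{-1}$, $\pi$, and the index shifts consistent — in particular, verifying that the partition whose SMB-limit we are invoking on $\widehat X$ is exactly the one coming from the inverse Bowen-like atom $\widehat{\mathcal{P}}_n^-(\hat x)$.
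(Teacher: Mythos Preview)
Your proposal is correct and follows essentially the same approach as the paper's proof: lift to $\widehat X$ via Proposition \ref{pr} to identify $\mu(\mathcal{P}_n(x_{-n}))$ with $\hat\mu(\widehat{\mathcal{P}}_n^-(\hat x))$, apply the classical Shannon--McMillan--Breiman theorem to $(\widehat X,\hat\mu,\hat f^{-1})$ with the partition $\widehat{\mathcal{P}}$, and then use $h_{\hat f^{-1}}(\hat\mu,\widehat{\mathcal{P}}) = h_{\hat f}(\hat\mu,\widehat{\mathcal{P}}) = h_f(\mu,\mathcal{P})$. Your write-up is in fact slightly more explicit than the paper's in justifying the last entropy identity.
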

\begin{proof}
From Proposition \ref{pr}, it follows that	
\begin{equation}\label{a1}
\hat{\mu}(\widehat{\mathcal{P}}_n^{-}(\hat{x}))=\hat{\mu}(\hat{f}^{-n}(\widehat{\mathcal{P}}_n^{-}(\hat{x})))=\mu((\mathcal{P}_{n}(x_{-n})).
\end{equation}If $H_\mu(\mathcal{P})<\infty$ then by Shannon-McMillan-Breiman Theorem we know that for $\hat{\mu}$-a.e. $\hat{x}\in\widehat{X}$,
\begin{equation}\label{a2}
\lim_{n\rightarrow\infty}\frac{-\log \hat{\mu}(\widehat{\mathcal{P}}_n^{-}(\hat{x}))}{n}=h_{\hat{f}^{-1}}(\hat{\mu},\widehat{\mathcal{P}}).
\end{equation} 
Since $h_{\hat{f}^{-1}} (\hat{\mu},\widehat{\mathcal{P}})=h_{\hat{f}}( \hat{\mu},\widehat{\mathcal{P}})=h_f(\mu,\mathcal{P}),$ the conclusion follows from (\ref{a1}) and (\ref{a2}).	
\end{proof}

Let $\mathcal{P}$ be measurable a partition of $X$. We recall that for $\hat{x}\in\widehat{X}$, $$h^-_{f, inf}(\mu,\mathcal{P},\hat{x})= \liminf_{n\rightarrow\infty}\frac{-\log \mu(\mathcal{P}_n^-(\hat{x}))}{n}, \ \  
h^-_{f,sup}(\mu, \mathcal{P}, \hat{x})= \limsup_{n\rightarrow\infty}\frac{-\log \mu(\mathcal{P}_n^-(\hat{x}))}{n}.$$
Furthermore, 
$$h^-_{f,inf}(\mu,\mathcal{P})= \int_{\widehat{X}}h^-_{ f,inf}(\mu,\mathcal{P}, \hat{x}) \ d \hat{\mu}(\hat{x}), \ \  h^-_{f,sup}(\mu,\mathcal{P})= \int_{\widehat{X}}h^-_{ f,sup}(\mu,\mathcal{P}, \hat{x})  \ d\hat{\mu}(\hat{x}).$$
 
The \textbf{lower, respectively the upper inverse partition entropy of $\mu$} are defined by
$$h^-_{f,inf}(\mu)=\sup\{ h^-_{f,inf}(\mu,\mathcal{P})\  : \mathcal{P} \text{ is a measurable partition with }  H_\mu(\mathcal{P})<\infty \}, $$
$$h^-_{f,sup}(\mu)=\sup\{ h^-_{f,sup}(\mu,\mathcal{P})\  : \mathcal{P} \text{ is a measurable partition with }  H_\mu(\mathcal{P})<\infty \}. $$
If $h^-_{f,inf}(\mu)=h^-_{f,sup}(\mu)$, the common value is called the \textbf{inverse partition entropy} of $\mu$ with respect to $f$ and is denoted by $h^-_{f}(\mu)$. In this case we say that $\mu$ has \textbf{inverse partition entropy}.

\begin{proposition}
	If $\mathcal{P}\leq \mathcal{Q}$ are measurable partitions with finite entropy with respect to $\mu$, then 
	$$h^-_{f,inf}(\mu,\mathcal{P})\leq h^-_{f,inf}(\mu,\mathcal{Q}) \text{ and } h^-_{f,sup}(\mu,\mathcal{P})\leq h^-_{f,sup}(\mu,\mathcal{Q}).$$	
\end{proposition}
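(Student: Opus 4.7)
The plan is to reduce the claim to a pointwise set-inclusion between the $\mathcal{P}$-inverse sets and the $\mathcal{Q}$-inverse sets, and then exploit monotonicity of $\mu$, of $-\log$, and of $\liminf$/$\limsup$, followed by integration with respect to $\hat\mu$.

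First I would verify the key inclusion: for every $\hat x \in \widehat X$ and every $n\ge 1$,
$$\mathcal{Q}_n^{-}(\hat x) \;\subseteq\; \mathcal{P}_n^{-}(\hat x).$$
This is immediate from the definitions. If $y \in \mathcal{Q}_n^{-}(\hat x)$, then there exists a prehistory $\hat y = (y_{-i})_{i\ge 0}\in\widehat X$ with $y_0 = y$ and $y_{-i}\in \mathcal{Q}(x_{-i})$ for $i=0,1,\ldots,n$. Because $\mathcal{P}\le \mathcal{Q}$, every atom of $\mathcal{Q}$ is contained in an atom of $\mathcal{P}$, so $\mathcal{Q}(x_{-i})\subseteq \mathcal{P}(x_{-i})$ and hence $y_{-i}\in \mathcal{P}(x_{-i})$ for $i=0,\ldots,n$. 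Therefore $y\in \mathcal{P}_n^{-}(\hat x)$.

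Next, apply monotonicity of $\mu$ to obtain $\mu(\mathcal{Q}_n^{-}(\hat x))\le \mu(\mathcal{P}_n^{-}(\hat x))$, so
$$-\log \mu(\mathcal{Q}_n^{-}(\hat x)) \;\ge\; -\log \mu(\mathcal{P}_n^{-}(\hat x)).$$
Dividing by $n$ and passing to $\liminf_{n\to\infty}$ and $\limsup_{n\to\infty}$ gives the pointwise inequalities
$$h^{-}_{f,\mathrm{inf}}(\mu,\mathcal{P},\hat x) \le h^{-}_{f,\mathrm{inf}}(\mu,\mathcal{Q},\hat x), \qquad h^{-}_{f,\mathrm{sup}}(\mu,\mathcal{P},\hat x) \le h^{-}_{f,\mathrm{sup}}(\mu,\mathcal{Q},\hat x),$$
for every $\hat x \in \widehat X$.

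Finally, I would integrate both inequalities with respect to $\hat\mu$ over $\widehat X$, using the definitions of $h^{-}_{f,\mathrm{inf}}(\mu,\mathcal{P})$ and $h^{-}_{f,\mathrm{sup}}(\mu,\mathcal{P})$, to conclude the result. There is no real obstacle here: the argument is a pure monotonicity argument and relies only on the set-theoretic fact that refining the partition shrinks the generating sets $\mathcal{P}_n^{-}(\hat x)$; the assumption $H_\mu(\mathcal{P}), H_\mu(\mathcal{Q})<\infty$ is used implicitly only so that the quantities appearing in the statement are defined.
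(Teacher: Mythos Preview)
Your argument is correct and is exactly the natural monotonicity argument one expects: the inclusion $\mathcal{Q}_n^{-}(\hat x)\subseteq \mathcal{P}_n^{-}(\hat x)$ follows directly from $\mathcal{Q}(x_{-i})\subseteq \mathcal{P}(x_{-i})$, and the rest is routine. The paper in fact states this proposition without proof, so your write-up fills in precisely the straightforward verification the authors left to the reader.
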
	

\begin{proposition}
	Let $(X,\mathcal{B},\mu)$ and $(Y,\mathcal{C},\nu)$ be two probability spaces and let $f: X\rightarrow X$ and $g: Y\rightarrow Y$ be two measurable endomorphisms such that $\mu$ is $f$-invariant and $\nu$ is $g$-invariant.
	\begin{itemize}
		\item [(i)] If $\phi: X\rightarrow Y$ is measurable such that $\phi \circ f=g\circ\phi$ $\mu$-a.e. and $\phi_*\mu=\nu$, then 
		$h^-_{f,inf}(\mu) \geq  h^-_{g, inf}(\nu)$ and $h^-_{f,sup}(\mu) \geq  h^-_{g, sup}(\nu).$
		\item[(ii)] If $\phi: X\rightarrow Y$ is an isomorphism of probability spaces such that $\phi\circ f=g\circ\phi$ $\mu$-a.e. and $\phi_*\mu=\nu$, then
		$h^-_{f,inf}(\mu) =  h^-_{g, inf}(\nu)$ and $h^-_{f,sup}(\mu) =  h^-_{g, sup}(\nu).$
		\item[(iii)] If $\phi: X\rightarrow Y$ is an isomorphism of probability spaces such that $\phi \circ f=g\circ\phi$ $\mu$-a.e., $\phi_*\mu=\nu$ and $f$ has inverse partition entropy, then $g$ has inverse partition entropy and $h^-_{f}(\mu) =  h^-_{g}(\nu).$
	\end{itemize}
\end{proposition}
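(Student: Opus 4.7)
The plan is to prove the three parts in order, using the standard strategy of lifting $\phi$ to a factor map between the inverse limits and pulling back partitions.

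For part (i), I would begin by constructing the induced map $\hat\phi:\widehat X\to\widehat Y$ given by $\hat\phi(\hat x)=(\phi(x),\phi(x_{-1}),\phi(x_{-2}),\ldots)$. Since $\phi\circ f=g\circ\phi$ holds $\mu$-a.e., the $f$-invariance of $\mu$ implies it holds $\mu$-a.e.\ at every iterate, so the set of $\hat x\in\widehat X$ whose entries all satisfy the intertwining relation has full $\hat\mu$-measure; on this set $\hat\phi$ is well-defined and satisfies $\hat\phi\circ\hat f=\hat g\circ\hat\phi$. By uniqueness of the invariant lift of an ergodic measure to the inverse limit, $\hat\phi_*\hat\mu=\hat\nu$. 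Next, given a measurable partition $\mathcal{Q}$ of $Y$ with $H_\nu(\mathcal{Q})<\infty$, define $\mathcal{P}=\phi^{-1}(\mathcal{Q})$, which is a measurable partition of $X$ with $H_\mu(\mathcal{P})=H_\nu(\mathcal{Q})<\infty$ since $\phi_*\mu=\nu$.

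The core step is the inclusion $\mathcal{P}_n^-(\hat x)\subseteq\phi^{-1}(\mathcal{Q}_n^-(\hat\phi(\hat x)))$ for $\hat\mu$-a.e.\ $\hat x$: if $y\in\mathcal{P}_n^-(\hat x)$ is witnessed by a prehistory $\hat y=(y_{-i})$ with $y_{-i}\in\mathcal{P}(x_{-i})$, then $\phi(y_{-i})\in\mathcal{Q}(\phi(x_{-i}))$ and $\hat\phi(\hat y)$ is a prehistory of $\phi(y)$ under $g$, showing $\phi(y)\in\mathcal{Q}_n^-(\hat\phi(\hat x))$. Hence $\mu(\mathcal{P}_n^-(\hat x))\leq\nu(\mathcal{Q}_n^-(\hat\phi(\hat x)))$, which yields
$$h^-_{f,inf}(\mu,\mathcal{P},\hat x)\geq h^-_{g,inf}(\nu,\mathcal{Q},\hat\phi(\hat x)),\qquad h^-_{f,sup}(\mu,\mathcal{P},\hat x)\geq h^-_{g,sup}(\nu,\mathcal{Q},\hat\phi(\hat x)).$$
Integrating against $\hat\mu$ and using $\hat\phi_*\hat\mu=\hat\nu$ to change variables gives $h^-_{f,\bullet}(\mu,\mathcal{P})\geq h^-_{g,\bullet}(\nu,\mathcal{Q})$ for $\bullet\in\{inf,sup\}$, and taking the supremum over $\mathcal{Q}$ (noting that pullbacks $\phi^{-1}(\mathcal{Q})$ form a particular class of partitions of $X$) proves (i).

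Part (ii) is an immediate consequence: the measure-space isomorphism $\phi$ has an inverse $\phi^{-1}$ satisfying $\phi^{-1}\circ g=f\circ\phi^{-1}$ $\nu$-a.e.\ and $(\phi^{-1})_*\nu=\mu$, so applying (i) in both directions yields equality of the lower and upper inverse partition entropies. Part (iii) then follows directly: the hypothesis that $f$ has inverse partition entropy means $h^-_{f,inf}(\mu)=h^-_{f,sup}(\mu)=h^-_f(\mu)$, and (ii) transports this equality to $Y$, giving $h^-_{g,inf}(\nu)=h^-_{g,sup}(\nu)=h^-_g(\nu)=h^-_f(\mu)$.

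The only genuinely delicate point is the bookkeeping at the inverse-limit level: one must verify that the $\mu$-null set where intertwining fails does not interfere with defining $\hat\phi$ on a set of full $\hat\mu$-measure, and that $\hat\phi_*\hat\mu=\hat\nu$; both are standard once one invokes $f$-invariance of $\mu$ and the uniqueness of the invariant lift noted earlier in the paper. The partition-pullback inclusion and the integration are then routine.
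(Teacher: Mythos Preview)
Your proof is correct and follows the same approach as the paper: lift $\phi$ to $\hat\phi$ on the inverse limits, pull back a partition $\mathcal{Q}$ of $Y$ to $\mathcal{P}=\phi^{-1}(\mathcal{Q})$, compare the inverse partition cells, integrate using $\hat\phi_*\hat\mu=\hat\nu$, and take suprema; parts (ii) and (iii) then follow by symmetry. The only minor difference is that the paper asserts the pointwise \emph{equality} $h^-_{f,inf}(\mu,\phi^{-1}(\mathcal{Q}),\hat x)=h^-_{g,inf}(\nu,\mathcal{Q},\hat\phi(\hat x))$ in (i), whereas you prove the inclusion $\mathcal{P}_n^-(\hat x)\subseteq\phi^{-1}(\mathcal{Q}_n^-(\hat\phi(\hat x)))$ and hence only the inequality --- which is actually the more careful statement for a general factor map and is all that is needed for (i).
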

\begin{proof} Let $\phi: X\rightarrow Y$ be a measure preserving map such that $\phi\circ f=g\circ\phi$ $\mu$-a.e. Notice that $\hat{\phi}:  \widehat{X}\rightarrow  \widehat{Y}$ defined by $\phi(\hat{x}) = \left(\phi({x_{-n}})\right)_{n\geq 0}$ is measure preserving from $(\widehat{X},\hat{\mu})$ to $(\widehat{Y},\hat{\nu})$.
	Let $ \mathcal{P}$ be a measurable partition of $Y$ of finite entropy. Then $\phi^{-1}(\mathcal{P})$ is a measurable partition of $(X, \mu)$ and it is easy to check that
	$h^-_{f,inf}(\mu,\phi^{-1}(\mathcal{P}),\hat{x})= h^-_{g,inf}(\nu,\mathcal{P},\hat{\phi}(\hat{x}))$. 
	Hence $$\int_{\widehat{X}} h^-_{f,inf}(\mu,\phi^{-1}(\mathcal{P}),\hat{x}) d\hat{\mu}(\hat{x})= \int_{\widehat{X}} h^-_{f,inf}(\nu,\mathcal{P},\hat{\phi}(\hat{x}))d\hat{\mu}(\hat{x})=\int_{\widehat{Y}} h^-_{g, inf}(\nu,\mathcal{P},\hat{y}))d\hat{\nu}(\hat{y})$$
	and then $h^-_{f,inf}(\mu) \geq  h^-_{g, inf}(\nu)$. Similarly  $h^-_{f,sup}(\mu) \geq  h^-_{g, sup}(\nu)$, thus proving (i). 
	Also (ii) and (iii) follow similarly.
\end{proof}	

\begin{definition}
	We say that a measurable partition $\mathcal{P}$ of $X$ is \textbf{normal} with respect to $(f, \mu)$,  if $f$ is injective on every atom $P\in \mathcal{P}$, $J_f(\mu)$ is bounded on every $P\in \mathcal{P}$, and $H_\mu(\mathcal{P})<\infty$.
\end{definition}

\begin{proposition}
	If $\mathcal{P}$ is a normal partition of $(X,\mu)$, then  for $\hat{\mu}$-a.e. $\hat{x}\in \widehat{X}$ we have
	\begin{itemize}
		\item [(i)] $h^-_{f,sup}(\mu, \mathcal{P})=h^-_{f,sup}(\mu, \mathcal{P}, \hat{x})\text{ and }h^-_{f,inf}(\mu, \mathcal{P})=h^-_{f,inf}(\mu, \mathcal{P}, \hat{x}).$
		\item [(ii)]  $h^-_{f,inf}(\mu, \mathcal{P}) \leq 	h^-_{f,sup}(\mu, \mathcal{P}) \leq h_{f}(\mu, \mathcal{P})$.
	\end{itemize}
	\end{proposition}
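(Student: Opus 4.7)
The proof splits into (ii), which is short and self-contained, and (i), which builds on it.

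For (ii), my plan is to exploit the identity $\mathcal{P}^-_n(\hat{x}) = f^n(\mathcal{P}_n(x_{-n}))$ together with the normality of $\mathcal{P}$. Since $f$ is injective on every atom of $\mathcal{P}$, an easy induction shows that $f^n$ is injective on $\mathcal{P}_n(x_{-n}) \subset \mathcal{P}(x_{-n})$ (the image $f^i(\mathcal{P}_n(x_{-n}))$ lies inside the atom $\mathcal{P}(x_{-n+i})$, and $f$ is injective there). Consequently,
\[
\mu(\mathcal{P}^-_n(\hat{x})) = \mu(f^n(\mathcal{P}_n(x_{-n}))) = \int_{\mathcal{P}_n(x_{-n})} J_{f^n}(\mu)\, d\mu \geq \mu(\mathcal{P}_n(x_{-n})),
\]
using that $J_{f^n}(\mu) \geq 1$ $\mu$-a.e. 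Dividing $-\log$ by $n$ and taking $\limsup$, the Modified Shannon-McMillan-Breiman Theorem (Theorem~\ref{sm}) gives $h^-_{f,sup}(\mu,\mathcal{P},\hat{x}) \leq h_f(\mu,\mathcal{P})$ for $\hat{\mu}$-a.e.\ $\hat{x}$. Integrating over $\widehat{X}$ yields $h^-_{f,sup}(\mu,\mathcal{P}) \leq h_f(\mu,\mathcal{P})$, while $h^-_{f,inf}(\mu,\mathcal{P}) \leq h^-_{f,sup}(\mu,\mathcal{P})$ is immediate from $\liminf \leq \limsup$.

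For (i), my plan is to show that the two functions $\hat{x} \mapsto h^-_{f,inf}(\mu,\mathcal{P},\hat{x})$ and $\hat{x} \mapsto h^-_{f,sup}(\mu,\mathcal{P},\hat{x})$ are $\hat{f}$-invariant $\hat{\mu}$-a.e., and then to invoke ergodicity of $\hat{\mu}$. The key combinatorial identity, obtained by unwinding definitions along the shifted prehistory and using injectivity of $f$ on $\mathcal{P}(x)$, is
\[
\mathcal{P}^-_{n+1}(\hat{f}\hat{x}) = f(\mathcal{P}^-_n(\hat{x})) \cap \mathcal{P}(fx) \subset f(\mathcal{P}^-_n(\hat{x})).
\]
Normality of $\mathcal{P}$ provides a finite bound $M_{\mathcal{P}(x)} < \infty$ for $J_f(\mu)$ on the atom $\mathcal{P}(x)$, so
\[
\mu(\mathcal{P}^-_{n+1}(\hat{f}\hat{x})) \leq \int_{\mathcal{P}^-_n(\hat{x})} J_f(\mu)\, d\mu \leq M_{\mathcal{P}(x)}\, \mu(\mathcal{P}^-_n(\hat{x})).
\]
Taking $-\log$, dividing by $n+1$, and passing to $\liminf$ (respectively $\limsup$), the constant term $\frac{\log M_{\mathcal{P}(x)}}{n+1}$ vanishes and the factor $\frac{n}{n+1}$ tends to $1$, giving the \emph{sub-invariance}
\[
h^-_{f,inf}(\mu,\mathcal{P},\hat{x}) \leq h^-_{f,inf}(\mu,\mathcal{P},\hat{f}\hat{x}), \quad h^-_{f,sup}(\mu,\mathcal{P},\hat{x}) \leq h^-_{f,sup}(\mu,\mathcal{P},\hat{f}\hat{x}) \quad \text{for $\hat{\mu}$-a.e.\ } \hat{x}.
\]

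To upgrade sub-invariance to exact invariance, I use the standard trick: both functions are nonnegative and, by (ii), bounded above by $h_f(\mu,\mathcal{P}) \leq H_\mu(\mathcal{P}) < \infty$, hence integrable. Setting $\phi$ equal to either function, the $\hat{f}$-invariance of $\hat{\mu}$ yields $\int (\phi\circ\hat{f} - \phi)\, d\hat{\mu} = 0$; together with $\phi\circ\hat{f} - \phi \geq 0$ a.e.\ this forces $\phi\circ\hat{f} = \phi$ $\hat{\mu}$-a.e. Ergodicity of $\hat{\mu}$ then implies $\phi$ is $\hat{\mu}$-a.e.\ constant, equal to its integral $h^-_{f,inf}(\mu,\mathcal{P})$ (respectively $h^-_{f,sup}(\mu,\mathcal{P})$), which is precisely (i).

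The main obstacle is that the natural comparison of $\mathcal{P}^-_n(\hat{x})$ with its $\hat{f}$-shift produces only a one-sided inclusion, so a direct two-sided sandwich giving strict $\hat{f}$-invariance is unavailable; the proof must instead lean on the sub-invariance plus measure-preservation argument, which is precisely why boundedness from (ii) is needed first.
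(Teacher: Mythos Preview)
Your proof is correct and follows essentially the same route as the paper. For (i) both you and the paper derive the sub-invariance $h^-_{f,\sup}(\mu,\mathcal{P},\hat{f}\hat{x}) \geq h^-_{f,\sup}(\mu,\mathcal{P},\hat{x})$ from the inclusion $\mathcal{P}^-_{n}(\hat{f}\hat{x}) \subset f(\mathcal{P}^-_{n-1}(\hat{x}))$ and the bounded Jacobian on $\mathcal{P}(x)$, then conclude constancy from ergodicity. The only difference is that the paper invokes ergodicity directly (via the standard level-set argument: $\{\phi > c\}$ is $\hat{f}$-invariant mod null sets), whereas you first establish (ii) to secure integrability and then run the integral trick $\int(\phi\circ\hat{f}-\phi)\,d\hat{\mu}=0$; both are valid and standard. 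Your explicit proof of (ii) via $J_{f^n}(\mu)\geq 1$ and the Modified Shannon--McMillan--Breiman Theorem is a clean addition that the paper leaves implicit.
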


\begin{proof}	
	For $\hat{x}\in \widehat{X}$ we have 
	$$\mu(\mathcal{P}_{n}^-(\hat{f}(\hat{x})))\leq \mu( f(\mathcal{P}_{ n-1}^-(\hat{x})))= \int_{\mathcal{P}_{n-1}^-(\hat{x})}J_f(\mu) d\mu,$$
	because $f$ is injective on $\mathcal{P}_{ n-1}^-(\hat{x})$ as $\mathcal{P}_{n-1}^-(\hat{x}) \subset \mathcal{P}(x)$. Also, as the Jacobian $J_f(\mu)$ is bounded on $\mathcal{P}(x)$ it follows that 
	$$h^-_{f,sup}(\mu, \mathcal{P}, \hat{f}(\hat{x})) \geq h^-_{f,sup}(\mu, \mathcal{P}, \hat{x}).$$
	From the ergodicity of $\hat{\mu}$ it follows that we have equality almost everywhere. 
\end{proof}	

As any refinement with finite entropy of a normal partition is also a normal partition, we obtain: 
\begin{proposition}\label{np}
	If there exists a normal partition of $X$ with respect to $(f, \mu)$ then 
	$$h^-_{f,inf}(\mu)= \sup \{ h^-_{f,inf}(\mu, \mathcal{P}) :  \mathcal{P} \text{ normal partition}  \},$$
	$$h^-_{f,sup}(\mu)= \sup \{ h^-_{f,sup}(\mu, \mathcal{P}) :  \mathcal{P} \text{ normal partition}  \}.$$	
\end{proposition}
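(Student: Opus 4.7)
The plan is to prove both equalities by a single refinement argument, leveraging the fact (stated just before the proposition) that any finite-entropy refinement of a normal partition remains normal. I will give the argument for $h^-_{f,inf}(\mu)$; the argument for $h^-_{f,sup}(\mu)$ is identical.

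First I would observe the trivial inequality: since every normal partition is in particular a measurable partition with $H_\mu(\mathcal{P}) < \infty$, taking the supremum over the smaller class of normal partitions cannot exceed the supremum over the full class of finite-entropy measurable partitions, so
\[
\sup \{ h^-_{f,inf}(\mu, \mathcal{P}) : \mathcal{P} \text{ normal} \} \;\leq\; h^-_{f,inf}(\mu).
\]

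For the reverse inequality, let $\mathcal{P}_0$ be the fixed normal partition whose existence is assumed, and let $\mathcal{Q}$ be an arbitrary measurable partition with $H_\mu(\mathcal{Q}) < \infty$. I would consider the refinement $\mathcal{R} := \mathcal{Q} \vee \mathcal{P}_0$ and check that $\mathcal{R}$ is itself normal: (i) $H_\mu(\mathcal{R}) \leq H_\mu(\mathcal{Q}) + H_\mu(\mathcal{P}_0) < \infty$; (ii) every atom of $\mathcal{R}$ is contained in some atom of $\mathcal{P}_0$, on which $f$ is injective, so $f$ is injective on every atom of $\mathcal{R}$; (iii) similarly, every atom of $\mathcal{R}$ lies inside an atom of $\mathcal{P}_0$ on which $J_f(\mu)$ is bounded, hence $J_f(\mu)$ is bounded on every atom of $\mathcal{R}$. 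Thus $\mathcal{R}$ is normal.

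Finally I would apply the monotonicity proposition established above: since $\mathcal{Q} \leq \mathcal{R}$, one has $h^-_{f,inf}(\mu, \mathcal{Q}) \leq h^-_{f,inf}(\mu, \mathcal{R})$, and the right-hand side is bounded by the supremum over normal partitions. Taking the sup over all finite-entropy $\mathcal{Q}$ yields
\[
h^-_{f,inf}(\mu) \;\leq\; \sup \{ h^-_{f,inf}(\mu, \mathcal{P}) : \mathcal{P} \text{ normal} \},
\]
completing the proof. There is no real obstacle here beyond confirming that the three defining properties of normality are closed under refinement, which is essentially by inspection; the only point to be slightly careful about is the inheritance of finite entropy, which uses the standard subadditivity $H_\mu(\mathcal{P} \vee \mathcal{Q}) \leq H_\mu(\mathcal{P}) + H_\mu(\mathcal{Q})$.
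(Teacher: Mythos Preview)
Your proof is correct and follows exactly the same approach as the paper, which simply records the key observation that any finite-entropy refinement of a normal partition is again normal and leaves the rest implicit. You have just written out the details of that refinement argument (the monotonicity step and the verification of the three normality conditions for $\mathcal{Q}\vee\mathcal{P}_0$) that the paper omits.
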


\begin{definition}
For $n\geq 1$, the projection $\pi_n :\widehat{X}\rightarrow X$ on the $n$-th coordinate is defined by $\pi_n(\hat{x})=x_{-n}$.	
\end{definition}

\begin{proposition}\label{a}
	Let $\mathcal{P}$ be a measurable partition of $(X,\mu)$ with finite entropy such that $f$ is injective on every $P\in \mathcal{P}$. Then, for $\hat{\mu}$-a.e. $\hat{x}\in\widehat{X}$ we have 
	\begin{equation}\label{c}
	h^-_{f,sup}(\mu, \mathcal{P}, \hat{x})	\leq  h_f(\mu, \mathcal{P}) -F_f(\mu),
	\end{equation}	
	and consequently, $h^-_{f,sup}(\mu,\mathcal{P})\leq  h_f(\mu)- F_{f}(\mu)$.
\end{proposition}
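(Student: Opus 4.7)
The plan is to combine the identity $\mathcal{P}_n^-(\hat x) = f^n(\mathcal{P}_n(x_{-n}))$ with Parry's change-of-variables formula for the Jacobian and then extract the bound via Jensen. Since $f$ is injective on every atom of $\mathcal{P}$ and $\mathcal{P}_n(x_{-n}) \subset \mathcal{P}(x_{-n})$, an easy induction shows $f^n$ is injective on $\mathcal{P}_n(x_{-n})$, so Parry's formula gives
\begin{equation*}
\mu(\mathcal{P}_n^-(\hat x)) = \int_{\mathcal{P}_n(x_{-n})} J_{f^n}(\mu)\, d\mu = \mu(\mathcal{P}_n(x_{-n}))\cdot \mathbb{E}\bigl(J_{f^n}(\mu)\,\big|\,\mathcal{P}_n\bigr)(x_{-n}),
\end{equation*}
where $\mathbb{E}(\cdot\,|\,\mathcal{P}_n)$ denotes $\mu$-conditional expectation. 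Applying Jensen's inequality to $\log$ (concave) on the normalized probability $\mu|_{\mathcal{P}_n(x_{-n})}/\mu(\mathcal{P}_n(x_{-n}))$ and dividing by $n$ yields the pointwise bound
\begin{equation*}
\frac{-\log \mu(\mathcal{P}_n^-(\hat x))}{n} \leq \frac{-\log \mu(\mathcal{P}_n(x_{-n}))}{n} - \frac{1}{n}\mathbb{E}\bigl(\log J_{f^n}(\mu)\,\big|\,\mathcal{P}_n\bigr)(x_{-n}).
\end{equation*}

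Taking $\limsup_{n\to\infty}$ on both sides, the first term on the right converges $\hat\mu$-a.e.\ to $h_f(\mu, \mathcal{P})$ by the modified Shannon--McMillan--Breiman theorem (Theorem \ref{sm}). For the second term I would apply Birkhoff's ergodic theorem on $(\widehat X, \hat\mu, \hat f^{-1})$ to the integrable observable $\log J_f(\mu)\circ \pi$; by the chain rule for Jacobians, $\tfrac{1}{n}\log J_{f^n}(\mu)(x_{-n}) = \tfrac{1}{n}\sum_{j=1}^n \log J_f(\mu)(x_{-j}) \to F_f(\mu)$ both $\hat\mu$-a.e.\ and in $L^1(\hat\mu)$. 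Combined with the fact that $\sigma(\mathcal{P}_n)$ forms a refining filtration, a Chatterji--Doob-type generalized martingale convergence argument then gives $\tfrac{1}{n}\mathbb{E}(\log J_{f^n}(\mu)\,|\,\mathcal{P}_n)(x_{-n}) \to F_f(\mu)$ for $\hat\mu$-a.e.\ $\hat x$. Putting the two limits together yields the pointwise inequality $h^-_{f,sup}(\mu, \mathcal{P}, \hat x) \leq h_f(\mu, \mathcal{P}) - F_f(\mu)$ for $\hat\mu$-a.e.\ $\hat x$, and the consequence $h^-_{f,sup}(\mu, \mathcal{P}) \leq h_f(\mu) - F_f(\mu)$ follows by integrating against $\hat\mu$ and using $h_f(\mu, \mathcal{P}) \leq h_f(\mu)$.

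The main obstacle is the pointwise a.e.\ convergence of the conditional-expectation term: the integrands $\tfrac{1}{n}\log J_{f^n}(\mu)$ depend on $n$, the conditioning $\sigma$-algebras $\sigma(\mathcal{P}_n)$ refine with $n$, and on top of this the evaluation point $x_{-n}$ itself moves along the backward orbit. This forces a coordinated use of Birkhoff along $\hat f^{-1}$, a Doob-type result for the refining filtration, and careful tracking of the moving base point. An alternative purely integral route --- integrate the Jensen bound against $\hat\mu$ to get $\int \tfrac{-\log\mu(\mathcal{P}_n^-(\hat x))}{n}\, d\hat\mu \leq \tfrac{H_\mu(\mathcal{P}_n)}{n} - F_f(\mu) \to h_f(\mu, \mathcal{P}) - F_f(\mu)$, combined with the $\hat f^{-1}$-super-invariance of $h^-_{f,sup}(\mu, \mathcal{P}, \cdot)$ coming from $\mathcal{P}_n^-(\hat f^{-1}\hat x) \subset \mathcal{P}_{n-1}^-(\hat x)$ --- yields only constancy of the sup-function $\hat\mu$-a.e.\ and a bound on $\limsup_n \int g_n$, and does not directly recover the required upper bound on the constant value, so the pointwise martingale step appears to be unavoidable.
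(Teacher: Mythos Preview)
Your Jensen inequality and the decomposition
\[
\frac{-\log\mu(\mathcal P_n^-(\hat x))}{n}\le \frac{-\log\mu(\mathcal P_n(x_{-n}))}{n}-\frac{1}{n}\,\mathbb E\bigl(\log J_{f^n}(\mu)\mid \mathcal P_n\bigr)(x_{-n})
\]
are correct, and the first right-hand term is handled by Theorem~\ref{sm}. The gap is exactly where you place it: the claim that $\frac{1}{n}\,\mathbb E(\log J_{f^n}(\mu)\mid\mathcal P_n)(x_{-n})\to F_f(\mu)$ $\hat\mu$-a.e. There is no ``Chatterji--Doob generalized martingale theorem'' asserting that $X_n\to X$ in $L^1$ and $\mathcal F_n\nearrow$ imply $\mathbb E(X_n\mid\mathcal F_n)\to\mathbb E(X\mid\mathcal F_\infty)$ almost everywhere: take $\mathcal F_n$ increasing rapidly to the full $\sigma$-algebra and $X_n\to 0$ in $L^1$ but not a.e.; then $\mathbb E(X_n\mid\mathcal F_n)=X_n$ eventually, with no a.e.\ limit. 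On top of this, you are evaluating at the moving point $x_{-n}$, so even a fixed-point a.e.\ statement would not transfer without further argument. Your own diagnosis of the integral route is also correct: since $b_n(\hat x):=\frac{1}{n}\mathbb E(\log J_{f^n}\mid\mathcal P_n)(x_{-n})\ge 0$ and $\int b_n\,d\hat\mu=F_f(\mu)$, Fatou yields only $\int\liminf b_n\le F_f(\mu)$, which is the wrong inequality.

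The paper circumvents the conditional-expectation issue entirely by a \emph{lower} bound on $\mu(\mathcal P_n^-(\hat x))$ rather than an upper bound on $\log$. For $\delta>0$ it fixes the ``good'' set $\widehat A_{k,\delta}=\{\hat x:\ |n^{-1}\log J_{f^n}(\mu)(x_{-n})-F_f(\mu)|<\delta,\ \forall n\ge k\}$, and then runs a Borel--Cantelli argument on the atoms of $\mathcal P_n$: except on a summable sequence of bad events, the atom $\mathcal P_n(x_{-n})$ meets $\pi_n(\widehat A_{k,\delta})$ in at least a $1/n^2$ fraction of its measure. On that intersection $J_{f^n}(\mu)\ge e^{n(F_f(\mu)-\delta)}$ pointwise, so
\[
\mu(\mathcal P_n^-(\hat x))=\int_{\mathcal P_n(x_{-n})}J_{f^n}(\mu)\,d\mu\ \ge\ e^{n(F_f(\mu)-\delta)}\cdot\frac{1}{n^2}\,\mu(\mathcal P_n(x_{-n})),
\]
and the conclusion follows by taking logarithms, using Theorem~\ref{sm}, and sending $\delta\to 0$. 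The point is that one never needs to control the conditional average of $\log J_{f^n}$; one only needs a pointwise lower bound on $J_{f^n}$ on a subset of the atom whose measure is not exponentially small. That is what Birkhoff plus Borel--Cantelli deliver, and it is the missing idea in your approach.
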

\begin{proof}
	Let $\delta>0$. For $k\geq 1$ define 
	$$\widehat{A}_{k,\delta}=\{\hat{x}=(x,x_{-1}, x_{-2},\ldots )\in \widehat{X} : \left| \frac{\log J_{f^n}(\mu)(x_{-n})}{n} - F_f(\mu)\right|<\delta  \text{ for all } n\geq k \}.$$
	Then, since $F_f(\mu)= \int_X \log J_{f}(\mu)d\mu$ and $\mu$ is ergodic (thus $\hat{\mu}$ ergodic), it follows by Birkhoff Ergodic Theorem applied to $\hat{f}$ on $(\widehat{X},\hat{\mu})$ and to $\log J_{f}(\mu)$, that
	$\lim_{k\rightarrow\infty} \hat{\mu}(\widehat{A}_{k,\delta})=1.$
	Let us fix $k\geq 1$ such that
	$\hat{\mu}(\widehat{A}_{k,\delta})>0$.
	For $n\geq k$, let the collection of sets from the partition $\mathcal{P}_n$
	$$\mathcal{K}_n^{\delta}=\left\{P\in \mathcal{P}_{n} : \mu(P\cap \pi_n(\widehat{A}_{k,\delta}))\geq \frac{1}{n^2}\cdot\mu(P) \right\},$$
	and define the measurable sets 
	$$D_n^{\delta}=\bigcup_{P\in \mathcal{K}_n^{\delta}}P  \ \text{and}  \ E_n^{\delta}=X\setminus D_n^{\delta}= \bigcup_{P\in \mathcal{P}_{n}\setminus \mathcal{K}_n^{\delta}}P.$$
	Notice that
	$$D_n^{\delta}=\left\{x\in X : \mu(\mathcal{P}_{n}(x)\cap \pi_n(\widehat{A}_{k,\delta}))\geq \frac{1}{n^2}\cdot\mu(\mathcal{P}_{n}(x))    \right\}.$$
	If $P\notin \mathcal{K}_n^{\delta}$ we have
	$\mu(P\cap \pi_n(\widehat{A}_{k,\delta}))<\frac{1}{n^2}\cdot\mu(P)$.
	%Thus
	%\begin{align*}
	%\mu(E_n^{\delta})=\sum_{P\in \mathcal{P}_{n}\setminus %\mathcal{K}_n^{\delta}}\mu(P)\leq \sum_{P\in %\mathcal{P}_{n}\setminus %\mathcal{K}_n^{\delta}}\frac{n^2}{n^2-1}\cdot\mu(P\cap %\pi_n(\widehat{A}_{k,\delta}))^c)=\frac{n^2}{n^2-1}\cdot\mu((%E_n^{\delta} \cap \pi_n(\widehat{A}_{k,\delta}))^c )\\ %=\mu((E_n^{\delta}\cap %\pi_n(\widehat{A}_{k,\delta}))^c)+\frac{1}{n^2-1}\mu((E_n^{(\%delta)}\cap \pi_n(\widehat{A}_{k,\delta}))^c)	
	%\end{align*}		
	Then $\mu(E_n^{\delta}\cap \pi_n(\widehat{A}_{k,\delta})) <\frac{1}{n^2}$.
		
	%\begin{align*}
	%\mu((E_n^{\delta}\cap \pi_n(\widehat{A}_{k,\delta})))\leq %\frac{1}{n^2-1}\cdot\mu((E_n^{\delta}\cap %\pi_n(\widehat{A}_{k,\delta}))^c)\leq %\frac{1}{n^2-1}\cdot\mu(\pi_n(\widehat{A}_{k,\delta}))^c)\leq% \frac{2}{n^2}
	%\end{align*}
	Since
	$$\pi_n^{-1}(E_n^{\delta}) \cap \widehat{A}_{k,\delta} \subseteq \pi_n^{-1}(E_n^{\delta} \cap \pi_n(\widehat{A}_{k,\delta})),$$
	we obtain that
	\begin{equation*}
	\hat{\mu}(\pi_n^{-1}(E_n^{\delta}) \cap \widehat{A}_{k,\delta}) \leq \hat{\mu}(\pi_n^{-1}(E_n^{\delta} \cap \pi_n(\widehat{A}_{k,\delta})))=\mu(E_n^{\delta}\cap \pi_n(\widehat{A}_{k,\delta}))<\frac{1}{n^2}.
	\end{equation*}
	If $\widehat{F}_n= \pi_n^{-1}(E_n^{\delta} \cap \pi_n(\widehat{A}_{k,\delta}))$,  then $\sum\limits_{n\geq k}\widehat{\mu}(\widehat{F}_n)<\infty$.  By Borel-Cantelli Lemma we have  
	$\hat{\mu}(\bigcap _{p\geq k} \bigcup_{n\geq p}\widehat{F}_n)=0$
	and therefore
	\begin{equation*}
	\hat{\mu}(\widehat{A}_{k,\delta})=\hat{\mu}((\widehat{X}\setminus\bigcap _{p\geq k} \bigcup_{n\geq p}\widehat{F}_n)\bigcap \widehat{A}_{k,\delta}) =\hat{\mu}((\bigcup\limits_{p\geq k} \bigcap_{n\geq p}\widehat{F}_n^c) \bigcap \widehat{A}_{k,\delta})=\hat{\mu}(\bigcup _{p\geq k} \bigcap_{n\geq p}(\widehat{A}_{k,\delta}\bigcap \widehat{F}_n^c)).
	\end{equation*}
	This shows that for $\hat{\mu}$-a.e $\hat{x}\in \widehat{A}_{k,\delta}$, there exists $p\geq k$ such that $\hat{x}\in \widehat{A}_{k,\delta}\bigcap \widehat{F}_n^c $ for all $n\geq p$. Notice that if $\hat{x}\in \widehat{A}_{k,\delta}\cap \widehat{F}_n^c$, then $\hat{x}\in \widehat{A}_{k,\delta}$ and $\pi_n(\hat{x})\in D_n^{\delta}$. Hence for $\hat{\mu}$-a.e $\hat{x}\in \widehat{A}_{k,\delta}$, there exists $p\geq k$ such that $\pi_n(\hat{x})\in D_n^{\delta}$ for all $n\geq p$. 
	We proved that 
	for $\hat{\mu}$-a.e. $\hat{x}\in \widehat{A}_{k,\delta}$ there exists $p\geq k$ such that
	\begin{equation}\label{above}
	\frac{1}{n^2}\cdot\mu(\mathcal{P}_{n}(x_{-n}))\leq \mu(\mathcal{P}_{n}(x_{-n})\cap \pi_n(\widehat{A}_{k,\delta})),  \text{ for all } n\geq p. 
	\end{equation}
	Let $\hat{x}\in \widehat{A}_{k,\delta}$ satisfying (\ref{above}) and the conclusion of Theorem \ref{sm}. For all $n\geq k$, since $f^n$ is injective on $\mathcal{P}_n(x_{-n})$, we have from (\ref{above}) that, 
	\begin{align*}
	\mu(\mathcal{P}_{n}^{-}(\hat{x}))&=\mu (f^n(\mathcal{P}_{n}(x_{-n})))= \int_{\mathcal{P}_{n}(x_{-n})}J_{f^n}(\mu)(y)\ d\mu(y)\\ &\geq \int_{\mathcal{P}_{n}(x_{-n}) \cap \pi_n(\widehat{A}_{k,\delta})}J_{f^n}(\mu)(y)\ d\mu(y)\geq e^{n(F_f(\mu)-\delta )}\cdot \mu (\mathcal{P}_{n}(x_{-n}) \cap \pi_n(\widehat{A}_{k,\delta}))\\ & \geq \frac{1}{n^2}\cdot e^{n(F_f(\mu)-\delta )}\cdot \mu (\mathcal{P}_{n}(x_{-n})),
	\end{align*}
	and then
	$$\frac{-\log \mu(\mathcal{P}_{n}^{-}(\hat{x}))}{n}\leq \frac{-\log \mu (\mathcal{P}_{n}(x_{-n}))}{n} +\frac{2\log n}{n}-F_f(\mu)+\delta.$$
	Hence, by using Theorem \ref{sm}, for $\hat{\mu}$-a.e. $\hat{x}\in \widehat{A}_{k,\delta}$ we obtain 
	\begin{equation}\label{b}
	\limsup_{n\rightarrow\infty}\frac{-\log \mu(\mathcal{P}_{n}^{-}(\hat{x}))}{n} \leq h_f(\mu, \mathcal{P}) -F_f(\mu)+\delta.
	\end{equation}
	Since the sequence of sets $(\widehat{A}_{k,\delta})_{k\geq 1}$ is increasing and $\hat{\mu}(\widehat{X}\setminus\bigcup\limits_{k\geq 1} \widehat{A}_{k,\delta})=0$, it follows that (\ref{b}) is satisfied for $\hat{\mu}$-a.e. $\hat{x}\in \widehat{X}$. But as $\delta>0$ was chosen arbitrarily, we conclude that  
	\begin{equation*}
	\limsup_{n\rightarrow\infty}\frac{-\log \mu(\mathcal{P}_{n}^{-}(\hat{x}))}{n} \leq  h_f(\mu, \mathcal{P}) -F_f(\mu), \ \text{ for }\hat{\mu}-\text{a.e. }\hat{x}\in \widehat{X}.
	\end{equation*}\end{proof}
\begin{corollary}\label{cor}
	If there exists a measurable partition $\mathcal{A}$ of $(X,\mu)$ with finite entropy such that $f$ is injective on every atom $A\in \mathcal{A}$, then	
	$$ h^-_{f,sup}(\mu)\leq  h_f(\mu)- F_{f}(\mu).$$
\end{corollary}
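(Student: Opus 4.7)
The plan is to reduce the supremum over all finite-entropy measurable partitions to the case already handled by Proposition \ref{a}, by refining with the given injectivity partition $\mathcal{A}$. Let $\mathcal{P}$ be any measurable partition of $X$ with $H_\mu(\mathcal{P}) < \infty$. Consider the common refinement $\mathcal{Q} := \mathcal{P} \vee \mathcal{A}$. Since $H_\mu(\mathcal{Q}) \le H_\mu(\mathcal{P}) + H_\mu(\mathcal{A}) < \infty$, the partition $\mathcal{Q}$ has finite entropy. Moreover every atom $Q \in \mathcal{Q}$ is contained in some atom $A \in \mathcal{A}$, so $f$ is injective on $Q$. Thus $\mathcal{Q}$ satisfies the hypotheses of Proposition \ref{a}.

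Next I would invoke the previously established monotonicity of $h^-_{f,sup}(\mu, \cdot)$ under refinement: since $\mathcal{P} \le \mathcal{Q}$, we have $h^-_{f,sup}(\mu, \mathcal{P}) \le h^-_{f,sup}(\mu, \mathcal{Q})$. Applying Proposition \ref{a} to $\mathcal{Q}$ gives
$$h^-_{f,sup}(\mu, \mathcal{Q}) \le h_f(\mu, \mathcal{Q}) - F_f(\mu).$$
Since $h_f(\mu, \mathcal{Q}) \le h_f(\mu)$ by the very definition of the measure-theoretic entropy as a supremum over finite-entropy partitions, we conclude
$$h^-_{f,sup}(\mu, \mathcal{P}) \le h_f(\mu) - F_f(\mu).$$

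Taking the supremum of the left-hand side over all measurable partitions $\mathcal{P}$ with $H_\mu(\mathcal{P}) < \infty$ yields $h^-_{f,sup}(\mu) \le h_f(\mu) - F_f(\mu)$, as desired. There is no real obstacle here beyond checking that the refinement preserves both the injectivity-on-atoms hypothesis and the finite-entropy hypothesis, both of which are immediate; the substance of the corollary is entirely carried by Proposition \ref{a}.
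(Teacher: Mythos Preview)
Your proposal is correct and follows essentially the same approach as the paper: refine an arbitrary finite-entropy partition $\mathcal{P}$ by $\mathcal{A}$, apply Proposition \ref{a} to the refinement, and use monotonicity (equivalently, $(\mathcal{P}\vee\mathcal{A})_n^-(\hat{x})\subset\mathcal{P}_n^-(\hat{x})$) together with $h_f(\mu,\mathcal{P}\vee\mathcal{A})\le h_f(\mu)$.
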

\begin{proof}
	Let $\mathcal{P}$ be a partition with finite entropy. Then the join partition $\mathcal{P} \vee\mathcal{A}$ has finite entropy. Also $f$ is injective on every atom of this partition. 
	The corollary then follows immediately from Proposition \ref{a} and the fact that 
	$(\mathcal{P}\vee{\mathcal{A}})^{-}_n(\hat{x}) \subset \mathcal{P}^{-}_n(\hat{x})$ for every $\hat{x}\in\widehat{X}$. 
\end{proof}

\begin{proposition}\label{aa}
	Assume that there exists  a finite partition $\mathcal{A}$ of $(X,\mu)$ such that $f$ is injective on every $A\in \mathcal{A}$. 
	Then: 
	\begin{itemize}
		\item [(i)] there exists a sequence $\mathcal{C}^1 \leq \mathcal{C}^2 \leq \mathcal{C}^3 \leq \ldots$ of normal partitions such that whenever $\mathcal{P}$ is an arbitrary partition with finite entropy and $\mathcal{P}_{(k)}:= \mathcal{P}\vee \mathcal{C}^k$, we have for $\hat{\mu}$-a.e $\hat{x}\in \widehat{X}$, 
		$$h^-_{f,inf}(\mu, \mathcal{P}_{(k)}, \hat{x})\geq  h_f(\mu, \mathcal{P}_{(k)})- F_f(\mu) -\frac{1}{2^{k-1}}.$$
		\item [(ii)] $\ h^-_{f,inf}(\mu)\geq  h_f(\mu)- F_{f}(\mu).$
	\end{itemize}
\end{proposition}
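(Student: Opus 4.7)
The plan is to build an increasing sequence of normal partitions $\mathcal{C}^k$ that refine $\mathcal{A}$ and control the oscillation of $\log J_f(\mu)$ along orbits at scale $2^{-k}$, then mirror the upper-bound argument of Proposition \ref{a} with the Jacobian bounded \emph{above} rather than below. We may assume $F_f(\mu)<\infty$, for otherwise both parts are vacuous. Since $J_f(\mu)\ge 1$, set $L_{k,j}:=\{x\in X : j/2^k \le \log J_f(\mu)(x) < (j+1)/2^k\}$ for $j\ge 0$, and define $\mathcal{C}^k := \mathcal{C}^{k-1}\vee \mathcal{A} \vee \{L_{k,j}\}_{j\ge 0}$ (with $\mathcal{C}^0$ trivial), so the sequence is increasing. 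Each atom of $\mathcal{C}^k$ is contained in an atom of $\mathcal{A}$ (so $f$ is injective on it) and in a single $L_{k,j}$ (so $J_f(\mu)$ is bounded on it). The partition $\{L_{k,j}\}_{j\ge 0}$ has finite entropy because $\sum_j j\,\mu(L_{k,j})\le 2^k F_f(\mu)<\infty$ and any distribution on $\mathbb{N}$ with finite mean has finite entropy; combined with $H_\mu(\mathcal{A})<\infty$ this gives $H_\mu(\mathcal{C}^k)<\infty$, so $\mathcal{C}^k$ is normal.

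For (i), fix $k$, a finite-entropy partition $\mathcal{P}$, and $\delta>0$, and set $\mathcal{P}_{(k)}:=\mathcal{P}\vee\mathcal{C}^k$. Birkhoff's Theorem applied to $\log J_f(\mu)\circ\pi$ on $(\widehat{X},\hat\mu,\hat f)$, together with (\ref{Jaco}), yields for $\hat\mu$-a.e.\ $\hat x$ an integer $N(\hat x)$ with $J_{f^n}(\mu)(x_{-n})\le e^{n(F_f(\mu)+\delta)}$ for every $n\ge N(\hat x)$. For such $\hat x$ and any $y\in \mathcal{P}_{(k),n}(x_{-n})$, the points $f^i(y)$ and $f^i(x_{-n})$ lie in the same atom of $\mathcal{C}^k$ for $0\le i<n$, hence in a common $L_{k,j_i}$. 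Summing the pointwise Jacobian bounds and using the chain rule gives $\log J_{f^n}(\mu)(y)\le \log J_{f^n}(\mu)(x_{-n})+n/2^k$. Since $\mathcal{P}_{(k)}\ge \mathcal{A}$, the map $f^n$ is injective on $\mathcal{P}_{(k),n}(x_{-n})$, so
\[
\mu(\mathcal{P}_{(k),n}^{-}(\hat x)) \;=\; \int_{\mathcal{P}_{(k),n}(x_{-n})} J_{f^n}(\mu)\, d\mu \;\le\; e^{n(F_f(\mu)+\delta + 2^{-k})}\,\mu(\mathcal{P}_{(k),n}(x_{-n})).
\]
Taking $-\log/n$, applying $\liminf_{n\to\infty}$, and invoking Theorem \ref{sm} (valid since $H_\mu(\mathcal{P}_{(k)})<\infty$) gives, for $\hat\mu$-a.e.\ $\hat x$,
\[
h^-_{f,inf}(\mu,\mathcal{P}_{(k)},\hat x) \;\ge\; h_f(\mu,\mathcal{P}_{(k)}) - F_f(\mu) - \delta - 2^{-k}.
\]
Letting $\delta\downarrow 0$ and using $2^{-k}\le 2^{-(k-1)}$ proves (i).

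Part (ii) is then immediate: integrating (i) in $\hat\mu$ and taking the supremum over $k$ and over all $\mathcal{P}$ with $H_\mu(\mathcal{P})<\infty$, the inequality $\mathcal{P}_{(k)}\ge \mathcal{P}$ yields $h_f(\mu,\mathcal{P}_{(k)})\ge h_f(\mu,\mathcal{P})$, and $\mathcal{P}_{(k)}$ itself has finite entropy, so $\sup_{k,\mathcal{P}} h_f(\mu,\mathcal{P}_{(k)}) = h_f(\mu)$ and therefore $h^-_{f,inf}(\mu)\ge h_f(\mu) - F_f(\mu)$.

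The main obstacle I anticipate is verifying that each $\mathcal{C}^k$ is truly \textbf{normal} despite $J_f(\mu)$ possibly being unbounded; this is where the finiteness of $F_f(\mu)$ and the elementary mean-to-entropy estimate for distributions on $\mathbb{N}$ become indispensable. A secondary subtlety is that $\mathcal{P}_{(k)}$ may be countably infinite, but Theorem \ref{sm} as stated only requires $H_\mu(\mathcal{P}_{(k)})<\infty$, so this causes no trouble. Combined with Corollary \ref{cor}, part (ii) of this proposition closes the inequality and immediately yields Theorem \ref{t1}.
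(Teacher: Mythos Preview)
Your proof is correct and follows the same structural idea as the paper: partition $X$ by level sets of the Jacobian so that $\log J_f(\mu)$ oscillates by at most $O(2^{-k})$ on each atom, refine by $\mathcal{A}$ to ensure injectivity, then use the chain rule to bound $J_{f^n}(\mu)(y)$ in terms of $J_{f^n}(\mu)(x_{-n})$ and apply the modified Shannon--McMillan--Breiman theorem.

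The implementation differs in a few places worth noting. The paper partitions by level sets $\alpha_i^k=J_f(\mu)^{-1}\!\big([i^2/4^k,(i+1)^2/4^k)\big)$ for $i\ge 2^k$, whose quadratic spacing makes the oscillation of $\log J_f(\mu)$ at most $2\log(1+1/i)\le 2/2^k$ and, somewhat subtly, makes the sequence $\mathcal{C}^k$ automatically increasing without explicit joins; your equal-width level sets of $\log J_f(\mu)$ are more transparent, and you secure monotonicity by the recursive join $\mathcal{C}^k=\mathcal{C}^{k-1}\vee\mathcal{A}\vee\{L_{k,j}\}$. For finite entropy the paper uses the bound $\mu(\alpha_i^k\cap A)\le 4^k/i^2$ (coming from $J_f(\mu)\ge i^2/4^k$ and injectivity on $A$), which does not invoke $F_f(\mu)<\infty$; your finite-mean argument for $\{L_{k,j}\}$ is equally valid once you stipulate $F_f(\mu)<\infty$, which is harmless since otherwise the statement is vacuous. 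Finally, the paper introduces an auxiliary step function $G\ge J_f(\mu)$ constant on atoms and runs Birkhoff on $\log G$, whereas you compare $J_{f^n}(\mu)(y)$ directly to $J_{f^n}(\mu)(x_{-n})$; your route is slightly shorter and in fact yields the better constant $2^{-k}$ rather than $2^{-(k-1)}$.
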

\begin{proof} (i) Consider $k\geq 1$ and define the countable measurable partition $\alpha^k$ of $X$ with elements
	\begin{equation}\label{alphaik}
	\alpha_i^k=J_f(\mu)^{-1}\left(\left[\frac{i^2}{4^k}, \frac{(i+1)^2}{4^k} \right) \right),  i\geq 2^k.
	\end{equation}
	Notice that for any $i\geq k$ and $A\in \mathcal{A}$, since $f$ is injective on every atom $A\in\mathcal{A}$ and $\frac{i^2}{4^k}\cdot \mu (\alpha_i^k\cap A)\leq \mu (f(\alpha_i^k\cap A))\leq  1$, we have
	$\mu(\alpha_i^k\cap A )\leq \frac{4^k}{i^2}.$ 
	Define the countable measurable partition of $X$
	$$\mathcal{C}^k:=\{\alpha_{i}^k \cap A \ | \ \mu (\alpha_i^k\cap A)>0, A\in \mathcal{A}, i\geq 2^k \}.$$ 
 Let $N$ be the number of elements of $\mathcal{A}$. Since
	$\sum_i -\frac{4^k}{i^2}\log \frac{4^k}{i^2}<\infty$ and the function $x\mapsto -x\log x$ is increasing on $(0,1/e)$ we have 
	$$-\sum_{A\in \mathcal{A}}\mu(\alpha_i^k\cap A )\log \mu(\alpha_i^k  \cap A)< -\sum_{i\geq 2^k}\frac{N\cdot  4^k}{i^2} \log \frac{ 4^k}{i^2}<\infty,$$ and then $H_\mu(\mathcal{C}^k)<\infty$.
	Thus since $f$ is injective on every atom of $\mathcal{A}$ it follows that $\mathcal{C}^k$ is a normal partition.
	Define the function $G: X \rightarrow \mathbb{R}$ by
	$$G(x) =\sum_{i\geq 2^k}\frac{(i+1)^2}{4^k}\cdot\chi_{\alpha^k_i}(x), \ \  x\in X.$$
	Then $G$ is a measurable function and $\log G$ is integrable with respect to $\mu$. As 
	$$\log \frac{i^2}{4^k}\leq \log J_f(\mu) (x)<\log \frac{(i+1)^2}{4^k} \text{ for all }x\in \alpha^k_i,$$
	and as $\log \frac{(i+1)^2}{4^k}-\log \frac{i^2}{4^k}=2\log \left( 1+\frac{1}{i} \right)\leq \frac{2}{i}$
	for every $i\geq 2^k$, we have from above and (\ref{alphaik}) that
	\begin{equation}\label{GJ}
	0\leq\log G(x) - \log J_f(\mu)(x) <\frac{2}{2^k},  \text{ for all }x\in X.
	\end{equation}
	Therefore
	\begin{equation}\label{int}
	0\leq \int_X (\log G - \log J_f(\mu)) \ d\mu \leq \frac{2}{2^k}.
	\end{equation}
	Let $\mathcal{P}$ be an arbitrary finite or countable measurable partition of $X$ with $H_\mu(\mathcal{P})<\infty$. Consider the join partition $\mathcal{P}_{(k)}:=\mathcal{P}\vee \mathcal{C}^{k}$. Now, by Birkhoff Ergodic Theorem we get 
	\begin{equation}\label{x1}
	\frac{1}{n} \sum_{i=1}^n \log G(x_{-i}) \underset{n\rightarrow\infty}\longrightarrow \int_X \log G(x) d\mu(x), \text{ for }\hat{\mu}-\text{a.e. }\hat{x}\in \widehat{X}.
	\end{equation}
	Define $\widehat{X}(\hat{\mu}, \mathcal{P}_{(k)})$ to be the set of all $\hat{x}\in \widehat{X}$ that satisfy (\ref{smb}) for $\mathcal{P}_{(k)}$ and also (\ref{x1}). Then $\hat{\mu}(\widehat{X}(\hat{\mu}, \mathcal{P}_{(k)}))=1$. If $y\in \mathcal{P}_{(k), n}(x_{-n})$ then $$f^{n-j}(y)\in  \mathcal{P}_{(k)}\left(f^{n-j} (x_{-n})\right)= \mathcal{P}_{(k)}\left(x_{-j}\right)\subset \mathcal{C}^k(x_{-j}),  \ \text{ for }j=1,2,\ldots, n.$$
	From (\ref{GJ}), $G$ is constant on any $\alpha_i^k$ and $J_f(\mu)(z)\leq G(z)$, for every $z\in \alpha_i^k$. 	
	Notice that $G(z)= G(x_{-j})  \text{ whenever }z\in  f^{n-j}(\mathcal{P}_{(k), n}(x_{-n})).$
	Thus by successive integration we obtain
	\begin{align*}
	\mu\left(f^n (\mathcal{P}_{(k),n}(x_{-n})) \right)&=\int_{f^{n-1} (\mathcal{P}_{(k),n}(x_{-n})) } J_f(\mu)(z)d\mu(z) \leq G(x_{-1})\cdot\mu\left(f^{n-1} (\mathcal{P}_{(k),n}(x_{-n})) \right)\\
	&\leq  G(x_{-1})\cdot G(x_{-2})\ldots G(x_{-n})\cdot \mu\left( \mathcal{P}_{(k). n}(x_{-n})\right).
	\end{align*}
	Thus for every $k\ge 1$ we have
	\begin{align*}
	\frac{-\log \mu(f^n (\mathcal{P}_{(k),n}(x_{-n})) )}{n}&\geq -\frac{1}{n }\sum_{i=1}^n \log G(x_{-i})- \frac{\log \mu( \mathcal{P}_{(k),n}(x_{-n}) )}{n}.
	\end{align*}
	But for every $\hat{x}\in \widehat{X}(\hat{\mu}, \mathcal{P}_{(k)})$, from Theorem \ref{sm}, we get
	$$\lim_{n\rightarrow\infty}\frac{-\log \mu( \mathcal{P}_{(k),n}(x_{-n}) )}{n} = h_f(\mu, \mathcal{P}_{(k)}).$$ 
	Then, by applying (\ref{int}) and (\ref{x1}), we obtain: 	
	\begin{align}\label{x}
	h^-_{f,inf}(\mu, \mathcal{P}_{(k)}, \hat{x})&=\liminf_{n\rightarrow\infty} \frac{-\log \mu(f^n (\mathcal{P}_{(k),n}(x_{-n})) )}{n} \geq h_{f}(\mu, \mathcal{P}_{(k)})- \int_X \log G \ d\mu\nonumber \\
	&\geq  h_f(\mu, \mathcal{P}_{(k)})- \int_X \log J_f(\mu) \ d\mu-\frac{1}{2^{k-1}} = h_f(\mu, \mathcal{P}_{(k)})- F_f(\mu) -\frac{1}{2^{k-1}}\nonumber, 
	\end{align}
	for every $\hat{x}\in \widehat{X}(\hat{\mu}, \mathcal{P}_{(k)})$. 
	Then by integration, we get: 
	$$h^-_{f,inf}(\mu, \mathcal{P}_{(k)})\geq  h_f(\mu, \mathcal{P}_{(k)})- F_f(\mu) -\frac{1}{2^{k-1}}.$$	
	(ii) Let $\{\mathcal{P}^{(m)}, m \ge 1\}$ be a sequence of measurable partitions of finite entropy such that $$
	\lim_{m\rightarrow\infty}h_f(\mu,\mathcal{P}^{(m)})=\sup_m h_f(\mu, \mathcal{P}^{(m)})= h_f(\mu).$$
	For $k,m\geq 1$, let $\mathcal{P}^{(m)}_{(k)} = \mathcal{P}^{(m)}\vee \mathcal{C}^k$. Since $\mathcal{P}^{(m)}_{(k)}$ is a refinement of $\mathcal{P}^{(m)}$, for all $k,m$ we have 
	$$h^-_{f,inf}(\mu, \mathcal{P}^{(m)}_{(k)})\geq  h_f(\mu, \mathcal{P}^{(m)}_{(k)})- F_f(\mu) -\frac{1}{2^{k-1}}\geq  h_f(\mu,\mathcal{P}^{(m)})- F_f(\mu) -\frac{1}{2^{k-1}}.$$
	Thus for every $k\geq 1$, we have
	$$h^-_{f,inf}(\mu)\geq \sup_{m,k} h^-_{f,inf}(f, \mathcal{P}^{(m)}_{(k)})\geq \sup_{m} h_f(\mu,\mathcal{P}^{(m)})- F_f(\mu) -\frac{1}{2^{k-1}}.$$ 
	Therefore
	$h^-_{f,inf}(\mu)\geq  h_f(\mu)- F_f(\mu).$
\end{proof}

Now \textbf{Theorem \ref{t1}} follows immediately from Corollary \ref{cor} and Proposition \ref{aa}.  Thus if there exists a finite partition $\mathcal{A}$ such that $f$ is injective on every atom of $\mathcal{A}$, then $\mu$ has inverse partition entropy and 
	$$h^-_{f}(\mu)=h_f(\mu)- F_f(\mu).$$
\begin{corollary}\label{corz}
Let $f: M\rightarrow M$ be a $C^r$, $r>1$ endomorphism on  a compact Riemannian manifold $M$ with no critical points and let $\mu$ be an ergodic $f$-invariant measure. Then $h^-_f(\mu)$  exists and 
$$h^-_f(\mu)\leq  - \sum_{i: \lambda_i(\mu)<0}\lambda_i(\mu),$$
where $\lambda_i(\mu)$ are the Lyapunov exponents of $\mu$ taken with their multiplicities. 
\end{corollary}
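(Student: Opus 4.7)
The strategy is to first apply Theorem \ref{t1} to identify $h^-_f(\mu)$ with $h_f(\mu)-F_f(\mu)$, and then to bound this difference above using a Pesin-theoretic Ruelle-style argument on inverse branches. For the first step, I would verify that Theorem \ref{t1} applies: since $f$ is $\mathcal{C}^r$ with $r>1$ on the compact Riemannian manifold $M$ without critical points, the inverse function theorem together with compactness of $M$ gives a uniform positive lower bound $r_0>0$ on the local injectivity radius of $f$. Covering $M$ by finitely many open sets of diameter less than $r_0/2$ on each of which $f$ is injective, and refining this cover into a finite measurable partition $\mathcal{A}$, I obtain a finite partition on whose atoms $f$ is injective. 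Theorem \ref{t1} then yields existence of the inverse partition entropy together with the identity $h^-_f(\mu)=h_f(\mu)-F_f(\mu)$.

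For the second step, I would establish the inequality $h_f(\mu)-F_f(\mu)\leq -\sum_{i:\lambda_i(\mu)<0}\lambda_i(\mu)$ by a Pesin-theoretic argument. Conceptually, $h_f(\mu)-F_f(\mu)$ represents the ``stable'' or ``inverse'' part of the forward entropy, namely the entropy generated by distinguishing individual backward trajectories along local stable manifolds, once the contribution from non-injectivity of $f$ (captured by $F_f$) has been subtracted. Since $f$ is $\mathcal{C}^r$ with $r>1$, Pesin theory for endomorphisms applies: for $\hat\mu$-a.e.\ $\hat x=(x,x_{-1},\ldots)\in\widehat M$ there exist local stable manifolds $W^s_{\mathrm{loc}}(x)$ (which, as always for endomorphisms, depend only on the base point $x$), Pesin charts, and bounded-distortion estimates for $Df^n|_{E^s(x_{-n})}$; moreover, the inverse branches $f^{-n}_{\hat x}$ expand $W^s_{\mathrm{loc}}(x)$ at rate at most $e^{n(|\lambda_i|+\delta)}$ along the Oseledets subspace $E_i$ with negative exponent $\lambda_i$, for arbitrarily small $\delta>0$. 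A Ruelle-style volume-comparison argument applied to the inverse branches $f^{-n}_{\hat x}$ restricted to the smooth manifolds $W^s_{\mathrm{loc}}(x)$, together with a partition $\mathcal{P}$ refining $\mathcal{A}$ whose atoms are subordinate to Pesin charts, then yields the desired pointwise bound $h^-_{f,\sup}(\mu,\mathcal{P},\hat x)\leq -\sum_{i:\lambda_i<0}\lambda_i(\mu)+O(\delta)$.

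The main obstacle is the Pesin-theoretic step producing the lower bound on $\mu(\mathcal{P}_n^-(\hat x))$ that feeds into the Ruelle comparison. It requires absolute continuity of the stable foliation with respect to $\mu$ and uniform distortion control for $Df^n|_{E^s(x_{-n})}$ on Bowen balls $B_n(x_{-n},\varepsilon)$, both of which use the $\mathcal{C}^r$ ($r>1$) hypothesis essentially, together with a careful selection of a sequence of refinements of $\mathcal{A}$ compatible with the Pesin block structure. Once these ingredients are in hand, the remaining computation mirrors Ruelle's original proof of $h_f(\mu)\leq \sum_{\lambda_i>0}\lambda_i$, applied here to the expanding inverse branches of $f$ on the smooth manifolds $W^s_{\mathrm{loc}}(x)$; integrating over $\hat\mu$, passing to the supremum over admissible partitions, letting $\delta\to 0$, and combining with the identity $h^-_f(\mu)=h_f(\mu)-F_f(\mu)$ from Theorem \ref{t1} then completes the proof.
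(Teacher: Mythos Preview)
Your first step --- constructing a finite partition on whose atoms $f$ is injective (using compactness and the absence of critical points) and then applying Theorem \ref{t1} to obtain $h^-_f(\mu)=h_f(\mu)-F_f(\mu)$ --- is exactly what the paper does.

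For the second step, however, the paper does not attempt a direct Pesin-theoretic argument. It simply invokes the inequality
\[
h_f(\mu)\;\leq\; F_f(\mu)-\sum_{i:\lambda_i(\mu)<0}\lambda_i(\mu),
\]
which is the theorem of Liao and Wang \cite{LW} (settling a conjecture of Ruelle \cite{R} for $C^r$ maps with $r>1$), and subtracts $F_f(\mu)$ from both sides.

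Your proposed route to this inequality has a genuine gap. You write that the argument ``requires absolute continuity of the stable foliation with respect to $\mu$'' and then proceed as though this were available. It is not: for an arbitrary ergodic invariant measure $\mu$ there is no reason for the conditional measures of $\mu$ on local stable manifolds to be absolutely continuous with respect to leaf volume --- that property is precisely what singles out inverse SRB measures (cf.\ Proposition \ref{propz} and \cite{M4}). Without it, a volume-comparison estimate on the inverse branches $f^{-n}_{\hat x}$ restricted to $W^s_{\mathrm{loc}}(x)$ gives no control on $\mu(\mathcal{P}_n^-(\hat x))$, and your Ruelle-style bound does not transfer to $-\log\mu(\mathcal{P}_n^-(\hat x))/n$. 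The Liao--Wang argument circumvents this by working with covering numbers rather than with the $\mu$-measure of individual partition elements; this is a nontrivial theorem in its own right, which is why the paper cites it rather than reproving it.
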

\begin{proof}
From \cite{LW} we know that 
$h_f(\mu)\leq F_f(\mu) - \sum_{i: \lambda_i(\mu)<0}\lambda_i(\mu).$
This inequality proved in \cite{LW} by Liao and Wang for $C^r$ maps, $r>1$ was conjectured  in \cite{R}. Since $f$ has no critical points, there exists a partition $\mathcal{A}$ such that $f$ is injective on each atom of $\mathcal{A}$, and then by Theorem \ref{t1}, it follows that $\mu$ has inverse partition entropy and 
$$h^-_f(\mu)=h_f(\mu) -F_f(\mu)\leq  - \sum_{i: \lambda_i(\mu)<0}\lambda_i(\mu).$$
\end{proof}

%\begin{theorem}\label{t1}
%	Let $f$  be a surjective ergodic measure preserving transformation of a probability space $(X, \mathcal{B},\mu)$ which has the property that there exists a finite partition $\mathcal{A}$ of $X$ such that $f$ is injective on every atom of $\mathcal{A}$. Then $\mu$ has inverse partition entropy and 
%	$$ h^-_{f}(\mu) =  h_f(\mu)- F_{f}(\mu).$$	
%\end{theorem}	

We have seen in the proof of Proposition \ref{aa} that if there exists a finite partition $\mathcal{A}$ such that $f$ is injective on every atom of $\mathcal{A}$, then there exists certain normal partitions. Also from Proposition \ref{np} and Theorem \ref{t1}, $h^-_f(\mu)$ can be computed using normal partitions. 

\begin{proposition}
Let $(X,\mathcal{B},\mu)$ and $(Y,\mathcal{C},\nu)$ be two probability spaces and let $f: X\rightarrow X$ and $g: Y\rightarrow Y$ be two measurable endomorphisms such that $\mu$ is ergodic with respect to $f$ and $\nu$ is ergodic with respect to $g$ and satisfy the conditions from Theorem \ref{t1}.
	\begin{itemize}
		\item [(i)] If $\mu$ is ergodic with respect to $f^k$ for some $k \ge 1$, then $\mu$ has inverse partition entropy with respect to $f$ and $f^k$, and $h^{-}_{f^k}(\mu)= k h^{-}_f(\mu)$.
		\item 	[(ii)] If $\mu\times \nu$ is ergodic, then $\mu\times \nu$ has inverse partition entropy, and $$h^{-}_{f\times g}(\mu\times \nu)= h^{-}_f(\mu) + h^{-}_g(\nu).$$
	\end{itemize}	
\end{proposition}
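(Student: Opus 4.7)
The plan is to reduce both statements to Theorem \ref{t1}, which gives $h^-_f(\mu) = h_f(\mu) - F_f(\mu)$ whenever $f$ admits a finite partition of injectivity and $\mu$ is ergodic. Once we verify that the hypothesis of Theorem \ref{t1} is inherited by $f^k$ (for (i)) and by $f \times g$ (for (ii)), the result will follow by combining well-known identities for forward entropy with their analogues for folding entropy.

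\medskip

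\emph{Step 1: Inheritance of the injectivity hypothesis.} For (i), let $\mathcal{A}$ be a finite partition of $X$ with $f$ injective on every atom. The refined partition $\mathcal{A}_k := \bigvee_{i=0}^{k-1} f^{-i}\mathcal{A}$ is still finite, and a short induction on $k$ shows that $f^k$ is injective on each atom of $\mathcal{A}_k$: if $x,y$ lie in the same atom, then $f^{k-1}x,f^{k-1}y$ lie in a common atom of $\mathcal{A}$, so $f^k x = f^k y$ forces $f^{k-1}x = f^{k-1}y$, and then one iterates. For (ii), if $\mathcal{A}$ works for $f$ and $\mathcal{B}$ for $g$, then $\mathcal{A} \times \mathcal{B}$ is a finite partition of $X \times Y$ on whose atoms $f \times g$ is injective. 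Together with the ergodicity assumptions (noting $f^k$-ergodicity implies $f$-ergodicity), Theorem \ref{t1} applies in each case, so the relevant inverse partition entropies exist and equal the forward entropy minus the folding entropy.

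\medskip

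\emph{Step 2: Scaling of folding entropy under $f^k$ and under products.} The chain rule recalled in the excerpt gives, for $\mu$-a.e.\ $x$,
$$\log J_{f^k}(\mu)(x) = \sum_{i=0}^{k-1} \log J_f(\mu)(f^i x).$$
Integrating against the $f$-invariant measure $\mu$ and applying invariance term by term yields $F_{f^k}(\mu) = k\, F_f(\mu)$. For the product, the standard factorization $J_{f \times g}(\mu \times \nu)(x,y) = J_f(\mu)(x) \cdot J_g(\nu)(y)$ (which follows from Fubini applied to the defining Radon–Nikodym derivatives on product atoms of the form $A_i \times B_j$ on which $f \times g$ is injective) gives, after integration,
$$F_{f \times g}(\mu \times \nu) = F_f(\mu) + F_g(\nu).$$

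\medskip

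\emph{Step 3: Combining with the classical identities for $h_f$.} Recall $h_{f^k}(\mu) = k\, h_f(\mu)$ and, under ergodicity of $\mu \times \nu$, $h_{f\times g}(\mu \times \nu) = h_f(\mu) + h_g(\nu)$. Substituting Step 2 and these into Theorem \ref{t1} gives
$$h^-_{f^k}(\mu) = h_{f^k}(\mu) - F_{f^k}(\mu) = k\big(h_f(\mu) - F_f(\mu)\big) = k\, h^-_f(\mu),$$
and
$$h^-_{f\times g}(\mu \times \nu) = h_f(\mu) + h_g(\nu) - F_f(\mu) - F_g(\nu) = h^-_f(\mu) + h^-_g(\nu),$$
which are the two claims. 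The only point requiring care is the product formula for the Jacobian in Step 2; once one checks this on rectangles built from injectivity atoms and extends by the uniqueness of Radon–Nikodym derivatives, the rest is purely formal.
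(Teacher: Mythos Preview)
Your proposal is correct and follows essentially the same approach as the paper: reduce both statements to Theorem \ref{t1} by showing $F_{f^k}(\mu)=kF_f(\mu)$ and $F_{f\times g}(\mu\times\nu)=F_f(\mu)+F_g(\nu)$, then combine with the classical identities for forward entropy. Your Step 1, verifying explicitly that $f^k$ and $f\times g$ inherit a finite partition of injectivity, is a detail the paper leaves implicit but which is indeed needed to invoke Theorem \ref{t1}.
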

\begin{proof}
	For $(i)$, notice that since $\mu$ is $f$-invariant, we have $F_{f^k}(\mu)= \int_X \log J_{f^k}d\mu = k \cdot\int_X J_f(\mu)d\mu$. As $h_{f^k}(\mu)= k \cdot h_f(\mu)$ it follows from Theorem \ref{t1} that $h^{-}_{f^k}(\mu)= k\cdot h^{-}_f(\mu)$.     	
	For $(ii)$ we have $h_{f\times g}(\mu \times \nu)= h_f(\mu) + h_{g}(\nu)$ and it is easy to see that $F_{f\times g}(\mu\times\nu)=  F_f(\mu) + F_{g}(\nu)$. We then apply again Theorem \ref{t1} to conclude the proof.
\end{proof}

\begin{proposition}
	Let $(X, \mathcal{B}, \mu)$ be a probability space and  $f: X\rightarrow X$ be a measurable endomorphism as in Theorem \ref{t1} (hence $\mu$ has inverse partition entropy). If $\mathcal{P}_1 \leq  \mathcal{P}_2 \leq  \mathcal{P}_3\leq  \ldots $ is a sequence of partitions with finite entropy such that $\mathcal{B}$ is the $\sigma$-algebra generated by $\bigcup_{n\geq 1} P_n$, then there exists a sequence $\mathcal{P}_1' \leq  \mathcal{P}_2'\leq \mathcal{P}_3'\leq\ldots$ of normal partitions of $X$ with $\mathcal{P}_n\leq  \mathcal{P}_n'$ such that 
	\begin{equation*}			
	h^-_{f}(\mu) = \lim_{n\rightarrow\infty} h^-_{f,inf}(\mu, \mathcal{P}_n') =\lim_{n\rightarrow\infty} h^-_{f,sup}(\mu,\mathcal{P}_n'). 
	\end{equation*}
\end{proposition}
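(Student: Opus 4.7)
The plan is to set $\mathcal{P}_n' := \mathcal{P}_n \vee \mathcal{C}^n$, where $\mathcal{C}^1 \leq \mathcal{C}^2 \leq \ldots$ is the increasing sequence of normal partitions produced in Proposition \ref{aa}(i). First I would verify that $\mathcal{P}_n'$ is a normal partition: the atoms of $\mathcal{C}^n$ have the form $\alpha_i^n \cap A$ with $A \in \mathcal{A}$ and $i \geq 2^n$, so $\mathcal{C}^n$ refines the finite partition $\mathcal{A}$ on whose atoms $f$ is injective; consequently $f$ is injective on every atom of $\mathcal{P}_n' \geq \mathcal{C}^n \geq \mathcal{A}$. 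Moreover $J_f(\mu)$ lies in $[i^2/4^n,(i+1)^2/4^n)$ on such an atom, so it is bounded on each atom of $\mathcal{P}_n'$, and $H_\mu(\mathcal{P}_n') \leq H_\mu(\mathcal{P}_n) + H_\mu(\mathcal{C}^n) < \infty$. Monotonicity $\mathcal{P}_n' \leq \mathcal{P}_{n+1}'$ and $\mathcal{P}_n \leq \mathcal{P}_n'$ are immediate from the construction.

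Next I would combine the two ingredients already in hand. For the upper bound, Proposition \ref{a} applied to $\mathcal{P}_n'$ (whose atoms carry an injective $f$) gives
\begin{equation*}
h^-_{f,sup}(\mu,\mathcal{P}_n') \leq h_f(\mu,\mathcal{P}_n') - F_f(\mu) \leq h_f(\mu) - F_f(\mu) = h^-_f(\mu).
\end{equation*}
For the lower bound, Proposition \ref{aa}(i) with $\mathcal{P}=\mathcal{P}_n$ and $k=n$ yields $\mathcal{P}_{(n)}=\mathcal{P}_n'$, and integrating the pointwise estimate over $\widehat X$ gives
\begin{equation*}
h^-_{f,inf}(\mu,\mathcal{P}_n') \geq h_f(\mu,\mathcal{P}_n') - F_f(\mu) - \frac{1}{2^{n-1}}.
\end{equation*}

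Finally, since $\mathcal{P}_n \leq \mathcal{P}_n'$ and $\bigvee_{n\ge 1}\mathcal{P}_n$ generates $\mathcal{B}$, the increasing sequence $\{\mathcal{P}_n'\}$ also generates $\mathcal{B}$; the Kolmogorov--Sinai theorem then gives $h_f(\mu,\mathcal{P}_n') \uparrow h_f(\mu)$. Combining with the two bounds above,
\begin{equation*}
h^-_f(\mu) \geq \limsup_n h^-_{f,sup}(\mu,\mathcal{P}_n') \geq \liminf_n h^-_{f,inf}(\mu,\mathcal{P}_n') \geq h_f(\mu) - F_f(\mu) = h^-_f(\mu),
\end{equation*}
so both limits exist and equal $h^-_f(\mu)$. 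The only subtlety worth highlighting is the coupling between the two refinement parameters: the $2^{-(k-1)}$ slack in Proposition \ref{aa}(i) is fixed once $\mathcal{C}^k$ is chosen, so one must let $k$ grow with $n$ in order that the lower bound recovers $h_f(\mu)-F_f(\mu)$ exactly in the limit, while $\mathcal{P}_n$ supplies the generating property needed for Kolmogorov--Sinai. The diagonal choice $k=n$ marries the two conditions.
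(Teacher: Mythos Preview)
Your proof is correct and follows the same approach as the paper's: define $\mathcal{P}_n' = \mathcal{P}_n \vee \mathcal{C}^n$, apply Proposition~\ref{aa}(i) for the lower bound and Proposition~\ref{a} for the upper bound, then use the generating property to pass to the limit. Your presentation is in fact somewhat more careful than the paper's---you explicitly verify normality of $\mathcal{P}_n'$ and spell out why the diagonal choice $k=n$ is needed---and your citation of Proposition~\ref{a} for the inequality $h^-_{f,sup}(\mu,\mathcal{P}_n') \le h_f(\mu,\mathcal{P}_n') - F_f(\mu)$ is arguably more precise than the paper's reference to Corollary~\ref{cor}.
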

\begin{proof}
	As $\mathcal{P}_1 \leq  \mathcal{P}_2\leq \mathcal{P}_3\leq\ldots $ and $\mathcal{B}$ is the $\sigma$-algebra generated by $\bigcup_{n\geq 1} P_n$, we have 	
	$h_{f}(\mu) = \sup_n   h_{f}(\mu, \mathcal{P}_n)$. By Proposition \ref{aa} there exist normal partitions $\mathcal{C}^1 \leq \mathcal{C}^1 \leq \mathcal{C}^3 \leq \ldots $ such that if $\mathcal{P}_k':=\mathcal{P}_k \vee \mathcal{C}^k$, then for every $k \geq 1$, $h^-_{f,inf}(\mu, \mathcal{P}_k')\geq  h_f(\mu, \mathcal{P}_k')-  F_f(\mu) -\frac{1}{2^{k-1}}.$
	By Corollary \ref{cor} we have $h_f(\mu, \mathcal{P}_k')-  F_f(\mu) \geq h^-_{f,sup}(\mu, \mathcal{P}_k'), \forall k \ge 1$.
	Hence $h_f(\mu, \mathcal{P}_k')-F_f(\mu)\geq h^-_{f,sup}(\mu, \mathcal{P}_k')\geq h^-_{f,inf}(\mu, \mathcal{P}_k') \geq  h_f(\mu, \mathcal{P}_k')-  F_f(\mu) -\frac{1}{2^{k-1}}$ and so $\lim\limits_{n\rightarrow\infty} h^-_{f,inf}(\mu, \mathcal{P}_n') =\lim\limits_{n\rightarrow\infty} h^-_{f,sup}(\mu,\mathcal{P'}_n)=h^-_f(\mu)$. 
	\end{proof}
	
	We now give a class of ergodic measures for which the inverse entropy can be computed. Let $f: M\rightarrow M$ be a smooth $C^2$ map on a Riemannian manifold $M$ and let $\Lambda$ be a compact set which is $f$-invariant and such that $f$ is topologically transitive. We assume that  $\Lambda$ is a repellor;  by this we mean that there exists a neighborhood $U$ of $\Lambda$ such that $\Lambda=\bigcap_{n\in\mathbb{N}}f^{-n}(U)$ and $\overline{U}\subset f(U)$. If $\Lambda$ is connected and $f$ does not have critical points in $\Lambda$, then $Card (f^{-1}(x)\cap \Lambda)$ does not depend on $x\in \Lambda$ and is equal to some integer $d\geq 1$. There exists a neighbourhood $V$ of $\Lambda$ which is close enough to $\Lambda$ such that
	any point $y\in V$ has exactly $d^n$ $n$-preimages belonging to $U$, for $n\geq 1$ (see \cite{M4}).
	Then for any $z\in V\subset U$ one can consider the discrete  measures
	$$\mu_n^z=\frac{1}{d^n}\sum_{y\in f^{-n}z \cap U}\frac{1}{n}\sum_{i=0}^{n-1}\delta_{f^iy}, n\geq 1.$$
	It was proved in \cite{M4} that there exists a subset $A\subset V$, having full Lebesgue  measure in $V$ and a subsequence $(\mu^z_{n_k})_{k}$ that converges weakly to a unique measure $\mu^-$ for every $z\in A$, and this measure $\mu^-$ is called the \textbf{inverse SRB measure}. It was shown in \cite{M4} that $\mu^-:=\mu_s$, where $\mu_s$ is the equilibrium measure of the stable potential $\Phi^s(x)= \log |Df|_{E^{s}_x}|$, $x\in\Lambda$. Then a Pesin type formula involving the negative Lyapunov
	exponents can be derived for the measure $\mu^-$, namely: 
	\begin{theorem}\cite[Theorem 3]{M4}.
		Let $\Lambda$ be a connected hyperbolic repellor for a $C^2$ endomorphism $f: M\rightarrow M$ on a Riemannian manifold $M$; assume that $f$ is $d$-to-$1$ on $\Lambda$ and does not have critical points in $\Lambda$. Then there exists a unique
		$f$-invariant probability measure $\mu^-$ on $\Lambda$ satisfying an inverse Pesin entropy formula:
		$$h_f(\mu^-)=F_f(\mu^-) -\int_\Lambda\sum_{i: \lambda_i(x)<0}\lambda_i(x)d\mu^-(x)=\log d -\int_\Lambda\sum_{i: \lambda_i(x)<0}\lambda_i(x)d\mu^-(x),$$
		where the Lyapunov exponents $\lambda_i(x)$ are taken with their multiplicities. In addition the measure $\mu^-$ has absolutely continuous conditional measures on local stable
		manifolds.	
	\end{theorem}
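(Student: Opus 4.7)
My plan is to identify $\mu^-$ with the unique equilibrium state $\mu_s$ of the stable potential $\Phi^s(x) = \log|Df|_{E^s_x}|$ on $\Lambda$, and then to extract the entropy formula via the variational principle. The hypotheses---$\Lambda$ is a connected hyperbolic basic set without critical points, and $f$ is $d$-to-$1$ on $\Lambda$---guarantee that the stable subbundle $E^s$ is H\"older continuous over $\Lambda$, and hence $\Phi^s$ is a H\"older potential. Thus the classical thermodynamic formalism for hyperbolic basic sets provides a unique equilibrium state $\mu_s$ for $\Phi^s$ enjoying the Gibbs property, together with exponential decay of correlations.

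The first step is to show $\mu^- = \mu_s$. Using the hypothesis that every $z$ close to $\Lambda$ has $d^n$ $n$-preimages in $U$, I would organize them into $d^n$ local inverse branches of $f^n$ defined on a neighborhood of $z$ transverse to the stable foliation. A bounded-distortion argument along the backward orbits then shows that the Jacobian of each such branch with respect to the transverse Lebesgue measure is comparable to $e^{S_n\Phi^s(y)}$. Normalizing and passing to the weak-$*$ limit, the empirical distribution of the $d^n$ preimages converges to the Gibbs state of $\Phi^s$ for Lebesgue-a.e. $z \in V$, and taking the additional Ces\`aro average along forward iterates preserves the limit by ergodicity, yielding $\mu_n^z \Rightarrow \mu_s$, i.e.\ $\mu^- = \mu_s$.

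Next come the two auxiliary computations. First, $F_f(\mu^-) = \log d$: because $\mu^-$ is the weak-$*$ limit of measures that equidistribute over the $d^n$ preimages, the disintegration of $\mu^-$ over each fiber $f^{-1}(x)$ gives uniform weight on the $d$ preimages, so $H(\mu_x) = \log d$ for $\mu^-$-a.e. $x$ and hence $F_f(\mu^-) = \log d$. Second, $P_f(\Phi^s) = \log d$: this is the non-trivial step; it relies on the fact that for a connected hyperbolic $d$-to-$1$ repellor, the partition function
\begin{equation*}
Z_n(\Phi^s,x) \;=\; \sum_{y \in f^{-n}(x) \cap \Lambda} e^{S_n \Phi^s(y)}
\end{equation*}
captures exactly the total stable volume transported back through the $d^n$ inverse branches, which by the connectedness of $\Lambda$ and its foliation by stable leaves remains comparable to $d^n$ up to bounded factors. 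Combined with the variational principle this yields
\begin{equation*}
\log d \;=\; P_f(\Phi^s) \;=\; h_f(\mu^-) + \int_\Lambda \Phi^s\, d\mu^- \;=\; h_f(\mu^-) + \int_\Lambda \sum_{i:\lambda_i(x)<0}\lambda_i(x)\, d\mu^-(x),
\end{equation*}
where Oseledets was used to rewrite $\int\log|Df|_{E^s_x}|\, d\mu^-$ as the integrated sum of negative Lyapunov exponents. Rearranging and using the folding entropy computation gives the required formula $h_f(\mu^-) = F_f(\mu^-) - \int \sum_{\lambda_i<0}\lambda_i\, d\mu^- = \log d - \int \sum_{\lambda_i<0}\lambda_i\, d\mu^-$.

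For the final claim---absolute continuity of the conditional measures of $\mu^-$ on local stable manifolds---I would mimic the Ledrappier--Young argument used for forward SRB on unstable manifolds: the Gibbs description of $\mu^- = \mu_s$ makes the $\mu^-$-measure of stable Bowen neighborhoods of length $n$ comparable to $e^{S_n\Phi^s}$, which is precisely the Riemannian stable volume of those neighborhoods up to bounded distortion, so the conditionals on stable plaques must have densities with respect to the induced Riemannian volume. The main obstacle is the pressure computation $P_f(\Phi^s) = \log d$ and the compatible identification $\mu^- = \mu_s$: connectedness of $\Lambda$ is essential, because it forces the stable leaves through points of $\Lambda$ to be contained in $\Lambda$ and ensures that the $d^n$ inverse branches of $f^n$ tile a full neighborhood of the stable foliation of $x$ in $\Lambda$, making the partition function scale exactly like $d^n$; without this, the partition function would acquire an extra stable-contraction factor that invalidates the desired pressure identity.
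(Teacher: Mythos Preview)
The paper does not actually prove this theorem: it is quoted verbatim from \cite{M4} and used as a black box. What the paper does record, in the surrounding discussion and in the proof of Proposition~\ref{propz}, are precisely the two structural facts you build your argument on: that $\mu^-$ coincides with the equilibrium state $\mu_s$ of the stable potential $\Phi^s(x)=\log|\det(Df|_{E^s_x})|$, and that $J_f(\mu^-)(x)=d$ for $\mu^-$-a.e.\ $x$ (hence $F_f(\mu^-)=\log d$). Your route---identify $\mu^-=\mu_s$, compute $P_f(\Phi^s)=\log d$, apply the variational identity $P_f(\Phi^s)=h_f(\mu_s)+\int\Phi^s\,d\mu_s$, and rewrite $\int\Phi^s\,d\mu^-$ as the integrated sum of negative exponents via Oseledets---is exactly the skeleton of the argument in \cite{M4}, and your sketch of absolute continuity on stable leaves via the Gibbs/bounded-distortion comparison with stable Riemannian volume is also the correct mechanism.

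Two points deserve tightening. First, you establish that $\mu_s$ satisfies the inverse Pesin identity but do not address the \emph{uniqueness} claim in the theorem: one must show that any invariant probability satisfying $h_f(\mu)=F_f(\mu)-\int\sum_{\lambda_i<0}\lambda_i\,d\mu$ has absolutely continuous conditionals on stable leaves and hence equals $\mu_s$; this is the inverse analogue of the Ledrappier--Young rigidity for the forward Pesin formula and is a genuine (if standard) step. Second, your explanation of the role of connectedness is off: connectedness is not what forces stable leaves to sit in $\Lambda$ (that comes from $\Lambda=\bigcap_n f^{-n}(U)$ for a repellor), but rather what guarantees that $\mathrm{Card}(f^{-1}(x)\cap\Lambda)$ is constant and equal to $d$, which is what makes $J_f(\mu^-)=d$ and the scaling $Z_n(\Phi^s,x)\asymp d^n$ hold uniformly.
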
	
	\begin{proposition}\label{propz}
		Let $\Lambda$ be a connected hyperbolic repellor for a $C^2$ endomorphism $f: M\rightarrow M$ on a Riemannian manifold $M$. Assume that $f$ is $d$-to-$1$ on $\Lambda$ and $f$ does not have critical points in $\Lambda$, and let $\mu^-$ be the inverse SRB measure of $f$ on $\Lambda$. Then $h^-_f(\mu^-)$ and $h^-_{f,B}(\mu^-)$ exists and $$h^-_f(\mu^-)=h^-_{f,B}(\mu^-)=-\sum_{i: \lambda_i(\mu^-)<0}\lambda_i(\mu^-),$$
		where the Lyapunov exponents $\lambda_i(\mu^-)$ are taken with their multiplicities. 
	\end{proposition}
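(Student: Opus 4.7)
The plan is to reduce both equalities to the inverse Pesin entropy formula recalled just before the proposition. Since $f$ has no critical points on the compact set $\Lambda$ and $f$ is locally injective there, one can choose a finite open cover of $\Lambda$ by balls on which $f$ is injective and take a measurable partition $\mathcal{A}$ subordinate to it; then $f$ is injective on every atom of $\mathcal{A}$. Hence Theorem \ref{t1} applies to $\mu^-$ on $\Lambda$, giving
\begin{equation*}
h^-_f(\mu^-)=h_f(\mu^-)-F_f(\mu^-).
\end{equation*}
Combining this with the inverse Pesin formula $h_f(\mu^-)=F_f(\mu^-)-\sum_{i:\lambda_i(\mu^-)<0}\lambda_i(\mu^-)$ recalled above yields the partition-entropy equality.

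The remaining and more delicate step is to show that the inverse \emph{metric} entropy $h^-_{f,B}(\mu^-)$ exists and coincides with $h^-_f(\mu^-)$. For this I would appeal to Theorem \ref{D}. The hypotheses to verify are that $J_f(\mu^-)$ is bounded and that its set of discontinuities $D$ is closed and has $\mu^-$-measure zero. Recall from \cite{M4} that $\mu^-=\mu_{\Phi^s}$ is the equilibrium state of the H\"older continuous stable potential $\Phi^s(x)=\log|Df|_{E^s_x}|$. Boundedness of $J_f(\mu^-)$ follows directly from Corollary \ref{corol}. For continuity, I would use the standard representation of an equilibrium measure as $\mu^-=h\,\nu$, where $\nu$ is the conformal measure with Jacobian $e^{P(\Phi^s)-\Phi^s}$ and $h$ is the positive H\"older continuous eigenfunction of the transfer operator $\mathcal{L}_{\Phi^s}$; this yields the closed-form expression
\begin{equation*}
J_f(\mu^-)(x)=e^{P(\Phi^s)-\Phi^s(x)}\cdot\frac{h(f(x))}{h(x)},\qquad x\in\Lambda.
\end{equation*}
Since $f$ has no critical points, $\Phi^s$ is H\"older and $h$ is continuous and bounded away from zero on the compact set $\Lambda$, hence $J_f(\mu^-)$ is continuous on $\Lambda$, so $D=\emptyset$.

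With all hypotheses of Theorem \ref{D} satisfied, it follows that $\mu^-$ has inverse metric entropy and
\begin{equation*}
h^-_{f,B}(\mu^-)=h_f(\mu^-)-F_f(\mu^-)=-\sum_{i:\lambda_i(\mu^-)<0}\lambda_i(\mu^-),
\end{equation*}
which, together with the partition-entropy identity, proves the proposition. The main obstacle I anticipate is the verification of continuity of $J_f(\mu^-)$: the Jacobian bound from Theorem \ref{Jac} is only up to a multiplicative constant and is not by itself enough to control the discontinuity set, so one really needs the explicit transfer-operator formula above (or, alternatively, the H\"older continuity of the density of $\mu^-$ with respect to a Gibbs/conformal measure) to exclude discontinuities. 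Once this is in place, the rest of the argument is a routine application of results already established in the paper.
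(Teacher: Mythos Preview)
Your argument for $h^-_f(\mu^-)$ via Theorem \ref{t1} and the inverse Pesin formula is correct and is essentially what is needed. For $h^-_{f,B}(\mu^-)$, however, you take an unnecessarily involved route and miss the key simplification the paper uses: the Jacobian $J_f(\mu^-)$ is in fact \emph{constant}, equal to $d$, for $\mu^-$-a.e.\ $x\in\Lambda$. This comes directly from the construction of the inverse SRB measure in \cite{M4} as the weak limit of the uniformly distributed preimage measures $\mu_n^z=\frac{1}{d^n}\sum_{y\in f^{-n}z\cap U}\frac{1}{n}\sum_i\delta_{f^iy}$. Once $J_f(\mu^-)\equiv d$, the Jacobian is trivially continuous and bounded, so Theorem \ref{D} applies with no further work; alternatively one can bypass Theorem \ref{D} entirely, since $J_{f^n}(\mu^-)\equiv d^n$ gives $\mu^-(B_n^-(\hat x,\varepsilon))=d^n\,\mu^-(B_n(x_{-n},\varepsilon))$ and Lemma \ref{mod} yields $h^-_{f,B}(\mu^-)=h_f(\mu^-)-\log d$ immediately.

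The step you yourself flagged as the main obstacle is a genuine gap in your approach. The transfer-operator density formula $J_f(\mu^-)(x)=e^{P(\Phi^s)-\Phi^s(x)}\,h(fx)/h(x)$ is standard for \emph{expanding} maps, where the Ruelle--Perron--Frobenius eigenfunction $h$ lives on $\Lambda$. For a hyperbolic endomorphism with nontrivial stable direction the one-sided conformal-measure/eigenfunction decomposition is not available in that form: the eigenfunction naturally lives on the inverse limit $\widehat\Lambda$, and the Jacobian of the equilibrium measure on $\Lambda$ does not reduce to such a clean pointwise formula (indeed Theorem \ref{Jac} only gives two-sided bounds, precisely because no exact identity of this type holds in general). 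So your proposed verification of continuity would require substantial additional argument that is neither in the paper nor standard. The constant-Jacobian observation short-circuits all of this.
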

	\begin{proof}
		%If $f: M\rightarrow M$ is a $C^r$, $r>1$ map on  a compact
		%Riemannian manifold $M$ and and $\mu$ is an ergodic $f$-invariant measure, the following inequality was conjectured in \cite{R}: 
		%$$h_f(\mu)\leq F_f(\mu) - \sum_{i: \lambda_i(\mu)<0}\lambda_i(\mu),$$
		%where $\lambda_i(\mu)$ are the Lyapunov exponents of $\mu$ taken with their multiplicities. This inequality was proved  Liao and Wang \cite{LW} for $C^r$ ($r>1$) maps.	
		From the proof of the above theorem, we know that the Jacobian $J_f(\mu^-)(x)=d$ for $\mu^-$-a.e. $x\in\Lambda$ and from this it easily follows that $h^-_f(\mu^-)$ and $h^-_{f,B}(\mu^-)$ exist and $$h^-_f(\mu^-)=h^-_{f,B}(\mu^-)=-\sum_{i: \lambda_i(\mu^-)<0}\lambda_i(\mu^-),$$
		where $\lambda_i(\mu^-)$ are taken with their multiplicities. 
	\end{proof}

Now we study the relations between the inverse metric entropy (defined using inverse Bowen balls) and the inverse partition entropy. First we obtain an inequality in Proposition \ref{c1}. Next we study conditions when the inverse metric entropy is equal to the inverse partition entropy. Under these conditions we prove in Theorem \ref{zerob} that both the inverse metric entropy and the inverse partition entropy of an ergodic measure $\mu$ are equal to $h_{f}(\mu)- F_f(\mu)$.

\begin{proposition}\label{c1}
	Let $f$ be a continuous locally injective transformation of a compact metric space $X$ and $\mu$ be a probability measure $f$-invariant on $X$ which is ergodic. Then for $\hat{\mu}$-a.e. $\hat{x}\in\widehat{X}$ we have 
	$$h^-_{f,sup, B}(\mu)=\lim_{\varepsilon\rightarrow 0}\limsup_{n\rightarrow\infty} \frac{-\log \mu(B^-_{n}(\hat{x},\varepsilon))}{n}\leq h^-_{f,sup}(\mu) = h_f^-(\mu) =  h_{f}(\mu)- F_f(\mu).$$
\end{proposition}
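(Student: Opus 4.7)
The plan is to reduce the statement to two ingredients: (a) comparing inverse Bowen balls with atoms of a finite partition of small diameter, and (b) invoking Theorem \ref{t1} to identify the inverse partition entropy with $h_f(\mu)-F_f(\mu)$. The pointwise $\hat\mu$-a.e.\ equality on the left is not proved from scratch: an earlier proposition in this section already shows that the upper inverse metric entropy function is $\hat f$-invariant, so by ergodicity of $\hat\mu$ it is $\hat\mu$-a.e.\ constant and equal to its integral $h^-_{f,sup,B}(\mu)$.

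First, I would construct, for each $\varepsilon>0$, a finite measurable partition $\mathcal{P}_\varepsilon$ of $X$ such that every atom has diameter strictly less than $\varepsilon$ and $f$ is injective on each atom. This is possible by compactness of $X$ and local injectivity of $f$: cover $X$ by finitely many open sets of diameter $<\varepsilon$ on which $f$ is injective, and refine to a Borel partition. Then $\mathcal{P}_\varepsilon(x_{-i})\subset B(x_{-i},\varepsilon)$ for every $i$, and a direct check from the definitions gives the inclusion
\[
\mathcal{P}_{\varepsilon,n}^-(\hat x)\subset B_n^-(\hat x,\varepsilon),
\]
so that $-\log\mu(B_n^-(\hat x,\varepsilon))\le -\log\mu(\mathcal{P}_{\varepsilon,n}^-(\hat x))$ for every $n$ and every $\hat x\in\widehat X$.

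Dividing by $n$, taking $\limsup_n$, and integrating against $\hat\mu$ yields
\[
\int_{\widehat X} h^-_{f,sup,B}(\mu,\hat x,\varepsilon)\,d\hat\mu(\hat x)\le h^-_{f,sup}(\mu,\mathcal{P}_\varepsilon)\le h^-_{f,sup}(\mu).
\]
As $\varepsilon\searrow 0$ the integrand is monotone increasing at each $\hat x$ with limit $h^-_{f,sup,B}(\mu,\hat x)$, so the monotone convergence theorem delivers
\[
h^-_{f,sup,B}(\mu)\le h^-_{f,sup}(\mu).
\]
The existence of the partition $\mathcal{P}_\varepsilon$ above also furnishes a finite partition $\mathcal{A}$ on whose atoms $f$ is injective, so Theorem \ref{t1} applies: $\mu$ has inverse partition entropy, giving $h^-_{f,sup}(\mu)=h^-_{f,inf}(\mu)=h^-_f(\mu)=h_f(\mu)-F_f(\mu)$. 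Combining with the ergodicity remark in the opening paragraph completes the chain of (in)equalities for $\hat\mu$-a.e.\ $\hat x$.

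The only genuinely delicate point is the choice of the partition $\mathcal{P}_\varepsilon$: one must simultaneously control the diameter (for the comparison with $B_n^-(\hat x,\varepsilon)$) and the injectivity of $f$ on each atom (needed to activate Theorem \ref{t1} and to guarantee that $\mathcal{P}_{\varepsilon,n}^-(\hat x)$ behaves well). Once this is arranged, the rest of the proof is a routine monotone-limit argument together with citations of earlier results.
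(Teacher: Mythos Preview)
Your proposal is correct and follows essentially the same route as the paper: construct a finite partition $\mathcal{P}$ with diameter $<\varepsilon$ on whose atoms $f$ is injective, use the inclusion $\mathcal{P}_n^-(\hat x)\subset B_n^-(\hat x,\varepsilon)$ to bound the inverse metric entropy by the inverse partition entropy, and invoke Theorem~\ref{t1}. The paper presents this slightly more tersely (staying pointwise rather than passing through integration and monotone convergence), but the argument is the same.
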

\begin{proof}
Let a finite measurable partition $\mathcal{P}$ so that $\text{diam}(P)< \varepsilon$ and $f$ is injective on every $P\in \mathcal{P}$. Thus $\mathcal{P}_{n+1}^-(\hat{x})\subset B_n^-(\hat{x},\varepsilon)$, $\forall \hat{x}\in \widehat{X}$. Then by Theorem \ref{t1}, it follows that for $\hat{\mu}$-a.e. $\hat{x}\in \widehat{X}$, 
	$$\limsup_{n\rightarrow\infty}\frac{-\log \mu(B_n^-(\hat{x},\varepsilon)) }{n}\leq \limsup_{n\rightarrow\infty}\frac{-\log \mu(\mathcal{P}_{n+1}^-(\hat{x})) }{n} \leq h_f^-(\mu) = h_f(\mu)- F_f(\mu).$$
\end{proof}

\begin{definition}\label{defzerob}
	Let $f:X\rightarrow X$ be a continuous and locally injective transformation of the compact metric space $X$ and $\mu$ be a probability measure on $X$ which is $f$-invariant. We say that the measure $\mu$ satisfies the \textbf{zero boundary property} if for every $\varepsilon'>0$ there exists a finite measurable partition $\mathcal{P}$ such that $\mu(\partial \mathcal{P})=0$ and such that for $\mu$-a.e $x\in X$,
	$$e^{-\varepsilon'} <\left| \frac{J_f(\mu)(x)}{J_f(\mu)(y)} \right | < e^{\varepsilon'}, \ \text{for }\mu\text{-a.e. }y\in\mathcal{P}(x).$$
\end{definition}	

\textbf{Proof of Theorem \ref{zerob}.}
Let $\varepsilon'>0$ arbitrary. As $\mu$ satisfies the zero boundary condition, there exists a finite measurable partition $\mathcal{P}$ which depends on $\varepsilon'$ such that $\mu(\partial \mathcal{P})=0$ 
	and for $\mu$-a.e $x\in X$ 
	\begin{equation}\label{b0}
e^{-\varepsilon'} <\left| \frac{J_f(\mu)(x)}{J_f(\mu)(y)} \right | < e^{\varepsilon'}, \text{ for } \mu\text{-a.e } y\in\mathcal{P}(x).
	\end{equation}
	We can also assume that  $h_f(\mu, \mathcal{P})> h_f(\mu) -\varepsilon'$.
	For $\delta>0$ let 
	$$W_\delta(\mathcal{P})=\{x\in X :  B(x,\delta) \not\subset \mathcal{P}(x) \}.$$
	As $\bigcap\limits_{\delta>0} W_\delta(\mathcal{P})= \partial \mathcal{P}$ and $\mu(\partial \mathcal{P})=0$, it follows that $\mu (W_\delta(\mathcal{P}) )\rightarrow 0$ as $\delta \rightarrow 0$. Let $\varepsilon>0$ arbitrary. Then there exists $\delta_0(\varepsilon)>0$ such that $\mu(W_{\delta}(\mathcal{P})) <\varepsilon$ for any $0<\delta<\delta_0(\varepsilon)$. Let $0<\delta<\delta_0(\varepsilon)$ arbitrary.
	Denote by $N=N(\varepsilon')$ the number of elements of the above partition $\mathcal{P}$. Recall that $\hat{f}:\widehat{X}\rightarrow \widehat{X}$ is a homeomorphism which preserves the lift measure $\hat{\mu}$ and $\pi:\widehat{X}\rightarrow X$ is the canonical projection. By Birkhoff's Ergodic Theorem applied to $\hat{f}$ and  $\chi_{\pi^{-1}(W_\delta(\mathcal{P}))}$, we have $\frac{1}{n}\sum\limits_{i=0}^{n} \chi_{\pi^{-1}(W_\delta(\mathcal{P}))}(\hat{f}^{-i}(\hat{x})) \mathop{\longrightarrow}\limits_{n\rightarrow\infty}  \hat{\mu}(\pi^{-1}(W_\delta(\mathcal{P})))$ for $\hat{\mu}$-a.e $\hat{x}\in \widehat{X}$, and then for $\hat{\mu}$-a.e $\hat{x}\in \widehat{X}$,
	\begin{equation*}
	\frac{1}{n}\sum_{i=0}^{n} \chi_{W_\delta(\mathcal{P})}(x_{-i}) \mathop{\longrightarrow}\limits_{n\rightarrow\infty}  \mu(W_\delta(\mathcal{P})). 
	\end{equation*}
	Thus as $\mu(W_\delta(\mathcal{P})) <\varepsilon$, it follows that for $\hat{\mu}$-a.e $\hat{x}\in \widehat{X}$ there exists $n(\hat{x}, \varepsilon, \varepsilon')\geq 1$ such that
	\begin{equation}\label{b1} 
	\frac{1}{n}\sum_{i=0}^{n} \chi_{W_\delta(\mathcal{P})}(x_{-i}) <\varepsilon, \text{ for all }n\geq n(\hat{x},\varepsilon, \varepsilon').
	\end{equation}
	Now recall that for $n\geq 1$, $\mathcal{P}_n=\bigvee_{i=0}^{n}f^{-i}(\mathcal{P})$. By Theorem \ref{sm}, for $\hat{\mu}$-a.e. $\hat{x}\in \widehat{X}$ there exists $n'(\hat{x},\varepsilon, \varepsilon')\geq 1$ such that
	\begin{equation}\label{e2}
	\frac{-\log \mu(\mathcal{P}_{n}(x_{-n}))}{n}> h_f(\mu, \mathcal{P})-\varepsilon, \text{ for all }n\geq n'(\hat{x},\varepsilon, \varepsilon').
	\end{equation}
	Also since $F_f(\mu)=\int_X \log J_f(\mu)d\mu$, by Birkhoff Ergodic Theorem we obtain that for $\hat{\mu}$-a.e $\hat{x}\in \widehat{X}$ there exists $n''(\hat{x},  \varepsilon)\geq 1$ such that 
	\begin{equation}\label{e3}
	\left|\frac{1}{n}\log J_{f^{n}}(x_{-n}) -F_f(\mu) \right|<\varepsilon, \text{ for all }n\geq n''(\hat{x}, \varepsilon).
	\end{equation}
	For $\hat{x}\in \widehat{X}$ define the $\left(\mathcal{P}, n\right)$-name of $\hat{x}$ as $(\mathcal{P}(x) ,\mathcal{P}(x_{-1}),\ldots, \mathcal{P}(x_{-n}))$. Let $n\geq n(x,\varepsilon, \varepsilon')$. If $\hat{x}, \hat{y}\in\widehat{X}$ then the Hamming distance (see \cite{B}) between the   
	$(\mathcal{P}, n)$-name of $\hat{x}$ and the   
	$(\mathcal{P}, n)$-name of $\hat{y}$ is 
	$$\frac{1}{n+1} \cdot\text{Card}\{0\leq i\leq n :  \mathcal{P}(x_{-i})\neq \mathcal{P}(y_{-i})   \}.$$
	The $(\mathcal{P}, n)$-name of $\hat{x}$ can be interpreted as being the (forward) $(\mathcal{P}, n)$-name of $x_{-n}$. Notice that by (\ref{b1}), $B^{-}_{n}(\hat{x},\delta)$ is contained in the set of all $y\in X$ with the property that there exists $\hat{y}\in\widehat{X}$ such that the $(\mathcal{P},n)$-name of $\hat{y}$ is $\varepsilon$-close in the Hamming distance to the $(\mathcal{P},n)$-name of $\hat{x}$. But if $V_n$ denotes the number of $(\mathcal{P},n)$-names which are $\varepsilon$-close to the $(\mathcal{P},n)$-name of $x_{-n}$, then from \cite{B},
	\begin{equation}\label{b10}
	\lim_{n\rightarrow\infty}\frac{\log V_n}{n} = \varepsilon \log(N-1)-\varepsilon \log \varepsilon -(1-\varepsilon)\log (1-\varepsilon),
	\end{equation}
	where recall that $N=\text{card}(\mathcal{P})>1$ depends only on $\varepsilon'$. Hence there exists $N(\varepsilon, \varepsilon')$ such that for every $n\geq N(\varepsilon, \varepsilon')$, we have
	\begin{equation}\label{Vn}
	V_n < e^{C(\varepsilon, \varepsilon')n}, \ \text{where}
	\end{equation} 
	\begin{equation}\label{b8}
	 C(\varepsilon, \varepsilon')=\varepsilon \log(N-1)-\varepsilon \log \varepsilon -(1-\varepsilon)\log (1-\varepsilon) +\varepsilon.
	\end{equation}
	However notice that
	$
	C(\varepsilon, \varepsilon') > \varepsilon.
	$
	For any $k\geq N( \varepsilon, \varepsilon')$, define $$\widehat{R}_k(\varepsilon, \varepsilon')=\{ \hat{x}\in\widehat{X} : n(\hat{x},\varepsilon, \varepsilon')\leq k, n'(\hat{x},\varepsilon, \varepsilon')\leq k, n''(\hat{x},\varepsilon)\leq k   \}.$$
	Notice that $\left\{\widehat{R}_k(\varepsilon, \varepsilon')\right\}_k$ is an increasing sequence of Borel sets and 
	\begin{equation}\label{b2}
	\hat{\mu}(\widehat{R}_k(\varepsilon, \varepsilon')))\mathop{\longrightarrow}_{k\rightarrow\infty}1.	
	\end{equation}
	Fix $k\geq N(\varepsilon,\varepsilon')$. 
	% $n\geq k$, we estimate the measure of points from $\pi_{n}(\widehat{R}_k(\varepsilon, \varepsilon'))$ whose $(\mathcal{P}, n)$-name have an element of $\mathcal{P}_{n}$ with measure greater than 
	%$$\mu(\mathcal{P}_n(y_{-n}))> e^{-(h_f(\mu)-2 C(\varepsilon, \varepsilon')n}.$$
	%in their Hamming $\varepsilon$-neighborhood. 
	It follows from (\ref{Vn}) that the total number of elements of $\mathcal{P}_{n}$ with measure greater than $e^{-(h_f(\mu, \mathcal{P})-2 C(\varepsilon, \varepsilon'))n}$ is at most $e^{(h_f(\mu, \mathcal{P})-2 C(\varepsilon, \varepsilon'))n}$, for all $n\geq N(\varepsilon, \varepsilon')$. Denote the set of these elements by $\Xi_n$. The total number $Q_n$ of elements of $\mathcal{P}_{n}$ belonging to the Hamming $\varepsilon$-neighborhood of $\Xi_n$ satisfies 
	\begin{equation}\label{b3}
	Q_n \leq  V_n\cdot e^{(h_f(\mu, \mathcal{P})  - 2C(\varepsilon, \varepsilon'))n}= e^{(h_f(\mu, \mathcal{P})  - C(\varepsilon, \varepsilon'))n}.	
	\end{equation}
	Recall that $k\geq N(\varepsilon, \varepsilon')$ is fixed. From these $Q_n$ elements of $\mathcal{P}_{n}$ consider those whose intersection with $\pi_{n}(\widehat{R}_k(\varepsilon, \varepsilon'))$ has positive measure and denote their union by $E_n(\varepsilon, \varepsilon')$. Then from the definition of  $\widehat{R}_k(\varepsilon, \varepsilon')$ and from (\ref{e2}) and (\ref{b3}), we have that, for all $n>N(\varepsilon, \varepsilon')$, 
	$$\mu(E_{n}(\varepsilon, \varepsilon'))\leq e^{(h_f(\mu, \mathcal{P})  - 2 C(\varepsilon, \varepsilon'))n} \cdot e^{(-h_f(\mu, \mathcal{P})+\varepsilon)n} = e^{(\varepsilon -C(\varepsilon, \varepsilon'))n}.$$
	Since $C(\varepsilon,\varepsilon')>\varepsilon$, there exists $k(\varepsilon, \varepsilon')$ such that for every $n\geq k(\varepsilon, \varepsilon')$ we have
	\begin{equation}\label{b4}
	\sum_{n\geq k(\varepsilon, \varepsilon')}\mu( E_n(\varepsilon,\varepsilon'))< \varepsilon.
	\end{equation} 
	Also by (\ref{b2}) we can assume that for every $k\geq k(\varepsilon,  \varepsilon')$ we have
	\begin{equation}\label{i1}
	\hat{\mu}(\widehat{X}\setminus \widehat{R}_k(\varepsilon,\varepsilon'))<\varepsilon.
	\end{equation}
	Since $\hat{\mu}(\pi_n^{-1}(E_n(\varepsilon,\varepsilon')))=\mu(E_n(\varepsilon,\varepsilon'))$, from (\ref{b4}) we have 
	\begin{equation}\label{i3}
	\hat{\mu}(\bigcup_{n\geq k(\varepsilon,\varepsilon')} \pi_n^{-1}(E_n(\varepsilon,\varepsilon'))) \leq \sum_{n\geq k(\varepsilon,\varepsilon')}\hat{\mu}(\pi_n^{-1}(E_n(\varepsilon,\varepsilon')))= \sum_{k\geq k(\varepsilon,\varepsilon')}\mu(E_n(\varepsilon,\varepsilon')) <\varepsilon.
	\end{equation} 
	%There exists $k_0\geq 1$ such that for every $k\ge  k_0$ 
	For $k\geq k(\varepsilon, \varepsilon')$ define 
	\begin{equation}\label{i5}
	\widehat{Q}_k(\varepsilon,\varepsilon')=\widehat{R}_k(\varepsilon,\varepsilon')\setminus \bigcup_{n\geq k}\pi_n^{-1} (E_n(\varepsilon,\varepsilon')).
	\end{equation}
	Then by (\ref{i1}), (\ref{i3}) and (\ref{i5}), for any $k\geq k(\varepsilon, \varepsilon')$ we have 
	\begin{equation}\label{b7}
	\hat{\mu} (\widehat{Q}_k(\varepsilon,\varepsilon')) >1- 2\varepsilon.
	\end{equation}
	Let $\hat{x}\in \widehat{Q}_k(\varepsilon,\varepsilon')$. Hence $\hat{x} \in \widehat{R}_k(\varepsilon,\varepsilon')$ and $x_{-n}\notin E_{n}(\varepsilon,\varepsilon')$ for every $n\geq k$. Let 
	$\Gamma(\hat{x},n,\varepsilon)$ be the collection of elements from $\mathcal{P}_{n}$
	whose $(\mathcal{P},n)$-names are $\varepsilon$-close in Hamming distance to the $(\mathcal{P},n)$-name of $x_{-n}$, since $\hat{x}\in \widehat{Q}_k(\varepsilon,\varepsilon')$. If $P\in \Gamma(\hat{x},n,\varepsilon)$ and $y\in P$,  
	then the Hamming distance between the $(\mathcal{P},n)$-names of $y$ and $x_{-n}$ is less than $\varepsilon$. 	Let $M>0$ be such that $J_f(\mu)(x)<M$ for every $x\in X$. Thus $f^i(y) \in \mathcal{P}(f^{i}(x_{-n}))$ for at least $n-[(n+1)\varepsilon]$ of  indices $i\in \{1,\ldots, n\}$ and if  $f^i(y) \notin \mathcal{P}(f^{i}(x_{-n}))$ then $J_{f}(\mu)(f^{i}(y)) \leq M\leq 
	M J_{f}(\mu)(f^i(x_{-n}))$. Consequently from (\ref{b0}),
	\begin{equation}\label{b5}
	J_{f^{n}}(\mu)(y)< J_{f^{n}}(\mu)(x_{-n})e^{(n-[(n+1)\varepsilon])\varepsilon' }M^{[(n+1)\varepsilon]}\leq J_{f^{n}}(\mu)(x_{-n})e^{\varepsilon' n}M^{(n+1)\varepsilon}.	
	\end{equation}
	%$$J_{f^{n-1}}(\mu)(y)< J_{f^{n}}(\mu)(x_{-n})e^{(1-\varepsilon)\varepsilon' n}M^{n\varepsilon}\leq J_{f^{n}}(\mu)(x_{-n})e^{\varepsilon' n}M^{\varepsilon' n} .$$
	Since the atoms of $\mathcal{P}_n$ with measure greater then $e^{-(h_f(\mu, \mathcal{P})-2C(\varepsilon, \varepsilon'))n}$ together with their neighbors $\varepsilon$-close in Hamming distance were eliminated in the definition of $\widehat{Q}_k(\varepsilon,\varepsilon')$, it follows that for all $P\in \Gamma(\hat{x},n,\delta)$, $\mu(P)<e^{-(h_f(\mu, \mathcal{P})-2C(\varepsilon, \varepsilon'))n}$.
	Thus, for every $\hat{x}\in \widehat{Q}_k(\varepsilon,\varepsilon')$, every $n\geq k$ and $\delta<\delta(\varepsilon)$, we have from the discussion about $B^-_n(\hat{x},\delta)$ before (\ref{b10}) together with (\ref{Vn}) and (\ref{b5}) that
	\begin{align*}\mu(B^-_{n}(\hat{x},\delta)) &=\mu(f^{n}(B_n(x_{-n},\delta))) \leq \sum_{P\in \Gamma(\hat{x},n,\varepsilon)} \mu (f^{n}(P)) = \sum_{P\in \Gamma(\hat{x},n,\varepsilon)}  \int_{P} J_{f^{n}}(\mu) d\mu\\
	&\leq V_n\cdot  e^{(-h_f(\mu, \mathcal{P})+2C(\varepsilon, \varepsilon'))n}\cdot   J_{f^{n}}(\mu)(x_{-n})\cdot e^{\varepsilon'n}\cdot M^{\varepsilon (n+1)}\\
	&\leq e^{C(\varepsilon,\varepsilon')n}\cdot e^{(-h_f(\mu, \mathcal{P})+ 2C(\varepsilon, \varepsilon') )n} \cdot  e^{(F_f(\mu)+\varepsilon)n}\cdot e^{n\varepsilon'}\cdot M^{\varepsilon (n+1)}\\
	&\leq  e^{(-h_f(\mu, \mathcal{P})+F_f(\mu)+3C(\varepsilon, \varepsilon') +\varepsilon +\varepsilon')n} \cdot M^{\varepsilon (n+1)},
	\end{align*}
	and therefore
	\begin{equation}\label{i7}
	\lim_{\delta\rightarrow 0}\liminf_{n\rightarrow\infty}\frac{- \log \mu(B^-_n(
		\hat{x},\delta)) }{n} \geq h_f(\mu, \mathcal{P})- F_f(\mu)-3C(\varepsilon, \varepsilon')- \varepsilon -\varepsilon \log M- \varepsilon'.
	\end{equation}
	Now recall that $\varepsilon'$ is fixed. Then for any $p>1$ let $k_p>k(\frac{1}{2^p},\varepsilon')$ such that $k_{p+1}>k_p$ and define $\widehat{Q}_{k_p}(\varepsilon'):=\widehat{Q}_{k_p}(\frac{1}{2^p},\varepsilon')$. For $m>1$, let $\widehat{Q}_m(\varepsilon'):=\mathop{\bigcap}\limits_{p>m} \widehat{Q}_{k_p}(\varepsilon')$. From (\ref{b7}) it follows that  $\hat{\mu}(\widehat{Q}_{k_p}(\varepsilon'))>1-\frac{1}{2^{p-1}}$. Hence
	\begin{equation}\label{b6}
	\hat{\mu}(\widehat{Q}_m(\varepsilon'))>1-\sum_{p>m}\frac{1}{2^{p-1}}=1-\frac{1}{2^{m-1}}.	
	\end{equation} 
	From (\ref{b8}) we know that  
	$C(\varepsilon, \varepsilon')=\varepsilon \log(N-1)-\varepsilon \log \varepsilon -(1-\varepsilon)\log (1-\varepsilon) +\varepsilon,$
	and $\mathop{\lim}\limits_{\varepsilon\rightarrow 0}C(\varepsilon, \varepsilon')=0$. 
	Then from (\ref{i7}) we obtain for every $m\geq 1$ and every $\hat{x}\in \widehat{Q}_m(\varepsilon')$ that
	\begin{equation}\label{b9}
	\lim_{\delta\rightarrow 0}\liminf_{n\rightarrow \infty}\frac{- \log \mu(B^-_n(
		\hat{x},\delta)) }{n} \geq h_f(\mu, \mathcal{P})- F_f(\mu)-\varepsilon'\geq h_f(\mu)- F_f(\mu)-2\varepsilon'.
	\end{equation}
	Notice that $\widehat{Q}_m(\varepsilon')\subset \widehat{Q}_{m+1}(\varepsilon')$ for every $m\geq 1$. Let now 
	$\widehat{Q}(\varepsilon')=\mathop{\bigcup}\limits_{m>1}\widehat{Q}_m(\varepsilon')$. Then  from (\ref{b6}), it follows that $\widehat{Q}(\varepsilon')$ has $\hat{\mu}$-measure equal to $1$. Finally let $\widehat{Q}=\mathop{\bigcap}\limits_{q>1}\widehat{Q}(\frac{1}{2^q})$. Then $\hat{\mu}(\widehat{Q})=1$ and for every $\hat{x}\in \widehat{Q}$ we have 
	$$h^{-}_{f,inf, B}(\hat{x})=\lim_{\delta\rightarrow 0}\liminf_{n\rightarrow \infty}\frac{- \log \mu(B^-_n(
		\hat{x},\delta)) }{n} \geq h_f(\mu, \mathcal{P})- F_f(\mu).$$
	Then the conclusion of the theorem follows from Proposition \ref{c1}.	
	
	$\hfill\square$

Let $f:M\rightarrow M$ be a smooth (say $C^2$) non-invertible map  defined on a Riemannian manifold $M$, and let $\Lambda\subset M$ be a compact $f$-invariant set. We recall that $f$ is (uniformly) hyperbolic on $\Lambda$ if there exists a continuous splitting
of the tangent bundle over $\widehat{\Lambda}$ into stable and unstable directions. For every $\hat{x}\in\widehat{\Lambda}$ we have a stable space $E^s_{x}$ and an unstable space $E^{u}_{\hat{x}}$ and these subspaces are invariant under $Df$ (see \cite{Ru}). %$T_{\widehat{\Lambda}}=E^s_{x}\oplus E^{u}_{\hat{x}}$, where $\hat{x}=(x,x_{-1}, x_{-2}, \ldots)$
%and constants $c\geq 0$ and $\lambda>1$ such that
%$$\|Df^n(v)\|\leq c\lambda^{-n}\cdot\|v\|,  \ v\in E^s_{x}$$
%$$\|Df^n(v')\|\geq c^{-1}\lambda^{n}\cdot\|v'\|,  \ v'\in E^{u}_{\hat{x}}, \forall n\geq 0.$$
%Up to a change change of metric it
%an be proved that in the above inequalities one
%can take $c=1$. 
The above splitting gives birth, for some $\vp>0$, to local stable/unstable manifolds  $W_\varepsilon^s(x)$ and $W_\varepsilon^u(\hat{x})$ for every $\hat{x}\in \widehat{\Lambda}$,  where
$$W_\varepsilon^s(x)=\{ y\in X : d(f^nx, f^ny)<\varepsilon, \forall n\geq 0 \} \text { and}$$
$$W_\varepsilon^u(\hat{x})=\{ y\in X : \exists \text{ a prehistory }\hat{y}=(y_{-n})_{n\geq 0} \text{ of }y \text{ such that }d(x_{-n}, y_{-n})<\varepsilon, \forall n\geq 0 \}.$$

%\begin{definition}\label{defspecial}
%	Let a $\mathcal{C}^2$ endomorphism $f:M\rightarrow M$ on a Riemannian manifold $M$ and assume that $f$ is hyperbolic on a  compact invariant set $\Lambda\subset M$. We say that $f$ is special on $\Lambda$ if the local unstable manifolds $W^u_\varepsilon(\hat{x})$, $\hat{x}\in \widehat{\Lambda}$ depend only on the base point $x$ and not on the whole  prehistory $\hat{x}$.
%\end{definition}

Let $f: M\rightarrow M$ be a $\mathcal{C}^2$ smooth  endomorphism defined on a compact Riemannian manifold. 
We will now study the inverse metric entropy of an $f$-invariant \textbf{hyperbolic measure} $\mu$. For background on hyperbolic ergodic measures the book of Barreira and Pesin \cite{BP} is a good reference. 
We recall that if $f: M\rightarrow M$ is a $\mathcal{C}^2$ smooth endomorphism on a compact Riemannian manifold $M$, and if $\mu$ is an $f$-invariant ergodic hyperbolic measure, then for $\varepsilon>0$ there exists a Pesin set $\widehat{R}_\varepsilon \subset \widehat M$ such that for every $\hat{x}\in\widehat{R}_\varepsilon$ there exists a local stable manifold $W^s_\varepsilon(x)$ and a local unstable manifold $W^u_\varepsilon(\hat{x})$ of size $\varepsilon$. One has also the estimates from \cite{BP} for the distances between the iterates of points in  $W^s_\varepsilon(x)$ and $W^u_\varepsilon(\hat{x})$. Moreover $\bigcup\limits_{\varepsilon>0} \widehat{R}_\varepsilon=\widehat{M}$ up to a set of zero $\hat\mu$-measure (see \cite{BP}).

\begin{definition}\label{preind}
	Let $f :M \rightarrow M$ be a $\mathcal{C}^2$ endomorphism defined on a compact Riemannian manifold. Assume that $\mu$ is an $f$-invariant ergodic measure on $M$.

 a) The measure $\mu$ is called hyperbolic if for $\mu$-a.e $x\in M$ all the Lyapunov exponents of $\mu$ at $x$ are different from zero.	 

b) A hyperbolic measure $\mu$ is called special (or prehistory independent) if for any $\vp \in (0, \vp_0)$ and every prehistories $\hat{x}$, $\hat{y} \in \widehat R_\vp$ with $x =y$, we have that $W^u_\varepsilon(\hat{x})$ is equal to $W^u_\varepsilon(\hat{y})$.

\end{definition}

\begin{remark}
Special hyperbolic endomorphisms, i.e endomorphisms whose unstable manifolds depend only on their base point (and not on the entire prehistory) will be presented in more detail in Section \ref{specialAnosov}. If $f$ is a special hyperbolic endomorphism on $\Lambda$, then clearly any $f$-invariant ergodic measure on $\Lambda$ is hyperbolic and special. 
\end{remark}

%\begin{definition}
	%If $f :M \rightarrow M$ is hyperbolic on the compact set %$\Lambda\subset M$, we  say that $f$ is prehistory-independent %if the unstable manifolds depend only on the base point, and not on their entire prehistory.	
%\end{definition}

%\begin{theorem}
%	Let $f :M \rightarrow M$ be a hyperbolic map on the compact set $\Lambda\subset M$ that is prehistory-independent, conformal on stable manifolds and does not have critical points in $\Lambda$. Then, for every $\varepsilon>0$ we have	
%	$$\lim_{\varepsilon\rightarrow 0}\liminf_{n\rightarrow\infty} \frac{-\log \mu(f^{n}(B_{n}(x_{-n},\varepsilon)))}{n}=\lim_{\varepsilon\rightarrow 0}\limsup_{n\rightarrow\infty} \frac{-\log \mu(f^{n}(B_{n}(x_{-n},\varepsilon)))}{n}= h_{f}(\mu)- F_f(\mu).$$
%\end{theorem}

\textbf{Proof of Theorem \ref{thpreind}.} By Proposition \ref{c1} it is enough to prove that for $\hat \mu$-a.e $\hat x \in \widehat M$,
	$$\lim_{\varepsilon\rightarrow 0}\liminf_{n\rightarrow\infty} \frac{-\log \mu(f^{n}(B_{n}(x_{-n},\varepsilon)))}{n}\geq  h_{f}(\mu)- F_f(\mu).$$	
	For $\varepsilon>0$ let $\widehat{R}_\varepsilon\subset\widehat{M}$ be a Pesin regular set for $\hat{\mu}$ (see for example \cite{BP}) such that 
	\begin{equation}\label{Repsilon}
\hat{\mu}(\widehat{R}_\varepsilon)>1-\eta(\varepsilon), \text{ where  } \lim_{\varepsilon\rightarrow 0}\eta(\varepsilon)= 0.
	\end{equation}
	Let $\tau > 0$ and $\varepsilon>0$ and recall Lemma \ref{mod}. Let 
	$$\widehat{T}_\varepsilon(\tau)=\left\{\hat{x}\in \widehat{M} :	\left |\liminf_{n\rightarrow\infty}\frac{-\log \mu(B_{n}(x_{-{n}},\varepsilon))}{n} -h_f(\mu)\right|<\tau \right\}.$$ Hence there exists $0<\varepsilon(\tau)<\tau$ such that for every $0<\varepsilon\leq\varepsilon(\tau)$ we have $\hat{\mu}(\widehat{T}_\varepsilon(\tau))>1-\tau$. Let $\varepsilon\in (0, \varepsilon(\tau)]$. If $\hat{x}\in \widehat{R}_\varepsilon$ then there exist the local stable manifold $W^s_\varepsilon(x)$ of size $\varepsilon$ and the local unstable manifold $W^u_\varepsilon(\hat{x})$ of size $\varepsilon$. For any $m\geq 1$ let $\widehat{A}_m(\varepsilon,\tau)$ be the set of all $\hat{x}\in \widehat{R}_\varepsilon \cap \widehat{T}_\varepsilon(\tau)$ which satisfy the following three conditions: 
	\begin{equation}\label{amepsdelta2}
	\left|\frac{1}{n}\sum_{i=0}^{n-1} \chi_{\widehat{R}_\varepsilon\cap\widehat{T}_\varepsilon(\tau)}(f^{-i}(x))-\hat{\mu}(\widehat{R}_\varepsilon\cap \widehat{T}_\varepsilon(\tau))\right|<\varepsilon, \  \text{ for all }n\geq 2m,
	\end{equation}
	\begin{equation}\label{amepsdelta1}
	\left|\frac{\log J_{f^{n}}(\mu)(x_{-n})}{n}-F_f(\mu)\right|<\varepsilon, \text{ for all }n\geq m,
	\end{equation}
		\begin{equation}\label{amepsdelta3}
	\frac{-\log \mu(B_{n}(x_{-{n}},\varepsilon))}{n}> h_f(\mu)-\tau, \text{ for all }n\geq m.
	\end{equation}
	By Birkhoff Ergodic Theorem applied to $\hat{f}^{-1}$ on $(\widehat{M},\hat{\mu})$ and to the functions $\log J_f(\mu)\circ \pi :\widehat{M}\rightarrow \mathbb{R}$ and $\chi_{\widehat{R}_\varepsilon}:\widehat{M}\rightarrow \mathbb{R}$, and by (\ref{Repsilon}) and Lemma \ref{mod} we have
	\begin{equation}\label{limamn}
	\lim_{m\rightarrow\infty}\hat{\mu}(\widehat{A}_m(\varepsilon,\tau))=\hat{\mu}(\widehat{R}_\varepsilon \cap \widehat{T}_\varepsilon(\tau)) > 1-\eta(\varepsilon) -\tau.
	\end{equation}
	Let $\widehat{D}_m(\varepsilon,\tau)= \Big\{ \hat{x}\in \widehat{A}_m(\varepsilon,\tau):\mu(f^{n}(B_{n}(x_{-{n}},\varepsilon)))  > n^2\cdot \mu\Big(f^{n}\big(B_{n}(x_{-{n}},\varepsilon)\cap \pi_{n}(\widehat{A}_{m}(\varepsilon,\tau))\big)\Big)
	\text{ for } \\ \text{infinitely many }n   \Big\}$. 
By choosing $\varepsilon$ and $\tau$ sufficiently small, without loss of generality we can assume that 	
$\hat{\mu}(\widehat{R}_\varepsilon\cap \widehat{T}_\varepsilon(\tau))-\varepsilon >1/2$. Then it follows from (\ref{amepsdelta1}) that for every $\hat{x}\in \widehat{A}_{m}(\varepsilon,\tau)$ and $n\geq 2m$, the number of positive integers 
$k$ with $0\leq k<n$ such that $\hat{f}^{-k}(\hat{x})\in \widehat{R}_\varepsilon\cap \widehat{T}_\varepsilon(\tau)$ is larger than $m$. For $\hat{x}\in \widehat{A}_{m}(\varepsilon, \tau)$ and $n\geq 2m$, let $m_n$ be the largest integer smaller than $n$ (which depends on $\hat{x}$), such that $\hat{f}^{-m_n}(\hat{x})\in \widehat{R}_\varepsilon\cap \widehat{T}_\varepsilon(\tau)$. From above it follows that $m_n\geq m$ and from (\ref{amepsdelta2}) we have
\begin{equation} \label{Tepshat}
\frac{m_n}{n}\geq \hat{\mu}(\widehat{R}_\varepsilon\cap \widehat{T}_\varepsilon(\tau))-\varepsilon.
\end{equation}
Notice also that
	\begin{equation}\label{fnbn}
 f^n(B_n(x_{-n},\varepsilon))\subset f^{m_n}(B_{m_n}(x_{-m_n},\varepsilon)).
	\end{equation}
For $n\geq 2m$ define the measurable subset of $\widehat M$,
	\begin{equation}\label{Enepsd}
	\begin{aligned}
		\widehat{E}_n(\varepsilon,\tau)=\big\{ \hat{x}\in \widehat{D}_m(\varepsilon,\tau) &: \mu(f^{m_n}(B_{m_n}(x_{-{m_n}},\varepsilon)))> \\&> n^2\cdot \mu(f^{m_n}\big(B_{m_n}(x_{-m_n},\varepsilon)\cap \pi_{m_n}(\widehat{A}_{m}(\varepsilon,\tau))\big)) \big\}.
	\end{aligned}
	\end{equation}
	Now we want to cover the set $\pi(\widehat{E}_n(\varepsilon,\tau))$ with sets of the type $f^{m_n}( B_{m_n}(x_{-m_n}, \varepsilon))$. First we fix $y\in \pi\widehat{R}_\varepsilon$ and take the intersection $W^s_\varepsilon(y)\cap \pi(\widehat{E}_n(\varepsilon,\tau))$. Then for any $\hat{x}\in \widehat{E}_n(\varepsilon,\tau)\subset\widehat{R}_\varepsilon$,  
	 $W^s_\varepsilon(y)\cap f^{m_n}( B_{m_n}(x_{-m_n}, \varepsilon))$ is a small parallelepiped in $W^s_\varepsilon(y)$ of dimension equal to the dimension of $W^s_\varepsilon(y)$ whose sides are parallel to the stable tangent subspaces. Since $\hat{f}^{-m_n}(\hat{x})\in \widehat{R}_\varepsilon$, we apply the estimates on the distances between iterates of points from $W_\varepsilon^s(x_{-m_n})$ (see \cite{BP}), and the fact that the contraction along the stable manifolds is stronger than the subexponential oscillation of the size of local stable/unstable manifolds and of the multiplicative constant. 
	 
	Let us now cover the set $W^s_\varepsilon(y)\cap \pi(\widehat{E}_n(\varepsilon,\tau))$ with a family $\mathcal{F}$ of small parallelepipeds of type $f^{m_n}(B_{m_n}(x_{-m_n},\varepsilon))\cap W^s_\varepsilon(y)$. Given that these parallelepipeds have sides parallel to a finite set of stable directions, we can apply a version of Besicovitch Covering Theorem for this family $\mathcal{F}$. Thus there exists a constant $N$ (which depends only on the dimension of the manifold $M$) such that we can extract at most $N$ subfamilies $\mathcal{G}_1, \mathcal{G}_2,\ldots, \mathcal{G}_N$ of $\mathcal{F}$ such that each such family $\mathcal{G}_i$ consists of mutually disjoint parallelepipeds in $W^s_\varepsilon(y)$ and $\mathcal{G}_1 \cup  \mathcal{G}_2 \cup \ldots\cup  \mathcal{G}_N$
	 covers $W^s_\varepsilon(y)\cap \pi(\widehat{E}_n(\varepsilon,\tau))$. Let us denote by $\widetilde{G}_k$ the family of sets of type $f^{m_n}(B_{m_n}(x_{-{m_n}},\varepsilon))$, where $f^{m_n}(B_{m_n}(x_{-{m_n}},\varepsilon)) \cap  W^s_\varepsilon(y)\in G_k$, for $k=1,\ldots, N.$ Let also $\widetilde{G}(y)$ to be the union of all sets from the families $\widetilde{G}_1,\ldots, \widetilde{G}_N$. Now since we work on $\widehat{R}_\varepsilon$, and since $\mu$ is special, the local unstable manifolds depend only on their respective base points in $W^s_\varepsilon(y) \cap \pi(\widehat{E}_{n}(\varepsilon, \tau))$. Thus the sets in each family $\widetilde{G}_i$ are mutually disjoint, for $i=1,\ldots, N$. Therefore we obtain that $\widetilde{G}(y)$ covers the set $B(y,\varepsilon)\cap \pi(\widehat{E}_n(\varepsilon,\tau))$, and from (\ref{Enepsd}) it follows that for each set $f^{m_n}(B_{m_n}(x_{-{m_n}},\varepsilon))$ from $\widetilde{G}(y)$ we have

	 %Now since we work on $\widehat{R}_\varepsilon$, the unstable manifolds of points in $\widehat{R}_\varepsilon$ have size $\varepsilon$. Hence, since $\mu$ is prehistory independent, the union $\mathcal{G}'(y)$ of all sets of type  $f^{m_n}(B_{m_n}(x_{-{m_n}},\varepsilon))$ where $f^{m_n}(B_{m_n}(x_{-{m_n}},\varepsilon)) \cap  W^s_\varepsilon(y)\in \bigcup\limits_{i=1}^N\mathcal{G}_i$, covers  $B(y,\varepsilon)\cap \pi(\widehat{E}_n(\varepsilon,\tau))$. But for   $f^{m_n}(B_{m_n}(x_{-{m_n}},\varepsilon))\in\mathcal{G}'(y)$ we have from (\ref{Enepsd}) that
	 \begin{equation}\label{mufnbnam}
	 \mu(f^{m_n}(B_{m_n}(x_{-{m_n}},\varepsilon))\cap \pi_{m_n}(\widehat{A}_{m}(\varepsilon, \tau)))< \frac{1}{n^2}\cdot\mu(f^{m_n}(B_{m_n}(x_{-{m_n}},\varepsilon))).
	 \end{equation}
	 Let $K_\varepsilon$ be the minimum number of balls of radius $\varepsilon/2$ which cover $M$. Thus, since the sets in each family $\widetilde{G}_i$ are mutually disjoint for $i=1,\ldots, N$, we infer from (\ref{mufnbnam}) that
	 \begin{equation}\label{nkepsn}
	 \mu(\pi(\widehat{E}_n(\varepsilon,\tau)))< \frac{N\cdot K_\varepsilon}{n^2}.
	 \end{equation}
	But $\hat{\mu}(\widehat{E}_n(\varepsilon,\tau)) \leq \hat{\mu}\left( \pi^{-1}(\pi(\widehat{E}_n(\varepsilon,\tau)))\right)=\mu(\pi(\widehat{E}_n(\varepsilon,\tau)))$, and then from (\ref{nkepsn}) we obtain that $\sum\limits_{n=1}^\infty \hat{\mu}(\widehat{E}_n(\varepsilon,\tau)) <\infty$. Then, from Borel-Cantelli Lemma we get $$\hat{\mu}\big(\bigcap_{k\geq 1 }\bigcup_{n\geq k}\widehat{E}_n(\varepsilon,\tau)\big)=0.$$
	Hence for $\hat{\mu}$-a.e $\hat{x}\in \widehat{A}_m(\varepsilon,\tau)$, there is $n(\hat{x})\geq 2m$ such that  $\hat{x}\notin \widehat{E}_n(\varepsilon,\tau)$ for any $n\geq n(\hat{x})$, and thus
	\begin{equation}\label{muf}
	\mu(f^{m_n}(B_{m_n}(x_{-{m_n}},\varepsilon)) \leq  n^2\cdot \mu(f^{m_n}(B_{m_n}(x_{-m_n},\varepsilon))\cap \pi_{m_n}(\widehat{A}_m(\varepsilon,\tau))).
	\end{equation}
	Hence from  (\ref{amepsdelta1}), (\ref{fnbn}) and (\ref{muf}) and since $m_n\geq m$, it follows that for $\hat{\mu}$-a.e $\hat{x}\in \widehat{A}_m(\varepsilon,\tau)$ and every $n\geq n(\hat{x})$ we obtain,
	\begin{align*}
	\mu(f^{n}(B_{n}(x_{-n},\varepsilon)))&\leq  \mu(f^{m_n}(B_{m_n}(x_{-m_n},\varepsilon))) \leq    n^2\cdot \int_{B_{m_n}(x_{-m_n},\varepsilon) \cap \pi_{m_n}(\widehat{A}_m(\varepsilon,\tau))} J_{f^{m_n}}(\mu) \ d\mu
	\\ &\leq n^2\cdot  e^{m_n(F_f(\mu)+\varepsilon)}\cdot \mu(B_{m_n}(x_{-{m_n}},\varepsilon)).
	\end{align*}
Then, for $\hat{\mu}$-a.e. $\hat{x}\in \widehat{A}_m(\varepsilon,\tau)$ and every $n\geq n(\hat{x})$ we have	
		$$\frac{-\log \mu(f^{n}(B_{n}(x_{-{n}},\varepsilon)))}{n}\geq \frac{-\log \mu(B_{m_n}(x_{-{m_n}},\varepsilon))}{m_n}\cdot\frac{m_n}{n}- (
F_f(\mu)+\varepsilon)\cdot\frac{m_n}{n}-\frac{\log n^2}{n },$$
		and therefore 
		\begin{equation*}
	\liminf_{n\rightarrow\infty} \frac{-\log \mu(f^{n}(B_{n}(x_{-n},\varepsilon)))}{n} \geq
	\liminf_{n\rightarrow\infty} \left(\frac{-\log \mu(B_{m_n}(x_{-{m_n}},\varepsilon))}{m_n}\cdot\frac{m_n}{n}  - (F_f(\mu)+\varepsilon)\cdot\frac{m_n}{n} \right)
		\end{equation*}
But from (\ref{amepsdelta3}) and since $\frac{m_n}{n}\geq \hat{\mu}(\widehat{R}_\varepsilon\cap \widehat{T}_\varepsilon(\tau))-\varepsilon$ (see (\ref{Tepshat})), we obtain 		
\begin{equation}\label{infhmufmu}
\liminf_{n\rightarrow\infty} \frac{-\log \mu(f^{n}(B_{n}(x_{-n},\varepsilon)))}{n} \geq
(h_f(\mu)-\tau-F_f(\mu)-\varepsilon)\cdot (\hat{\mu}(\widehat{R}_\varepsilon\cap \widehat{T}_\varepsilon(\tau))-\varepsilon) 
\end{equation}	
Notice that from (\ref{limamn}), for every $\varepsilon\leq\varepsilon(\tau)$ and for $m$ larger than some number $m(\varepsilon,\tau)$,
$$\hat{\mu}\left(\widehat{A}_{m}(\varepsilon, \tau)\right)\geq \hat{\mu}\left(\widehat{R}_{\varepsilon}\cap \widehat{T}_{\varepsilon}(\tau)\right)-\tau>1-\eta(\varepsilon)-\tau.$$ 
Hence for any integer $p>1$, there exists $\kappa(p)\in \mathbb{N}$, $\varepsilon(p)>0$ and $\tau(p)>0$ such that if $\widehat{A}_p$ denotes the set $\widehat{A}_{\kappa(p)}(\varepsilon(p),\tau(p))$, then $\hat\mu(\widehat{A}_p)> 1-\frac{1}{2^p}.$
Let now $\widehat{A} =\mathop{\bigcup}\limits_{k\geq 1} \mathop{\bigcap}\limits_{p\geq k} \widehat{A}_p$. Then $\hat{\mu}(\widehat{A})=1$ and from (\ref{limamn}) and (\ref{infhmufmu}) we obtain that for every $\hat{x}\in \widehat{A}$,
	$$\lim_{\varepsilon\rightarrow 0}\liminf_{n\rightarrow\infty} \frac{-\log \mu(f^{n}(B_{n}(x_{-n},\varepsilon)))}{n}\geq
	h_f(\mu)- F_f(\mu).$$

$\hfill\square$

\section{Special Anosov endomorphisms on tori}\label{specialAnosov}

If $f:M \to M$ is a smooth ($\mathcal C^\infty$) map on a compact manifold, then recall that $f$ is called \textbf{Anosov endomorphism} if $f$ is hyperbolic (as an endomorphism) over the entire manifold $M$ (for eg \cite{KH}). Thus there exists a continuous splitting of the tangent bundle over the inverse limit $\widehat M_f$ into $Df$-invariant stable and unstable tangent subbundles $T_{\hat x}M = E^s(x) \bigoplus E^u(\hat x)$, and there exists $\alpha \in (0, 1)$
such that for any $\hat x = (x, x_{-1}, \ldots) \in \widehat M_f$ we have that   $Df|_{E^s(x)}$ contracts with a factor smaller than $\alpha$ and $Df|_{E^u(\hat x)}$ expands with a factor larger than $1/\alpha$. 

Let us recall now also some notions related to endomorphisms from \cite{AH} and \cite{S}.
Firstly, a continuous surjection $f: X \to X$ on a compact metric space $(X, d)$ is called a \textit{covering map} if $f$ is a local homeomorphism. A continuous surjection $f: X \to X$ is called \textit{c-expansive} (\textit{constant-expansive}) if there exists some constant $e>0$ such that if $\hat x, \hat y \in \widehat X_f$ and $d(x_i, y_i) \le e, i \in \mathbb Z$ (where for $i >0$ we let $x_i = f^i(x)$), then $\hat x = \hat y$. If $\delta >0$, then a  sequence of points $\{x_i, i \ge 0\}$ is called a $\delta$-\textit{pseudo-orbit} if $d(f(x_i), x_{i+1}) < \delta$ for all $i \ge 0$. If $\vp>0$ we say that a $\delta$-pseudo-orbit $\{x_i, i \ge 0\}$ is $\vp$-\textit{traced} by a point $x$ if $d(x_i, f^i(x)) < \vp, i \ge 0$.   Then $f$ has \text{POTP} (\textit{pseudo-orbit tracing property}) if for every $\vp>0$ there exists some $\delta>0$ such that every $\delta$-pseudo-orbit can be $\vp$-traced by a point. The continuous surjection $f:X \to X$ is called a \textbf{topological Anosov map} (or \textbf{TA-map}) if $f$ is $c$-expansive and has POTP.  For a TA-map $f$ on  $X$ and for any $\hat x \in \widehat X_f$, define the \textit{global unstable set} as  
$$ W^u(\hat x) := \{y_0 \in X: \exists \ \hat y \in \widehat X_f \ \text{with} \ \mathop{\lim}\limits_{i \to \infty} d(x_{-i}, y_{-i}) = 0\}.$$
Then the map $f$ is called \textbf{special} if $W^u(\hat x) = W^u(\hat y)$ for every $\hat x, \hat y \in \widehat X_f$ with $x_0 = y_0$. 

If $f:X \to X$ is a special TA-map, then any $f$-invariant ergodic measure $\mu$ is special according to  Definition \ref{preind}.
Clearly, if $f:M \to M$ is an Anosov endomorphism, then $f$ is $c$-expansive and has POTP (see \cite{KH}), thus it is a TA-map. If $f: \mathbb T^d \to \mathbb T^d$ is a linear hyperbolic endomorphism, then $f$ is special. However  there exist many Anosov endomorphisms on tori which are not special, in fact any Anosov endomorphism on $\mathbb T^d$ can be approximated with Anosov endomorphisms which are not special (\cite{Pr}). Also for an Anosov endomorphism $f:\mathbb T^d \to \mathbb T^d$ without critical points,  the number of $f$-preimages of any point is constant (say equal to $D$), and we call this number the \textbf{degree} of $f$, so $$D = Card (f^{-1}(x)), \ \forall x \in \mathbb T^d.$$
 
If $f:\mathbb T^d \to \mathbb T^d$ is an Anosov endomorphism for $d \ge 2$, then $f$ is homotopic to a hyperbolic linear endomorphism $f_L: \mathbb T^d \to \mathbb T^d$ called the \textbf{linearization} of $f$. The integer-valued matrix of $f_L$ is determined by the induced homomorphism $f_*: \pi_1(\mathbb T^d) \to \pi_1(\mathbb T^d)$, where we recall that the fundamental group $\pi_1(\mathbb T^d)$ is equal to $\mathbb Z^d$. By extending a previous result from \cite{AH}, Sumi  proved in \cite{S} that any special TA-covering self-map on $\mathbb T^d$ (i.e covering map which is TA and special) is topologically conjugate to its linearization.

\begin{theorem*}(Linearization Theorem for special TA-covering maps, \cite{S}).
Let $f:\mathbb T^d \to \mathbb T^d$ be a special TA-covering map, and $f_L:\mathbb T^d \to \mathbb T^d$ be its linearization.  Then $f_L$ is a hyperbolic toral endomorphism and $f$ is topologically conjugate to $f_L$. 
\end{theorem*}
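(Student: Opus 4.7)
The plan is to adapt the Franks--Manning strategy for Anosov diffeomorphisms, building the conjugacy via hyperbolic shadowing in the universal cover, with specialness entering precisely to collapse the inverse-limit construction down to a self-map of $\mathbb T^d$.

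First I lift $f$ to $\tilde f \colon \mathbb R^d \to \mathbb R^d$ and write $\tilde f = A + H$, where $A$ is the linear action on $\pi_1(\mathbb T^d) = \mathbb Z^d$ and $H$ is $\mathbb Z^d$-periodic, hence bounded. Since $f$ is a covering map and $\mathbb R^d$ is simply connected, $\tilde f$ is a homeomorphism of $\mathbb R^d$. A neutral invariant subspace for $A$ would, after absorbing the bounded perturbation $H$, yield distinct $\tilde f$-orbits remaining uniformly close, contradicting $c$-expansiveness; so $A$ is hyperbolic, establishing that $f_L$ is a hyperbolic toral endomorphism. Using the splitting $\mathbb R^d = E^s \oplus E^u$ with hyperbolic projections $\pi_s, \pi_u$, I then define
\[
\tilde \Phi(\tilde x) := \tilde x + \sum_{k=0}^\infty A_u^{-k-1} \pi_u H(\tilde f^k \tilde x) - \sum_{k=1}^\infty A_s^{k-1} \pi_s H(\tilde f^{-k} \tilde x).
\]
Both series converge uniformly by hyperbolicity of $A$, and a direct computation gives $A \circ \tilde\Phi = \tilde\Phi \circ \tilde f$. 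Projecting down, this yields a continuous semiconjugacy $\widehat\Phi \colon \widehat{\mathbb T^d}_f \to \mathbb T^d$ intertwining $\hat f$ and $f_L$, since the forward orbit of $\tilde x$ depends only on $\pi(\tilde x)$, while the backward orbit (controlled by $\tilde f^{-1}$) encodes the prehistory.

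The central and most delicate step is to show that $\widehat\Phi(\hat x)$ depends only on $\pi(\hat x) \in \mathbb T^d$, so that it descends to $\Phi \colon \mathbb T^d \to \mathbb T^d$. Two distinct prehistories of the same base point correspond to lifts $\tilde y = \tilde x + v$ for some $v \in \mathbb Z^d$, and their backward iterates satisfy $\tilde f^{-k}(\tilde x + v) = \tilde f^{-k}\tilde x + A^{-k} v$, where $A^{-k} v$ typically lies in $\mathbb Q^d \setminus \mathbb Z^d$. Consequently $\tilde\Phi(\tilde y) - \tilde\Phi(\tilde x)$ differs from $v$ by the stable-series discrepancy $-\sum_{k\ge 1} A_s^{k-1} \pi_s \bigl(H(\tilde f^{-k}\tilde x + A^{-k}v) - H(\tilde f^{-k}\tilde x)\bigr)$, which is a priori not in $\mathbb Z^d$. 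The specialness hypothesis $W^u(\hat x) = W^u(\hat y)$ whenever $\pi\hat x = \pi\hat y$, combined with the local product structure from POTP and $c$-expansiveness of a TA-map, forces this discrepancy to vanish modulo $\mathbb Z^d$, so $\widehat\Phi$ factors through $\pi$. This is the main obstacle of the proof — converting the geometric specialness condition into the algebraic identity needed for descent — and it is precisely here that specialness cannot be removed: for non-special TA-maps one obtains only a conjugacy of the inverse-limit solenoids, strictly coarser than a conjugacy on $\mathbb T^d$.

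Once $\Phi$ is defined, $\Phi \circ f = f_L \circ \Phi$ follows from the corresponding identity for $\widehat\Phi$. Because $\tilde\Phi - \mathrm{id}$ is bounded, $\Phi$ is homotopic to the identity, hence has degree one. Injectivity of $\Phi$ uses $c$-expansiveness and specialness together: if $\Phi(x) = \Phi(y)$ then $\Phi(f^n x) = \Phi(f^n y)$ for all $n$, which, since $\tilde\Phi - \mathrm{id}$ is bounded, forces the forward orbits of $x$ and $y$ to stay uniformly close; matching their backward orbits via specialness (the unstable manifolds agree on $\mathbb T^d$) produces equal elements of $\widehat{\mathbb T^d}_f$, so $x = y$. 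A continuous degree-one injection between closed manifolds is a homeomorphism, so $\Phi$ is the required topological conjugacy.
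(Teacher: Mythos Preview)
The paper does not supply its own proof of this theorem: it is quoted as a black box from Sumi \cite{S} (extending Aoki--Hiraide \cite{AH}) and then invoked in the proof of Theorem \ref{clasAnendos}. So there is no in-paper argument to compare against. Your overall Franks--Manning strategy is the standard one, and your explicit series for $\tilde\Phi$, together with the identification of the stable sum as the sole obstruction to $\mathbb Z^d$-equivariance, is correct.

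That said, there is a concrete error and a genuine gap. The error: the identity $\tilde f^{-k}(\tilde x + v) = \tilde f^{-k}\tilde x + A^{-k} v$ is false for general $v \in \mathbb Z^d$. Equivariance of the lift reads $\tilde f(y+w)=\tilde f(y)+Aw$ for $w\in\mathbb Z^d$; inverting gives $\tilde f^{-1}(z+v)=\tilde f^{-1}(z)+A^{-1}v$ only when $A^{-1}v\in\mathbb Z^d$, since otherwise $H(\tilde f^{-1}z+A^{-1}v)\ne H(\tilde f^{-1}z)$ and the equation fails. Your displayed discrepancy formula is therefore incorrect. A related subtlety: the map $\tilde x\mapsto\big(p(\tilde f^{-k}\tilde x)\big)_{k\ge 0}$ from $\mathbb R^d$ into $\widehat{\mathbb T^d}_f$ is not onto --- its image is a single dense path-component --- so pairs of lifts $(\tilde x,\tilde x+v)$ do not exhaust all pairs of prehistories over a common base point. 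Both issues are bypassed by defining the stable correction directly on the inverse limit via $g_s(\hat x)=-\sum_{k\ge1}A_s^{k-1}\pi_s H(x_{-k})$ (legitimate since $H$ descends to $\mathbb T^d$), and then checking that $\widehat\Phi(\hat x):=x_0+g_u(x_0)+g_s(\hat x)$ semiconjugates $\hat f$ to $f_L$.

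The gap is the step you yourself flag as central: you assert that specialness, combined with local product structure, forces the stable discrepancy to vanish modulo $\mathbb Z^d$, but you give no mechanism. This implication is precisely the nontrivial content of the Sumi/Aoki--Hiraide argument, and restating the desired conclusion is not a proof. The actual argument requires upgrading $\widehat\Phi$ to a conjugacy of inverse limits (using only the TA property) and then exploiting the well-definedness of the unstable foliation on $\mathbb T^d$ that specialness provides; the details are delicate. Your sketch locates the crux accurately but does not resolve it.
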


Given a $\mathcal C^\infty$ Anosov endomorphism $f:M \to M$ without critical points, one has the SRB (Sinai-Ruelle-Bowen) measure $\mu^+_f$ on $M$ which describes the asymptotic distribution of forward iterates of Lebesgue-a.e point $x\in M$ (see for eg \cite{Si}, \cite{Bo-carte}, \cite{Pe}, \cite{QZ},  \cite{Y}), and the inverse SRB measure $\mu^-_f$ introduced in \cite{M4} which describes the asymptotic distribution of the $n$-preimage sets of Lebesgue-a.e point $x\in M$. Recall that $\mu^+_f$ is the unique $f$-invariant probability measure absolutely continuous on the local unstable manifolds of $f$, while $\mu^-_f$ is the unique $f$-invariant probability measure absolutely continuous on the local stable manifolds of $f$. Moreover, the inverse SRB measure $\mu^-_f$ is the equilibrium measure of the stable potential $\log |\text{det}(Df|_{E^s(x)})|$ (see \cite{M4}).

In \cite{An} it was shown that if $f: \mathbb T^2 \to \mathbb T^2$ is a non-invertible Anosov endomorphism, then $f$ is special if and only if every periodic point admits the same Lyapunov exponent on the stable bundle, i.e 
$$ \lambda^s_f(p) = \lambda^s_{f_L}, \forall p \in Per(f),$$
where $\lambda^s_{f_L}$ is the Lyapunov exponent on the stable bundle for the linearization $f_L$ of $f$.   However, notice that the conjugacy above is only topological, not necessarily smooth ($\mathcal C^\infty$), and then the unstable Lyapunov exponents of $f$ at periodic points may be different from the unstable Lyapunov exponent $\lambda^u_{f_L}$ of $f_L$. This is the problem of \textbf{rigidity} in dynamics, namely when can we obtain a stronger conjugacy  (smooth) from a weaker conjugacy (topological). This is a difficult problem in general, since  the topological conjugacy obtained in \cite{AH} and \cite{S} is at most H\"older continuous (see \cite{KH}), but  it may be nowhere differentiable. The rigidity problem was studied in many cases for Anosov diffeomorphisms and Anosov endomorphisms, for eg by \cite{RL}, \cite{An}, \cite{Mi}. 
If $f, g:M \to M$ are Anosov endomorphisms on a manifold $M$ and if $\Phi$ is a smooth conjugacy with $\Phi\circ f = g \circ \Phi$, then $Df(x) = (D\Phi^{-1}\circ Dg \circ D\Phi) (x), x \in M$ as matrices. In this case the upper/lower Lyapunov exponents of $f$ and $g$ coincide at corresponding points.  %In particular if $M$ is a surface and if $p$ is a periodic point for $f$, then $\Phi(p)$ is a periodic point for $g$ and their respective stable/unstable Lyapunov exponents coincide,  $\lambda^s_f(p) = \lambda^s_g(\Phi(p))$, $\lambda^u_f(p) = \lambda^u_g(\Phi(p))$.

We now prove the \textbf{entropy rigidity} from Theorem \ref{clasAnendos}, namely that special Anosov endomorphisms on $\mathbb T^2$ can be classified up to smooth conjugacy by using the inverse entropy of their inverse SRB measure and the entropy of their (forward) SRB measure. 
If $f$ is a special Anosov endomorphism on $\mathbb T^2$, then unstable spaces depend only on their base points, and denote  $$Df_u(x):= Df|_{E^u_f(x)}, Df_s(x):= Df|_{E^s_f(x)}, \  x \in \mathbb T^2.$$ 
 %First we prove a more general result about smooth conjugacy between  two arbitrary special Anosov  endomorphisms $f, g$. Then we show that a special Anosov endomorphism $f$ is smoothly conjugated to its linearization $f_L$, if and only if the entropy of the SRB measure $\mu_f^+$ is equal to the positive Lyapunov exponent of $f_L$ and the inverse entropy of the inverse SRB measure $\mu_f^-$ is equal to the absolute value of the negative Lyapunov exponent of $f_L$.

\

\textbf{Proof of Theorem \ref{clasAnendos}.} \ 
a) Since $f, g$ are Anosov endomorphisms without critical points, it follows that $f, g$ are TA-covering maps on $\mathbb T^2$. Since $f, g$ are special, it follows from the above Linearization Theorem of \cite{S} that $f, g$ are topologically conjugate to their respective linearization; but as $f_L = g_L$, there exists a topological conjugacy $\Phi: \mathbb T^2 \to \mathbb T^2$ between $f$ and $g$, i.e. $\Phi \circ f = g \circ \Phi$. 

Let $\mu^+_f$ be the SRB measure of $f$ and $\mu_f^-$ be the inverse SRB measure of $f$. Denote $\nu:= \Phi_*\mu_f^+$. Since $\Phi$ is a topological conjugacy, $h_g(\nu) = h_f(\mu_f^+)$. 
As $f$ is a special endomorphism, the unstable space $E^u_f(x)$ depends only on the base point for any  $x \in \mathbb T^2$. %Recall that $\mu_f^+$ is the unique equilibrium measure of the potential  $\phi_u(x) := -\log |Df_u(\cdot)|, x\in \mathbb T^2$ (see \cite{QZ}, \cite{Y}). 
Also from Pesin formula, $h_f(\mu_f^+) = \chi_u(\mu_f^+) = \int \log |Df_u| d\mu_f^+$.  
Thus from our assumption and since $\nu = \Phi_* \mu_f^+$, we obtain $$h_g(\nu) = h_f(\mu_f^+) = \int \log|Dg_u|\circ \Phi d\mu_f^+ = \int \log |Dg_u| d\nu.$$
But the SRB measure $\mu_g^+$ is the only $g$-invariant probability measure whose entropy is equal to its unstable Lyapunov exponent (\cite{QZ}, \cite{Y}). Therefore, 
\begin{equation}\label{SRBmnug}
\mu_g^+ = \nu = \Phi_* \mu_f^+.
\end{equation}

On the other hand, denote by $\rho:= \Phi_*\mu_f^-$ which is a $g$-invariant ergodic measure on $\mathbb T^2$. Then since $\Phi$ is a topological conjugacy, $h_g^-(\rho) = h_f^-(\mu_f^-)$. Now denote by $D(x)$ the cardinality of the set $f^{-1}(x)$ for $x \in \mathbb T^2$; since $f$ does not have critical points, it follows that $D(\cdot)$ is constant on $\mathbb T^2$ and denote this constant by $D$. Since $f$ and $g$ are topologically conjugate, then $D$ is the cardinality of the set $g^{-1}(x), \forall x\in \mathbb T^2$. It was proved in \cite{M4} that $\mu_f^- = \mathop{\lim}\limits_{n \to \infty} \frac{1}{D^n}  \mathop{\sum}\limits_{z\in f^{-n}(y)} \frac 1n \mathop{\sum}\limits_{i=0}^{n-1} \delta_{f^iz}$, for any $y$ from a set $A\subset \mathbb T^2$ of full Haar measure. This implies that $\rho = \mathop{\lim}\limits_{n \to \infty} \frac{1}{D^n}  \mathop{\sum}\limits_{z'\in g^{-n}(y')} \frac 1n \mathop{\sum}\limits_{i=0}^{n-1} \delta_{g^iz'}$, for $y' \in \Phi(A)$, where $\rho(\Phi(A)) = 1$. Since $g$ has no critical points, it follows that for any set $B\subset \mathbb T^2 $ of sufficiently small diameter, $g$ is injective on $B$. Thus from the above convergence of measures to $\rho$, we obtain that, if $B$ has small diameter and its boundary has $\rho$-measure zero, then $\rho(g(B)) = D \rho(B)$. Hence $J_g(\rho) = D$, thus $F_g(\rho) = \log D$. 
Hence from our assumption in the statement of Theorem \ref{clasAnendos}, and using Theorem \ref{t1} and the above formula for $F_g(\rho)$,  we infer  that $$h_f^-(\mu_f^-) = h_g^-(\rho) = h_g(\rho) - \log D = - \int \log|Dg_s|\circ \Phi \ d\mu_f^-,$$
and therefore since $\rho = \Phi_*\mu_f^-$, we obtain
$$h_g(\rho) = \log D - \int \log |Dg_s| \ d\rho.$$
But then from the uniqueness property of the inverse SRB measures (Theorem 3 of \cite{M4}), we obtain that $\rho = \Phi_* \mu_f^- = \mu_g^-$.  Now if $f$ is Anosov special on $\mathbb T^2$ and not expanding, then $f$ is strongly special (Remark 5.4.1 of \cite{AH}). Hence since $\Phi_*\mu_f^- = \mu_g^-$ and $\Phi_*\mu_f^+ = \mu_g^+$ (from (\ref{SRBmnug})), we apply Theorem A  of \cite{Mi}, to conclude that $\Phi$ is in fact a smooth conjugacy between $f$ and $g$. 

b) Now let $f$ be a special $\mathcal C^\infty$  Anosov endomorphism without critical points on $\mathbb T^2$ and $g = f_L$ be its linearization. Then $Dg_s = \lambda_s, Dg_u = \lambda_u$, where $\lambda_s, \lambda_u$ are the stable/unstable eigenvalues of the matrix of $f_L$. Assuming that $h_f(\mu_f^+) = \log |\lambda_u|$ and $h_f^-(\mu_f^-) = -\log |\lambda_s|$, we obtain that the above conditions of a) are satisfied. Thus from a) it follows that $f$ and $f_L$ are smoothly conjugated. 

$\hfill\square$

\section{Links with inverse topological pressure}\label{linkstop}

In \cite{MU2} there was introduced and studied a notion of  inverse topological entropy (and inverse topological pressure) which is defined using coverings with inverse Bowen balls of type $B^-_n(\hat{x},\varepsilon)$, $x\in\widehat{X}$. In the current Section  we define a generalization of this inverse topological entropy by using covers with inverse Bowen balls along subsets of prehistories in $\widehat{X}$. We prove Theorem \ref{newtop} and Theorem \ref{t2} which relate the inverse entropy of an ergodic measure to the generalized inverse topological entropy. We prove a Partial Variational Principle for this generalized inverse topological entropy in Theorem \ref{varpr}; and a more precise result for special endomorphisms in Corollary \ref{specialcor}. Moreover, in Theorem \ref{VPtor} we establish a Full Variational Principle for inverse entropy for special TA-covering maps of tori (in particular for special Anosov endomorphisms on tori). %As an application we establish in Corollary \ref{specialcor} a Variational Principle for inverse topological entropy for special hyperbolic endomorphisms of Riemannian manifolds. For linear hyperbolic toral endomorphisms we prove  the full Variational Principle for inverse entropy. 

Let again $X$ be a compact metric space, $f:X\rightarrow X$ a continuous map on $X$, $\widehat{X}$ the inverse limit of $(X,f)$, and $\widehat{\mathcal{A}}\subset \widehat{X}$ an arbitrary set of prehistories. For $\varepsilon>0$ define
$$\mathcal{B}^-(\widehat{\mathcal{A}},\varepsilon)=\{B_m^-(\hat{x},\varepsilon) :  m\geq 1, \hat{x}\in\widehat{\mathcal{A}} \}.$$
For a set $B^-=B_m^-(\hat{x},\varepsilon)$ denote $m$ by $n(B)$. Let a subset $Y\subset X$. For $\lambda\geq 0, N\geq 1 $ and $\varepsilon>0$ define
$$m_N^-(\lambda, Y,\widehat{\mathcal{A}},\varepsilon)=\inf\left\{\sum_{B^-\in \mathcal{F}}
e^{-\lambda n(B^-)} :  \mathcal{F}\subset \mathcal{B}^-(\widehat{\mathcal{A}},\varepsilon), n(B^-)\geq N,  \forall B^-\in \mathcal{F}, Y\cap \pi(\widehat{A})\subset  \bigcup_{B^-\in \mathcal{F}} B^-\right\}.$$ 
When $N$ increases, the set of acceptable covers $\mathcal{F}$ becomes smaller and therefore the infimum increases in the above expression. Hence
the limit
$\lim_{N\rightarrow\infty}m_N^-(\lambda, Y,\widehat{\mathcal{A}},\varepsilon)$
exists and will be denoted by $m^-(\lambda, Y,\widehat{\mathcal{A}},\varepsilon)$. Now, let 
$$h^-( Y,\widehat{\mathcal{A}},\varepsilon): =\inf\{\lambda : m^-(\lambda, Y,\widehat{\mathcal{A}},\varepsilon)=0  \}.$$
If $\varepsilon$ decreases to zero, $h^-( Y,\widehat{\mathcal{A}},\varepsilon)$ increases, so the limit $\lim_{\varepsilon\rightarrow 0} h^-( Y,\widehat{\mathcal{A}},\varepsilon)$ exists and is denoted by $h^-(Y,\widehat{\mathcal{A}})$. If $\widehat{Y}\subset\widehat{X}$ and $Y=\pi(\widehat{Y})$ then we denote $h^-(Y, \widehat{Y})$ simply by  $h^-(\widehat{Y})$; or if we want to emphasize the transformation $f$ we write $h^-_f(\widehat{Y})$. 

\begin{proposition}\label{properties}
Let $f: X\rightarrow X$ be a continuous  map on the compact metric space $X$, $\widehat{X}$ be the inverse limit of $(X,f)$. Then the following properties hold: 
\begin{itemize}
	\item [(i)] If $Y\subset X$ and $\widehat{\mathcal{B}}\subset \widehat{\mathcal{A}}\subset \widehat{X}$,
	then $h^-(Y,\widehat{\mathcal{A}})\leq h^-(Y,\widehat{\mathcal{B}})$.
 \item[(ii)] If $Y_1\subset Y_2	\subset X$ and $\widehat{\mathcal{A}}\subset \widehat{X}$, then $h^-(Y_1,\widehat{\mathcal{A}})\le h^-(Y_2,\widehat{\mathcal{A}})$.
 \item[(iii)] If $Y = \bigcup_{p\geq 1} Y_p$ is a countable union of subsets of $X$ and $\widehat{\mathcal{A}}\subset \widehat{X}$, then $h^-(Y,\widehat{\mathcal{A}})= \sup\limits_{p} h^-(Y_p,\widehat{\mathcal{A}})$.
\end{itemize}
\end{proposition}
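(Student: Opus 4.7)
\textbf{Plan for proving Proposition \ref{properties}.} All three statements fit the Carathéodory/dimension–theoretic pattern: $h^{-}(Y,\widehat{\mathcal{A}})$ is the critical exponent of the set function $\lambda\mapsto m^{-}(\lambda,Y,\widehat{\mathcal{A}},\varepsilon)$, so the plan is to establish the relevant monotonicity/countable subadditivity at the level of $m_{N}^{-}$ and then pass to the limits $N\to\infty$ and $\varepsilon\to 0$. The basic reduction I will use throughout is the following: $\lambda>h^{-}(Y,\widehat{\mathcal{A}})$ iff for every $\varepsilon,\eta>0$ and every $N\ge 1$ there exists a countable family $\mathcal{F}\subset\mathcal{B}^{-}(\widehat{\mathcal{A}},\varepsilon)$, with $n(B^{-})\ge N$ for all $B^{-}\in\mathcal{F}$, covering $Y\cap\pi(\widehat{\mathcal{A}})$, and with $\sum_{B^{-}\in\mathcal{F}}e^{-\lambda n(B^{-})}<\eta$.

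The easiest part is (ii). If $Y_{1}\subset Y_{2}$ then $Y_{1}\cap\pi(\widehat{\mathcal{A}})\subset Y_{2}\cap\pi(\widehat{\mathcal{A}})$, so every family admissible in the infimum defining $m_{N}^{-}(\lambda,Y_{2},\widehat{\mathcal{A}},\varepsilon)$ is also admissible for $m_{N}^{-}(\lambda,Y_{1},\widehat{\mathcal{A}},\varepsilon)$. The inequality $m_{N}^{-}(\lambda,Y_{1},\widehat{\mathcal{A}},\varepsilon)\le m_{N}^{-}(\lambda,Y_{2},\widehat{\mathcal{A}},\varepsilon)$ transfers to the critical exponents and gives (ii).

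For (iii), the direction $\sup_{p}h^{-}(Y_{p},\widehat{\mathcal{A}})\le h^{-}(Y,\widehat{\mathcal{A}})$ is an immediate consequence of (ii). For the reverse, I fix any $\lambda>\sup_{p}h^{-}(Y_{p},\widehat{\mathcal{A}})$ and any $\varepsilon,N,\eta>0$; by the reduction above applied to each $Y_{p}$ I choose a family $\mathcal{F}_{p}\subset\mathcal{B}^{-}(\widehat{\mathcal{A}},\varepsilon)$ of order $\ge N$ covering $Y_{p}\cap\pi(\widehat{\mathcal{A}})$ with weight less than $\eta\cdot 2^{-p}$. The countable union $\mathcal{F}:=\bigcup_{p\ge 1}\mathcal{F}_{p}$ covers $Y\cap\pi(\widehat{\mathcal{A}})=\bigcup_{p}(Y_{p}\cap\pi(\widehat{\mathcal{A}}))$ and has total weight strictly less than $\eta$, so $m_{N}^{-}(\lambda,Y,\widehat{\mathcal{A}},\varepsilon)=0$. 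Since $N$ and $\varepsilon$ were arbitrary, $h^{-}(Y,\widehat{\mathcal{A}})\le\lambda$, and letting $\lambda$ decrease to the supremum closes the argument.

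Property (i) is the delicate one and I regard it as the main obstacle. Enlarging $\widehat{\mathcal{A}}$ simultaneously enlarges the family of admissible covering balls (which tends to push $h^{-}$ down) and the set $Y\cap\pi(\widehat{\mathcal{A}})$ that has to be covered (which tends to push $h^{-}$ up); the claim is that the first effect always dominates. For $\lambda>h^{-}(Y,\widehat{\mathcal{B}})$ and given $\varepsilon,N,\eta$, I would start from a family $\mathcal{F}_{\widehat{\mathcal{B}}}\subset\mathcal{B}^{-}(\widehat{\mathcal{B}},\varepsilon)\subset\mathcal{B}^{-}(\widehat{\mathcal{A}},\varepsilon)$ covering $Y\cap\pi(\widehat{\mathcal{B}})$ with weight less than $\eta/2$, and then augment it so as to cover the residual set $R:=Y\cap(\pi(\widehat{\mathcal{A}})\setminus\pi(\widehat{\mathcal{B}}))$. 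For each $y\in R$ I pick a prehistory $\hat{y}\in\widehat{\mathcal{A}}$, so that $y\in B_{N}^{-}(\hat{y},\varepsilon)$. Using $B_{N}^{-}(\hat{y},\varepsilon)\subset B(y,\varepsilon)$ and the separability of $X$, a Lindel\"of--type selection produces a countable subfamily of this augmentation that still covers $R$; choosing $N$ large enough so that the added weight $\sum e^{-\lambda n(B^{-})}$ (bounded by a separability constant times $e^{-\lambda N}$) falls below $\eta/2$ yields a combined cover of $Y\cap\pi(\widehat{\mathcal{A}})$ of weight less than $\eta$, hence $h^{-}(Y,\widehat{\mathcal{A}})\le\lambda$. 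The hard point here is precisely this countable selection: the inverse Bowen balls $B_{N}^{-}(\hat{y},\varepsilon)$ need not be open, so the Lindel\"of reduction must be carried out carefully, passing through the open ambient $\varepsilon$--balls while ensuring that the chosen prehistories still yield a covering of $R$ by the corresponding inverse Bowen balls.
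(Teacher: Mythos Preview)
Your arguments for (ii) and (iii) are correct and coincide with the paper's approach: the paper calls (i) and (ii) ``straightforward'' and writes out only (iii), using exactly the $2^{-p}$ weight splitting that you describe (they phrase it with an auxiliary exponent $\lambda-\alpha$, but this is cosmetic).

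For (i), you are right that the two effects of enlarging $\widehat{\mathcal{A}}$ compete, and that this is not as automatic as the paper's one-line dismissal suggests. However, the remedy you sketch does not close the gap. Two concrete problems:
\begin{itemize}
\item The inclusion $B_N^-(\hat y,\varepsilon)\subset B(y,\varepsilon)$ goes the wrong way for a Lindel\"of selection: knowing that countably many of the ambient open balls $B(y,\varepsilon)$ cover $R$ does \emph{not} imply that the corresponding inverse Bowen balls cover $R$, since each $B_N^-(\hat y,\varepsilon)$ is typically a proper (and non-open) subset of $B(y,\varepsilon)$.
\item Even granting a countable subfamily $\{B_N^-(\hat y_j,\varepsilon)\}_{j\ge 1}$ covering $R$ at fixed level $N$, the weight $\sum_j e^{-\lambda N}$ is infinite, not ``a separability constant times $e^{-\lambda N}$''. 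Increasing $n(B^-)$ afterwards to shrink the weight shrinks the balls as well, so you would lose the covering property.
\end{itemize}
What actually rescues (i) in the paper is a hidden simplification: in every place where the authors invoke (i) (the proofs of Theorems \ref{T33} and \ref{t2}) one has $Y=\pi(\widehat{\mathcal{B}})\subset\pi(\widehat{\mathcal{A}})$, so $Y\cap\pi(\widehat{\mathcal{A}})=Y\cap\pi(\widehat{\mathcal{B}})=Y$ and your residual set $R$ is empty. In that regime the inequality is genuinely trivial: any admissible cover for $m_N^-(\lambda,Y,\widehat{\mathcal{B}},\varepsilon)$ is already admissible for $m_N^-(\lambda,Y,\widehat{\mathcal{A}},\varepsilon)$ because $\mathcal{B}^-(\widehat{\mathcal{B}},\varepsilon)\subset\mathcal{B}^-(\widehat{\mathcal{A}},\varepsilon)$ and the set to be covered is the same. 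If you want a proof of (i) in the stated generality, you will need a different idea than Lindel\"of; alternatively, it is entirely honest to prove (i) under the additional hypothesis $Y\subset\pi(\widehat{\mathcal{B}})$, which is the only case used downstream.
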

\begin{proof}We prove only $(iii)$, since the others are straightforward. From $(ii)$, it follows that $h^-(Y,\widehat{\mathcal{A}}) \geq \sup\limits_p h^-(Y_p, \widehat{\mathcal{A}})$. Let $\lambda > \sup\limits_p h^-(Y_p, \widehat{\mathcal{A}})$ and $\varepsilon>0$. Let now $\alpha>0$ be such that $\lambda -\alpha> \sup\limits_p h^-(Y_p, \widehat{\mathcal{A}}, \varepsilon)$. Hence $m^-(\lambda-\alpha, Y_p, \widehat{\mathcal{A}}, \varepsilon)=0$. As $m^-_N(\lambda-\alpha, Y_p, \widehat{\mathcal{A}}, \varepsilon)$ grows with $N$, we have that 
$$m^-_N(\lambda-\alpha, Y_p, \widehat{\mathcal{A}}, \varepsilon)=0 \text{ for every }N>0.$$
If $N$ is fixed, for every $p$ there exists 
$\mathcal{F}_p\subset \mathcal{B}^-(\widehat{\mathcal{A}},\varepsilon)$ such that $n(B^-)\geq N,  \text{ for every } B^-\in \mathcal{F}_p$, $Y_p\subset \bigcup_{B^-\in \mathcal{F}_p} B^-$ and 
$$\sum_{B^-\in \mathcal{F}_p}
e^{-(\lambda - \alpha) n(B^-)} < \frac{1}{2^p}.$$
If $\mathcal{F}=\cup_p \mathcal{F}_p\subset \mathcal{B}^-(\widehat{\mathcal{A}},\varepsilon)$, then $n(B^-)\geq N, \text{ for every } B^-\in \mathcal{F}$. Also $Y= \bigcup_p Y_p \subset  \bigcup_{B^-\in \mathcal{F}} B^-$ and 
$$\sum_{B^-\in \mathcal{F}}
e^{-(\lambda - \alpha) n(B^-)} < 1.$$ 
Hence $m^-_N(\lambda, Y, \widehat{\mathcal{A}}, \varepsilon)<e^{-\alpha N}$.
Thus $m^-(\lambda, Y, \widehat{\mathcal{A}}, \varepsilon)=0$, so $h^-(Y,\widehat{\mathcal{A}}, \varepsilon) \leq\lambda,  \forall \varepsilon>0$. This holds for every $\varepsilon>0$, thus $\lambda\geq h^-(Y,\widehat{\mathcal{A}})$. As $h^-(Y,\widehat{\mathcal{A}}) \leq\lambda$ for every $\lambda >\sup\limits_p h^-(Y_p, \widehat{\mathcal{A}})$, we conclude that   $h^-(Y,\widehat{\mathcal{A}}) = \sup\limits_p h^-(Y_p, \widehat{\mathcal{A}})$. 	
\end{proof}

%\begin{theorem}\label{newtop}
%	Let $Y\subset X$ be a Borel set such that $\mu(Y)>0$ and $h^-_{f,inf,B}(\mu,\hat{x})\geq \alpha>0$ for every $\hat{x}\in \widehat{Y}=\pi^{-1}(Y)$ then 
%	$$\lim_{\delta\rightarrow 0}\left( \sup \{h^-(Y, \widehat{\mathcal{A}}) : \widehat{\mathcal{A}}\subset \widehat{Y},   \hat{\mu}(\widehat{Y}\setminus \widehat{\mathcal{A}})<\delta\}\right)\geq \alpha.$$ 
%\end{theorem
	
\textbf{Proof of Theorem \ref{newtop}. }
	Recall that $\hat{\mu}$ is $\hat{f}$-invariant and ergodic on $\widehat{X}$. Let us assume that 
	$$\lim_{\delta\rightarrow 0}\left( \sup \{h^-(\widehat{\mathcal{A}}) : \widehat{\mathcal{A}}\subset \widehat{Y},   \hat{\mu}(\widehat{Y}\setminus \widehat{\mathcal{A}})<\delta\}\right)<\alpha.$$	
	%$$\lim_{\delta\rightarrow 0}\left( \sup \{h^-(Y, \widehat{\mathcal{A}}) : \widehat{\mathcal{A}}\subset \widehat{X},   \hat{\mu}(\widehat{\mathcal{A}})>\mu(Y)-\delta\right)< \alpha.$$
	Let us choose $\beta' <\alpha$ and $\delta'\in (0, \hat{\mu}(\widehat{Y}))$ such that for every $0<\delta<\delta'$
	\begin{equation}\label{eqq1}
	\sup \{h^-(\widehat{\mathcal{A}}) : \widehat{\mathcal{A}}\subset \widehat{Y},  \hat{\mu}(\widehat{Y}\setminus \widehat{\mathcal{A}})<\delta\} < \beta'.
	\end{equation}
	 Now consider $\beta\in (\beta',\alpha)$ and $\delta\in (0, \delta')$. Define
	$$\widehat{Y}_{k,p}=\left\{\hat{x}\in \widehat{Y} :  \frac{-\log \mu (B^-_n(\hat{x},\varepsilon))}{n}>\beta, \text{ for all }{n\geq p}    \text{ and  for all }\varepsilon \in (0, 1/k]\right\}.$$
	Since $h^-_{f, inf, B}(\mu,\hat{x})\geq \alpha>\beta$ for every $\hat{x}\in \widehat{Y}$, it follows that  $\widehat{Y}=\bigcup\limits_{k,p\geq 1}\widehat{Y}_{k,p}$. Hence for every $\hat{x}\in \widehat{Y}_{k,p}$,
	\begin{equation}\label{eqq2}
	\mu (B^-_n(\hat{x},\varepsilon))<e^{-n\beta}  \text{ for all }{n\geq p}    \text{ and  for all }\varepsilon \in (0, 1/k].
		\end{equation}
	Let $Y_{k,p}=\pi(\widehat{Y}_{k,p})$. Since $\widehat Y$ is defined as an increasing union, we can choose $k$ and $p$ large enough such that $ 
	\hat{\mu}(\widehat{Y}_{k,p})> \hat\mu(\widehat{Y})-\delta$ and $$0<\frac{1}{2}\hat{\mu}(\widehat{Y})<\hat{\mu}(\widehat{Y}_{k,p})\leq\mu (Y_{k,p}).$$
	Then for $\varepsilon>0$ we have from (\ref{eqq1}) that 
	$h^-(Y_{k,p}, \widehat{Y}_{k,p}, \varepsilon)\leq h^-(\widehat{Y}_{k,p})<\beta'. $
	Let $\varepsilon<1/k$ and $N\geq p$. Then there exists a cover $\mathcal{F}_N$ of $Y_{k,p}$ with inverse Bowen balls $B^-$ with $n(B^-)\geq N$ such that 
	$\sum_{B^-\in \mathcal{F}_N} e^{-\beta' n(B^-)}<1$. 
	Then using (\ref{eqq2}) and since $\beta'<\beta$, we obtain
	\begin{align*}
	0<\frac{1}{2}\hat\mu(\widehat{Y})<\mu(Y_{k,p})<&\sum_{B^-\in \mathcal{F}_N}\mu(B^-)< \sum_{B^-\in \mathcal{F}_N}e^{-\beta n(B^-)}=\sum_{B^-\in \mathcal{F}_N} e^{-\beta' n(B^-)+ (\beta'-\beta)n(B^-)}\\
	&\leq \sum_{B^-\in \mathcal{F}_N} e^{-\beta' n(B^-)- (\beta-\beta')N}\leq e^{-(\beta-\beta')N}.
	\end{align*} 
	But this is a contradiction since  $e^{-(\beta-\beta')N}\rightarrow 0$ when $N\rightarrow\infty$. 
	
	$\hfill\square$
%In the remaining part of this section $f: M\rightarrow M$ be $\mathcal{C}^2$ is differentiable endomorphism on a manifold $M$ and  $\mu$ is an ergodic $f$-invariant probability measure which is hyperbolic and prehistory-independent. 

We will now prove a covering lemma in the context of an ergodic hyperbolic measure which is special (see Definition \ref{preind}). If $B=B(x,r)$ is a ball in a metric space we denote by $5B$ the ball $B(x,5r)$. It is well known (Besicovitch Covering Theorem) that every family $\mathcal{F}$ of balls of uniformly bounded radius in a compact metric space contains a disjointed subcollection $\mathcal{G}\subset \mathcal{F}$ such that $\bigcup\limits_{B\in \mathcal{F}}B\subset \bigcup\limits_{B\in \mathcal{G}} 5B$. 
If $f:M\rightarrow M$ is $C^2$ endomorphism we will prove a similar result for a family of inverse Bowen balls. 
Before proving the general situation, we give a proof in a more restrictive setting, where we assume that $f$ is conformal on the local stable manifolds of the (non-uniformly) hyperbolic measure. 
 
\begin{lemma}\label{l32}
Let $f: M\rightarrow M$ be a $\mathcal{C}^2$ smooth endomorphism defined on a compact Riemannian manifold and let $\mu$ be an $f$-invariant ergodic hyperbolic and special measure  on $M$ and assume that that $f$ is conformal on the local stable manifolds with respect to $\mu$. For $\varepsilon>0$ let $\widehat{R}_\varepsilon\subset \widehat{M}$ be a Pesin set and $\widehat{Y}\subset \widehat{R}_\varepsilon$ be an arbitrary Borel set. Let $\mathcal{F}$ be a family of inverse Bowen balls $B_n^{-}(\hat{x}, \varepsilon)$ with $\hat{x}\in\widehat{R}_\varepsilon$ such that $\hat{f}^{-n}(\hat{x})\in \widehat{R}_\varepsilon$ for some integers $n\geq 1$ and assume that $\mathcal{F}$ covers $Y=\pi(\widehat{Y})$. Then there exists a subfamily $\mathcal{G}\subset\mathcal{F}$ of mutually disjoint sets such that 
	$$Y\subset \bigcup_{B\in \mathcal{G} } 5B^-, \text{ where  }5B^-= B_n^{-}(\hat{x}, 5\varepsilon)\text{ if }B^-=B_n^{-}(\hat{x}, \varepsilon).$$ 
\end{lemma}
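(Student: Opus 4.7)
The plan is to execute a Vitali-type greedy selection on the family $\mathcal F$ of inverse Bowen balls, exploiting the Pesin-chart structure on $\widehat R_\varepsilon$ together with the specialness of $\mu$ and the conformality of $f$ on local stable manifolds. My first task is to record the local shape of each $B^-=B_n^-(\hat x,\varepsilon)=f^n(B_n(x_{-n},\varepsilon))$ in Pesin coordinates around $x$: since both $\hat x$ and $\hat f^{-n}(\hat x)$ lie in $\widehat R_\varepsilon$ and $\mu$ is hyperbolic and special, the local unstable manifold through $x$ depends only on the base point, and $B_n(x_{-n},\varepsilon)$ is contained in a Pesin rectangle of unstable extent $\approx\varepsilon\,\|(Df^n|_{E^u})^{-1}\|$ and stable extent $\approx\varepsilon$; iterating forward by $f^n$, the inverse Bowen ball $B^-$ becomes a plate around $W^u_\varepsilon(x)$ of unstable extent $\approx\varepsilon$ and stable thickness $r(B^-):=\varepsilon\,\|Df^n|_{E^s_{x_{-n}}}\|$, which by the conformality hypothesis on stables is a genuine (non-eccentric) radius in $E^s_x$. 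I would additionally shrink $\varepsilon$ below half the uniform lower bound of the $f$-preimage separation over $\pi(\widehat R_\varepsilon)$, which is positive because $Df$ is invertible on the hyperbolic splitting throughout the Pesin set.

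My second task is the key Vitali comparison: if $B_1^-=B_{n_1}^-(\hat x_1,\varepsilon)$ and $B_2^-=B_{n_2}^-(\hat x_2,\varepsilon)$ both lie in $\mathcal F$, intersect, and satisfy $r(B_2^-)\le r(B_1^-)$ (so $n_1\le n_2$ up to a Pesin sub-exponential correction), then $B_2^-\subset 5B_1^-=B_{n_1}^-(\hat x_1,5\varepsilon)$. Choose $z\in B_1^-\cap B_2^-$ with witnessing prehistories $\hat z^{(1)},\hat z^{(2)}$. Inductively on $i$, since $d(z^{(1)}_{-i},z^{(2)}_{-i})<2\varepsilon$ and both $z^{(1)}_{-i},z^{(2)}_{-i}$ are $f$-preimages of the common point $z^{(j)}_{-i+1}$, the preimage-separation choice of $\varepsilon$ forces $z^{(1)}_{-i}=z^{(2)}_{-i}$ for $0\le i\le n_1$; write $\hat z$ for this shared prehistory segment. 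Then $d(x_{1,-i},x_{2,-i})\le d(x_{1,-i},z_{-i})+d(z_{-i},x_{2,-i})<2\varepsilon$ on $[0,n_1]$, and for any $w\in B_2^-$ with witnessing prehistory $\hat w$, the same $\hat w$ witnesses $w\in 5B_1^-$ via $d(x_{1,-i},w_{-i})\le d(x_{1,-i},x_{2,-i})+d(x_{2,-i},w_{-i})<3\varepsilon<5\varepsilon$ for $0\le i\le n_1\le n_2$.

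My third task is to run the selection itself. I would order $\mathcal F$ by decreasing $r(B^-)\in(0,\varepsilon]$ and extract (via transfinite induction, or layer-by-layer on dyadic intervals $\{r\in(2^{-j-1}\varepsilon,2^{-j}\varepsilon]\}$ with Zorn's lemma inside each layer) a pairwise-disjoint subfamily $\mathcal G\subset\mathcal F$ that is maximal with this disjointness property. Then any $B^-\in\mathcal F\setminus\mathcal G$ must meet some $B'^-\in\mathcal G$ with $r(B'^-)\ge r(B^-)$ (or $\ge r(B^-)/2$ if one uses dyadic layers, which is absorbed into the $5$-constant by a mild strengthening of the comparison above), so the Vitali comparison yields $B^-\subset 5B'^-$ and therefore $Y\subset\bigcup_{B'^-\in\mathcal G}5B'^-$.

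The main obstacle is the Vitali comparison of the second task. Three features of the hypotheses conspire to produce the clean constant $5$: specialness, which forces the local unstable leaves through $x_1$ and $x_2$ to coincide near the overlap; conformality on stables, which rules out arbitrarily eccentric inverse Bowen balls whose $5$-dilation could still fail to swallow a nearby sibling; and the sub-preimage-separation choice of $\varepsilon$, which kills the ambiguity of which $f^{-1}$-branch a witnessing prehistory follows at each step. Without any one of these, one would have to track Pesin-chart distortion along the entire backward orbit and perform multi-branch bookkeeping for the prehistories, which inflates the constant and significantly complicates the selection.
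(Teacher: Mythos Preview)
Your second task—the Vitali comparison $B_2^-\subset 5B_1^-$—contains a genuine gap. The inductive claim that $d(z^{(1)}_{-i},z^{(2)}_{-i})<2\varepsilon$ is never justified. At $i=0$ you have $z^{(1)}_0=z^{(2)}_0=z$, fine; but for $i\ge 1$, knowing that $z^{(1)}_{-(i-1)}=z^{(2)}_{-(i-1)}$ only tells you that $z^{(1)}_{-i}$ and $z^{(2)}_{-i}$ are both $f$-preimages of a common point. There is no a priori reason these two preimages are close: each $z^{(j)}_{-i}$ is pinned near $x_{j,-i}$, and the prehistories $\hat x_1,\hat x_2$ may follow \emph{different} inverse branches from level $0$ onward, so $x_{1,-i}$ and $x_{2,-i}$ can be far apart even though $d(x_{1,0},x_{2,0})<2\varepsilon$. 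Your preimage-separation hypothesis then gives you nothing. Notice also that your argument for task~2 never actually invokes conformality or specialness, which should already be a warning sign: if the triangle inequality and preimage separation sufficed, the lemma would hold for arbitrary hyperbolic measures.

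The paper avoids this trap by never attempting a direct global inclusion $B_2^-\subset 5B_1^-$. Instead it defines the \emph{stable diameter} $d_s(B^-)=\mathrm{diam}(B^-\cap W^s_\varepsilon(x))$, runs the Zorn-type maximal selection on $d_s$, and then verifies the $5B^-$ covering \emph{one stable leaf at a time}: for each $z\in\pi\widehat R_\varepsilon$, conformality on stables makes every $B^-\cap W^s_\varepsilon(z)$ a genuine metric ball in $W^s_\varepsilon(z)$, so the classical $5r$-Vitali inclusion works there. Specialness is used only at the last step, to propagate the leaf-wise covering to a covering of all of $Y$ (since the unstable direction is already of full width $\approx\varepsilon$ and independent of prehistory). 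This leaf-wise reduction is precisely what circumvents the branch-ambiguity problem that breaks your argument.
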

\begin{proof}
	For any inverse Bowen ball $B_n^{-}(\hat{x}, \varepsilon)$ with $\hat{x}\in\widehat{R}_\varepsilon$ and $\hat{f}^{-n}(\hat{x})\in \widehat{R}_\varepsilon$ define  
	$$d_s(B_n^{-}(\hat{x},  \varepsilon))=\text{diam}\left(B_n^{-}(\hat{x},  \varepsilon)\cap W_\varepsilon^s(x)\right),$$ as being the stable diameter of $B_n^{-}(\hat{x},\varepsilon)$. Let $\Omega$ be the collection of subfamilies $\zeta$ of mutually disjoint sets from $\mathcal{F}$ with the following property: if $B_n^{-}(\hat{x}, \varepsilon)\in \mathcal{F}$ intersects a set from $\zeta$, then it intersects a set $B_m^{-}(\hat{y}, \varepsilon)$ from $\zeta$ with 
	$d_s(B_m^{-}(\hat{y},\varepsilon)>\frac{1}{2}d_s(B_n^{-}(\hat{x},\varepsilon))$. Clearly $\Omega$ is partially ordered by inclusion. Let $\mathcal{C}\subset \Omega$ be a totally ordered collection of subfamilies of $\mathcal{F}$. Then $\tilde{\zeta}:= \bigcup_{\zeta\in\mathcal{C}}\zeta$ belongs to $\Omega$. Thus, by Zorn's Lemma there exists a maximal subfamily $\mathcal{G}$ of $\tilde{\zeta}$. Since $d_s(B^-)\leq C\varepsilon$, $B^-\in\mathcal{F}$, for $C>0$ a fixed constant, notice that $\Omega\neq \emptyset$ since $\{B_n^-(\hat{x},\varepsilon )\}\in \Omega$ whenever $B_n^-(\hat{x},\varepsilon )$ has stable diameter larger than $\frac{1}{2}\sup\{ d_s(B^-), B^-\in \mathcal{F}\}<\infty$.

	Now we want to prove that every set from $\mathcal{F}$ intersects at least a set from $\mathcal{G}$. If this were not true, then we can find a set $B^*\in \mathcal{F}$ that does not intersect any set of $\mathcal{G}$ and which has 
	$$d_s(B^*)> \frac{1}{2}\sup \{d_s(B'), \ B'\in \mathcal{F}, \text{ such that }B'\cap B''=\emptyset, \text{ for all }B''\in \mathcal{G}  \}.$$
	Consider $\mathcal{G}'=\mathcal{G}\cup \{ B^*\}$. We want to show that $\mathcal{G}'\in \Omega$. Let an arbitrary element  $\widetilde{B}^-\in \mathcal{F}$. If $\widetilde{B}^-$ intersects some element of $\mathcal{G}$, then since $\mathcal{G}\in\Omega$, we know that there exists $B'\in \mathcal{G}$ such that $d_s(B')>\frac{1}{2} d_s(\widetilde{B}^-)$. Otherwise, $\widetilde{B}^-$ does not intersect any element of $\mathcal{G}$. %but $B^-$ intersects $B^*$. 
	But then, from the definition of $B^*$ we have $d_s(B^*)>\frac{1}{2}d_s(\widetilde{B}^-)$.	Hence  $\mathcal{G}'=\mathcal{G}\cup \{ B^*\}\in \Omega$, which contradicts the maximality of $\mathcal{G}$. 
	
	Let next $z\in \pi \widehat{R}_\varepsilon$. Since $f$ is conformal on the local stable manifold $W^s_\varepsilon(z)$, it follows that for any inverse Bowen ball $B^-\in\mathcal{G}$, $B^-\cap W^s_\varepsilon(z)$ is a (usual) ball in $W^s_\varepsilon(z)$. Thus from the definition of $\Omega$ and the maximality of $\mathcal{G}$, it follows that $\{5B^- \cap W^s_\varepsilon(z), \  B^-\in \mathcal{G} \}$ covers $W^s_\varepsilon(z) \cap Y$. But now we use the fact that there exist local stable and unstable manifolds of size $\varepsilon$ for any  $\hat y \in \widehat{R}_\varepsilon$ and the fact that the local unstable manifolds depend only on base points since $\mu$ is special. Therefore $\{5B^-, \  B^-\in \mathcal{G} \}$ covers $Y$.
	\end{proof}

\begin{theorem}\label{T33}
	Let $f: M\rightarrow M$ be a $\mathcal{C}^2$ smooth endomorphism defined on a compact Riemannian manifold and let $\mu$ be an $f$-invariant ergodic hyperbolic measure on $M$. Assume that $\mu$ is special and $f$ is conformal on the local stable manifolds with respect to $\mu$. Let $\widehat{Y}\subset \widehat{M}$ be a Borel set and $Y:=\pi(\widehat{Y})$. If $h^{-}_{f,sup,B}(\mu,\hat{x}) \leq \alpha$ for every $\hat{x}\in \widehat{Y}$, then there exists $\widehat Z\subset \widehat Y$ with $\mu(\widehat Y\setminus \widehat Z)=0$ such that $h^{-}(\widehat{Z})\leq \alpha$. 
\end{theorem}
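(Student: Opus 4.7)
The plan is to construct a full-measure Borel subset $\widehat Z\subset\widehat Y$ by intersecting $\widehat Y$ with recurrent Pesin sets at all scales, and then apply the Vitali-type covering Lemma~\ref{l32} together with the mass lower bound $\mu(B_n^-(\hat x,\varepsilon))\ge e^{-\alpha' n}$ (valid for large $n$ by the hypothesis $h^-_{f,sup,B}(\mu,\hat x)\le\alpha<\alpha'$) to estimate $h^-(\widehat Z)$ by a Frostman-type argument. First I would fix $\alpha'>\alpha$ and a sequence $\varepsilon_k\downarrow 0$. Since $\hat f^{-1}$ is a $\hat\mu$-preserving homeomorphism of $\widehat M$, Poincar\'e recurrence applied to $\widehat R_{\varepsilon_k}$ yields the full-measure subset
$$\widehat R_{\varepsilon_k}^{rec}:=\{\hat x\in\widehat R_{\varepsilon_k}:\hat f^{-n}(\hat x)\in\widehat R_{\varepsilon_k}\text{ for infinitely many }n\}$$
of $\widehat R_{\varepsilon_k}$; since $\widehat R_{\varepsilon_k}$ is nested increasing in $k$, so is $\widehat R_{\varepsilon_k}^{rec}$. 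Put $\widehat Z:=\widehat Y\cap\bigcup_k\widehat R_{\varepsilon_k}^{rec}$; then $\widehat Z$ is Borel and $\hat\mu(\widehat Y\setminus\widehat Z)=0$.

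Next I would decompose $\widehat Z$ countably according to a quantitative lower bound on inverse Bowen balls. For $k,p\ge 1$ set
$$\widehat Z_{k,p}:=\{\hat x\in\widehat Y\cap\widehat R_{\varepsilon_k}^{rec}:\mu(B_n^-(\hat x,\varepsilon_k))\ge e^{-\alpha' n}\text{ for all }n\ge p\}.$$
Since $\varepsilon\mapsto h^-_{f,sup,B}(\mu,\hat x,\varepsilon)$ is monotone with limit $h^-_{f,sup,B}(\mu,\hat x)\le\alpha<\alpha'$, every $\hat x\in\widehat Y\cap\widehat R_{\varepsilon_k}^{rec}$ lies in some $\widehat Z_{k,p}$, so $\widehat Z=\bigcup_{k,p}\widehat Z_{k,p}$. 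Fix $k,p$ and $N\ge p$. For each $\hat x\in\widehat Z_{k,p}$, recurrence lets us pick the smallest $n(\hat x)\ge N$ with $\hat f^{-n(\hat x)}(\hat x)\in\widehat R_{\varepsilon_k}$. The family $\mathcal F_N:=\{B_{n(\hat x)}^-(\hat x,\varepsilon_k):\hat x\in\widehat Z_{k,p}\}$ covers $Z_{k,p}:=\pi(\widehat Z_{k,p})$, and Lemma~\ref{l32} extracts a pairwise disjoint subfamily $\mathcal G_N\subset\mathcal F_N$ whose $5$-fold enlargements still cover $Z_{k,p}$. Disjointness combined with the mass bound $e^{-\alpha' n(B)}\le\mu(B)$ gives, for every $\lambda>\alpha'$,
$$\sum_{B\in\mathcal G_N}e^{-\lambda n(B)}\le e^{-(\lambda-\alpha')N}\sum_{B\in\mathcal G_N}\mu(B)\le e^{-(\lambda-\alpha')N},$$
which tends to $0$ as $N\to\infty$. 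Hence $m^-(\lambda,Z_{k,p},\widehat Z_{k,p},5\varepsilon_k)=0$, so $h^-(Z_{k,p},\widehat Z_{k,p},5\varepsilon_k)\le\alpha'$.

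To conclude, by Proposition~\ref{properties}(i) one has $h^-(Z_{k,p},\widehat Z,5\varepsilon_k)\le h^-(Z_{k,p},\widehat Z_{k,p},5\varepsilon_k)\le\alpha'$, and by monotonicity of $h^-(\cdot,\cdot,\delta)$ in the scale $\delta$ the same bound persists for every $\delta\ge 5\varepsilon_k$. Fix $\delta>0$, pick $K_0$ with $5\varepsilon_{K_0}\le\delta$, and observe that $\pi(\widehat Z)=\bigcup_{k\ge K_0,\,p}Z_{k,p}$ (this tail of an increasing union still exhausts $\pi(\widehat Z)$). The fixed-scale version of Proposition~\ref{properties}(iii), whose proof goes through verbatim at $\varepsilon=\delta$, gives
$$h^-(\pi(\widehat Z),\widehat Z,\delta)=\sup_{k\ge K_0,\,p}h^-(Z_{k,p},\widehat Z,\delta)\le\alpha'.$$
Sending $\delta\to 0$ and then $\alpha'\to\alpha^+$ produces the required bound $h^-(\widehat Z)\le\alpha$.

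The main obstacle is that Lemma~\ref{l32} provides covers only at scale $5\varepsilon_k$ tied to the particular Pesin block $\widehat R_{\varepsilon_k}$, so no single block suffices; one is forced to exhaust $\widehat Y$ by the increasing sequence $\widehat R_{\varepsilon_k}^{rec}$ and then reconcile the derived bound $h^-(\cdot,5\varepsilon_k)\le\alpha'$ with the requirement of estimating $h^-(\widehat Z,\delta)$ at arbitrarily small $\delta$. The countable decomposition $\widehat Z=\bigcup_{k,p}\widehat Z_{k,p}$, coupled with monotonicity in the scale and the fixed-scale union property of Proposition~\ref{properties}(iii), is precisely the mechanism that reconciles these two scales.
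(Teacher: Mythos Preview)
Your proposal is correct and follows essentially the same approach as the paper: both construct $\widehat Z$ by intersecting $\widehat Y$ with recurrent Pesin sets at all scales, decompose $\widehat Z$ countably according to a quantitative tail bound on $\mu(B_n^-(\hat x,\varepsilon))$, apply Lemma~\ref{l32} to extract disjoint subfamilies whose $5$-enlargements cover the pieces, use the Frostman mass argument to bound $h^-$ at scale $5\varepsilon_k$, and then reconcile scales via the fixed-$\varepsilon$ version of Proposition~\ref{properties}(iii). Your organization is slightly cleaner in explicitly noting that the countable-union property is invoked at a fixed scale, but the substance is the same.
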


\begin{proof}
	Let $\beta>\alpha$ and $\hat{x}\in \widehat{Y}$.
	Since the expression $\mathop{\limsup}\limits_{n\rightarrow \infty} \frac{-\log\mu (B_n^{-}(\hat{x},\varepsilon))}{n}$ increases to $h^-_{f,sup, B}(\mu,\hat{x})$ when $\varepsilon$ decreases to $0$, and since by assumption $h^-_{f,sup, B}(\mu,\hat{x})\leq \alpha$, we have that
	\begin{equation}\label{sdd}
	\limsup_{n\rightarrow \infty} \frac{-\log (B_n^{-}(\hat{x},\varepsilon))}{n}\leq \alpha<\beta, \text{ for every }\varepsilon>0.
	\end{equation}
	For $\varepsilon>0$ we consider a Pesin set $\widehat{R}_\varepsilon\subset\widehat{M}$. Let $\widehat{Q}_\varepsilon$ be the set of all $\hat{x}\in \widehat{R}_\varepsilon$ with the property that $\hat{f}^{-n}(\hat{x})\in \widehat{R}_\varepsilon$ for infinitely many $n\geq 1$. By Poincar\'e Recurrence Theorem we have that $\hat{\mu}(\widehat{R}_\varepsilon\setminus \widehat{Q}_\varepsilon)=0$. For $\varepsilon >0$ and $p\geq 1$ define the Borel set
	\begin{equation}\label{Ydelta}
	\widehat{Y}_{p}(\varepsilon)=\left\{\hat{x}\in \widehat{Y}: \ \frac{-\log \mu(B_n^{-}(\hat{x},\varepsilon))}{n}< \beta \text{ for all }n\geq p \right\}
	\end{equation}  
	and let 
	$$\widehat{Z}_{p}(\varepsilon)=\widehat{Y}_{p}(\varepsilon)\cap \widehat{Q}_\varepsilon.$$	
Clearly $Z_{p}(\varepsilon)=\pi(\widehat{Z}_{p}(\varepsilon))$ is a Borel set in $M$. Let now $N\geq p$. Notice that for every $\hat{x}\in\widehat{Z}_{p}(\varepsilon) \subset \widehat{Q}_\varepsilon$, $\hat{f}^{-n}(\hat{x})\in \widehat{R}_\varepsilon$ for infinitely many $n\geq 1$. Let $\mathcal{F}$ be the set of all inverse Bowen balls of the form $B_n^-(\hat{x},\varepsilon)$, where $\hat{x}\in \widehat{Z}_{p}(\varepsilon)$ and $\hat{f}^{-n}(\hat{x})\in \widehat{R}_\varepsilon$. Clearly $\mathcal{F}$ covers $Z_{p}(\varepsilon)$. By Lemma \ref{l32} we obtain then a subcollection $\mathcal{G}\subset \mathcal{F}$ of mutually disjoint sets of the form $B_n^-(\hat{x},\varepsilon)$ with $n\geq N$ and $\hat{x}\in \widehat{Z}_{p}(\varepsilon)$ such that $\{5B^- : B^-\in\mathcal{G}\}$ covers $Z_{p}(\varepsilon)$, where  $5B^-= B_n^{-}(\hat{x}, 5\varepsilon)$ if $B^-=B_n^{-}(\hat{x},\varepsilon)$. For a set $B^-$ of the form $B_n^{-}(\hat{x},\varepsilon)$ we denote $n$ by $n(B^-)$. 
	Then, from (\ref{Ydelta}) and since $\mathcal{G}$ is a disjoint family, we obtain
	\begin{equation}\label{betan}
	\sum_{B^-\in \mathcal{G}} e^{-\beta n(B^-)}\leq \sum_{B^-\in \mathcal{G}}\mu(B^-)\leq 1.
	\end{equation}
	Let now an arbitrary $\beta'$ with $\beta'>\beta$. Then from (\ref{betan}),
	\begin{align*}
	\sum_{B^-\in \mathcal{G}} e^{-\beta' n(B^-)} & = \sum_{B^-\in \mathcal{G}} e^{-\beta n(B^-)} e^{-(\beta'-\beta) n(B^-)} \leq \sum_{B^-\in \mathcal{G}} e^{-\beta n(B^-)} e^{-(\beta'-\beta) N}\leq  e^{-(\beta'-\beta) N} \underset{N\rightarrow\infty}\longrightarrow 0.
	\end{align*}
	Thus $h^-(Z_{p}(\varepsilon), \widehat{Z}_p(\varepsilon), 5\varepsilon)\leq \beta'$ for every $p\geq 1$ and every $\beta'>\beta>\alpha$.  Notice that $\widehat{Y}_p(\varepsilon) \subset \widehat{Y}_{p+1}(\varepsilon)$ and by (\ref{sdd}) we have $\widehat{Y}=\bigcup_{p=1}^\infty \widehat{Y}_{p}(\varepsilon)$. Let $\widehat{Z}= \bigcup_{\varepsilon>0}(\widehat{Y}\cap \widehat{Q}_\varepsilon)=\bigcup_{\varepsilon>0}\bigcup_{p\geq 1}\widehat{Z}_p(\varepsilon)$. Then, since $\widehat{Z}_p(\varepsilon) \subset \widehat{Z}$, by Proposition \ref{properties}, we have
	\begin{equation}\label{vhpe}
		h^-(Z_p(\varepsilon), \widehat{Z}, 5\varepsilon)\leq h^-(Z_{p}(\varepsilon), \widehat{Z}_p(\varepsilon), 5\varepsilon)\leq \beta',
	\end{equation}
	 for every $p\geq 1$ and every $\varepsilon>0$. Notice that $\widehat{Y}_p(\varepsilon) \subset \widehat{Y}_{p+1}(\varepsilon)$ and by (\ref{sdd}) we have $\widehat{Y}=\bigcup_{p=1}^\infty \widehat{Y}_{p}(\varepsilon)$. 	 
	  Since $\widehat{R}_\varepsilon\subset \widehat{R}_{\varepsilon'}$ if $\varepsilon'<\varepsilon$ it follows that $\widehat{Q}_\varepsilon\subset \widehat{Q}_{\varepsilon'}$ if $\varepsilon'<\varepsilon$. As $\widehat{Y}=\bigcup_{p=1}^\infty\widehat{Y}_p(\varepsilon)$ for every $\varepsilon>0$, it follows that for every $n\geq 1$, $$\widehat{Z}=\bigcup_{q=n}^\infty(\widehat{Y}\cap  \widehat{Q}_\frac{1}{q})=\bigcup_{q=n}^\infty(\bigcup_{p=1}^\infty\widehat{Y}_p(\frac{1}{q})\cap  \widehat{Q}_\frac{1}{q})=\bigcup_{q=n}^\infty \bigcup_{p=1}^\infty \widehat{Z}_p(\frac{1}{q}).$$
	  Now as $\bigcup_\varepsilon \widehat{R}_\varepsilon= \widehat{M}$ up to a set of $\hat{\mu}$-measure zero and $\hat{\mu}(\widehat{R}_\varepsilon\setminus \widehat{Q}_\varepsilon)=0$, it follows that  
	 $\hat{\mu}(\widehat{Y}\setminus \widehat{Z})=0$. Hence, if we define $Z:=\pi(\widehat{Z})$ then $Z=\bigcup\limits_{q=n}^\infty\bigcup\limits_{p=1}^\infty Z_{p}(\frac{1}{q})$, for every $n\geq 1$. Thus by Proposition \ref{properties} and (\ref{vhpe}) we obtain for every $n\geq 1$,  
	$$
	 h^-\big(Z, \widehat{Z}, \frac{5}{n}\big) = \sup \{ h^-\big(Z_p(\frac{1}{q}), \widehat{Z}, \frac{5}{n}\big), \ p\geq 1, q\geq n \}  \leq \sup\{ h^-\big(Z_p(\frac{1}{q}), \widehat{Z}, \frac{5}{q}\big), p\geq 1, q\geq n \} \leq  \beta'.	 
	 $$
	 Then since $\beta,\beta'$ are arbitrary with $\beta'>\beta>\alpha$ it follows from above that $h^-(Z, \widehat{Z}, \frac{5}{n})\leq \alpha$, for every $n\geq 1$. Therefore $h^-(Z,\widehat{Z}) = h^-(\widehat Z) \leq \alpha$.
\end{proof}

Now we study the case when $f$ is not necessarily conformal on local stable manifolds. Let $f: M\rightarrow M$ be a $\mathcal{C}^2$ smooth endomorphism on a compact Riemannian manifold and let $\mu$ be an $f$-invariant ergodic hyperbolic and special measure on $M$. Let us assume now that the endomorphism $f$ is not necessarily conformal on local stable manifolds over $M$. Recall that $\hat{\mu}$ denotes the unique ergodic $\hat{f}$-invariant measure on $\widehat{M}$ such that $\pi_*\hat{\mu}=\mu$. Since $\hat{\mu}$ is ergodic, from Oseledec Theorem there exists a set $\hat{E}\subset \widehat{M}$ of $\hat{\mu}$-measure equal to $1$ and $k\geq 1$, such that for every  $\hat{x}\in\hat{E}$ there exist vector subspaces $E^s_{i,x_{-n}}$, $1\leq i\leq k$ and Lyapunov exponents of $\mu$, $\lambda_{s,i}<0$, $1\leq i\leq k$ (since $\hat{\mu}$ is ergodic). Without loss of generality we assume that there exists only two negative Lyapunov exponents, so $k=2$. Thus for any $\delta>0$ and $p\geq 1$ define the Borel set 
\begin{equation}\label{eq1}
\widehat{D}^s_p(\delta)=\left\{\hat{x}\in \widehat{M}:  e^{(\lambda_{s,i}-\delta)n} < \left\|Df^{n} |_{E^s_{i, x_{-n}}} \right\| <  e^{(\lambda_{s,i}+\delta)n} , \forall   i\in \{1,2\},\forall n\geq p\right\}.
\end{equation}
We have  $\lim\limits_{p\rightarrow\infty}\hat{\mu}( \widehat{D}^s_p(\delta))=1.$
Since $\mu$ is hyperbolic, for any $\varepsilon>0$ there exists a Pesin set $\widehat{R}_\varepsilon\subset\widehat{M}$ for $\mu$, (see \cite{BP}). If a set of the form $B_n^-(\hat{x},\varepsilon)\cap W^s_\varepsilon(y)$, with $y\in \pi \widehat{R}_\varepsilon
$ and such that $\hat{x}\in \widehat{D}^s_p(\delta)$ and $\hat{f}^{-n}(\hat{x})\in \widehat{R}_\varepsilon$ for some $n\geq p$ is nonempty, then this set is almost a rectangle whose sides in the two stable directions are $l_1(\hat{x},n, \varepsilon)$ and $l_2(\hat{x},n, \varepsilon)$ and there exists a constant $C(\varepsilon)>0$ independent of $\hat{x}$ and $n$, such that
$$ \frac{\varepsilon}{C(\varepsilon)}\cdot e^{(\lambda_{s,i}-\delta)n} < l_i(\hat{x}, n, \varepsilon) <  C(\varepsilon)\varepsilon\cdot e^{(\lambda_{s,i}+\delta)n}, \  i=1,2.$$
Let 	$$ K(\delta): =\left( \frac{-\lambda_{s,1}+\delta}{-\lambda_{s,1}-\delta} + \delta \right)^{-1}.$$
Since $\lim\limits_{\delta\rightarrow 0} K(\delta)=1$ for all $\delta$ sufficiently small we have $K(\delta)>1/2$. Let also
\begin{equation}\label{Adelta}
A(\delta):= K(\delta)\cdot\left( \min\left\{ \frac{-\lambda_{s,i}-\delta}{-\lambda_{s,i}+\delta} :  i=1,2 \right \} -\delta\right) .
\end{equation}
Clearly $0<A(\delta)< 1$ and  $\lim\limits_{\delta\rightarrow 0} A(\delta)=1$. With this notation we have: 
\begin{lemma}\label{l1}	
	Let $\delta, \varepsilon>0$ arbitrarily small and $p(\delta,\varepsilon)>1$ such that $p(\delta,\varepsilon)\cdot( -\lambda_{s,i}+\delta)\delta>\log 2+ 2\log C(\varepsilon)$ and then let $m,n,p$ arbitrary with $m,n>2p$ and $p\geq p(\delta,\varepsilon)$. Let $y\in \pi \widehat{R}_\varepsilon$ and $\hat{x}, \hat{z}\in \widehat{D}^s_p(\delta)$ such that $\hat{f}^{-n}(\hat{x}), \hat{f}^{-m}(\hat{z})\in \widehat{R}_\varepsilon$. If
	$B_n^-(\hat{x},\varepsilon)\cap W^s_\varepsilon(y)$ 
	intersects $B_m^{-}(\hat{z},\varepsilon)\cap   W^s_\varepsilon(y)$ and 
	$l_1(\hat{z},m, \varepsilon)>\frac{1}{2}\cdot 
	l_1(\hat{x},n, \varepsilon)$ then  $B_n^{-}(\hat{x},\varepsilon) \cap W^s_\varepsilon(y)\subset B^-_{[m A(\delta)]}(\hat{z}, 5\varepsilon)\cap  W^s_\varepsilon(y)$, where $[m A(\delta)]$ denotes the integer part of $m A(\delta)$. 
\end{lemma}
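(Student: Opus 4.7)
The plan is to prove the containment by a direct geometric Vitali-type argument inside the local stable manifold $W^s_\varepsilon(y)$, exploiting the rectangular structure of the inverse Bowen balls described just before the lemma. Concretely, the set $B_n^-(\hat{x},\varepsilon)\cap W^s_\varepsilon(y)$ is an approximate rectangle with sides $l_1(\hat{x},n,\varepsilon), l_2(\hat{x},n,\varepsilon)$ along the two Oseledec stable subspaces (whose variation along the Pesin set is subexponential), and the hypotheses $\hat{x},\hat{z}\in \widehat{D}^s_p(\delta)$, $\hat{f}^{-n}\hat{x}, \hat{f}^{-m}\hat{z}\in\widehat{R}_\varepsilon$ together with (\ref{eq1}) give sharp two-sided exponential bounds on each side.

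First I would fix a point $w$ in the intersection $\bigl(B_n^-(\hat{x},\varepsilon)\cap W^s_\varepsilon(y)\bigr)\cap\bigl(B_m^-(\hat{z},\varepsilon)\cap W^s_\varepsilon(y)\bigr)$ and use the hypothesis $l_1(\hat{z},m,\varepsilon)>\tfrac12 l_1(\hat{x},n,\varepsilon)$ combined with the two-sided bound on $l_1$ to derive the time-comparison $m\cdot K(\delta)\leq n$. The estimate
\[
C(\varepsilon)\varepsilon\, e^{(\lambda_{s,1}+\delta)m} \;>\; \tfrac{1}{2}\cdot\tfrac{\varepsilon}{C(\varepsilon)}\, e^{(\lambda_{s,1}-\delta)n}
\]
gives after taking logarithms $(-\lambda_{s,1}-\delta)m\leq \log(2C(\varepsilon)^2)+(-\lambda_{s,1}+\delta)n$, and the standing assumption $p(\delta,\varepsilon)(-\lambda_{s,i}+\delta)\delta>\log 2+2\log C(\varepsilon)$ together with $n>2p\geq 2p(\delta,\varepsilon)$ is exactly what is needed to absorb $\log(2C(\varepsilon)^2)$ into a $\delta n$ error, yielding $m\leq\bigl(\tfrac{-\lambda_{s,1}+\delta}{-\lambda_{s,1}-\delta}+\delta\bigr)n=n/K(\delta)$.

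Next I would apply the triangle inequality along each stable direction: for any $w'\in B_n^-(\hat{x},\varepsilon)\cap W^s_\varepsilon(y)$ and $i=1,2$,
\[
d_{s,i}(w',z)\;\leq\;d_{s,i}(w',w)+d_{s,i}(w,z)\;\leq\;l_i(\hat{x},n,\varepsilon)+l_i(\hat{z},m,\varepsilon),
\]
and it would suffice to show this sum is dominated by the $i$-th side $l_i(\hat{z},[mA(\delta)],5\varepsilon)$ of the enlarged rectangle. For $i=1$ this is essentially a Vitali argument: the hypothesis gives $l_1(\hat{x},n,\varepsilon)\leq 2l_1(\hat{z},m,\varepsilon)$, so the sum is at most $3l_1(\hat{z},m,\varepsilon)$, and both shrinking the time from $m$ to $[mA(\delta)]$ and enlarging the scale from $\varepsilon$ to $5\varepsilon$ only make the side larger, so (\ref{eq1}) closes the estimate.

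The main obstacle, and the step which actually forces the precise formula for $A(\delta)$, is the case $i=2$: here the hypothesis gives no comparison between $l_2(\hat{x},n,\varepsilon)$ and $l_2(\hat{z},m,\varepsilon)$, and moreover $m$ may exceed $n$ (since $K(\delta)<1$), so $l_2(\hat{z},m,\varepsilon)$ could be much smaller than $l_2(\hat{x},n,\varepsilon)$. I would estimate, using (\ref{eq1}) and $m\leq n/K(\delta)$,
\[
l_2(\hat{x},n,\varepsilon)+l_2(\hat{z},m,\varepsilon)\;\leq\;2C(\varepsilon)\varepsilon\, e^{(\lambda_{s,2}+\delta)\min(n,m)},
\]
and compare with the lower bound $\tfrac{5\varepsilon}{C(5\varepsilon)}e^{(\lambda_{s,2}-\delta)[mA(\delta)]}$. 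The inequality
\[
(\lambda_{s,2}-\delta)[mA(\delta)]\;\geq\;(\lambda_{s,2}+\delta)\min(n,m)+\log\tfrac{2C(\varepsilon)C(5\varepsilon)}{5}
\]
is exactly what forces the multiplicative factor $\min_i\tfrac{-\lambda_{s,i}-\delta}{-\lambda_{s,i}+\delta}-\delta$ inside $A(\delta)$, while the extra factor $K(\delta)$ is what handles the conversion between $m$ and $n$ coming from Step 1. Once this exponential bookkeeping is carried out, the constants $2C(\varepsilon)C(5\varepsilon)/5$ are again absorbed using the hypothesis on $p(\delta,\varepsilon)$, and the containment $B_n^-(\hat{x},\varepsilon)\cap W^s_\varepsilon(y)\subset B_{[mA(\delta)]}^-(\hat{z},5\varepsilon)\cap W^s_\varepsilon(y)$ follows.
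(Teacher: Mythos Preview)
Your approach is essentially the same as the paper's: both derive the time comparison $mK(\delta)<n$ from the $l_1$ hypothesis via the two-sided exponential bounds, and then use the definition of $A(\delta)$ to control the side lengths at time $[mA(\delta)]$. The only organizational difference is that the paper treats $i=1,2$ uniformly---showing directly that the lower bound for $l_i(\hat z,[mA(\delta)],\varepsilon)$ exceeds $\tfrac12$ the upper bound for $l_i(\hat x,n,\varepsilon)$ for each $i$, and then invoking the standard Vitali $5r$-geometry---rather than splitting into cases and bringing in $\min(n,m)$ and $C(5\varepsilon)$; this keeps the absorption of constants via the hypothesis on $p(\delta,\varepsilon)$ slightly cleaner.
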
	
\begin{proof}	
	With the above notation, for $i=1,2$ we have
	$$ \frac{\varepsilon}{C(\varepsilon)}\cdot e^{(\lambda_{s,i}-\delta)n} < l_1(\hat{x}, n, \varepsilon) <  C(\varepsilon)\varepsilon\cdot e^{(\lambda_{s,i}+\delta)n},$$
	$$ \frac{\varepsilon}{C(\varepsilon)}\cdot e^{(\lambda_{s,i}-\delta)m} < l_1(\hat{z}, m, \varepsilon) <  C(\varepsilon)\varepsilon\cdot e^{(\lambda_{s,i}+\delta)m}.$$
	Now, since $l_1(\hat{z},m, \varepsilon)>\frac{1}{2}\cdot 
	l_1(\hat{x},n, \varepsilon)$, we have
	$C(\varepsilon)e^{(\lambda_{s,1}+\delta)m}> \frac{1}{2C(\varepsilon)}e^{(\lambda_{s,1}-\delta)n}$,
	and then 
	$$(-\lambda_{s,1}-\delta)m < (-\lambda_{s,1}+\delta)n + \log 2 +2\log C(\varepsilon).$$
	Hence, since $m,n>2p(\delta,\varepsilon)$ and $p(\delta,\varepsilon)$ is chosen as in the statement, we obtain
	$$\frac{m}{n}< \frac{-\lambda_{s,1}+\delta}{-\lambda_{s,1}-\delta}+ \frac{2\log C(\varepsilon)+\log 2}{n( -\lambda_{s,1}+\delta)}< \frac{-\lambda_{s,1}+\delta}{-\lambda_{s,1}-\delta} + \delta.$$
	Therefore 
	$m\cdot K(\delta)< n$. As $K(\delta)>1/2$, for $i=1,2$ we have
	\begin{align*}
	(-\lambda_{s,i}+\delta)&\cdot A(\delta)m \leq     \left( \frac{-\lambda_{s,i}-\delta}{-\lambda_{s,i}+\delta}-\delta\right) \cdot(-\lambda_{s,i}+\delta) K(\delta) m\\&=   (-\lambda_{s,i}-\delta)\cdot m K(\delta) - \delta (-\lambda_{s,i}+\delta)m K(\delta)< (-\lambda_{s,i}-\delta)n+\log 2-  2\log C(\varepsilon).
	%&< (-\lambda_{s,i}-\delta)n-\log 2-  2\log C(\varepsilon)
	\end{align*}
	Thus
	$-\log C(\varepsilon)+ (\lambda_{s,i}-\delta)\cdot A(\delta)m > (\lambda_{s,i}+\delta)n-\log 2+\log C(\varepsilon)$			
	and then 
	$$ \frac{\varepsilon}{C(\varepsilon)}\cdot e^{(\lambda_{s,i}-\delta)A(\delta)m}>  \frac{1}{2}\cdot  C(\varepsilon)\varepsilon \cdot e^{(\lambda_{s,i}+\delta)n}, \ \ \  i=1,2.$$
	Using the assumption $n>2p$ from the statement, it follows that 
	$$B^-_n(\hat{x},\varepsilon)\cap W^s_\varepsilon(y)\subset B^-_{[mA(\delta)]}(\hat{z},5\varepsilon)\cap W^s_\varepsilon(y).$$	
\end{proof}

Recall that $\mu$ is a hyperbolic and special measure on $\Lambda$. With the same notation as above we prove now the following lemma: 

\begin{lemma}\label{lx}
	Let $\widehat{Y}\subset \widehat{R}_\varepsilon$ be a Borel set. Let $\mathcal{F}$ be a family of sets of the form $B_n^{-}(\hat{x}, \varepsilon)$ with $\hat{x}\in \widehat{D}^s_p(\delta)\cap \widehat{R}_\varepsilon$ and such that $\hat{f}^{-n}(\hat{x})\in\widehat{R}_\varepsilon$ for some $n\geq 2p$ (where $p\geq p(\delta,\varepsilon)$ as in Lemma \ref{l1}) and assume that $\mathcal{F}$ covers $Y=\pi(\widehat{Y})$. Then there exists a subfamily $\mathcal{G}\subset\mathcal{F}$ of mutually disjoint sets such that  
	\begin{equation*}
		Y\subset \bigcup\limits_{B^- \in \mathcal{G} }5B^-, \text{ where  }5B^-= B_{[nA(\delta)]}^{-}(\hat{x}, 5\varepsilon)\text{ if  }B^-=B_n^{-}(\hat{x}, \varepsilon).
		\end{equation*}
\end{lemma}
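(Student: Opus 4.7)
The plan is to mimic the Zorn--Besicovitch style selection from the proof of Lemma \ref{l32}, but with the stable diameter replaced by the length of the longer stable side of each inverse Bowen ball, and with Lemma \ref{l1} playing the role that the elementary ``ball enlarges to $5$-ball'' inclusion played in the conformal case. To each $B^{-} = B_n^{-}(\hat{x},\varepsilon)\in \mathcal{F}$ I will attach the scale $d_1(B^{-}) := l_1(\hat{x},n,\varepsilon)$, which by the estimates following \eqref{eq1} and the assumption $\hat{x}\in \widehat{D}^s_p(\delta)$ is uniformly bounded above by $C(\varepsilon)\varepsilon$.

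First, I will consider the collection $\Omega$ of subfamilies $\zeta\subset \mathcal{F}$ of pairwise disjoint sets with the property that whenever some $B_n^{-}(\hat{x},\varepsilon)\in \mathcal{F}$ meets a member of $\zeta$, it in fact meets some $B_m^{-}(\hat{y},\varepsilon)\in \zeta$ with $d_1(B_m^{-}(\hat{y},\varepsilon)) > \tfrac{1}{2}\, d_1(B_n^{-}(\hat{x},\varepsilon))$. Ordering $\Omega$ by inclusion, every chain has its union as an upper bound, and $\Omega$ is non-empty because any singleton $\{B^{-}\}$ with $d_1(B^{-}) > \tfrac12\sup_{\mathcal{F}} d_1$ lies in $\Omega$. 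Zorn's Lemma then supplies a maximal element $\mathcal{G}\in \Omega$. The usual ``adjoin the largest leftover'' argument, identical to the one in Lemma \ref{l32}, forces every $B^{-}\in\mathcal{F}$ to intersect at least one element of $\mathcal{G}$: otherwise one could pick $B^{*}\in\mathcal{F}$ disjoint from $\mathcal{G}$ with $d_1(B^{*})$ exceeding half the supremum of $d_1$ over such leftovers, and verify that $\mathcal{G}\cup\{B^{*}\}\in\Omega$, contradicting maximality.

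Next, I will take an arbitrary $z\in Y$ and pick $B_n^{-}(\hat{x},\varepsilon)\in \mathcal{F}$ containing $z$ together with some $B_m^{-}(\hat{z},\varepsilon)\in \mathcal{G}$ that meets it and satisfies $d_1(B_m^{-}(\hat{z},\varepsilon)) > \tfrac12\, d_1(B_n^{-}(\hat{x},\varepsilon))$. Choosing $y\in \pi\widehat{R}_\varepsilon$ so that $W^s_\varepsilon(y)$ passes through a point of $B_n^{-}(\hat{x},\varepsilon)\cap B_m^{-}(\hat{z},\varepsilon)$, Lemma \ref{l1} immediately gives the leafwise inclusion $B_n^{-}(\hat{x},\varepsilon)\cap W^s_\varepsilon(y)\subset B^{-}_{[mA(\delta)]}(\hat{z},5\varepsilon)\cap W^s_\varepsilon(y)$. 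The last step is to lift this stable-leaf inclusion to an inclusion in $M$. For this I invoke the local product structure: since $\mu$ is hyperbolic and special, for every base point in $\pi\widehat{R}_\varepsilon$ the local unstable manifold is independent of the chosen prehistory, and each inverse Bowen ball fibers consistently over its stable slice by unstable leaves of size comparable to $\varepsilon$. Sliding along these matching unstable leaves therefore promotes the leafwise containment to $B_n^{-}(\hat{x},\varepsilon)\subset B^{-}_{[mA(\delta)]}(\hat{z},5\varepsilon)$, and in particular $z$ lies in $5B_m^{-}(\hat{z},\varepsilon)$, yielding the claimed cover.

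The step I expect to be the main obstacle is precisely this last transversal promotion: ensuring that unstable leaves anchored at points of $B_n^{-}(\hat{x},\varepsilon)$ actually coincide with unstable leaves seen from prehistories of points in $B_m^{-}(\hat{z},\varepsilon)$, without losing size bounds when $m\le n$ and while staying within $5\varepsilon$-neighbourhoods along the entire backward orbit. This is where the special hypothesis on $\mu$, combined with uniform control on the Pesin block $\widehat{R}_\varepsilon$ and the choice $p\ge p(\delta,\varepsilon)$ so that Lemma \ref{l1} can be applied, will have to be used carefully to absorb the discrepancy between the two different prehistories $\hat{x}$ and $\hat{z}$.
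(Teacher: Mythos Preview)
Your proposal is correct and follows essentially the same approach as the paper's proof: the paper also uses a Zorn-style selection on subfamilies of $\mathcal{F}$ ordered by $l_1(B^-)$ with the same $\tfrac{1}{2}$-comparison property, obtains a maximal $\mathcal{G}$ via the identical ``adjoin the largest leftover'' argument, then applies Lemma~\ref{l1} leafwise on each $W^s_\varepsilon(z)$ and invokes the special property of $\mu$ to pass from stable-leaf coverage to global coverage of $Y$. The only cosmetic difference is that the paper phrases the final step leaf-by-leaf (showing $\{5B^- \cap W^s_\varepsilon(z)\}$ covers $W^s_\varepsilon(z)\cap Y$ for every $z\in\pi\widehat{R}_\varepsilon$) rather than point-by-point as you do, but the underlying geometry and the role of the special hypothesis are identical.
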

\begin{proof}
	We recall that for any set of type $B_n^{-}(\hat{x}, \varepsilon)$ with $\hat{x}\in \widehat{D}^s_p(\delta)\cap \widehat{R}_\varepsilon$ and such that $\hat{f}^{-n}(\hat{x})\in\widehat{R}_\varepsilon$ we denoted by $l_1(\hat{x}, n, \varepsilon)$ and 	$l_2(\hat{x}, n, \varepsilon)$ the diameters in the stable directions of $B_n^{-}(\hat{x},\varepsilon)\cap W^s_\varepsilon (x)$. For the inverse ball $B^-= B_n^{-}(\hat{x}, \varepsilon)$ we also denote $l_i(\hat{x}, n, \varepsilon)$ by $l_i(B^-)$, for $i=1,2$. Let $\Omega$ be the collection of subfamilies $\zeta$ of mutually disjoint sets from $\mathcal{F}$ with the following property: if $B_n^{-}(\hat{x}, \varepsilon)\in \mathcal{F}$ intersects a set from $\zeta$, then it intersects a set $B_m^{-}(\hat{y}, \varepsilon)$ from $\zeta$ with 
	$l_1(\hat{y}, m, \varepsilon)> \frac{1}{2}l_1(\hat{x}, n, \varepsilon)$. Clearly $\Omega$ is partially ordered by inclusion. Let $\mathcal{C}\subset \Omega$ be a totally ordered collection of subfamilies of $\mathcal{F}$. Then $\tilde{\zeta}= \bigcup_{\zeta\in\mathcal{C}}\zeta$ belongs to $\Omega$. Thus, by Zorn's Lemma there exists a maximal subfamily $\mathcal{G}$ of $\tilde{\zeta}$. Notice that $\Omega\neq \emptyset$ since $\{B^-=B_n^-(\hat{x},\varepsilon )\}\in \Omega$ whenever
	$l_1(B^-)> \frac{1}{2}\sup\{ l_1(B^-), B^-\in \mathcal{F}\}$.

	Now, we want to prove that every set from $\mathcal{F}$ intersects at least a set from $\mathcal{G}$. If this were not true, then we can find a set $B^*\in \mathcal{F}$ that does not intersect any set of $\mathcal{G}$ and which has 	$$l_1(B^*)> \frac{1}{2}\sup \{l_1(B'), \ B'\in \mathcal{F}, \text{ such that }B'\cap B''=\emptyset, \text{ for all }B''\in \mathcal{G}  \}.$$
	Consider $\mathcal{G}'=\mathcal{G}\cup \{ B^*\}$. We want to show that $\mathcal{G}'\in \Omega$. Let an arbitrary element $\widetilde B^-\in\mathcal{F}$. If $\widetilde B^-$ intersects some element of $\mathcal{G}$, then since $\mathcal{G}\in\Omega$, we know that there exists $B'\in \mathcal{G}$ such that $l_1(B')>\frac{1}{2} l_1(\widetilde B^-)$. Otherwise, $\widetilde B^-$ does not intersect any element of $\mathcal{G}$. %but $B^-$ intersects $B^*$. 
	But then, from the definition of $B^*$ we have $l_1(B^*)>\frac{1}{2}l_1(\widetilde B^-)$.
	Hence  $\mathcal{G}'=\mathcal{G}\cup \{ B^*\}\in \Omega$, which contradicts the maximality of $\mathcal{G}$. 

Let now $z\in \pi \widehat{R}_\varepsilon$. Hence for any set $B^-\in\mathcal{G}$, $B^- \cap   W_\varepsilon^s(z)$ is almost a rectangle with sides $l_1(\hat{x},n, \varepsilon)$ and $l_2(\hat{x},n, \varepsilon)$ in the two stable directions. Thus from the definition of $\Omega$ and the maximality of $\mathcal{G}$ it follows that $\{5B^- \cap W^s_\varepsilon(z), \  B^-\in \mathcal{G} \}$ covers $W^s_\varepsilon(z) \cap Y$. Here we use Lemma \ref{l1} and the fact that $A(\delta)$ is necessary for the estimate of $l_2(B^-)$. But now we use that there exist local stable and unstable manifolds of size $\varepsilon$ over $\widehat{R}_\varepsilon$ and that the local unstable manifolds depend only on their respective base points, since $\mu$ is special. Thus $\{5B^-, \  B^-\in \mathcal{G} \}$ covers $Y$.
\end{proof}

%\begin{theorem}\label{t2}
	%	Let $f: M\rightarrow M$ be $\mathcal{C}^2$ differentiable endomorphism on a manifold $M$ and assume that $\mu$ is an ergodic $f$-invariant probability measure which is hyperbolic and prehistory-independent. Let $\widehat{Y}\subset \widehat{\Lambda}$ be a compact set and $Y=\pi(\widehat{Y})$. If $h^{-}_{f,sup,B}(\mu,\hat{x}) \leq \alpha$ for every $\hat{x}\in \widehat{Y}$ then there exists $\widetilde{Y}\subset Y$ with $\mu( \widetilde{Y} \setminus Y)=0$ such that  $h^{-}(Y ,\widehat{Y})\leq \alpha$
%\end{theorem}

 \textbf{Proof of Theorem \ref{t2}. } 
As in the previous lemma, we assume without loss of generality, that there exists only two negative Lyapunov exponents $\lambda_{s,1}$ and $\lambda_{s,2}$.  Let $\delta>0$ be such that $A(\delta), K(\delta)>1/2$ for every $0<\delta<\delta_0$. Let  $p\geq 1$. Recall that 
\begin{equation*}
\widehat{D}^s_p(\delta)=\left\{\hat{x}\in \widehat{M}:  e^{(\lambda_{s,i}-\delta)n} < \left\|Df^{n} |_{E^s_{i, x_{-n}}} \right\| <  e^{(\lambda_{s,i}+\delta)n} , \forall   i\in \{1,2\},\forall n\geq p\right\}.
\end{equation*}
Then since the lift $\hat{\mu}$ of $\mu$ is ergodic, we have
\begin{equation}\label{Ddelta}
\widehat{D}^s_p(\delta)\subset \widehat{D}^s_{p+1}(\delta)\text{ and } \hat\mu(\widehat{M}\setminus\bigcup_{p\geq 1}\widehat{D}^s_p(\delta))=0.
\end{equation}
Let  
$$\widehat{Y}(\delta)=\bigcup_{p\geq 1}(\widehat{D}^s_p(\delta) \cap \widehat{Y}).$$
Then by (\ref{Ddelta}), we have $\hat{\mu}(\widehat{Y}\setminus\widehat{Y}(\delta))=0$. Let $\beta>\alpha$.
Since the expression $\mathop{\limsup}\limits_{n\rightarrow \infty} \frac{-\log\mu (B_n^{-}(\hat{x},\varepsilon))}{n}$ increases to $h^-_{f,sup, B}(\mu,\hat{x})$ when $\varepsilon$ decreases to $0$, and since by assumption $h^-_{f,sup, B}(\mu,\hat{x})\leq \alpha$, we obtain that for every $\hat{x}\in \widehat{Y}$,
\begin{equation}\label{sd}
\limsup_{n\rightarrow \infty} \frac{-\log \mu (B_n^{-}(\hat{x},\varepsilon))}{n}<\beta, \text{ for every }\varepsilon>0.
\end{equation}
	Recall that for $\varepsilon>0$,  $\widehat{R}_\varepsilon\subset\widehat{M}$ is a Pesin set. Let $\widehat{Q}_\varepsilon$ be the set of all $\hat{x}\in \widehat{R}_\varepsilon$ with the property that $\hat{f}^{-n}(\hat{x})\in \widehat{R}_\varepsilon$ for infinitely many $n\geq 1$. By Poincar\'e Recurrence Theorem we have that $\hat{\mu}(\widehat{R}_\varepsilon\setminus \widehat{Q}_\varepsilon)=0$. For $\varepsilon >0$ and $p\geq 1$ define the Borel set
\begin{equation}\label{Ypdelta}
\widehat{Y}_{p}(\delta, \varepsilon)=\left\{\hat{x}\in \widehat{D}^s_p(\delta) \cap \widehat{Y} \ | \ \frac{-\log \mu(B_n^{-}(\hat{x},\varepsilon))}{n}<\beta, \text{for all }n\geq p \right\}.
\end{equation}
Notice that $\widehat{Y}_p(\delta, \varepsilon) \subset \widehat{Y}_{p+1}(\delta,\varepsilon)$ and by (\ref{sd}), we have that $\widehat{Y}(\delta)=\bigcup\limits_{p=1}^\infty\widehat{Y}_{p}(\delta,\varepsilon)$. Let
$$\widehat{Z}_{p}(\delta, \varepsilon)=\widehat{Y}_{p}(\delta,\varepsilon)\cap\widehat{Q}_\varepsilon.$$Clearly, $Z_{p}(\delta,\varepsilon):=\pi(\widehat{Z}_{p}(\delta,\varepsilon))$ is a Borel set in $M$. Let now $N\geq p$. Notice that for every $\hat{z}\in\widehat{Z}_{p}(\delta,\varepsilon) \subset \widehat{Q}_\varepsilon$, we have $\hat{f}^{-n}(\hat{z})\in \widehat{R}_\varepsilon$ for infinitely many $n\geq 1$. Then, by Lemma \ref{lx}, we obtain a collection $\mathcal{G}$ of disjoint sets of the form $B_n^-(\hat{x},\varepsilon)$, $\hat{x}\in \widehat{Z}_{p}(\delta,\varepsilon)$ with $n\geq 2N+2$, such that $\{5B^- : B^-\in\mathcal{G}\}$ covers  $Z_{p}(\delta,\varepsilon)$, where  $5B^-= B_{[nA(\delta)]}^{-}(\hat{x}, 5\varepsilon)$ if $B^-=B_n^{-}(\hat{x},\varepsilon)$. For a set $B^-$ of the form $B_n^{-}(\hat{x},\varepsilon)$ we denote $n$ by $n(B^-)$; thus $n(5B^-)=[nA(\delta)]$. Notice that $\beta n(5B^-)/A(\delta)=\beta[A(\delta)n(B^-)]/A(\delta)> \beta  n(B^-)- 2\beta$, since $A(\delta)>\frac{1}{2}$. Then from (\ref{Ypdelta}), and since $\mathcal{G}$ is a disjoint family, we obtain
\begin{equation}\label{ebeta}
\sum_{B^-\in \mathcal{G}} e^{-\beta n(5B^-)/A(\delta)}\leq \sum_{B^-\in \mathcal{G}} e^{-\beta n(B^-)} \cdot e^{2\beta}\leq
\sum_{B^-\in \mathcal{G}}\mu(B^-)\cdot  e^{2\beta}\leq  e^{2\beta}.
\end{equation}
Since  $A(\delta)>1/2$, we have  $n(5B^-)>N$ for every $B^-\in \mathcal{G}$. Consider now $\beta'$ arbitrary with $\beta'>\beta$. Then from (\ref{ebeta}),
\begin{align*}
\sum_{B^-\in \mathcal{G}} e^{-\beta' n(5B^-)/A(\delta)} & = \sum_{B^-\in \mathcal{G}} e^{-\beta n(5B^-)/A(\delta)} e^{-(\beta'-\beta) n(5B^-)/A(\delta)} \\ &\leq  e^{-(\beta'-\beta) N}\cdot\sum_{B^-\in \cdot\mathcal{G}} e^{-\beta n(5B^-)/A(\delta)}\leq e^{2\beta}\cdot e^{-(\beta'-\beta) N} \overset{N\rightarrow \infty}\longrightarrow 0.
\end{align*}
Thus $h^-(Z_{p}(\delta,\varepsilon), \widehat{Z}_p(\delta,\varepsilon), 5\varepsilon)\leq \beta'/A(\delta)$ for every $p\geq 1$ and every $\beta'>\beta>\alpha$. Let $\widehat{Z}(\delta)= \bigcup_{\varepsilon>0}(\widehat{Y}(\delta)\cap \widehat{Q}_\varepsilon)$. %= \bigcup_{p\geq 1}\widehat{Z}_p(\delta)$. 
Then, since $\widehat{Z}_p(\delta,\varepsilon) \subset \widehat{Z}(\delta)$, by Proposition \ref{properties}, for every $p$ and $\varepsilon>0$, we have 
\begin{equation}\label{hzp}
h^-(Z_p(\delta,\varepsilon), \widehat{Z}(\delta), 5\varepsilon)\leq h^-(Z_{p}(\delta,\varepsilon), \widehat{Z}_p(\delta,\varepsilon), 5\varepsilon)\leq \beta'/A(\delta).	
\end{equation}

 If $0<\varepsilon'<\varepsilon$, then since $\widehat{R}_\varepsilon\subset \widehat{R}_{\varepsilon'}$, it follows that $\widehat{Q}_\varepsilon\subset \widehat{Q}_{\varepsilon'}$. As   $\widehat{Y}(\delta)=\bigcup_{p=1}^\infty\widehat{Y}_p(\delta,\varepsilon)$ for every $\varepsilon>0$, it follows that for every $n\geq 1$, $$\widehat{Z}(\delta)=\bigcup_{q=n}^\infty(\widehat{Y}(\delta)\cap  \widehat{Q}_\frac{1}{q})=\bigcup_{q=n}^\infty(\bigcup_{p=1}^\infty\widehat{Y}_p(\delta,\frac{1}{n})\cap  \widehat{Q}_\frac{1}{q})=\bigcup_{q=n}^\infty \bigcup_{p=1}^\infty \widehat{Z}_p(\delta,\frac{1}{q}).$$ Now as $\bigcup_{\varepsilon>0} \widehat{Q}_\varepsilon= \widehat{M}$ up to a $\hat{\mu}$-null set, and  $\hat{\mu}(\widehat{Y}\setminus\widehat{Y}(\delta))=0$, it follows that  
$\hat{\mu}(\widehat{Y}\setminus \widehat{Z}(\delta))=0$. Hence if  $Z(\delta):=\pi(\widehat{Z}(\delta))$, then 	 $Z(\delta)=\bigcup\limits_{q=n}^\infty\bigcup\limits_{p=1}^\infty Z_p(\delta,\frac{1}{q})$, $\forall n\geq 1$. So by  Proposition \ref{properties} and (\ref{hzp}), for $\forall n\geq 1$,  
\begin{align*}
h^-\big(Z(\delta), \widehat{Z}(\delta), \frac{5}{n}\big) &= \sup \{ h^-\big(Z_p(\delta,\frac{1}{q}), \widehat{Z}(\delta), \frac{5}{n}\big), \ p 
\geq 1, q\geq n \} \\ &\leq \sup \{ h^-\big( Z_p(\delta,\frac{1}{q}), \widehat{Z}(\delta), \frac{5}{q}\big), \ p\geq 1, q\geq n \} \leq  \beta'/A(\delta).
\end{align*}	
Then since $\beta,\beta'$ are arbitrary with $\beta'>\beta>\alpha$, it follows from above that $h^-(Z(\delta), \widehat{Z}(\delta), \frac{5}{n})\leq \alpha/A(\delta)$, for every $n\geq 1$. Thus $h^-(\widehat{Z}(\delta))=h^-(Z(\delta),\widehat{Z}(\delta))\leq\alpha/A(\delta)$, where $\widehat{Z}(\delta)\subset \widehat{Y}$ and $\hat{\mu}(\widehat{Y}) =\hat{\mu}(\widehat{Z}(\delta))$.
Hence $\inf\limits_{\delta>0}  h^-(\widehat Z(\delta))\le \alpha$ and the conclusion of the theorem follows.

%Thus $h^-(Z(\delta),\widehat{Z})\leq\alpha/A(\delta)$. But $\lim\limits_{\delta\rightarrow 0}A(\delta)=1$; thus if $Z:=\bigcap_{n\geq 1} {Z}(\frac{1}{n})$ we obtain that $h^-(Z,\widehat{Y})\leq \alpha$ and $\mu(Y\setminus Z)=0$.

$\hfill\square$

\noindent \textbf{Theorem 1.9.}\textit{(Partial Variational Principle for inverse entropy).
	Let $f: M\rightarrow M$ be a $\mathcal{C}^2$ smooth endomorphism on a manifold $M$. Then,	
	\begin{align*}
		\sup \{ \inf \{h^-(\widehat Z), \    \hat\mu(\widehat Z)=1 \}, \  & \mu \text{ hyperbolic ergodic and special} \} \
		\leq\sup \{h^-_{f,inf, B}(\mu),   \mu \text{ ergodic} \} \\
		& \leq\lim_{\delta\rightarrow 0} \left(\sup\{h^-(\widehat{\mathcal{A}}), \ \hat{\mu}(\widehat{\mathcal{A}})>1-\delta, \ \mu \text{  ergodic} \}\right).
	\end{align*}}
	\begin{proof}
Let $\mu$ be an arbitrary  probability measure on $M$ which is $f$-invariant ergodic hyperbolic and special. Since $\mu$ is ergodic there exists a Borel set $\widehat{Y}\subset\widehat{M}$ with $\hat{\mu}(\widehat{Y})=1$ and such that $h^{-}_{f,sup,B}(\mu,\hat{x})= h^{-}_{f,sup,B}(\mu)$ for every $\hat{x}\in \widehat{Y}$. Then by Theorem \ref{t2}, we have
$\inf \{ h^-(\widehat Z), \  \widehat Z \subset \widehat Y,  \hat \mu (\widehat Z ) =\hat \mu (\widehat Y )\}\le h^{-}_{f,sup,B}(\mu)$ and thus $\inf \{ h^-(\widehat Z), \  \hat{\mu}(\widehat Z)=1 \}\le h^{-}_{f,sup,B}(\mu)$.
From Theorem \ref{thpreind} we know that if $\mu$ is ergodic hyperbolic and special, then $h^-_{f,inf, B}(\mu)=h^-_{f,sup, B}(\mu)$. 
Therefore,  
\begin{align*}
\sup \Big\{ \inf \{h^-(\widehat{Z}), \ \hat\mu(\widehat{Z})=1 \}, \  & \mu \text{ ergodic and special} \Big\} \leq \\
\leq\sup \{h^-_{f,sup, B}(\mu),   \mu \text{ ergodic and special} \} &=\sup \{h^-_{f,inf, B}(\mu),   \mu \text{ ergodic and special}\}.
\end{align*}
Next it is clear that,
$\sup \{h^-_{f,inf, B}(\mu),  \mu \text{ ergodic and special} \}\leq \sup \{h^-_{f,inf, B}(\mu),   \mu \text{ ergodic} \}.$
Hence from the above displayed inequalities it follows that,
\begin{align}\label{varpppp}
	\sup \Big\{ \inf \{h^-(\widehat{Z}),  \ & \hat{\mu}(\widehat{Z})=1 \}, \   \mu \text{ ergodic and special} \Big\} \leq \\
	&\leq\sup \{h^-_{f,inf, B}(\mu),   \mu \text{ ergodic} \}.\nonumber
\end{align}
Let $\nu$ be an arbitrary  probability measure on $M$ which is $f$-invariant and ergodic. Let $\widehat{Y}\subset\widehat{M}$ be such that $\hat{\nu}(\widehat{Y})=1$ and $h^{-}_{f,inf,B}(\nu,\hat{x})= h^{-}_{f,inf,B}(\nu)$ for every $\hat{x}\in \widehat{Y}$. Then by Theorem \ref{newtop} we obtain:
\begin{align*}
h^-_{f,inf, B}(\nu) & \leq\lim_{\delta\rightarrow 0} \left(\sup\{h^-(\widehat{\mathcal{A}}), \widehat{\mathcal{A}}\subset \widehat{Y},  \hat{\nu}(\widehat{\mathcal{A}})>1-\delta\}\right) \leq \lim_{\delta\rightarrow 0} \left(\sup\{h^-(\widehat{\mathcal{A}}),   \hat{\nu}(\widehat{\mathcal{A}})>1-\delta\}\right) \\
& \leq\lim_{\delta\rightarrow 0} \left(\sup\{h^-(\widehat{\mathcal{A}}), \ \hat{\mu}(\widehat{\mathcal{A}})>1-\delta, \ \mu \text{  ergodic} \}\right),
\end{align*}
where the last supremum is taken over all ergodic measures $\mu$. 
As $\nu$ was arbitrary, we conclude that
\begin{align*}
\sup \{h^-_{f,inf, B}(\mu),   \mu \text{ ergodic} \} 
\leq\lim_{\delta\rightarrow 0} \left(\sup\{h^-(\widehat{\mathcal{A}}), \ \hat{\mu}(\widehat{\mathcal{A}})>1-\delta, \ \mu \text{  ergodic} \}\right).
\end{align*}
Therefore from the last displayed inequality and (\ref{varpppp}), we obtain the conclusion of the Theorem. 
\end{proof}

In case $f$ is a special hyperbolic endomorphism on $\Lambda$ (Definition  \ref{defspecial}), then any $f$-invariant ergodic measure on $\Lambda$ is hyperbolic and special. Examples of special endomorphisms are toral endomorphisms, certain skew product endomorphisms, etc (see \cite{MF, Mi}).

\begin{corollary}\label{specialcor}
Let $f$ be a $\mathcal{C}^2$ endomorphism on a Riemannian manifold $M$, so that $f$ is hyperbolic and special on a  compact invariant set $\Lambda\subset M$. Then, 
\begin{align*}
		\sup \Big\{ \inf \{h^-(Z,\widehat{\Lambda}),   \ Z\subset \Lambda,   \mu(Z)=1 \}, \   \mu \text{ ergodic on $\Lambda$} \Big\}  
		\leq\sup \{h^-_{f,inf, B}(\mu),   \mu \text{ ergodic on $\Lambda$} \}= \\
		= \sup \{h^-_{f,sup, B}(\mu),   \mu \text{ ergodic on $\Lambda$} \}  \leq\lim_{\delta\rightarrow 0} \left(\sup\{h^-(\widehat{\mathcal{A}}), \ \hat{\mu}(\widehat{\mathcal{A}})>1-\delta, \ \mu \text{ ergodic on $\Lambda$} \}\right).
	\end{align*}
\end{corollary}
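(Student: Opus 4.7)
The plan is to derive Corollary \ref{specialcor} by essentially reapplying the proof of Theorem \ref{varpr}, and then exploiting the single additional hypothesis—that $f$ is hyperbolic and special on $\Lambda$—to upgrade the inequalities at both ends into the sharper statement. The key preliminary remark is the one recorded after Definition \ref{preind}: if $f$ is hyperbolic and special on $\Lambda$, then every $f$-invariant ergodic probability measure $\mu$ on $\Lambda$ is automatically a hyperbolic and special measure in the sense of Definition \ref{preind}. In particular, Theorem \ref{thpreind} applies to every ergodic $\mu$ on $\Lambda$, giving
$$h^-_{f,\mathrm{inf},B}(\mu)=h^-_{f,\mathrm{sup},B}(\mu)=h_f(\mu)-F_f(\mu).$$
Taking suprema over all ergodic $\mu$ on $\Lambda$ then yields the central equality of the corollary for free.

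For the left inequality, I would proceed as in the first half of the proof of Theorem \ref{varpr}. Fix an ergodic $\mu$ on $\Lambda$ and choose a Borel set $\widehat{Y}\subset\widehat{\Lambda}$ of full $\hat{\mu}$-measure on which $h^-_{f,\mathrm{sup},B}(\mu,\hat{x})$ is constantly equal to $h^-_{f,\mathrm{sup},B}(\mu)$. Since $\mu$ is hyperbolic and special, Theorem \ref{t2} applies with $\alpha=h^-_{f,\mathrm{sup},B}(\mu)$, producing arbitrarily $\hat{\mu}$-large Borel sets $\widehat{Z}\subset\widehat{Y}$ with $h^-(\widehat{Z})\leq h^-_{f,\mathrm{sup},B}(\mu)$. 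Setting $Z=\pi(\widehat{Z})\subset\Lambda$, we have $\mu(Z)=1$, and Proposition \ref{properties}(i) gives $h^-(Z,\widehat{\Lambda})\leq h^-(Z,\widehat{Z})=h^-(\widehat{Z})$. Hence $\inf\{h^-(Z,\widehat{\Lambda})\,:\,Z\subset\Lambda,\ \mu(Z)=1\}\leq h^-_{f,\mathrm{sup},B}(\mu)=h^-_{f,\mathrm{inf},B}(\mu)$, and taking supremum over ergodic $\mu$ delivers the left inequality.

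For the right inequality, I would mirror the second half of the proof of Theorem \ref{varpr} using Theorem \ref{newtop}. For each ergodic $\mu$ on $\Lambda$ and each $\eta>0$, choose a Borel set $\widehat{Y}\subset\widehat{\Lambda}$ of full $\hat{\mu}$-measure on which $h^-_{f,\mathrm{inf},B}(\mu,\hat{x})\geq h^-_{f,\mathrm{inf},B}(\mu)-\eta$ (in the trivial case $h^-_{f,\mathrm{inf},B}(\mu)=0$ there is nothing to prove). Applying Theorem \ref{newtop} with $\alpha=h^-_{f,\mathrm{inf},B}(\mu)-\eta$ and using that $\hat{\mu}(\widehat{Y})=1$ (so that $\hat{\mu}(\widehat{Y}\setminus\widehat{\mathcal{A}})<\delta$ is equivalent to $\hat{\mu}(\widehat{\mathcal{A}})>1-\delta$) gives
$$\lim_{\delta\to 0}\sup\{h^-(\widehat{\mathcal{A}})\,:\,\hat{\mu}(\widehat{\mathcal{A}})>1-\delta\}\geq h^-_{f,\mathrm{inf},B}(\mu)-\eta.$$
Letting $\eta\to 0$ and then taking supremum over all ergodic $\mu$ on $\Lambda$ closes the chain.

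Since every ergodic measure on $\Lambda$ is hyperbolic and special under the hypotheses, no extra work is needed to check applicability of the two key tools; the only mild nuisance is the passage between $h^-(\widehat{Z})$ and $h^-(Z,\widehat{\Lambda})$, which is cleanly handled by Proposition \ref{properties}(i). No new estimates beyond those in Theorems \ref{t2}, \ref{newtop}, and \ref{thpreind} are required, so I expect the argument to be essentially a packaging step rather than to present any genuinely new obstacle.
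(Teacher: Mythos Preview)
Your proposal is correct and follows essentially the same approach as the paper, which treats the corollary as an immediate specialization of Theorem \ref{varpr} once one observes (as stated just before the corollary) that every ergodic measure on $\Lambda$ is hyperbolic and special when $f$ is. The one extra step you make explicit---passing from $h^-(\widehat{Z})$ to $h^-(Z,\widehat{\Lambda})$ via Proposition \ref{properties}(i)---is exactly the packaging needed and is implicit in the paper's formulation.
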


In the case of special TA-covering maps on tori, we obtain in Theorem \ref{VPtor} a Full Variational Principle for inverse entropy. In particular, this holds for Anosov endomorphisms without critical points on tori.

\textbf{Proof of Theorem \ref{VPtor}.}

From the Theorem of Sumi from \cite{S} stated in Section \ref{specialAnosov} it follows that there exists a topological conjugacy $\Phi: \mathbb T^d \to \mathbb T^d$ between $f$ and $f_L$. Thus by Proposition \ref{conjtopol} the inverse topological entropy of $f$  satisfies the relation $$h^-_f(\widehat{\mathbb T^d}_f) = h^-_{f_L}(\widehat{\mathbb T^d}_{f_L}) =  - \sum_{i: |\lambda_i|<1} \log |\lambda_i|,$$ where $\lambda_i$ are the eigenvalues of the matrix of $f_L$. Moreover $\mu$ is an $f$-invariant ergodic measure if and only if $\Phi_*\mu$ is an $f_L$-invariant ergodic measure. But if $m$ 
	is the Haar measure on $\mathbb T^d$, then $m$ is $f_L$-invariant ergodic and $h^-_{f_L}(m) =  - \sum_{i: |\lambda_i|<1}\log |\lambda_i|$ (see subsection \ref{toralendo}); for any other $f_L$-invariant ergodic measure $\mu$ we have $h^-_{f_L}(\mu) \le  - \sum_{i: |\lambda_i|<1}\log |\lambda_i|.$  So the measure $\rho := \Phi^{-1}_*m$ is $f$-invariant ergodic and $h^-_f(\rho) = h^-_{f_L}(m) =  - \sum_{i: |\lambda_i|<1}\log |\lambda_i|$. If $\nu$ is any other $f$-invariant ergodic measure, then $h^-_f(\nu) = h^-_{f_L} (\Phi_*\nu) \le  - \sum_{i: |\lambda_i|<1}\log |\lambda_i|.$ Thus we have a Full Variational Principle for inverse entropy if $f:\mathbb T^d \to \mathbb T^d$ is a special TA-covering map.
	$\hfill\square$

\section{Classes of Examples}\label{classesexp}
%\begin{example} \normalfont
%Let $(X,\mathcal{B},\mu)$ be a probability space and let $f$ be a measure preserving isomorphism of $(X,\mathcal{B},\mu)$. Then 
%$$h^-_{f}(\mu)=h^-_{f,B}(\mu)=h_f(\mu).$$
%\end{example}

%\begin{proposition}
%Let $f$ be a surjective map of a compact metric space $X$ and  $\mu$ a probabiluty measure on $X$ which is $f$-invariant. If $f$ is open and distance-expanding then $h^{-}_{f, B}(\mu)=0$. 
%\end{proposition}	

\subsection{Examples using expanding maps}

(1)	Let $\sigma: \Sigma_m^+\rightarrow \Sigma_m^+$ be the shift on the one-sided symbolic space $$\Sigma_m^+=\{(x_0,x_1,x_2,\ldots ) :  x_i\in\{1,2,\ldots, m\}, i\geq 0\}.$$ Then $h^-_{\sigma}(\mu)=0$ for any $\sigma$-invariant probability measure $\mu$, in particular for any Bernoulli measure $\mu_p$ corresponding to a probability vector $p=(p_1,p_2,\ldots, p_m)$.
 
(2) Let $f: S^1\rightarrow S^1$, $f(x)=dx \ (\text{mod  }1)$. Then $h^-_{f}(m)=0$, where $m$ is the Haar measure on $S^1$.

%(3) If $f$ is a homeomorphism and $\mu$ is $f$-invariant ergodic, then $h^-_f(\mu)=h^-_{f,B}(\mu)=h_f(\mu)$. Let $g:S^1\rightarrow S^1$, $g(z)=z^2$ and let $f: X\rightarrow X$ be a homeomorphism of a compact metric space. Let $m$ be the Haar measure on $S^1$ and let $\mu$ be an ergodic $f$-invariant ergodic measure on $X$. Then, $$h^-_{f\times g}(\mu\times m)=h^-_{f\times g, B}(\mu\times m)= h_{f}(\mu).$$

\subsection{Linear toral endomorphisms}\label{toralendo}
Let $A$ be a $p\times p$ hyperbolic matrix with $|\text{det}(A)|>1$ and with integer entries and non-zero eigenvalues $\lambda_1, \lambda_2, \ldots\lambda_p$. Let $f_A:\mathbb{T}^p\rightarrow \mathbb{T}^p$ be the associated toral endomorphism. Let $m$ be the normalized  Lebesgue measure (Haar measure) on $\mathbb{T}^p$. Then
	 $$h^-_{f_A, B}(m)=-\sum_{\{i: |\lambda_i|<1\}}\log|\lambda_i| , \  h_{f_A}(m)=\sum_{\{i: |\lambda_i|>1\}}\log|\lambda_i|. $$

%	$$h^-_{f_A, B}(m)=-\log|\lambda_1|, \ \ \ h_{f_A}(m)=\log|\lambda_2|.$$
	%Indeed, denote the eigenvectors of $A$, normalized to unit length
%	by $v_1$ and $v_2$ and assume that $|\lambda_1|>1$ and $|\lambda_2|<1$. It is well known that they are linear independent. 
	%Define on $\mathbb{T}^2$ a metric $d$ equivalent to the Euclidean one in the following way: if $z=xv_1+yv_2$ and $z'=x'v_1+y'v_2$, $d(z, z')=\max\{ |x-x'| , |y-y'|\}$.
	Indeed, for an arbitrary prehistory $\hat{z}$ and arbitrary $n\ge 1$ we have $C_1 \cdot\varepsilon^p \prod_{i : |\lambda_i|<1} \lambda_i^n \leq m(B_n^-(\hat{z},\varepsilon))\leq C_2 \cdot\varepsilon^p \prod_{i : |\lambda_i|<1} \lambda_i^n$, where $C_1$ and $C_2$ are positive constants independent on $n$ and on $\hat{z}$. Then 
	$$\lim_{n\rightarrow\infty}\frac{-\log m(B_n^-(\hat{z},\varepsilon))}{n}=-\sum_{\{i: |\lambda_i|<1\}}\log|\lambda_i|,$$
	and therefore 
	$h^{-}_{f_A, B}(m)=-\sum_{\{i: |\lambda_i|<1\}}\log|\lambda_i|$. 
	Note that, for $m$-a.e. $z\in \mathbb{T}^p$, $J_{f_A}(m)(z)=\prod_{i=1}^p | \lambda_i|$
	and so by Theorem \ref{t1} or Theorem \ref{zerob}, 
\begin{equation}\label{inventLeb}	
	h^-_{f_A, B}(m)=h^-_{f_A}(m)=h_{f_A}(m)-F_{f_A}(m)=-\sum_{\{i: |\lambda_i|<1\}}\log|\lambda_i|.
	\end{equation}
	One can cover $\mathbb {T}^p$ with $N_{n,\varepsilon}= ( \varepsilon^p \prod_{i : |\lambda_i|<1} \lambda_i^n)^{-1} $ $(n,\varepsilon) $-inverse Bowen balls with mutually disjoint interiors. It then follows that the inverse topological entropy of $f_A$ satisfies $h^-_{f_{A}}(\widehat{\mathbb {T}}^p)	= - \sum_{i: |\lambda_i(\mu)|<1}\log |\lambda_i|.$
	If $\mu$ is any ergodic $f$-invariant measure, then from Corollary \ref{corz} we know that $h^-_{f_A}(\mu)$ exists and 
	$$h^-_{f_A}(\mu)\leq  - \sum_{i: |\lambda_i(\mu)|<1}\log |\lambda_i|,$$
	where $\lambda_i$ are the the eigenvalues of $A$ with their multiplicities. 
	In this case Theorem \ref{VPtor} applies. %$$h^-_{f_{A}}(\widehat{\mathbb {T}^p})= \sup \{h^-_{f_{A}}(\mu) :  \mu \text{ ergodic} \ f_A-\text{invariant}\}.$$

%	Similarly for any $n\geq 2$, if $A$ is a $n\times n$ hyperbolic matrix with integer entries and non-zero eigenvalues, then $$h^-_{f_A, B}(m)= h^-_{f_A}(m)=-\sum_{\{i: |\lambda_i|<1\}}\log|\lambda_i|.$$

	% where $m$ is the normalized Lebesgue measure on $\mathbb{T}^n$ and $\lambda_1, \ldots ,\lambda_n$ are the eigenvalues of $A$.

\begin{example}\label{toralex}
	\normalfont Inverse measure-theoretic entropy can \textbf{distinguish between isomorphism classes }of measure preserving endomorphisms,  when they have the same forward entropy. 
	Let the matrices
	\begin{equation*}
		A_1=\left(\begin{array}{ccc}
			8 & 1 & 4\\
			0 & 3 & 1\\
			0 & 2 & 1
		\end{array}
		\right) \text {and }A_2=\left(\begin{array}{ccc}
			4 & 0 & 0\\
			3 & 6 & 2\\
			5 & 4 & 2
		\end{array}
		\right).
	\end{equation*}
	The eigenvalues of $A_1$ are $8, 2+\sqrt{3}$ and $2-\sqrt{3}$ and the eigenvalues of $A_2$ are $4, 4+2\sqrt{3}$ and $4-2\sqrt{3}$. Then $f_{A_1}$ and $f_{A_2}$ have the same (forward) entropy for the Haar measure $m$ on $\mathbb{T}^3$, namely $\log 8+\log (2+\sqrt{3})$. On the other hand, we have
	$$h^-_{f_{A_1}}(m)=-\log(2-\sqrt{3}) \text{ and }h^-_{f_{A_2}}(m)=-\log(4-2\sqrt{3}),$$  thus  $(\mathbb{T}^3, f_{A_1}, m)$ and $(\mathbb{T}^3, f_{A_2}, m)$ are not isomorphic. 
	%Thus the inverse entropy distinguishes between the isomorphism classes of measure-preserving endomorphisms, when the usual forward metric entropy cannot be used to distinguish between them.
\end{example}

\subsection{Fat baker's transformations}\label{fatbaker} 

Let $K=[-1,1]\times[-1,1]$. For $0<\beta<1$		
consider as in \cite{AY} the transformation $T_\beta: K\rightarrow K$ defined by
$$T_\beta(x,y)=\begin{cases}
(\beta x+(1-\beta), 2y-1)&   y\geq 0, \\
(\beta x-(1-\beta), 2y+1)&   y< 0.
\end{cases}$$
The maps obtained for $\beta\in\left(\frac{1}{2},1\right)$ are called fat
baker's transformations. In this case the attractor is the whole square $K$. Let $z=(x,y)\in K$. Notice that we have overlaps between the images of the two branches of $T_\beta$. There exists a $T_\beta$-invariant ergodic probability measure $\mu_{SRB}^\beta$ on $K$ (called the Sinai-Ruelle-Bowen measure of $T_\beta$), such that for Lebesgue a.e. point $(x,y) \in K$ the measures
$\frac{1}{n}\sum\limits_{i=0}^{n-1}\delta_{T_\beta^i(x,y)}$
converge weakly to $\mu_{SRB}^\beta$.
Consider the iterated function system $\mathcal{S}_\beta=\{S_1^\beta, S_2^\beta\}$ consisting of $S_1^\beta(x)=\beta x+(1-\beta),  S_2^\beta(x)=\beta x-(1-\beta)$, $x\in [-1,1]$ and let $\pi_\beta:\Sigma_2^+\rightarrow [-1,1]$ be the canonical projection to the limit set of $\mathcal{S}_\beta$. Let $\mu_{(\frac{1}{2}, \frac{1}{2})}$ be the Bernoulli measure on $\Sigma^+_2$ corresponding to the vector $\left(\frac{1}{2},\frac{1}{2}\right)$ and $\nu_\beta:=\pi_{\beta*}\mu_{(\frac{1}{2}, \frac{1}{2})}$. The Sinai-Ruelle-Bowen measure $\mu_{SRB}^\beta$ for $T_\beta$ is equal to $\nu_\beta\times m$ (see \cite{AY}), where $m$ is the normalized Lebesgue measure on $[-1,1]$. 
Let now $z=(x, y)\in K$ with $y \ne \pm 1$, and $\hat{z}$ be a $T_\beta$-prehistory of $z$. 
As $T_\beta$ is contracting with constant factor $\beta$ in first coordinate and expanding in second coordinate, it follows that for $\vp>0$ small, 
$B_n^-(\hat{z},\varepsilon)=B(x,\beta^n\varepsilon)\times B(y,\varepsilon)$ for all $n \ge 1$. Thus
\begin{equation}\label{invbaker}
\mu_{SRB}^\beta(B_n^-(\hat{z},\varepsilon))=\nu_\beta(B(x,\beta^n\varepsilon))\cdot\varepsilon.
\end{equation}
 Let
$$\underline{\delta}(\nu_\beta)(x)=\liminf_{r\rightarrow 0}\frac{\log \nu_\beta(B(x,r))}{\log r}\text{ and }\overline{\delta}(\nu_\beta)(x)=\limsup_{r\rightarrow 0}\frac{\log \nu_\beta(B(x,r))}{\log r},$$
be the lower, respectively upper pointwise dimension of $\nu_\beta$ at $x$. It follows from \cite{FH} (or \cite{M}) that $\nu_\beta$ is exact dimensional and thus $\underline{\delta}(\nu_\beta)(x)=\overline{\delta}(\nu_\beta)(x)$ for $\nu_\beta$-a.e. $x\in [-1,1]$ and the common value $\delta(\nu_\beta)(x)$ is constant for $\nu_\beta$-a.e $x$; we denote this constant by $\delta(\nu_\beta)$. 
%So we have 
%$$h^-_{T_\beta,inf,B}(\mu_{SRB}, \hat{z})=\lim_{\varepsilon\rightarrow 0}\liminf_{n\rightarrow\infty}\frac{-\log ( \mu_{SRB}(B_n^-(\hat{z},\varepsilon)))}{n},$$  $$h^-_{T_\beta,sup,B}(\mu_{SRB},\hat{z})=\lim_{\varepsilon\rightarrow 0}\limsup_{n\rightarrow\infty}\frac{-\log ( \mu_{SRB}(B_n^-(\hat{z},\varepsilon)))}{n}$$
Then from (\ref{invbaker}) and since $\delta(\nu_\beta)=\lim\limits_{r\rightarrow 0}\frac{\log \nu_\beta(B(x,r))}{\log r}$ for $\nu_\beta$-a.e. $x$, it follows that $h^-_{T_\beta,B}(\mu_{SRB}^\beta)$ exists and 
\begin{equation}\label{invb}
h^-_{T_\beta,B}(\mu_{SRB}^\beta)= |\log\beta|\cdot\delta(\nu_\beta).
\end{equation}
In \cite{MU4} Mihailescu and Urba\'nski introduced the topological overlap number; in our case denote the topological overlap number of $S_\beta$ by $o(\beta)$. Then from \cite{M} it follows that 
\begin{equation}\label{invc}
\delta(\nu_\beta)=HD(\nu_\beta)=\frac{\log 2- \log o(\beta)}{|\log\beta|}.
\end{equation}
From (\ref{invb}) and (\ref{invc}), for any $\beta\in \left(\frac{1}{2},1\right)$ the inverse entropy of the SRB measure of $T_\beta$ satisfies, 
\begin{equation}\label{fatformula}
h^-_{T_\beta, B}(\mu_{SRB}^\beta) = h^-_{T_\beta}(\mu_{SRB}^\beta) = \log 2- \log o(\beta).
\end{equation}

\subsection{Tsujii endomorphisms} \label{tsujii}
Let the family of endomorphisms $T: S^1\times \mathbb{R}\rightarrow  S^1\times \mathbb{R}$, 
\begin{equation}\label{TTs}
T(x,y)=(lx, \lambda y+f(x)),
\end{equation}
where $l\geq 2$ is an integer, $1/l < \lambda < 1$ and $f:S^1\rightarrow \mathbb{R}$ is a $C^2$ function, defined in \cite{Ts}. The map $T$ is a skew product over the expanding map $\tau : x\mapsto lx$, having uniform contraction in the fibre direction,  hence $T$ is an Anosov endomorphism. Let $\mu_{SRB}$ denote the SRB measure for $T$.
If $\lambda l<1$, the SRB measure $\mu_{SRB}$ is totally singular with respect to the Lebesgue measure on $S^1 \times \mathbb R$ because $T$ contracts area. If $\lambda l> 1$ as in our setting, then the situation is more interesting: in some cases the SRB measure is totally singular with respect to Lebesgue measure and in other cases it is absolutely continuous with respect to Lebesgue measure.

%For example, if $l=2$, $0.5 <\lambda \leq0.51$ and $f(x)=\sin 2\pi x$, then the SRB measure $\mu_{SRB}$ for $T$ is absolutely continuous with respect to the Lebesgue measure of $S^1\times \mathbb{R}$ (see \cite{Ts}). On the other hand, if $f(x)=\varphi(\tau(x))-\lambda \varphi(x)$ for some measurable function $\varphi$ on $S^1$, then the SRB measure for $T$ is supported on the graph of $\varphi$ and is totally singular.

In \cite{Ts} Tsujii proved that for a generic map $T$ the corresponding SRB measure is absolutely continuous with respect to the Lebesgue measure. 
Fix an integer $l\geq 2$. Let $\mathcal{D}\subset (0,1)\times C^2(S^1, \mathbb{R})$ be the set of all pairs $(\lambda,f)$ for which the SRB measure is absolutely continuous with respect to the Lebesgue measure on $S^1\times \mathbb{R}$. Then $\mathcal{D}$ contains an open and dense subset of $(1/l, 1)\times C^2(S^1, \mathbb{R})$.
Consider the map $$\tau : S^1\rightarrow S^1, \ \  \tau(x)=lx \ \ \text{mod } 1.$$
Let $\Sigma_l^+$ be the one sided shift space on $l$ symbols, i.e the set $\{\omega=(\omega_1,\omega_2,\ldots)   :  \omega_i\in \{1,\ldots,l\}, i\geq 1 \}$. Let $\pi: S^1\rightarrow \{1,2,\ldots l \}$ be defined by $\pi(x)=j$ where 
$x\in 
\left[ \frac{j-1}{l}, \frac{j}{l}\right)$. If $\omega\in \Sigma_l^+$ let $\pi(x)\omega$ be the element of $\Sigma_l^+$ obtained by putting $\pi(x)$ in front of $\omega$. For $(x,\omega)\in S^1\times \Sigma_l^+$ let 
$$S(x, \omega)=f(x_1(\omega))+ \lambda f(x_2(\omega))+\lambda^2 f(x_2(\omega))+\cdots,$$
where $x_1(\omega)$ is the unique point $y$ from $\left[ \frac{\omega_1-1}{l}, \frac{\omega_1}{l}\right)$ such that $\tau(y)=x$, $x_2(\omega)$ is the unique point $y$ from $\left[ \frac{\omega_2-1}{l}, \frac{\omega_2}{l}\right)$ such that $\tau(y)=x_1(\omega)$, and so on. Let us consider the maps
\begin{align}\label{maps}
\Psi: S^1\times \Sigma_l^+\rightarrow S^1\times \mathbb{R},\ \ \  
\Psi(x,\omega)=(x, S(x,\omega)), \text  { and  }\\
\Theta: S^1\times \Sigma_l^+\rightarrow S^1\times \Sigma_l^+,\ \ \ \Theta(x,\omega)=(\tau(x), \pi(x)\omega) \nonumber.
\end{align}
Then
\begin{align*} 
\Psi \circ \Theta(x,\omega)&=\Psi(\tau(x), \pi(x)\omega) = (\tau(x), S(\tau(x), \pi(x)\omega)
=(\tau(x), f(x)+\lambda f(x_1(\omega)) + \lambda^2 f(x_2(\omega))+\cdots)\\ &= (\tau(x), f(x)+\lambda(f(x_1(\omega)) + \lambda f(x_2(\omega))+\cdots))=T(\tau(x), S(x,\omega))= T\circ\Psi (x,\omega).
\end{align*} 

Fix an integer $l\geq 2$ and $\lambda$ such that $l^{-1}<\lambda<1$. Let $g:S^1\rightarrow \mathbb{R}$ a $\mathcal{C}^2$ function, $k\geq 1$, 
and $\varphi_i:S^1\rightarrow \mathbb{R}$, $1\leq i\leq k$ be $\mathcal{C}^\infty$. For $t=(t_1, t_2, \ldots, t_k)\in \mathbb{R}^k$,  consider the family of functions
$$f_t(x)=g(x)+\sum_{i=1}^k t_i\varphi_i(x): S^1\rightarrow \mathbb{R}.$$ 

Then we obtain the corresponding family of maps
\begin{equation}\label{Tt}
T_t: S^1\times\mathbb{R}\rightarrow  S^1\times\mathbb{R} \ \ \ \ T_t(x,y)=(lx,\lambda y+f_t(x)).
\end{equation}

\begin{theorem}\label{Tsujiestimateinv}
	There exists a family $\varphi_i: S^1\rightarrow \mathbb{R}$, $1\leq i\leq k$ of $C^\infty$  functions such that, if  $\{T_t\}_{t\in\mathbb{R}^k}$ are the maps  defined in (\ref{Tt}),
	then for Lebesgue a.e. $t\in\mathbb{R}^k$ the following estimates for the inverse entropy of the SRB measure $\mu_{SRB}^t$ of $T_t$ hold:
	\begin{equation}\label{invts}
	\frac{1}{2}|\log \lambda|\leq h^-_{T_t,inf, B}(\mu_{SRB}^t) \leq h^-_{T_t,sup, B}(\mu_{SRB}^t)|\leq h^-_{T_t}(\mu_{SRB}^t)=|\log \lambda|.
	\end{equation} 
\end{theorem}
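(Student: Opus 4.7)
The plan is to split the inequalities into three pieces: the middle inequality $h^-_{T_t,inf,B}\le h^-_{T_t,sup,B}$ is trivial, the inequality $h^-_{T_t,sup,B}(\mu_{SRB}^t)\le h^-_{T_t}(\mu_{SRB}^t)$ is Proposition \ref{c1}, and the equality $h^-_{T_t}(\mu_{SRB}^t)=|\log\lambda|$ will follow from Theorem \ref{t1}. Since $T_t$ has no critical points and its preimages on $S^1\times\mathbb R$ can be separated by a finite measurable partition, Theorem \ref{t1} gives $h^-_{T_t}(\mu_{SRB}^t)=h_{T_t}(\mu_{SRB}^t)-F_{T_t}(\mu_{SRB}^t)$. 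Tsujii's theorem supplies a specific family $\varphi_1,\dots,\varphi_k$ such that for Lebesgue-a.e.\ $t\in\mathbb R^k$ the SRB measure $\mu_{SRB}^t$ is absolutely continuous with density $\rho_t$ in $L^2$. Pesin's formula applied to an absolutely continuous SRB measure gives $h_{T_t}(\mu_{SRB}^t)=\log l$, the sum of positive Lyapunov exponents. For the folding entropy, I would compute the Jacobian directly: if $\mu_{SRB}^t=\rho_t\,dm$ and $T_t$ is injective on small sets, then $J_{T_t}(\mu_{SRB}^t)(z)=\rho_t(T_tz)\,|\det DT_t|/\rho_t(z)=\rho_t(T_tz)\,l\lambda/\rho_t(z)$, so by $T_t$-invariance of $\mu_{SRB}^t$, $F_{T_t}(\mu_{SRB}^t)=\int\log J_{T_t}(\mu_{SRB}^t)\,d\mu_{SRB}^t=\log(l\lambda)$. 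This yields $h^-_{T_t}(\mu_{SRB}^t)=\log l-\log(l\lambda)=|\log\lambda|$, proving the rightmost equality.

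The lower bound $\frac12|\log\lambda|\le h^-_{T_t,inf,B}(\mu_{SRB}^t)$ requires a geometric estimate of the inverse Bowen balls combined with an $L^2$-Cauchy--Schwarz argument. Writing $z=(x,y)$ with prehistory $\hat z$ and using $B^-_n(\hat z,\varepsilon)=T_t^n(B_n(z_{-n},\varepsilon))$, I would analyze the latter using the explicit form $T_t(x,y)=(lx,\lambda y+f_t(x))$. The $x$-coordinate expands by $l$ so the Bowen ball $B_n(z_{-n},\varepsilon)$ has $x$-extent of order $\varepsilon l^{-n}$; the $y$-coordinate satisfies the linearized cocycle $\lambda^i(b-y_{-n})+\sum_{j<i}\lambda^{i-1-j}(f_t(l^ja)-f_t(l^jx_{-n}))$, and the binding constraint comes from $i=0$, giving $y$-extent of order $\varepsilon$ (the shear term at time $n$ is also $O(\varepsilon)$ since $|\delta x|\le \varepsilon l^{-n}$ and $\sum\lambda^{n-1-j}l^j\lesssim l^{n}$). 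Hence $B_n(z_{-n},\varepsilon)$ is a parallelogram of Lebesgue area $\asymp\varepsilon^2 l^{-n}$, and pushing forward by $T_t^n$ (whose Jacobian determinant is $(l\lambda)^n$) gives
\begin{equation*}
m\bigl(B^-_n(\hat z,\varepsilon)\bigr)\le C\varepsilon^2\lambda^n.
\end{equation*}

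With $\rho_t\in L^2$, Cauchy--Schwarz yields $\mu_{SRB}^t(B^-_n(\hat z,\varepsilon))\le\|\rho_t\|_{L^2}\sqrt{m(B^-_n(\hat z,\varepsilon))}\le C'\varepsilon\,\lambda^{n/2}$, so
\begin{equation*}
\frac{-\log\mu_{SRB}^t(B^-_n(\hat z,\varepsilon))}{n}\ge\frac{-\log(C'\varepsilon)}{n}+\frac12|\log\lambda|\xrightarrow[n\to\infty]{}\frac12|\log\lambda|.
\end{equation*}
Letting $\varepsilon\to 0$ gives $h^-_{T_t,inf,B}(\mu_{SRB}^t,\hat z)\ge\frac12|\log\lambda|$ for $\hat\mu_{SRB}^t$-a.e.\ $\hat z$, and integration produces the claimed bound.

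The main obstacle is twofold: invoking Tsujii's technical $L^2$ result for the generic family (this is where the specific choice of $\varphi_1,\dots,\varphi_k$ is needed and where the ``Lebesgue-a.e.\ $t$'' qualifier enters), and carrying out the shear estimate in the geometric step with enough care to confirm that the area bound $m(B^-_n(\hat z,\varepsilon))\lesssim\varepsilon^2\lambda^n$ holds uniformly in $\hat z$ and $n$. The factor $1/2$ in the lower bound is precisely the price of Cauchy--Schwarz on an $L^2$ density; one could conceivably close the gap to $|\log\lambda|$ if $\rho_t\in L^\infty$, but Tsujii's generic theorem only provides $L^2$ control, which is why the estimate stops at $\frac12|\log\lambda|$.
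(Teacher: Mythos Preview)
Your proposal is correct and follows essentially the same route as the paper: invoke Tsujii's genericity theorem to get an $L^2$ density, compute the Jacobian and hence the folding entropy $F_{T_t}(\mu_{SRB}^t)=\log(l\lambda)$, apply Theorem~\ref{t1} and Proposition~\ref{c1} for the upper chain, and use Cauchy--Schwarz on the $L^2$ density together with a Lebesgue-measure bound on $B^-_n(\hat z,\varepsilon)$ for the lower bound $\tfrac12|\log\lambda|$. The only noticeable difference is in how the Lebesgue bound $m(B^-_n(\hat z,\varepsilon))\le C\varepsilon^2\lambda^n$ is obtained: the paper asserts a containment $B^-_n(\hat z,\varepsilon)\subset B(x,\varepsilon)\times B(y,\lambda^n\varepsilon)$, whereas you compute the area of the forward Bowen ball and push forward by the Jacobian $(l\lambda)^n$; your route is in fact the cleaner one, since the skew term $f_t$ produces an $O(\varepsilon)$ shear that makes the paper's product containment only approximate, even though the area bound (which is all that is needed) survives either way.
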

\begin{proof}
In general, let $T$ be an Anosov endomorphism defined as in (\ref{TTs}) and denote by $\mu_{SRB}$ its SRB measure (which exists since $T$ is Anosov). Recall the definition of the map $\Psi$ from (\ref{maps}). Let $\mu_x=\Psi_*(\delta_x\times \nu)$ for $x\in S^1$, where $\delta_x$ is the point mass at $x$ and $\nu$ is the Bernoulli measure on $\Sigma_l^+$ associated to the probability vector $(1/l, \ldots, 1/l)$. The measures $\mu_{x}$, $x\in S^1$, form a canonical family of conditional measures of the SRB measure $\mu_{SRB}$ with respect to the partition of $S^1\times \mathbb{R}$ into fibres $\{x\}\times \mathbb{R}$, $x\in S^1$ (see \cite{Ro}, \cite{Ts}).
In general, for a finite Borel measure $\rho$ on $\mathbb{R}$ and any $r>0$  define 
$$\|\rho\|_r=\left(\int_{\mathbb{R}} \left(\rho(B(z,r))\right)^2dz\right)^{\frac{1}{2}}.$$
For $r>0$,  define
$I(r):=r^{-2}\int_{S^1}\|\mu_{x}\|_r^2 \ dx.$
We want to see when is the following condition satisfied:
\begin{equation}\label{em}
\liminf_{r\rightarrow 0} I(r)<\infty.
\end{equation}
%Hence, if condition (\ref{em}) is satisfied we have 
%\begin{equation}\label{invts}
%\frac{1}{2}|\log \lambda|\leq h^-_{T,inf, B}(\mu_{SRB}) \leq h^-_{T,sup, B}(\mu_{SRB})|\leq \log \lambda|.
%\end{equation}
We will show that condition (\ref{em}) is satisfied for a large class of maps $T_t$ (defined in  (\ref{Tt})). With the  notations from \cite{Ts}, assume now that $\limsup_{q\rightarrow \infty}d(q)<\lambda l$. Thus there exists some $\beta>0$ such that $\limsup\limits_{q\rightarrow \infty}d(q)<\beta<\lambda l$. Let $M=\sup d(q)$ and let $q_0\geq 1$ be such that $d(i)\leq \beta$ for $i\geq q_o$. But from Proposition 9 of \cite{Ts}, we know that $e(q)\leq \prod_{i=1}^q d(i)$ for $q\geq 1$. Hence 
$$e(q)\leq \prod_{i=1}^q d(i)\leq M^{q_0}\beta^{q-q_0}\leq(\lambda l)^q,$$
for $q$ sufficiently large. Then Proposition 8 of \cite{Ts} implies that  $(\lambda,f)$ is in $\mathcal{D}^o$ and $\liminf\limits_{r\rightarrow 0}I(r)<\infty$. Recall the definition of $T_t$ in (\ref{Tt}) and denote by $\mu_{SRB}^t$ its SRB measure. 
From the proof of Theorem 1 of \cite{Ts} it follows that we can choose  the functions $\varphi_i$, $1\leq i\leq k$ such that for Lebesgue a.e. $t\in\mathbb{R}^k$, we have $\limsup\limits_{q\rightarrow \infty}d(q)<\beta<\lambda l$ and thus (\ref{em}) holds as shown above. Let us then fix the functions $\varphi_i$, $1\leq i\leq k$ and $t\in\mathbb{R}^k$ as above. Hence
$\mu^t_{SRB}$ is absolutely continuous with respect to the Lebesgue measure $m$ on $S^1\times \mathbb{R}$, and from (\ref{em}) its Radon-Nikodym derivative $\zeta_t$ is in $L^2(S^1\times \mathbb{R})$.
 %(\ref{maps}) is satisfied. This implies that for such $t$, $\mu_{SRB}^t$ is absolutely continuous with respect to the Lebesgue measure $m$ on $S^1\times\mathbb{R}$ and its density is in $L^2(S^1\times\mathbb{R})$. 
%Thus we obtain the estimates (\ref{invts}). 
% Hence, if $\limsup_{q\rightarrow \infty}d(q)<\lambda l$, the SRB neasure $\mu_{SRB}$ of $T$ is absolutely continuous with respect to the Lebesgue measure on $S^1\times \mathbb{R}$ and its density function is in $L^2(S^1\times \mathbb{R})$. 
For $z\in S^1 \times  \mathbb{R}$ and  $r>0$  small enough, $T_t$ is injective on $B(z,r)$. 
Thus,
$$\mu_{SRB}^t(B(z,r))=\int_{B(z,r)}\zeta_t\ dm, \ \ \mu_{SRB}^t(T_t(B(z,r)))=\int_{T_t(B(z,r))} \zeta_t\ dm, $$
and therefore
\begin{equation}\label{der}
\frac{\mu_{SRB}^t(T_t(B(z,r)))}{\mu_{SRB}^t(B(z,r))}=\frac{\int_{T_t(B(z,r))} \zeta_t \ dm  }{  \int_{B(z,r)}\zeta_t \ dm} = \frac{ \int_{T_t(B(z,r))} \zeta_t \ dm}{m(T_t(B(z,r)))  } \cdot \frac{m (B(z,r))} {  \int_{B(z,r)}\zeta_t \ dm}\cdot \frac{ m(T_t(B(z,r))) }{m(B(z,r))}. 
\end{equation}
Now let $r\rightarrow 0$; then from (\ref{der}) and Lebesgue Density Theorem, we obtain for Lebesgue a.e. $z\in S^1\times\mathbb{R}$,
$J_{T_t}(\mu_{SRB}^t)(z)=\frac{\zeta_t(T_tz)}{\zeta_t(z)}\cdot \lambda l$.
Thus the folding entropy of the SRB measure $\mu_{SRB}^t$ of $T_t$ satisfies
$$F_{T_t}(\mu_{SRB}^t)=\int \log J_{T_t}(\mu_{SRB}^t)d\mu_{SRB}^t= \int \log \zeta_t (T_tz)d\mu_{SRB}^t(z)-\int \log \zeta_t(z)d\mu_{SRB}^t(z) +\log (\lambda l).$$ 
From the $T_t$-invariance of $\mu_{SRB}^t$ and the last formula, it follows that 
\begin{equation}\label{fol}
F_{T_t}(\mu_{SRB}^t)=\log (\lambda l). 
\end{equation}
From Proposition \ref{c1} we have $h^-_{T_t,sup,B}(\mu_{SRB}^t)\leq h_{T_t}(\mu_{SRB}^t)-F_{T_t}(\mu_{SRB}^t)$. On the other hand, $$h_{T_t}(\mu_{SRB}^t)=\log l,$$ as $T_t$ is given by $lx$ in first coordinate and contracts in second coordinate, and since $\mu_{SRB}^t$ is absolutely continuous on the unstable manifolds of $T_t$ for the chosen $t$. So from  above and (\ref{fol}) we obtain that
\begin{equation}\label{eq7}
h^-_{T_t,sup,B}(\mu_{SRB}^t)\leq |\log \lambda|.
\end{equation} 

However recall that $\mu_{SRB}^t(A)=\int_A \zeta_t\  dm$ for every measurable set $A\subset S^1\times \mathbb{R}$, and in our case for the parameter $t$ chosen above we have $\zeta_t\in L^2(S^1\times\mathbb{R})$. Hence, 
\begin{equation}\label{zetatma}
\mu_{SRB}^t(A)\leq \|\zeta_t\|_2\cdot  m(A)^{\frac{1}{2}}.
\end{equation}
Let $\hat{z}$ be an arbitrary prehistory of $z=(x,y)\in S^1\times \mathbb{R}$ with respect to the endomorphism $T_t$. Since $B^-_n(\hat{z}, \varepsilon)= T_t^n(B_n( z_{-n}, \varepsilon)) \subset B(x,\varepsilon) \times B(y, \lambda^n\varepsilon)
$,
we have $m(B^-_n(\hat{z}, \varepsilon))\leq \lambda^n \cdot\varepsilon^2$. Then, from (\ref{zetatma}),
$$\liminf_{n\rightarrow\infty}\frac{-\log \mu_{SRB}^t(B^-_n(\hat{z}, \varepsilon)) }{n}\geq \liminf_{n\rightarrow\infty} \frac{-\log m(B^-_n(\hat{z}, \varepsilon)) - 2\log \|\zeta_t\|_2}{2n}  \geq -\lim_{n\rightarrow\infty}\frac{\log (\lambda^n \varepsilon^2)}{2n}=\frac{1}{2}|\log \lambda|.$$
Therefore $h^-_{T_t,inf, B}(\mu_{SRB}^t)\geq\frac{|\log \lambda|}{2} $. Also by Theorem \ref{t1}, since $h_{T_t}(\mu^t_{SRB})= \log l$ and by (\ref{fol}), it follows that the inverse partition entropy $h^-_{T_t}(\mu_{SRB}^t)$ exists and  $$h^-_{T_t}(\mu_{SRB}^t)=|\log \lambda|.$$
Hence for Lebesgue a.e $t\in \mathbb{R}^k$, the lower/upper inverse metric entropies, and the inverse partition entropy of the SRB measure $\mu_{SRB}^t$ of $T_t$ satisfy 
\begin{equation}
\frac{1}{2}|\log \lambda|\leq h^-_{T_t,inf, B}(\mu_{SRB}^t) \leq h^-_{T_t,sup, B}(\mu_{SRB}^t)\leq  h^-_{T_t}(\mu_{SRB}^t)=|\log \lambda|.
\end{equation} 
\end{proof}

\smallskip

Eugen Mihailescu, 
		Institute of Mathematics of the  Romanian Academy, Calea Grivitei 21,  Bucharest,  Romania. \ \ \ \  \ \ \ \ 
	\email{Eugen.Mihailescu@imar.ro \ \ \ \ \ \   \  \   www.imar.ro/$\sim$mihailes}

\smallskip 

R. B. Munteanu,  Department of Mathematics, University of Bucharest, 14 Academiei St., 010014,  Bucharest, Romania.  \ \ \ \ \ \ 
			\email{radu-bogdan.munteanu@g.unibuc.ro}

\begin{thebibliography}{9}
	\bibitem{AY}
	J.C. Alexander, J.A. Yorke, Fat baker's transformations, Ergod Th  Dynam Syst 4 (1984), 1-23.
	
	\bibitem{An}
	J. An, S. Gan, R. Gu, Y. Shi,  Rigidity of stable Lyapunov exponents and integrability for Anosov maps, Comm. Math. Phys. 402 (2023),  2831-2877.
	
	\bibitem{AH} N. Aoki, K. Hiraide, Topological Theory of Dynamical Systems, 
North Holland 1994.

	\bibitem{BP}
	L. Barreira, Y. Pesin, Lyapunov Exponents and Smooth Ergodic Theory, Lecture Series, AMS, vol 23, 2002.
	
	\bibitem{BR}
	P. Berger, A. Rovella, On the inverse limit stability of endomorphisms. Ann. Inst.
Poincar\'e C, 30, 463-475, 2013.

	\bibitem{Bo}
	R. Bowen, Topological entropy for non-compact sets, Trans Amer Math Soc, 49, 1973, 125-136.
	
	\bibitem{Bo-carte}
	R. Bowen, Equilibrium states and the ergodic theory of Anosov diffeomorphisms,
Springer Lecture Notes 470 (1975).

	\bibitem{B}
	M. Brin, A. Katok, On local entropy, Lecture Notes Math, 1007, Springer, Berlin, (1983), 30-38.
	
	\bibitem{CN}
	W. Cheng, S. Newhouse, Pre-image entropy, Ergod Th  Dynam Syst, 25 (2005), 1091-1113.

\bibitem{RL}	
	R. de la Llave, Smooth conjugacy and SRB measures for uniformly and nonuniformly
systems, Comm. Math. Phys., 150(2): 289-320, 1992.

	%\bibitem{Fe}H. Federer, Geometric Measure Theory, Springer, 1996.
	
	\bibitem{FH}
	D-J. Feng, H. Hu, Dimension theory of iterated function systems, Comm.
	Pure Appl Math 62 (2009), 1435-1500.
	
    \bibitem{MF}
	J.E. Fornaess, E. Mihailescu, Equilibrium measures on saddle sets of holomorphic maps on $\mathbb{P}^2$, Math Ann, vol. 356 (2013), 1471-1491.	
	
%	\bibitem{FS}	J.E. Fornaess, N. Sibony, Hyperbolic maps on $\mathbb{P}^2$. Math. Ann. 311, (1998), 305–333. 
\bibitem{Fr}
	J. Franks, Anosov diffeomorphisms on tori. Trans. Am. Math. Soc. 145, 117-124 (1969).
	 
	\bibitem{H} 
	M. Hurley, On topological entropy of maps. Ergod. Th.  Dynam. Sys. 15 (1995), 557–568. 
	
	
	\bibitem{KH}
	A. Katok, B. Hasselblatt, Introduction to the Modern Theory of Dynamical Systems, Cambridge Univ Press, 1995.
	 
	
	\bibitem{LW}
	G. Liao, S. Wang, Ruelle inequality of folding type for $\underline{}C^{1+\alpha}$ maps, Math. Zeit. 290(1–2) (2018) 509-519.

		
	%\bibitem{L}
	%P.-D. Liu, Ruelle inequality relating entropy, folding entropy and negative Lyapunov exponents, Comm. Math. Phys. 240 (2003), no. 3, 531-538.
	%\bibitem{MaPu} R. Ma\~n\'e, C. Pugh, Stability of Endomorphisms, Dynamical Systems - Warwick 1974,  Lecture Notes Math, vol. 468,  pp 175–184, Springer, 1975.

	\bibitem{Ma}
	A. Manning,  There are no new Anosov diffeomorphisms on tori. Amer. J. Math. 96, 422-429 (1974).
	
	%\bibitem{MM} A. Manning, H. McCluskey, Hausdorff dimension for horseshoes. Ergod Th Dyn Syst 3(1983), 251–260. 
	%\bibitem{Mat} P. Mattila, Geometry of sets and measures in Euclidean spaces,  Cambridge Univ Press,  1995.
		
	\bibitem{Mi}
	F. Micena, Rigidity for some cases of Anosov endomorphisms of torus, Discrete Cont Dynam Syst, 43, 2023, 3082-3097.
	 
	% F. Micena and R. de la Llave, Lyapunov exponents everywhere and rigidity, J  Dynam Control Syst, 27 (2021), 819-831.
	 %\bibitem{MT} F. Micena, A. Tahzibi,  On the unstable directions and Lyapunov exponents of Anosov endomorphisms. Fund. Math. 235(1), 37-48 (2016).
	 
%	\bibitem{M1} E. Mihailescu, Metric properties of some fractal sets and applications of inverse pressure, Math Proc Cambr, 148, (2010), 553-572. 
	
	\bibitem{M4}
	E. Mihailescu, Physical measures for multivalued inverse iterates near hyperbolic repellors, J. Stat. Physics, vol. 139, no. 5, (2010), 800-819. 
		
		
	\bibitem{M11}
	E. Mihailescu, Unstable directions and fractal dimensions for a family of skew products with overlaps in fibers, Math Zeit, vol. 269, (2011), 733-750.
	
	%\bibitem{M-JMAA} E. Mihailescu, Inverse limits and statistical properties for chaotic implicitly defined economic models, J Math An Appl, vol. 394, no.2, (2012), 517-528. 
	
		\bibitem{M}
		E. Mihailescu, Thermodynamic formalism for invariant measures in iterated function systems with overlaps, Commun Contemp Math, vol 24, 6, (2022), paper 2150041.
	
%	\bibitem{MU1} E. Mihailescu, M. Urba\'nski, Inverse topological pressure with applications to holomorphic dynamics of several complex variables, Commun Contemp Math, 6, no. 4, (2004), 653-682.
	
	\bibitem{MU2}
	E. Mihailescu, M. Urba\'nski, Inverse pressure estimates and the independence of stable
	dimension, Canadian J. Math., vol. 60, no. 3, (2008), 658-684.
	
	\bibitem {MU3}
	E. Mihailescu, M. Urba\'nski, Measure-theoretic degrees and topological pressure for non-expanding transformations, J Funct Analysis, 267 (2014), 8, 2823-2845.
	
	\bibitem {MU4}
	E. Mihailescu, M. Urba\'nski,	
	Overlap functions for measures in conformal iterated function systems, J Stat Phys,  162 (2016), 43-62.	
	
	\bibitem{MT}
S. Moosavi, K. Tajbakhsh,  Classification of special Anosov endomorphisms of nil-manifolds, Acta Math. Sin. Engl. Ser. 35(12), 1871-1890 (2019).

\bibitem{Ni}	
		Z. Nitecki, Topological entropy and the preimage structure of maps, Real Anal. Exch. 29 (2003/2004) 7-39.
		
	%\bibitem{N}	
	%Z. Nitecki, F. Przytycki, Preimage entropy for mappings, Inter. J. Bifur. and Chaos, 9 (1999), 1815-1843.
	
		
	\bibitem{P} 
	W. Parry, Entropy and Generators in Ergodic Theory, W.A. Benjamin, New York, 1969.
	
	\bibitem{Pe}
	Y. Pesin, Dimension Theory in Dynamical Systems, Univ. Chicago Press, Chicago, 1997.
	
%	\bibitem{PP} Y. Pesin, B. Pitskel, Topological pressure and the variational principle for non-compact sets, Funct Analysis Appl 18, 1984, 307-318.

\bibitem{Pr}
F. Przytycki, Anosov endomorphisms, Stud. Math. 58(3), 1976, 249-285.

\bibitem{QZ}
M. Qian, S. Zhu, SRB measures and Pesin’s entropy formula for endomorphisms.
Trans. Am. Math. Soc., 354(4) (2002) :1453-1471.

	\bibitem{Ro}
	V.A. Rokhlin, Lectures on the theory of entropy of transformations with invariant measures, Russ. Math. Surveys 22 (1967), 1-54.
	
%	\bibitem{R2}	D. Ruelle, Repellers for real analytic maps, Ergod Th Dynam Sys, 2 (1982), 99-107.
\bibitem{Ru-TF}	
	D. Ruelle, Thermodynamic Formalism, Addison Wesley,  1978.
	
	\bibitem{Ru}
	D. Ruelle, Elements of Differentiable Dynamics and Bifurcation Theory, Academic Press, 1989.
	
	\bibitem{R}
	D. Ruelle, Positivity of entropy production in nonequilibrium statistical mechanics, J Stat Phys 85 (1996), 1-23.
	
	\bibitem{Si}
	Ya. G. Sinai, Gibbs measures in ergodic theory, Russ. Math. Surveys 27 (1972),
21-69.

	\bibitem{S} N. Sumi, Linearization of expansive maps of tori, Proc. of International Conference on Dynamical Systems and Chaos, 1 (1994), 243-248.
	
	\bibitem{Ts} M. Tsujii,  Fat solenoidals attractors, Nonlinearity 14 (2001), 1011--1027.	
	
	\bibitem{W}
	P. Walters, An Introduction to Ergodic Theory, Springer-Verlag, New York - Berlin, 1982.
	
	 \bibitem{WZ2}
	W. Wu, Y. Zhu, Entropy via preimage structure, Adv. Math. 406 (2022), paper 108483.

\bibitem{Y}
 L.-S. Young, What are SRB measures and which dynamical systems have them? J Stat Phys 108, 2002,
733-754.
  %  \bibitem{LP}
   % R. Langevin and F. Przytycki. Entropie de l’image inverse d’une application. Bull. Soc. Math. France 120 (1991), 237–250.
\end{thebibliography}
\end{document}